\DeclareMathAlphabet{\mathpzc}{OT1}{pzc}{m}{it} % Zapf Chancery math alphabet
\numberwithin{equation}{section} % Number formulas within sections
\numberwithin{figure}{section} % Number figures within sections
\theoremstyle{plain}
\newtheorem*{maintheo}{Main Theorem}
\newtheorem{prop}{Proposition}[section]
\newtheorem{lemm}[prop]{Lemma}
\newtheorem{sublemma}[prop]{Sublemma}
\newtheorem{proproman}{Proposition}
\theoremstyle{definition}
\newtheorem{defi}[prop]{Definition}
\theoremstyle{remark}
\newtheorem{rema}[prop]{Remark}
\newtheoremstyle{citing}% name
{3pt}%      Space above, empty = `usual value'
{3pt}%      Space below
{\itshape}% Body font
{}%         Indent amount (empty = no indent, \parindent = para indent)
{\bfseries}% Thm head font
{.}%        Punctuation after thm head
{.5em}%     Space after thm head: " " = normal interword space;
\theoremstyle{citing}
\newtheorem*{generic}{}% all text supplied in the note
\newcommand{\C}{\mathbb{C}}
\newcommand{\D}{\mathbb{D}}
\newcommand{\N}{\mathbb{N}}
\newcommand{\R}{\mathbb{R}}
\newcommand{\Z}{\mathbb{Z}}
\newcommand{\cE}{\mathcal{E}}
\newcommand{\cI}{\mathcal{I}}
\newcommand{\cK}{\mathcal{K}}
\newcommand{\cM}{\mathcal{M}}
\newcommand{\cO}{\mathcal{O}}
\newcommand{\cP}{\mathcal{P}}
\newcommand{\cR}{\mathcal{R}}
\newcommand{\cW}{\mathcal{W}}
\newcommand{\cX}{\mathcal{X}}
\newcommand{\fD}{\mathfrak{D}}
\newcommand{\sF}{\mathscr{F}}
\newcommand{\sH}{\mathscr{H}}
\newcommand{\sM}{\mathscr{M}}
\newcommand{\sP}{\mathscr{P}}
\newcommand{\sS}{\mathscr{S}}
\newcommand{\sU}{\mathscr{U}}
\newcommand{\hA}{\widehat{A}}
\newcommand{\hB}{\widehat{B}}
\newcommand{\hC}{\widehat{C}}
\newcommand{\hD}{\widehat{D}}
\newcommand{\hJ}{\widehat{J}}
\newcommand{\hU}{\widehat{U}}
\newcommand{\hV}{\widehat{V}}
\newcommand{\hW}{\widehat{W}}
\newcommand{\hX}{\widehat{X}}
\newcommand{\hkappa}{\widehat{\kappa}}
\newcommand{\hXi}{\widehat{\Xi}}
\newcommand{\hrho}{\widehat{\rho}}
\newcommand{\hSigma}{\widehat{\Sigma}}
\newcommand{\hvarphi}{\widehat{\varphi}}
\newcommand{\tA}{\widetilde{A}}
\newcommand{\tC}{\widetilde{C}}
\newcommand{\tI}{\widetilde{I}}
\newcommand{\tJ}{\widetilde{J}}
\newcommand{\tL}{\widetilde{L}}
\newcommand{\tR}{\widetilde{R}}
\newcommand{\tW}{\widetilde{W}}
\newcommand{\tY}{\widetilde{Y}}
\newcommand{\talpha}{\widetilde{\alpha}}
\newcommand{\tbeta}{\widetilde{\beta}}
\newcommand{\tmu}{\widetilde{\mu}}
\newcommand{\tPi}{\widetilde{\Pi}}
\newcommand{\trho}{\widetilde{\rho}}
\newcommand{\partn}[1]{{\smallskip \noindent \textbf{#1.}}}% for parts of a proof
\renewcommand{\=}{\coloneqq}
\newcommand{\dd}{\hspace{1pt}\operatorname{d}\hspace{-1pt}}
\DeclareMathOperator{\diam}{diam}
\DeclareMathOperator{\dist}{dist}
\DeclareMathOperator{\modulus}{mod} % modulus of an annulus
\DeclareMathOperator{\supp}{supp} % Topological support of a measure
\DeclareMathOperator{\crit}{crit}
\newcommand{\pV}{V_f}
\newcommand{\pP}{P_{f,n+1}(0)}
\newcommand{\pL}{L_f}
\newcommand{\pF}{F_f}
\newcommand{\pD}{D_f}
\newcommand{\pchicrit}{\chi_{\crit}(f)}
\newcommand{\psP}{\mathscr{P}_f}
\newcommand{\pfD}{\mathfrak{D}_f}
\newcommand{\df}{\widehat{f}_\lambda}
\newcommand{\signs}{\{ +, - \}^{\N}}
\newcommand{\uvarsigma}{\underline{\varsigma}}
\newcommand{\uiota}{\underline{\iota}}
\newcommand{\fs}{f_{\uvarsigma}}
\newcommand{\hcX}{\widehat{\cX}}
\newcommand{\wtp}{p^+}
\newcommand{\whc}{\widehat{c}}
\newcommand{\whf}{\widehat{f}}
\newcommand{\whm}{\widehat{m}}
\newcommand{\whn}{\widehat{n}}
\newcommand{\whp}{p^-}
\newcommand{\whr}{\widehat{r}}
\newcommand{\whx}{\widehat{x}}
\newcommand{\uW}{\underline{W}}
\newcommand{\chicritf}{\chi_{\crit}(f)}
\newcommand{\chicritfs}{\chi_{\crit}(\fs)}
\newcommand{\chiinfR}{\chi_{\operatorname{inf}}^{\R}}
\newcommand{\chiinfC}{\chi_{\operatorname{inf}}}
\newcommand{\cl}[1]{{\rm cl}({#1})}
\newcommand{\Asup}{A_{\sup}}
\newcommand{\Ainf}{A_{\inf}}
\begin{document}

\title[Sensitive dependence of geometric Gibbs states]{Robust sensitive dependence of geometric Gibbs states for analytic families of quadratic maps}
\author{Daniel Coronel}
\address{Daniel Coronel, Facultad de Matem{\'a}ticas, Pontifica Universidad Cat{\'o}lica de Chile, Avenida Vicu{\~n}a Mackenna~4860, Santiago, Chile}
\email{acoronel@mat.uc.cl}

\author{Juan Rivera-Letelier}
\address{Department of Mathematics, University of Rochester. Hylan Building, Rochester, NY~14627, U.S.A.}
\email{riveraletelier@gmail.com}
\urladdr{\url{http://rivera-letelier.org/}}

\begin{abstract}
  For quadratic-like maps, we show a phenomenon of sensitive dependence of geometric Gibbs states: There are analytic families of quadratic-like maps for which  an arbitrarily small perturbation of the parameter can have a definite effect on the low-temperature geometric Gibbs states.
  Furthermore, this phenomenon is robust: There is an open set of analytic 2\nobreakdash-parameter families of quadratic-like maps that exhibit sensitive dependence of geometric Gibbs states. 
  We introduce a geometric version of the Peierls condition for contour models ensuring that the low-temperature geometric Gibbs states are concentrated near the critical orbit.
\end{abstract}
% \subjclass[2000]{S99}
% \keywords{Key words}

\maketitle
% \setcounter{depth}{1}
% \tableofcontents

% 
%%% Main
% 

\section{Introduction}

A central problem in statistical mechanics and the thermodynamic formalism, is the study of phase transitions.
Here we focus on ``zero-temperature'' phase transitions.\footnote{Also known as the ``chaotic dependence of Gibbs states'' as the temperature parameter drops to zero.}
In good situations, as in the case of contour models satisfying the Peierls condition, Gibbs states converge to a ground state as the temperature drops to zero, see for example~\cite[II, \S2]{Sin82} or the summary in~\cite[\S B.4]{vEnFerSok93}, and~\cite{Bre03,ChaGamUga11,Con16,Lep05} and references therein for other convergence results.
There are several examples of non-convergence, see~\cite{BisGarThi18,ChaHoc10,vEnRus07}, and the companion paper~\cite{CorRiv15a}.

Here we focus on the thermodynamic formalism of analytic maps and geometric potentials.\footnote{Note that in the setting considered in~\cite{BisGarThi18,Bre03,ChaGamUga11,ChaHoc10,CorRiv15a,Con16,Gar17,vEnRus07,Lep05} the map is fixed, and the potential is allowed to vary independently of the map. In contrast, the geometric potential considered here is entirely determined by the map.}
The geometric potential arises naturally in several important problems, like in the construction of physical measures, as in the pioneering work of Sina{\u{\i}}~\cite{Sin72}, Ruelle~\cite{Rue76}, and Bowen~\cite{Bow75}.
The pressure of the geometric potential, as a function of the inverse temperature, is also connected to several multifractal spectra, and large deviations rate functions.

The simplest case of interest is that of circle expanding maps.
A folklore result asserts that generically there is a unique ground state for the geometric potential, and that geometric Gibbs states converge to this ground state as the temperature drops to zero~\cite{CorRiv_expanding}.
On the other hand, some of the non-convergence examples mentioned above can be adapted to the case of smooth circle expanding maps, as shown in~\cite{CorRiv_expanding}.\footnote{For real analytic maps it is an open problem to show that for every real analytic circle expanding map the geometric Gibbs states converge to a ground state as the temperature drops to zero, see~\cite{CorRiv_expanding}.}
However, these examples, as well as those in~\cite{BisGarThi18,ChaHoc10,CorRiv15a,vEnRus07}, are given by constructions that require infinitely many non-trivial conditions.
They are therefore of infinite co-dimension, and a generic finite-dimensional family of circle expanding maps cannot contain such a map.

Our goal is to show that in the next simplest case, of (real and complex) quadratic-like maps on a single variable, the occurrence of zero-temperature phase transitions is a robust phenomenon for 2\nobreakdash-parameter families.
In fact, our main result implies that there is an open set of 2\nobreakdash-parameter families of quadratic-like maps that exhibit a phenomenon of \emph{sensitive dependence of Gibbs states}, which is similar in spirit to the sensitive dependence on initial conditions that is characteristic of chaotic dynamical systems.
More precisely, for a 2\nobreakdash-parameter family of quadratic-like maps in this open set, an arbitrarily small perturbation of the parameters can have a drastic effect on the low-temperature geometric Gibbs states.
In the companion paper~\cite{CorRiv19} we show a similar phenomenon at positive temperature, and in~\cite{CorRiv15a} we study it for classical lattice systems.
The situation is however significantly simpler in~\cite{CorRiv15a}, since the potential is independent of the system and there are no differentiability issues.

One of the main technical difficulties to study geometric Gibbs states of quadratic-like maps is the presence of the critical point, which is a serious obstruction to uniform hyperbolicity.
This leads to some complications, like the fact that there is no obvious characterization of ground states.\footnote{In fact, there are quadratic maps without a Lyapunov minimizing measure, see for example~\cite[Example~5.4]{BruKel98}, \cite[Corollary~2]{BruTod06}, and~\cite[Main Theorem]{CorRiv15a}.}
In particular, the ergodic optimization approach to study low-temperature Gibbs states, described for example in~\cite{BarLepLop13,Con16,Gar17}, breaks down for quadratic-like maps.

The main tool introduced in this paper is the ``Geometric Peierls condition''.
It ensures that the geometric Gibbs states concentrate on the critical orbit as the temperature drops to zero, under certain circumstances.
By considering a critical orbit that accumulates on~2 different periodic orbits with the same Lyapunov exponent,\footnote{In contrast with the usual Peierls condition for countour models, where the ground state is assumed to be supported on a periodic configuration, the Geometric Peierls Condition introduced here is compatible with a non-periodic critical orbit, see Remark~\ref{r:geometric Peierls}.} this creates the non-convergence of geometric Gibbs states.
This is why we use 2-parameter families: One parameter is needed to ensure that these~2 periodic oribts have the same Lyapunov exponent and the other parameter is needed to control the combinatorics of the critical orbit. 
A somewhat similar idea was used by Hofbauer and Keller to produce an example of a quadratic map without a physical measure~\cite{HofKel90}, see also~\cite{HofKel95}.
However, the mechanisms are different: Hofbauer and Keller used long parabolic cascades to control almost every point with respect to the Lebesgue measure; we use a fine control of derivatives of orbits far from the critical orbit to control the mass of the geometric Gibbs states at low temperatures.

To state our results more precisely, we recall the concept of quadratic-like maps of Douady and Hubbard~\cite{DouHub85a}.
Given simply-connected subsets~$U$ and~$V$ of~$\C$ such that the closure of~$U$ is compact and contained in~$V$, a holomorphic map ${f \colon U \to V}$ is a \emph{quadratic-like map} if it is proper of degree~2.
Such a map has a unique point at which the derivative~$Df$ vanishes; it is the \emph{critical point of~$f$}.
The \emph{filled Julia set} of a quadratic-like map~$f \colon U \to V$ is
$$ K(f)
\=
\{ z \in U \mid \text{ for every integer~$n \ge 1$, $f^n(z) \in U$} \}. $$
The \emph{Julia set $J(f)$} of~$f$ is the boundary of~$K(f)$, and it coincides with the closure of the repelling periodic points of~$f$.

Given a quadratic-like map~$f$, denote by~$\sM_f$ the space of probability measures on~$J(f)$ that are invariant by~$f$.
For~$\mu$ in~$\sM_f$ denote by~$h_\mu(f)$ the measure-theoretic entropy of~$\mu$, and for each~$\beta$ in~$\R$ put
$$ P_f(\beta)
\=
\sup \left\{ h_\mu(f) - \beta \int \log |Df| \dd \mu \mid \mu \in \sM_f \right\}. $$
It is the \emph{pressure of~$f|_{J(f)}$ for the potential~$-\beta \log |Df|$}.
A measure~$\mu$ realizing the supremum above is an \emph{equilibrium state of~$f|_{J(f)}$ for the potential~$- t \log |Df|$}, or a \emph{geometric Gibbs state}.

A quadratic-like map~$f \colon U \to V$ is \emph{real} if~$U$ and $V$ are invariant under complex conjugation, and if~$f$ commutes with complex conjugation.
The critical point of such a map is real.
A real quadratic-like map with critical point~$c$ is \emph{essentially topologically exact}, if~$f^2(c)$ is defined and is different from~$f(c)$, if~$f$ maps the interval~$I(f)$ bounded by~$f(c)$ and~$f^2(c)$ to itself, and if~$f|_{I(f)}$ is topologically exact.
For such a map~$f$ we consider both, the interval map~$f|_{I(f)}$, and the complex map~$f$ acting on its Julia set~$J(f)$.

Let~$f$ be a real quadratic-like map that is essentially topologically exact.
Denote by~$\sM_f^{\R}$ the space of all probability measures on~$I(f)$ that are invariant by~$f$.
For~$\mu$ in~$\sM_f^{\R}$ we denote by~$h_\mu(f)$ the measure-theoretic entropy of~$\mu$, and for each~$\beta$ in~$\R$ we put
$$ P^{\R}_f(\beta)
\=
\sup \left\{ h_\mu(f) - \beta \int \log |Df| \dd \mu \mid \mu \in \sM_f^{\R} \right\}. $$
It is the \emph{pressure of~$f|_{I(f)}$ for the potential~$-\beta \log |Df|$}.
A measure~$\mu$ realizing the supremum above is an \emph{equilibrium state of~$f|_{I(f)}$ for the potential~$- t \log |Df|$}, or a \emph{geometric Gibbs state}.

For~$f$ as above, we refer to the family of functions~$(- \beta \log |Df|)_{\beta > 0}$ as the \emph{geometric potentials} of~$f$.
We follow the usual terminology in statistical mechanics, where the parameter~$\beta$ is interpreted as the inverse of the ``temperature''.

\begin{defi}[Sensitive dependence of Gibbs states]
  \label{d:sensitive dependence}
  Let~$\Lambda$ be a topological space, and~$(f_\lambda)_{\lambda \in \Lambda}$ a continuous family quadratic-like maps.
  For~$\lambda_0$ in~$\Lambda$ the family~$(f_\lambda)_{\lambda \in \Lambda}$ has \emph{sensitive dependence of low-temperature geometric Gibbs states at~$\lambda_0$}, if for every sequence~$(\beta_\ell)_{\ell \in \N}$ satisfying~$\beta_\ell \to +\infty$ as~$\ell \to +\infty$, there is a parameter~$\lambda$ in~$\Lambda$ arbitrarily close to~$\lambda_0$ such that:
  \begin{enumerate}
  \item[1.]
    For each~$\beta$ in ${(0, +\infty)}$, there is a unique equilibrium state~$\rho_t(\lambda)$ of~$f|_{J(f_\lambda)}$ for the potential~$-\beta \log |Df_\lambda|$.
  \item[2.]
    The sequence of equilibrium states~$\left( \rho_{\beta_\ell} (\lambda) \right)_{\ell \in \N}$ does not converge in the weak* topology.
  \end{enumerate}
  When this holds, we also say that~$(f_\lambda)_{\lambda \in \Lambda}$ has \emph{sensitive dependence of low-temperature geometric Gibbs states}.

  If in addition for every~$\lambda$ in~$\Lambda$ the quadratic-like map~$f_\lambda$ is real, then for~$\lambda_0$ in~$\Lambda$ the family~$(f_\lambda|_{I(f_\lambda)})_{\lambda \in \Lambda}$ has \emph{sensitive dependence of low-temperature geometric Gibbs states at~$\lambda_0$} if the properties above are satisfied with~$f|_{J(f_\lambda)}$ replaced by~$f|_{I(f_\lambda)}$.
  In this case, we also say that~$(f_\lambda|_{I(f_\lambda)})_{\lambda \in \Lambda}$ has \emph{sensitive dependence of low-temperature geometric Gibbs states}.
\end{defi}

Our main result is stated as the Main Theorem in~\S\ref{ss:Main Theorem}.
The following is a simple consequence of this result, which is easier to state.

\begin{generic}[Sensitive Dependence of Geometric Gibbs States]
  There is an open subset~$\Lambda_0$ of~$\C$ intersecting~$\R$, a holomorphic family of quadratic-like maps~$(\df)_{\lambda \in \Lambda_0}$, and a Cantor set~$\Lambda$ contained in~$\Lambda_0 \cap \R$, such that the following properties hold.
  For every~$\lambda$ in~$\Lambda$ the map~$\df$ is real and essentially topologically exact, and each of the families~$(\df|_{I(f_\lambda)})_{\lambda \in \Lambda}$ and~$(\df)_{\lambda \in \Lambda}$ has sensitive dependence of low-temperature geometric Gibbs states at every parameter in~$\Lambda$.
\end{generic}

In fact, we prove that the conclusions of the Sensitive Dependence of Geometric Gibbs States hold for an open set of holomorphic 2\nobreakdash-parameter families of quadratic-like maps, see Remark~\ref{r:robustness}.
Thus, for quadratic-like maps, the sensitive dependence of geometric Gibbs states is a robust phenomenon for 2\nobreakdash-parameter families.

Note that the Sensitive Dependence of Geometric Gibbs States does not say anything about the behavior of the low-temperature geometric Gibbs states of~$\whf_{\lambda_0}$.
We show that the parameter~$\lambda_0$ can be chosen so that the geometric Gibbs states of~$\whf_{\lambda_0}$ converge as the temperature drops to zero, and that~$\lambda_0$ can be chosen so that they do not converge, see Remark~\ref{r:convergence or divergence}.
In the latter case we show that the set of accumulation measures of the geometric Gibbs states of~$\whf_{\lambda_0}$ is a segment joining certain periodic measures, see Remark~\ref{r:set of accumulation measures}.
In the former case we show that the convergence of the geometric Gibbs states is super-exponential, and that the large deviation principle for Gibbs states studied in~\cite{BarLepLop13} holds with a degenerated rate function, see Remark~\ref{r:convergence}.
Our estimates also show that for every~$\lambda$ in~$\Lambda$ the geometric pressure of~$\df$ is super-exponentially close to its asymptote, see Remark~\ref{r:pressure assymptotic}.

For each~$\lambda$ in~$\Lambda$ the map~$\df$ has a non-recurrent critical point, so it is non-uniformly hyperbolic in a strong sense.
In fact, the maps in the family~$(\df)_{\lambda \in \Lambda}$ satisfy various non-uniform hyperbolicity conditions with uniform constants.
For example, the critical orbit is non-recurrent in a uniform way: There is a neighborhood of~$z = 0$ that for each~$\lambda$ in~$\Lambda$ is disjoint of the forward orbit of the critical value of~$\df$.
Furthermore, all the maps in the family~$(\df)_{\lambda \in \Lambda}$ satisfy the Collet-Eckmann condition with uniform constants: There are constants~$C$ and~$\eta$ satisfying ${C > 0}$ and ${\eta > 1}$, such that for every~$\lambda$ in~$\Lambda$ and every~$n$ in~$\N$, we have~$\left| D\df^n(\df(0)) \right| \ge C \eta^n$.
Moreover, all maps in~$(\df)_{\lambda \in \Lambda}$ have uniform ``goodness constants'' in the sense of~\cite[Definition~2.2]{BalBenSch15}, \emph{cf}. Proposition~\ref{p:transporting Peierls}.
This supports the idea that the lack of expansion is not responsible for the sensitive dependence of geometric Gibbs states.

The Sensitive Dependence of Geometric Gibbs States provides the first examples of an analytic map having a ``zero-temperature'' phase transition.
In the case of a quadratic-like map~$f$, this completes the classification of phase-transitions for~$\beta$ in ${(0, +\infty)}$.\footnote{Compare with the discussion in the introduction of~\cite{CorRiv13}.}
Restricting to transitive maps in the real case, there are only~3 types of phase transitions:
\begin{description}
\item[High-temperature]
  A phase transition at the first zero of the geometric pressure function.
  Such a phase transition appears if and only if~$f$ is not uniformly hyperbolic, and if it does not satisfy the Collet-Eckmann condition, see~\cite[Theorem~A]{NowSan98} or~\cite[Corollary~E]{Riv20} for the real case, and~\cite[Main Theorem]{PrzRivSmi03} for the complex case;
\item[Low-temperature]
  A phase transition occurring after the first zero the geometric pressure function.
  In this case~$f$ cannot be uniformly hyperbolic, and it must satisfy the Collet-Eckmann condition, see~\cite{CorRiv13,CorRiv15b,CorRiv19};
\item[Zero-temperature]
  The geometric pressure function is real analytic on~${(0, +\infty)}$, and for every~$\beta$ in this set there is a unique geometric Gibbs state for the potential~$-\beta \log |Df|$, but these measures do not converge as~$\beta \to +\infty$.
  A map exhibiting such a phase transition must be uniformly hyperbolic, or satisfy the Collet-Eckmann condition.
\end{description}
See~\cite{PrzRiv11,PrzRiv19} and the survey article~\cite{Prz18b} for general results on the thermodynamic formalism of one-dimensional maps.

Roughly speaking, the mechanism responsible for high-temperature phase transitions is the lack of (non-uniform) expansion.
However, the lack of (non-uniform) expansion is not responsible for zero-temperature phase transitions.
The irregular behavior of the critical orbit seems to be responsible for low and zero-temperature phase transitions.
As mentioned above, in~\cite{CorRiv_expanding} we give an example of a smooth circle expanding map having a zero-temperature phase transition.
However, it is an open problem if there is a uniformly hyperbolic quadratic-like map having a zero-temperature phase transition.

\subsection{Notes and references}
\label{ss:notes}
The family of quadratic-like maps~$(\df)_{\lambda \in \Lambda_0}$ in the theorem is given explicitly in~\S\ref{ss:deformation of quadratic}.

It follows from the proof of the Sensitive Dependence of Geometric Gibbs states that there is a definite oscillation of the geometric Gibbs states.
More precisely, there is a continuous function~$\varphi \colon \C \to [0, 1]$ that only depends on~$\lambda_0$, such that that for every~$(\beta_\ell)_{\ell \in \N}$ and~$\lambda$ as in the definition of sensitive dependence of geometric Gibbs states we have
$$ \limsup_{\ell \to +\infty} \int \varphi \dd \rho_{\beta_\ell}(\lambda) = 1,
\text{ and }
\liminf_{\ell \to +\infty} \int \varphi \dd \rho_{\beta_{\ell}}(\lambda) = 0. $$
In fact, at certain temperatures the geometric Gibbs state is super-exponentially close to a certain periodic measure, and at others temperatures they are close to a different periodic measure, see the Main Theorem in~\S\ref{ss:Main Theorem}.

\subsection{Organization}
\label{ss:organization}
After some preliminaries about the quadratic family in~\S\ref{s:preliminaries}, we state the Main Theorem in~\S\ref{s:Main Theorem}, and prove the Sensitive Dependence of Geometric Gibbs states assuming this result (\S\ref{ss:proof of sensitive dependence}).
The Main Theorem is stated for ``uniform families'' of quadratic-like maps, which are defined in~\S\ref{ss:uniform families}.
This notion is inspired from the work of Douady and Hubbard~\cite{DouHub85a}, and it is satisfied for a large class of holomorphic families of quadratic-like maps, see Remark~\ref{r:a priori bounds and uniformity}.

The rest of the paper is devoted to the proof of the Main Theorem.
In~\S\ref{s:uniform estimates} we introduce the Geometric Peierls Condition (Definition~\ref{d:geometric Peierls}), which roughly speaking requires the derivatives along the orbit of the critical value to outweigh the derivatives of orbits that stay far from the critical point.
We also give a criterion for this condition (Proposition~\ref{p:transporting Peierls}) whose proof occupies~\S\S\ref{ss:uniform geometric estimates}, \ref{ss:proving Peierls}.
In~\S\ref{ss:uniform estimates} we make various estimates for uniform families of maps, most of which are deduced from analogous estimates for quadratic maps in~\cite{CorRiv13}.
In~\S\ref{s:estimating pressure} we implement an inducing scheme (\S\ref{ss:Inducing scheme}), analogous to that in~\cite{CorRiv13} for quadratic maps.
For a map satisfying the Geometric Peierls Condition, we also show how to control the pressure of the induced map in terms of the derivatives of the map along the orbit of the critical value (Proposition~\ref{p:improved MS criterion} in~\S\ref{ss:Bowen type formula}).

The proof of the Main Theorem is given in~\S\ref{s:chaotic dependence at zero-temperature}.
After introducing a family of itineraries and other combinatorial objects in~\S\ref{ss:itineraries}, in~\S\ref{ss:estimating postcritical series} we estimate the postcritical series in terms of certain 2~variables series that only depends on the combinatorics of the postcritical orbit (Lemma~\ref{l:2 variables functions}).
The main estimates needed in the proof of the Main Theorem can be stated only in terms of these 2~variables series, and are relegated to Appendix~\ref{s:abstract estimates}.
These are given in an abstract setting that is independent of the rest of the paper.
The proof of the Main Theorem is completed in~\S\ref{ss:proof of Main Theorem}.

\subsection{Acknowledgments}
We would like to thank Jairo Mengue for useful discussions regarding Remark~\ref{r:convergence}.

The first named author acknowledges partial support from FONDECYT grant 1201612.
The second named author acknowledges partial support from NSF grant DMS-1700291.

\section{Preliminaries}
\label{s:preliminaries}
We use~$\N$ to denote the set of integers that are greater than or equal to~1, and~$\N_0 \= \N \cup \{ 0 \}$.

For a Borel measure~$\rho$ on~$\C$, denote by~$\supp(\rho)$ its support.

For an annulus~$A$ contained in~$\C$, we use~$\modulus(A)$ to denote the conformal modulus of~$A$.

\subsection{Koebe principle}
\label{ss:Koebe}
We use the following version of Koebe distortion theorem that can be found, for example, in~\cite{McM94b}.
Given an open subset~$G$ of~$\C$ and a map~$f \colon G \to \C$ that is a biholomorphism onto its image, the \emph{distortion of~$f$} on a subset~$C$ of~$G$ is
$$ \sup_{x, y \in C} \frac{|Df(x)|}{|Df(y)|}. $$
\begin{generic}[Koebe Distortion Theorem]
  For each~$A > 0$ there is a constant~$\Delta > 1$ such that for each topological disk~$\hW$ contained in~$\C$ and each compact set~$K$ contained in~$\hW$ and such that~$\hW \setminus K$ is an annulus of modulus at least~$A$, the following property holds: For each open topological disk~$U$ contained in~$\C$ and every biholomorphic map~$f \colon U \to \hW$, for every~$x$, $y$ and~$z$ in $f^{-1}(K)$ we have 
  $$
  \Delta^{-1}|Df(z)|
  \le
  \frac{|f(x)-f(y)|}{|x-y|}
  \le
  \Delta |Df(z)|.
  $$
  Moreover, the distortion of~$f$ on~$f^{-1}(K)$ is bounded by~$\Delta$.
\end{generic}

\subsection{Quadratic polynomials, Green's functions, and B{\"o}ttcher coordinates}
\label{ss:quadratic polynomials}
In this subsection and the next we recall some basic facts about the dynamics of
complex quadratic polynomials, see for instance~\cite{CarGam93} or~\cite{Mil06c}
for references.

For~$c$ in~$\C$ we denote by~$f_c$ the complex quadratic polynomial
$$ f_c(z) \= z^2 + c, $$
and by~$K_c$ the \emph{filled Julia set} of $f_c$; that is, the set of all
points~$z$ in~$\C$ whose forward orbit under~$f_c$ is bounded in~$\C$.
The set~$K_c$ is compact and its complement is the connected set consisting of
all points whose orbit converges to infinity in the Riemann sphere.
Furthermore, we have $f_c^{-1}(K_c) = K_c$ and~$f_c(K_c) = K_c$. 
The boundary~$J_c$ of~$K_c$ is the \emph{Julia set of~$f_c$}.

For a parameter~$c$ in~$\C$, the \emph{Green's function of~$K_c$} is the function
$G_c \colon \C \to [0,+\infty)$ that is identically~$0$ on~$K_c$, and that
for~$z$ outside~$K_c$ is given by the limit,
\begin{equation}
  \label{def:Green function}
  G_c(z) \= \lim_{n\rightarrow +\infty} \frac{1}{2^n} \log |f_c^n(z)| > 0.
\end{equation}
The function~$G_c$ is continuous, subharmonic, satisfies~$G_c \circ f_c = 2G_c$ on~$\C$, and it is harmonic and strictly positive outside~$K_c$.
On the other hand, the critical values of~$G_c$ are bounded from above by~$G_c(0)$, and
the open set
$$ U_c \= \{z\in \C \mid G_c(z) > G_c(0)\} $$
is homeomorphic to a punctured disk.
Notice that $G_c(c)=2G_c(0)$, thus~$U_c$ contains~$c$ if~$0$ is not in~$K_c$. 

By B{\"o}ttcher's Theorem there is a unique conformal representation
\[
  \varphi_c\colon U_c
  \rightarrow
  \{z\in \C \mid |z| > \exp (G_c(0)) \},
\]
and this map conjugates~$f_c$ to $z \mapsto z^2$.
It is called \emph{the B{\"o}ttcher coordinate of~$f_c$} and satisfies $G_c =
\log |\varphi_c|$.

\subsection{External rays and equipotentials}
\label{ss:rays and equipotentials}
Let~$c$ be in~$\C$.
For~$v > 0$ the \emph{equipotential~$v$ of~$f_c$} is by definition~$G_c^{-1}(v)$.
A \emph{Green's line of~$G_c$} is a smooth curve on the complement of~$K_c$ in~$\C$ that is orthogonal to the equipotentials of~$G_c$ and that is maximal with this property. 
Given~$t$ in~$\R / \Z$, the \emph{external ray of angle~$t$ of~$f_c$}, denoted by~$R_c(t)$, is the Green's line of~$G_c$ containing
$$ \{ \varphi_c^{-1}(r \exp(2 \pi i t)) \mid \exp(G_c(0))< r < +\infty \}. $$
By the identity~$G_c \circ f_c= 2G_c$, for each~$v > 0$ and each~$t$ in~$\R / \Z$ the map~$f_c$ maps the equipotential~$v$ to the equipotential~$2v$ and maps~$R_c(t)$ to~$R_c(2t)$.
For~$t$ in~$\R / \Z$ the external ray~$R_c(t)$ \emph{lands at a point~$z$}, if~$G_c \colon R_c(t) \to (0, +\infty)$ is a bijection and if~$G_c|_{R_c(t)}^{-1}(v)$ converges to~$z$ as~$v$ converges to~$0$ in~$(0, +\infty)$.
By the continuity of~$G_c$, every landing point is in $J_c = \partial K_c$.

The \emph{Mandelbrot set~$\cM$} is the subset of~$\C$ of those
parameters~$c$ for which~$K_c$ is connected.
The function
\[
  \begin{array}{cccl}
    \Phi \colon &\C \setminus \cM & \to & \C \setminus \cl{\D}\\
                &     c         &  \mapsto           & \Phi(c) \= \varphi_c(c)
  \end{array}
\]
is a conformal representation, see~\cite[VIII, \emph{Th{\'e}or{\`e}me}~1]{DouHub84}.
For~$v > 0$ the \emph{equipotential~$v$ of~$\cM$} is by definition
$$ \cE(v) \= \Phi^{-1}(\{z\in \C \mid |z| = v \}). $$ 
On the other hand, for~$t$ in~$\R / \Z$ the set
$$ \cR(t) \= \Phi^{-1}(\{r \exp(2 \pi i t) \mid r > 1 \}) $$
is called the \emph{external ray of angle~$t$ of~$\cM$}.
We say that $\cR(t)$ \emph{lands at a point~$z$} in~$\C$, if~$\Phi^{-1} (r \exp(2\pi i t))$ converges to~$z$ as $r \searrow 1$.
When this happens~$z$ belongs to~$\partial \cM$.

\subsection{The wake~1/2}
\label{ss:wake 1/2}
In this subsection we recall a few facts that can be found for example
in~\cite{DouHub84} or~\cite{Mil00c}.

The external rays~$\cR(1/3)$ and~$\cR(2/3)$ of~$\cM$ land at the parameter~$c = -3/4$, and these are the only external rays of~$\cM$ that land at this point, see
for example~\cite[Theorem~1.2]{Mil00c}.
In particular, the complement in~$\C$ of the set
$$ \cR(1/3) \cup \cR(2/3) \cup \{ - 3/4 \} $$
has~2 connected components; we denote by~$\cW$ the connected component
containing the point~$c = -2$ of~$\cM$.

For each parameter~$c$ in~$\cW$ the map~$f_c$ has~2 distinct fixed points; one
of the them is the landing point of the external ray~$R_c(0)$ and it is denoted
by~$\beta(c)$; the other one is denoted by~$\alpha(c)$.
The only external ray landing at~$\beta(c)$ is~$R_c(0)$, and
the only external ray landing at~$-\beta(c)$
is~$R_c(1/2)$.

Moreover, for every parameter~$c$ in~$\cW$ the only external rays 
of~$f_c$ landing at~$\alpha(c)$ are~$R_c(1/3)$
and~$R_c(2/3)$, see for example~\cite[Theorem~1.2]{Mil00c}.
The complement of~$R_c(1/3) \cup R_c(2/3) \cup \{ \alpha(c)
\}$ in~$\C$ has~2 connected components; one containing~$- \beta(c)$ and~$z =
c$, and the other one containing~$\beta(c)$ and~$z = 0$.
On the other hand, the point~$\alpha(c)$ has~2 preimages by~$f_c$: Itself and~$\talpha(c) \= - \alpha(c)$.
The only external rays landing at~$\talpha(c)$ are~$R_c(1/6)$
and~$R_c(5/6)$.

\subsection{Yoccoz puzzles and para-puzzle}
\label{ss:puzzles}
In this subsection we recall the definitions of Yoccoz puzzles and para-puzzle.
We follow \cite{Roe00}.

\begin{defi}[Yoccoz puzzles]
  Fix~$c$ in~$\cW$ and consider the open region $X_c \= \{z\in \C \mid G_c(z) <
  1\}$. 
  The \emph{Yoccoz puzzle of~$f_c$} is given by the following sequence of
  graphs~$(I_{c, n})_{n = 0}^{+\infty}$ defined for~$n = 0$ by:
  \[
    I_{c,0} \= \partial X_c \cup (X_c \cap \cl{R_c(1/3)} \cap \cl{R_c(2/3)}),
  \]
  and for~$n \ge 1$ by~$I_{c,n} \= f_c^{-n}(I_{c,0})$.
  The \emph{puzzle pieces of depth~$n$} are the connected components of $f_c^{-n}(X_c) \setminus I_{c,n}$.
  The puzzle piece of depth~$n$ containing a point~$z$ is denoted by~$P_{c,n}(z)$.
\end{defi}

Note that for a real parameter~$c$, every puzzle piece intersecting the real line is invariant under complex conjugation.
Since puzzle pieces are simply-connected, it follows that the intersection of such a puzzle piece with~$\R$ is an interval.

\begin{defi}[Yoccoz para-puzzle\footnote{In contrast to~\cite{Roe00}, we only consider the para-puzzle in the wake~$\cW$.}]
  Given an integer~$n \ge 0$, put
  $$ J_n
  \=
  \{t\in [1/3,2/3] \mid 2^n t ~ (\mathrm{mod}\, 1) \in \{1/3,2/3\} \}, $$
  let~$\cX_n$ be the intersection of~$\cW$ with the open region in the parameter plane bounded by the equipotential~$\cE(2^{-n})$ of~$\cM$, and put
  \[
    \cI_{n}
    \=
    \partial \cX_n \cup \left( \cX_n \cap \bigcup_{t\in J_n} \cl{\cR(t)} \right).
  \]
  Then the \emph{Yoccoz para-puzzle of~$\cW$} is the sequence of graphs~$(\cI_n)_{n = 0}^{+\infty}$.
  The \emph{para-puzzle pieces of depth~$n$} are the connected components of $\cX_n \setminus \cI_n$.
  The para-puzzle piece of depth~$n$ containing a parameter~$c$ is denoted by~$\cP_n(c)$.  
\end{defi}
Observe that there is only~1 para-puzzle piece of depth~$0$, and only~1 para-puzzle piece of depth~1; they are bounded by the same external rays but different equipotentials.
Both of them contain~$c = - 2$.

Fix a parameter~$c$ in~$\cP_0(-2)$.
There are precisely~2 puzzle pieces of depth~$0$: $P_{c, 0}(\beta(c))$ and~$P_{c, 0}(-\beta(c))$.
Each of them is bounded by the equipotential~1 and by the closures of the
external rays landing at~$\alpha(c)$.
Furthermore, the critical value~$c$ of~$f_c$ is contained 
in~$P_{c, 0}(- \beta(c))$ and the critical point in~$P_{c, 0}(\beta(c))$.
It follows that the set~$f_c^{-1}(P_{c, 0}(\beta(c)))$ is the 
disjoint union of~$P_{c, 1}(- \beta(c))$ and~$P_{c, 1}(\beta(c))$, so~$f_c$ maps each of the sets~$P_{c, 1}(-\beta(c))$ and~$P_{c,
  1}(\beta(c))$ biholomorphically to~$P_{c, 0}(\beta(c))$.
Moreover, there are precisely~3 puzzle pieces of depth~1: 
$$ P_{c, 1}(-\beta(c)),
P_{c, 1}(0)
\quad \text{and} \quad
P_{c, 1}(\beta(c)); $$
$P_{c, 1}(- \beta(c))$ is bounded by the equipotential~1/2 and by the 
closures of the external rays that land at~$\alpha(c)$; $P_{c, 1}(\beta(c))$ is
bounded by the equipotential~1/2 and by the closures of the external rays that
land at~$\talpha(c)$; and~$P_{c, 1}(0)$ is bounded by the equipotential~1/2
and by the closures of the external rays that land at~$\alpha(c)$ and
at~$\talpha(c)$.
In particular, the closure of~$P_{c, 1}(\beta(c))$ is 
contained in~$P_{c, 0}(\beta(c))$.
It follows from this that for each integer~$n \ge 1$ the map~$f_c^n$ maps~$P_{c, n}(- \beta(c))$ biholomorphically to~$P_{c, 0}(\beta(c))$.

\subsection{The uniformly expanding Cantor set}
\label{ss:expanding Cantor set}
For a parameter~$c$ in~$\cP_3(-2)$, the maximal invariant set~$\Lambda_c$ of~$f_c^3$ in~$P_{c, 1}(0)$ plays an important r{\^o}le in the proof of the Main Theorem.

Fix~$c$ in~$\cP_3(-2)$.
There are precisely~2 connected components of~$f_c^{-3}(P_{c, 1}(0))$ contained in~$P_{c, 1}(0)$ that we denote by~$Y_c$ and~$\tY_c$.
The closures of these sets are disjoint and contained in~$P_{c, 1}(0)$.
The sets~$Y_c$ and~$\tY_c$ are distinguished by the fact that~$Y_c$ contains in its boundary the common landing point of the external rays~$R_c(7/24)$ and~$R_c(17/24)$, denoted~$\gamma(c)$, and that~$\tY_c$ contains in its boundary the common landing point of the external rays~$R_c(5/24)$ and~$R_c(19/24)$.
The map~$f_c^3$ maps each of the sets~$Y_c$ and~$\tY_c$ biholomorphically to~$P_{c, 1}(0)$.
Thus, if we put
\[
  \begin{array}{cccl}
    g_c & \colon Y_c \cup \tY_c & \to & P_{c,1}(0)\\
        &    z & \mapsto & g_c(z) \= f_c^{3}(z),
  \end{array}
\]
then
$$ \Lambda_c = \bigcap_{n\in \N} g_c^{-n}(\cl{P_{c,1}(0)}). $$

\subsection{Parameters}
\label{ss:Parameters}
In this subsection we recall the definition of a certain parameter sets in~\cite[Proposition~3.1]{CorRiv13} that are important in what follows.
To avoid confusions with the notation introduced in~\S\ref{ss:wake 1/2}, in this section, and in the rest of the paper, we use the parameter~$t$ to denote the inverse temperature.

Given an integer~$n \ge 3$, let~$\cK_n$ be the set of all those real parameters~$c$ in~${(-\infty, 0)}$ such that
$$ f_c(c) > f_c^2(c) > \cdots > f_c^{n-1}(c) > 0
\quad \text{and} \quad
f_c^n(c) \in \Lambda_c. $$
Note that for a parameter~$c$ in~$\cK_n$, the first return time of~$0$ to~$P_{c, 1}(0)$ is equal to ${n + 1}$.
On the other hand, the critical point of~$f_c$ cannot be asymptotic to a non-repelling periodic point.
This implies that all the periodic points of~$f_c$ in~$\C$ are hyperbolic
repelling and therefore that~$K_c = J_c$, see~\cite{Mil06c}.
On the other hand, we have~$f_c(c) > c$ and the interval~$I_c = [c, f_c(c)]$ is
invariant by~$f_c$.
This implies that~$I_c$ is contained in~$J_c$ and hence that for every real
number~$t$ we have~$P_c^{\R}(t) \le P_c(t)$.
Note also that~$f_c|_{I_c}$ is not renormalizable, so~$f_c$ is topologically
exact on~$I_c$, see for example~\cite[Theorem~III.4.1]{dMevSt93}.

Since for~$c$ in~$\cK_n$ the critical point of~$f_c$ is not periodic, for every integer~$k \ge 0$ we have~$f_c^{n + 3k}(c) \neq 0$.
The \emph{itinerary of~$f_c$}, is the sequence~$\iota(c)$ in~$\{0, 1 \}^{\N_0}$ defined for each~$k$ in~$\N_0$ by
$$ \iota(c)_k
\=
\begin{cases}
  0 & \text{ if }  f_c^{n + 3k}(c) \in Y_c; \\
  1  & \text{ if } f_c^{n + 3k}(c) \in \tY_c.
\end{cases} $$

\begin{prop}
  \label{p:ps}
  For each integer~$n \ge 3$, the set~$\cK_n$ is a compact subset of
  $$ \cP_n(-2) \cap (-2, -3/4), $$
  and the function ${\iota \colon \cK_n \to \{0, 1\}^{\N_0}}$ is a homeomorphism.
  Finally, for each~$\delta > 0$ there is~$n_0 \ge 3$ such that for each
  integer~$n \ge n_0$ the set~$\cK_n$ is contained in the interval~$(-2, -2 +
  \delta)$.
\end{prop}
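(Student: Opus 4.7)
The plan is to combine the Yoccoz puzzle and para-puzzle correspondence with the uniform hyperbolicity of $g_c = f_c^3$ on $\Lambda_c$ and the monotonicity of the real quadratic family, handling the three claims in order.

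First I would locate $\cK_n$ inside $\cP_n(-2) \cap (-2, -3/4)$. For $c \in \cK_n$, the condition $f_c^n(c) \in \Lambda_c \subset P_{c,1}(0)$ combined with the strict descent through positive reals over the first $n-1$ iterates prescribes the combinatorics of the critical orbit through dynamical puzzle pieces of depth up to $n$. Translating this to the parameter plane via the Douady--Hubbard correspondence between puzzles and para-puzzles places $c$ in $\cP_n(-2)$. The endpoints of $(-2,-3/4)$ are then excluded by direct inspection: at $c=-2$ the critical orbit is $-2, 2, 2, \ldots$ and the strict decrease fails, while at $c = -3/4$ the fixed point $\alpha(c)$ becomes parabolic and the puzzle decomposition degenerates.

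Next, to set up the bijection, I would use that $g_c$ is uniformly hyperbolic on $\Lambda_c$ and symbolically conjugate to the full $2$-shift: for every $\varsigma \in \{0,1\}^{\N_0}$ there is a unique $z_\varsigma(c) \in \Lambda_c$ with $g_c$-itinerary $\varsigma$, depending holomorphically on $c$ across the relevant para-puzzle piece. Then $\iota(c) = \varsigma$ is equivalent to the real equation $f_c^n(c) = z_\varsigma(c)$. Existence and uniqueness of a real solution I would deduce from Milnor--Thurston monotonicity of the real quadratic family (using negative Schwarzian): as $c$ decreases through $\cP_n(-2) \cap \R$, the kneading invariant of $f_c$ moves monotonically through the admissible symbol sequences, so each $\varsigma$ is attained exactly once. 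Continuity of $\iota$ and $\iota^{-1}$ then follows from the holomorphic dependence of both $f_c^n(c)$ and $z_\varsigma(c)$ on $c$; since $\{0,1\}^{\N_0}$ is compact Hausdorff and $\iota$ is a continuous bijection onto it from a subspace of $\R$, one concludes that $\cK_n$ is compact and $\iota$ is a homeomorphism.

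For the third claim, write $c = -2 + \varepsilon$ with $\varepsilon > 0$. The critical orbit then starts near the repelling fixed point $\beta(c) \approx 2$, where $f_c'(\beta(c)) \approx 4$. A direct inductive estimate shows that $|f_c^{k+1}(c) - \beta(c)|$ grows roughly like $4^k \varepsilon$ until it exits a small neighborhood of $\beta(c)$, so the constraint $f_c^{n-1}(c) > 0$ forces $\varepsilon$ to be at most $C \cdot 4^{-n}$ for some constant $C$. Hence $\cK_n \subset (-2, -2+\delta)$ once $n$ is large enough. The main obstacle I anticipate is the existence clause in the bijectivity argument: although $\Lambda_c$ always contains a point of any prescribed symbolic type, routing the critical value to \emph{that} point after exactly $n$ iterates is a parameter-theoretic transversality statement that requires the full Milnor--Thurston kneading machinery for the real quadratic family, carefully reconciled with the para-puzzle combinatorics in $\cP_n(-2)$.
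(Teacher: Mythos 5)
The paper's own proof is essentially a citation: everything except the homeomorphism claim (that is, compactness of $\cK_n$, bijectivity of $\iota$, containment in $\cP_n(-2)\cap(-2,-3/4)$, and the shrinking of $\cK_n$ into $(-2,-2+\delta)$) is taken directly from~\cite[Proposition~3.1]{CorRiv13}, and then one observes that $\iota$ is continuous because $f_c$ is uniformly expanding on $\Lambda_c$, so that a continuous bijection from the compact set $\cK_n$ to the Hausdorff space $\{0,1\}^{\N_0}$ is automatically a homeomorphism. You instead attempt a proof from scratch, which is a legitimately different route; your sketch for the third claim (using $Df_{-2}(\beta(-2))=4$ to force $\varepsilon\lesssim 4^{-n}$) and your para-puzzle argument for the containment are in the right spirit of what the cited result proves.

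There are two substantive issues. First, the sentence ``since $\{0,1\}^{\N_0}$ is compact Hausdorff and $\iota$ is a continuous bijection onto it from a subspace of $\R$, one concludes that $\cK_n$ is compact and $\iota$ is a homeomorphism'' is not a valid inference as stated: a continuous bijection from a non-compact space onto a compact Hausdorff space exists (e.g.\ $[0,2\pi)\to S^1$), so continuity of $\iota$ alone gives you neither compactness of $\cK_n$ nor the homeomorphism. The paper's order of logic is the clean one: get compactness first (from the cited result), then conclude the homeomorphism from ``continuous bijection, compact domain, Hausdorff codomain.'' If you truly establish continuity of $\iota^{-1}$ directly, then compactness of $\cK_n=\iota^{-1}(\{0,1\}^{\N_0})$ does follow as a continuous image of a compact set, but that is the opposite of what your sentence says, and continuity of $\iota^{-1}$ is precisely the nontrivial part.

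Second, and you acknowledge this, the existence and uniqueness of a parameter $c$ with $\iota(c)=\varsigma$ for each $\varsigma$ is the real content, and ``Milnor--Thurston monotonicity of the kneading invariant'' does not by itself deliver it: the constraint defining $\cK_n$ is not a pure kneading condition but the mixed condition ``$n-1$ strictly decreasing positive iterates followed by landing in $\Lambda_c$ with prescribed $g_c$-itinerary,'' and translating this into a statement about the kneading invariant that reduces to monotonicity would need careful bookkeeping that you have not supplied. The cited result in~\cite{CorRiv13} handles this via the para-puzzle combinatorics (nested sequences of para-puzzle pieces corresponding to finite prefixes of $\varsigma$ shrinking to a single parameter), which is a cleaner mechanism than kneading theory for this purpose and is also what gives the continuity of $\iota^{-1}$ implicitly. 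So your proposal, as it stands, has a genuine gap at exactly the step you flagged, plus the misdirected compactness deduction.
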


\begin{proof}
  Except for the assertion that~$\iota$ is a homeomorphism, this is~\cite[Proposition~3.1]{CorRiv13}.
  In this last result it is shown that~$\iota$ is a bijection, so it only remains to observe that, since for each~$c$ in~$\cP_3(-2)$ the map~$f_c$ is uniformly expanding on~$\Lambda_c$ \cite[\S3.3]{CorRiv13}, the map~$\iota$ is continuous, and therefore a homeomorphism.
\end{proof}

\begin{rema}
  \label{r:kneading}
  The proposition implies that for every integer~$n$ satisfying ${n \ge 3}$ and every~$\uiota$ in~$\{0, 1\}^{\N_0}$, there is a unique~$c$ in~$\cK_n$ for which the itinerary~$\iota(c)$ of~$f_c$ is equal to~$\uiota$.
  The real parameter~$c$ is uniquely characterized by the following properties:
  \begin{itemize}
  \item
    For every~$j$ in~$\{1, \ldots, n - 1\}$, we have ${f_c^j(c) > 0}$;
  \item
    For every~$k$ in~$\N_0$ and~$r$ in~$\{0, 1, 2\}$, we have
    \begin{equation*}
      f_c^{n + 3k + r}(c)
      \begin{cases}
        > 0
        & \text{if } \uiota_k = 1 \text{ and $r = 0$, or if } r = 2;
        \\
        < 0
        & \text{if } \uiota_k = 0 \text{ and $r = 0$, or if } r = 1. 
      \end{cases}
    \end{equation*}
  \end{itemize}
\end{rema}

\section{Main Theorem}
\label{s:Main Theorem}
In this section we state the Main Theorem, and prove the Sensitive Dependence of Geometric Gibbs States assuming this result.

The Main Theorem is stated in~\S\ref{ss:Main Theorem} for ``uniform families'' of quadratic-like maps, which are defined in~\S\ref{ss:uniform families}.
By the work of Douady and Hubbard~\cite{DouHub85a}, there is a large class of holomorphic families of quadratic-like maps that are uniform, see Remark~\ref{r:a priori bounds and uniformity}.
We use this to exhibit in~\S\ref{ss:deformation of quadratic} a concrete (real) 1\nobreakdash-parameter family of quadratic-like maps satisfying the hypotheses of the Main Theorem.
This family is used in~\S\ref{ss:proof of sensitive dependence} to prove the Sensitive Dependence of Geometric Gibbs States assuming the Main Theorem.

\subsection{Uniform families of quadratic-like maps}
\label{ss:uniform families}
A quadratic-like map~$f \colon U \to V$ is \emph{normalized}, if its unique critical point is~$0$, and if~$D^2f(0) = 2$.
For such a map~$f$ there is a holomorphic function~$R_f \colon U \to \C$ such that for~$w$ in~$U$ we have
$$ f(w) = f(0) + w^2 + w^3 R_f(w). $$
Note that~$f$ is uniquely determined by its critical value~$f(0)$, and the function~$R_f$.

In what follows, we endow a given family of normalized quadratic-like maps with a topology that will be used in the statement of the Main Theorem.
Let~$\sU$ be the set of all topological disks in~$\C$ containing~$0$ that are not biholomorphic to~$\C$. 
Endow~$\sU$ with the \emph{Carath\'eorody topology}, which is the topology generated by the following families of subsets of $\sU$:
$$
\{\{U\in \sU \mid K\subseteq U \} \mid K\subseteq \C \text{ compact} \}
$$
and
$$
\{\{U \in \sU \mid N \nsubseteq U\} \mid N\subseteq \C \text{ open and connected with } 0\in N\}.
$$
Let~$\sS$ be the set of all holomorphic univalent maps~$f$ defined on the open unit disk~$\D$, such that ${f(0) = 0}$ and ${Df(0) > 0}$.
Equip~$\sS$ with the topology of locally uniform convergence.
The map ${R \colon \sS \to \sU}$ sending~$f$ in~$\sS$ to~$f(\D)$ in~$\sU$ is a bijection by the Riemann mapping theorem, and a homeomorphism by~\cite[\S 4]{Oes86}.
Let~$\sH$ be the set of all holomorphic functions defined on~$\D$, equipped with the topology of locally uniform convergence.
Using the homeomorphism~$R$ we can inject each normalized family of quadratic-like maps~$\sF$ into~$\sU \times \sH$, sending ${f \colon U \to \C}$ to ${(U, f\circ R^{-1}(U))}$.
Using this injection, we endow~$\sF$ with the pull-back of the product topology on~$\sU\times \sH$.
We call this topology the \emph{Carath{\'e}odory topology} or the \emph{topology locally uniform convergence on~$\sF$}.
Thus, a sequence ${f_n \colon U_n\to \C}$ in~$\sF$ converges to ${f \colon U \to \C}$ in~$\sF$ if and only if ${(U_n, f_n\circ R^{-1}(U_n))}$ converges to $(U, f\circ R^{-1}(U))$ in~$\sU\times \sS$.
Observe that this is equivalent to~$U_n$ converging to~$U$ in the Carath\'eodory topology on~$\sU$, and that for every compact set~$K$ in~$U$ the sequence~$f_n$ converges uniformly to~$f$ on~$K$.
The latter is the usual definition of the Carath{\'e}odory convergence of maps, see \cite[\S5.1]{McM94b}.

By the straightening theorem of Douady and Hubbard \cite{DouHub85a}, for every quadratic-like map~$f \colon U \to V$ there is~$c$ in~$\C$ and a quasi\nobreakdash-conformal homeomorphism~$h \colon \C \to \C$ that conjugates the quadratic polynomial~$f_c$ to~$f$ on a neighborhood of~$J_c$.
In the case~$f$ is real, $c$ is real, and~$h$ can be chosen so that it commutes with the complex conjugation.
In all the cases, the quasi\nobreakdash-conformal homeomorphism~$h$ can be chosen to be holomorphic on a neighborhood of infinity, and tangent to the identity there.

Put
$$ \cX \= \{ c \in \C \mid G_c(c) \le 1 \}
\quad \text{and} \quad
\hcX \= \{ c \in \C \mid G_c(c) \le 2 \}, $$
and for~$c$ in~$\C$, put
$$ X_c \= \{ z \in \C \mid G_c(z) \le 1 \}
\quad \text{and} \quad
\hX_c \= \{ z \in \C \mid G_c(z) \le 2 \}. $$
Note that~$X_c$ is contained in the interior of~$\hX_c$, and that
$$ \cX = \{ c \in \C \mid c \in X_c \}
\quad \text{and} \quad
\hcX = \{ c \in \C \mid c \in \hX_c \}. $$
\begin{defi}[Uniform family of quadratic-like maps]
  A family~$\sF$ of normalized quadratic-like maps is \emph{uniform}, if there are constants~$K\ge 1$ and~$R>0$, such that for each~$f$ in~$\sF$ there are~$c(f)$ in~$\cX$ and a $K$\nobreakdash-quasi\nobreakdash-conformal homeomorphism~$h_f$ of~$\C$ satisfying the following properties.
  \begin{enumerate}
  \item[1.]
    The homeomorphism~$h_f$ conjugates~$f_{c(f)}$ on~$\hX_{c(f)}$ to~$f$ on~$h_f(\hX_{c(f)})$.
    Furthermore, if~$f$ is real, then~$h_f$ commutes with the complex conjugation.
  \item[2.]
    The set~$\hX_{c(f)}$ is contained in~$B(0, R)$, and the homeomorphism~$h_f$ is holomorphic on~$\C \setminus \cl{B(0, R)}$, and it is tangent to the identity at infinity.
  \end{enumerate}
\end{defi}

Note that property~1 implies that~$h_f(0) = 0$.

\begin{rema}
  \label{r:a priori bounds and uniformity}
  Although it is not needed in this paper, we remark that a family~$\sF$ of normalized quadratic-like maps with connected Julia sets is uniform if and only if  the following property holds: There is a constant~$m > 0$ such that for each~$f \colon U \to V$ in~$\sF$ there is an essential annulus in $V \setminus U$ whose conformal modulus is at least~$m$.
\end{rema}

Let~$\sF$ be a uniform family of quadratic-like maps.
For each~$f$ in~$\sF$ put
$$ X_f \= h_f(X_{c(f)})
\quad \text{and} \quad
\hX_f \= h_f(\hX_{c(f)}). $$
By the definition of uniform family, the puzzle pieces of~$f_{c(f)}$ can be push-forward to~$X_f$ by~$h_f$.
We call to these sets the \emph{puzzle pieces of~$f$}.
We say that a puzzle piece of $f$ has \emph{depth}~$n$  if it is the push-forward of a puzzle piece of $c(f)$ with depth~$n$.
The puzzle piece of depth~$n$ of~$f$ containing~$w$ is denoted~$P_{f,n}(w)$.
Thus, we have
$$
P_{f,n}(w) \= h_f(P_{c(f),n}(h_f^{-1}(w))).
$$
Set 
$$\beta(f) \= h_f(\beta(c(f)))
\quad \text{and} \quad
\tbeta(f) \= h_f(-\beta(c(f))).$$

For every  integer $n\ge 0$, put
$$\cP_n(\sF) \= \{f \in \sF \mid c(f) \in \cP_n(-2)\},$$  
and for
$n \ge 3$, put 
$$\cK_n(\sF) \= \{f \in \sF \mid c(f) \in \cK_n\}.$$  

Moreover, for~$f$ in~$\cP_3(\sF)$ put
\begin{displaymath}
  Y_f \= h_f(Y_{c(f)}),
  \text{ and }
  \tY_f \= h_f(\tY_{c(f)}),
\end{displaymath}
and let~$g_f \colon h_f(Y_{c(f)}\cup \tY_{c(f)}) \to P_{f, 1} (0)$ be defined by~$g_f \= h_f \circ g_{c(f)}\circ h_f^{-1}$.
Denote by~$p(f)$ and~$\wtp(f)$ the unique fixed point of~$g_f$ in~$Y_f$ and $\tY_f$, respectively, and denote by~$\whp(f)$ the unique fixed point of~$g_f^2$ in~$\tY_f$ that is different from~$\wtp(f)$; it is a periodic point of~$g_f$ of minimal period~2.
Furthermore, denote by
\begin{displaymath}
  \cO^+(f) \= \left\{ f^j(p^+(f)) \mid j \in \{0, 1, 2 \} \right\}
  \text{ and }
  \cO^-(f) \= \left\{ f^j(p^-(f)) \mid j \in \{0, 1, \dots, 5 \} \right\}
\end{displaymath}
the orbits of~$p^+(f)$ and~$p^-(f)$ under~$f$, respectively.

For every~$f$ in~$\sF$ such that~$c(f)$ is real and belongs to~$[-2,0)$, denote by~$I(f)$ the image under~$h_f$ of the interval ${[c(f), f_{c(f)}(c(f))]}$.
In the case where~$f$ is real, the set~$I(f)$ is a subinterval of~$\R$.
In all of the cases, $I(f)$ is a closed topological arc satisfying ${f(I(f)) = I(f)}$.
A quadratic-like map~$f$ in~$\sF$ is \emph{essentially topologically exact} if~$c(f)$ is in $[-2,0)$, and if~$f|_{I(f)}$ is topologically exact.
For such a map~$f$ we consider both, the map~$f|_{I(f)}$, and the complex map~$f$ acting on its Julia set~$J(f)$.
We also define~$\sM_f^{\R}$, $P_f^{\R}$, and \emph{equilibrium states} or \emph{Geometric Gibbs states} of~$f|_{I(f)}$ as in the introduction. 
In the case~$f$ is real, the definitions above coincide with those in the introduction.

Let~$n$ be an integer satisfying ${n \ge 5}$, and let~$f$ be in~$\cK_n(\sF)$.
The \emph{itinerary of~$f$}, is the sequence~$\iota(f)$ defined by ${\iota(f) \= \iota(c(f))}$, see~\S\ref{ss:Parameters}.
So, for every~$k$ in~$\N_0$ we have  
$$ \iota(f)_k = \begin{cases}
  0 & \text{ if } f^{n + 3k}(f(0)) \in Y_f;\\
  1 & \text{ if } f^{n + 3k}(f(0)) \in \tY_f.
\end{cases} $$
Here is an alternative description of the combinatorics of~$f$ in terms of its itinerary~$\iota(f)$, in the spirit of kneading theory.
Note first that the parameter~$c(f)$ is real and belongs to~$[-2, 0)$, and that the critical point~$0$ of~$f$ and its orbit are contained in~$I(f)$.
Removing~$0$ from the closed arc~$I(f)$, we obtain~2 disjoint semi-open arcs
\begin{equation*}
  I(f)^+
  \=
  h_f((0, f_{c(f)}(c(f))])
  \text{ and }
  I(f)^-
  \=
  h_f([c(f), 0)).
\end{equation*}
In view of Remark~\ref{r:kneading}, we have the following properties:
\begin{itemize}
\item
  For every~$j$ in~$\{1, \ldots, n - 1\}$, the point~$f^j(f(0))$ is in~$I(f)^+$;
\item
  For every~$k$ in~$\N_0$ and~$r$ in~$\{0, 1, 2\}$, we have
  \begin{equation*}
    f^{n + 3k + r}(f(0))
    \begin{cases}
      \in I(f)^+
      & \text{if } \iota(f)_k = 1 \text{ and $r = 0$, or if } r = 2;
      \\
      \in I(f)^-
      & \text{if } \iota(f)_k = 0 \text{ and $r = 0$, or if } r = 1. 
    \end{cases}
  \end{equation*}
\end{itemize}

\subsection{Main Theorem}
\label{ss:Main Theorem}
For every normalized quadratic-like map~$f$, and every periodic point~$p$ of~$f$ with period~$m$ in $\N$, put
$$
\chi_f(p)\=\frac{1}{m} \log|Df^m(p)|.
$$

In this subsection we state the Main Theorem, which is based on the following concept.
\begin{defi}[Admissible family of quadratic-like maps]
  \label{d:admissibility}
  A uniform family of quadratic-like maps~$\sF$ is \emph{admissible}, if for every sufficiently large integer~$n$ the following properties hold.
  \begin{enumerate}
  \item[1.]
    If we endow~$\sF$ with the topology of locally uniform convergence, then there is a continuous function~$s_n \colon \cK_n \to \cK_n(\sF)$ such that~$c \circ s_n$ is the identity.
  \item[2.]
    For every~$f$ in~$s_n(\cK_n)$, we have
    \begin{equation}
      \label{eq:1}
      \chi_f(p(f))
      >
      \chi_f(p^+(f))
      \text{ and }
      \chi_f(p^+(f))
      =
      \chi_f(p^-(f)).
    \end{equation}  
  \end{enumerate}
\end{defi}

Before stating the Main Theorem, we give a rough description of the combinatorics of the maps appearing on it.
The Main Theorem asserts that for every admissible uniform family of quadratic-like maps, there is a continuous subfamily~$(\fs)_{\uvarsigma \in \signs}$ satisfying some properties regarding the limit behavior of the equilibrium states as the temperature drops to zero.
The main feature of this subfamily is that for every map~$\fs$ on it, the orbit of the critical point remains most of the time close to the orbits of the periodic points~$p^+(\fs)$ and~$p^-(\fs)$.
The sequence~$\uvarsigma$ in~$\signs$ indicates how the orbit of the critical point alternates between these periodic orbits, or remains close to them.
For example, a large string of repeated~$+$'s in~$\uvarsigma$ indicates that the orbit of the critical point is very close to~$p^+(\fs)$ for some period of time.
With a careful choice of the family of itineraries~$(\iota(\fs))_{\uvarsigma \in \signs}$, we show that for each~$m$ in~$\N$ there is a certain range of inverse temperatures for which the corresponding equilibrum states of~$\fs$ are either concentrated near the orbit of~$p^+(\fs)$, or that of~$p^-(\fs)$, depending on whether ${\uvarsigma_m = +}$ or ${\uvarsigma_m = -}$.
Another interesting feature of this construction, is that the limit behavior of the equilibrium states of~$\fs$ depends on the tail of the sequence~$\uvarsigma$.
So, we can have maps of the subfamily that are close among them but with significantly different behavior of its equilibrium states, which is the basic idea behind the sensitive dependence notion stated in the introduction.

Endow the set~$\{+, - \}$ with the discrete topology, and~$\signs$ with the corresponding product topology.

\begin{maintheo}
  For every~$R > 0$ there is a constant~$K_0 > 1$ such that if~$\sF$ is an admissible uniform family of quadratic-like maps with constants~$K_0$ and~$R$, then for every sufficiently large integer~$n$ there is a continuous subfamily~$(\fs)_{\uvarsigma \in \signs}$ of~$s_n(\cK_n)$ such that the following properties hold.
  \begin{enumerate}
  \item[1.]
    For each~$\uvarsigma$ in~$\signs$ the map~$\fs$ is essentially topologically exact.
    Moreover, for each~$t$ in~${(0, +\infty)}$ there is a unique equilibrium state~$\rho_{t}^{\R}(\uvarsigma)$ (resp. $\rho_{t}(\uvarsigma)$) of~$\fs|_{I(\fs)}$ (resp.~$\fs|_{J(\fs)}$) for the potential $- t \log |D\fs|$.
  \item[2.]
    There are constants~$C_0 > 0$ and~$\upsilon_0 > 0$, and a continuous function
    \begin{displaymath}
      A \colon \signs \to (0, +\infty),
    \end{displaymath}
    such that for every sequence~$\uvarsigma = (\varsigma(m))_{m \in \N}$ in~$\signs$, the following properties hold.
    Let~$m$ and~$\whm$ be integers such that
    $$ \whm \ge m \ge 1
    \quad \text{and} \quad
    \varsigma(m) = \cdots = \varsigma(\whm), $$
    and let~$t$ be in~$[A(\uvarsigma) m, A(\uvarsigma) \whm]$.
    Then the equilibrium state~$\rho_{t}^{\R}(\uvarsigma)$ (resp.~$\rho_{t}(\uvarsigma)$) of~$\fs|_{I(\fs)}$ (resp.~$\fs|_{J(\fs)}$) is super-exponentially close to the orbit~$\cO^{\varsigma(m)}(\fs)$ of~$p^{\varsigma(m)}(\fs)$, in that
    \begin{multline*}
      \rho_t^{\R}(\uvarsigma) \left( B \left( \cO^{\varsigma(m)}(\fs), \exp(- \upsilon_0 t^2) \right) \right)
      \ge
      1 - C_0 \exp(- \upsilon_0 t^2)
      \\
      \left( \text{resp. } \rho_{t}(\uvarsigma) \left( B \left( \cO^{\varsigma(m)}(\fs), \exp(- \upsilon_0 t^2) \right) \right)
        \ge
        1 - C_0 \exp(- \upsilon_0 t^2)
      \right).
    \end{multline*}
  \end{enumerate}
\end{maintheo}

Note that for each~$\uvarsigma$ in~$\signs$ the map~$\fs$ has a non-recurrent critical point, so it is non-uniformly hyperbolic in a strong sense.

\begin{rema}[Robustness]
  \label{r:robustness}
  It follows from the theory of quadratic-like maps of Douady and Hubbard~\cite{DouHub85a} that condition~1 in Definition~\ref{d:admissibility} is satisfied for every holomorphic 1\nobreakdash-parameter family of quadratic-like maps $(\whf_\lambda)_{\lambda \in \Lambda_0}$ intersecting the combinatorial class of the quadratic map~$f_{-2}$ transversally.
  That is, if there is a parameter~$\lambda_0$ in~$\Lambda_0$ such that
  $$ \whf_{\lambda_0}^2(0) = \beta(\whf_{\lambda_0}),
  \text{ and }
  \frac{\partial}{\partial \lambda} \left( \whf_\lambda^2(0) - \beta(\whf_\lambda) \right)|_{\lambda = \lambda_0} \neq 0. $$
  So, condition~1 of Definition~\ref{d:admissibility} is satisfied for an open set of holomorphic 1\nobreakdash-parameter families of quadratic-like maps.
  If in addition~$\chi_{\whf_{\lambda_0}}(p(\whf_{\lambda_0})) > \chi_{\whf_{\lambda_0}}(p^+(\whf_{\lambda_0}))$, then the inequality in~\eqref{eq:1} is also satisfied for an open set of holomorphic 1\nobreakdash-parameter families of quadratic-like maps.

  On the other hand, the equality in~\eqref{eq:1} imposes a restriction, but there is an open set of holomorphic 2\nobreakdash-parameter families of quadratic-like maps that have a holomorphic 1\nobreakdash-parameter subfamily satisfying this condition.
  Thus, the conclusions of the Main Theorem hold for an open set of holomorphic 2\nobreakdash-parameter families of quadratic-like maps.
\end{rema}

\begin{rema}[Sensitivity is compatible with convergence and non-convergence]
  \label{r:convergence or divergence}
  In the proof of the Sensitive dependence of Gibbs states in~\S\ref{ss:proof of sensitive dependence} we show that for any choice of~$\uvarsigma_0$ in~$\signs$, a uniform family~$\sF$ as in the Main Theorem has sensitive dependence of low-temperature geometric Gibbs states at~$f_{\uvarsigma_0}$.
  If we choose~$\uvarsigma_0$ that is not eventually constant, then the Main Theorem implies that the geometric Gibbs states of~$f_{\uvarsigma_0}$ do not converge as the temperature drops to zero.
  On the other hand, if~$\uvarsigma_0$ is eventually constant, then the geometric Gibbs states converge.
  This shows that in the Sensitive Dependence of Geometric Gibbs states the parameter~$\lambda_0$ can be chosen so that the geometric Gibbs states of~$\whf_{\lambda_0}$ converge as the temperature drops to zero, and that it can also be chosen so that they do not converge.
\end{rema}

\begin{rema}[Accumulation measures]
  \label{r:set of accumulation measures}
  Our estimates show that for every~$\uvarsigma$ in~$\signs$, and every~$t$ in ${(0, +\infty)}$ we have
  \begin{multline*}
    \rho_t^{\R}(\uvarsigma) \left( B \left( \cO^+(\fs) \cup \cO^-(\fs), \exp(- \upsilon_0 t^2) \right) \right)
    \ge
    1 - C_0 \exp(- \upsilon_0 t^2)
    \\
    \left( \text{resp. } \rho_{t}(\uvarsigma) \left( B \left( \cO^+(\fs) \cup \cO^-(\fs), \exp(- \upsilon_0 t^2) \right) \right)
      \ge
      1 - C_0 \exp(- \upsilon_0 t^2)
    \right),
  \end{multline*}
  see Remark~\ref{r:periodic concentration}.
  In particular, every accumulation measure of~$( \rho_t^{\R}(\uvarsigma) )_{t > 0}$ and of~$(\rho_t(\uvarsigma))_{t > 0}$ is supported on~$\cO^+(\fs) \cup \cO^-(\fs)$.
  Combined with the Main Theorem this implies that, if the sequence~$\uvarsigma$ is not eventually constant, then the set of accumulation measures of~$( \rho_t^{\R}(\uvarsigma) )_{t > 0}$ and that of~$(\rho_t(\uvarsigma))_{t > 0}$, are both equal to the segment joining the invariant probability measure supported on~$\cO^+(\fs)$ to the invariant probability measure supported on~$\cO^-(\fs)$.
\end{rema}

\begin{rema}[Speed of convergence to ground states]
  \label{r:convergence}
  If the sequence~$\uvarsigma$ is eventually constant, then the Main Theorem implies that, as the temperature drops to zero, the geometric Gibbs states of~$\fs$ converge super-exponentially to the periodic measure supported either on~$\cO^+(\fs)$, or~$\cO^-(\fs)$.
  In other situations the convergence is only exponential, as in the case of the shift map and a locally constant potential~\cite{Bre03}.
  For the shift map and a potential admitting a unique ground state, the exponential convergence can be derived from the large deviation principle in~\cite[\S3.1.3]{BarLepLop13}, using the fact that the rate function is finite on a dense set.
  The Main Theorem shows that this large deviation principle holds, and that the corresponding rate function is everywhere equal to~$+\infty$, except on~$\cO^+(\fs)$ or on~$\cO^-(\fs)$ (depending on the choice of~$\uvarsigma$) where it vanishes.
\end{rema}

\begin{rema}[Pressure at low temperatures]
  \label{r:pressure assymptotic}
  Our estimates show that there is a constant~$\gamma$ in~$(0, 1)$ such that for every~$\uvarsigma$ in~$\signs$, and every sufficiently large~$t > 0$ we have
  $$ P_{\fs}^{\R}(t)
  \sim
  P_{\fs}(t)
  \sim
  - t \frac{\chicritfs}{2} + \frac{\log 2}{3} \gamma^{\left( \frac{4}{A(\uvarsigma)} t \right)^3}, $$
  see~\eqref{e:pressure upper bound} and~\eqref{e:pressure lower bound} for precisions.
\end{rema}

\subsection{A concrete admissible family}
\label{ss:deformation of quadratic}
In this subsection we exhibit a concrete (real) 1\nobreakdash-parameter family of quadratic-like maps satisfying the hypotheses of the Main Theorem.
We use this family to prove the Sensitive Dependence of Geometric Gibbs States, in~\S\ref{ss:proof of sensitive dependence} below.

Recall that for every~$c$ in~$\C$, we denote by~$f_c$ the complex quadratic polynomial given by ${f_c(z) = z^2 + c}$.
For each parameter~$\lambda$ in~$\cP_3(-2)$, put~$\whp(\lambda) \= \whp(f_{\lambda})$ and define the polynomial
\begin{multline*}
P_\lambda(w)
\=
\left( w^2 - \beta(\lambda)^2 \right)
\prod_{i = 0}^2 \left[ \left( w - f_\lambda^i(p(\lambda)) \right) \left( w - f_\lambda^i(\wtp(\lambda)) \right)\right]^2
\\ \cdot
(w - \whp(\lambda)) \prod_{j = 1}^5 \left( w - f_\lambda^j(\whp(\lambda)) \right)^2.
\end{multline*}
Noting that~$DP_\lambda(\whp(\lambda)) \neq 0$, define
$$ \omega(\lambda)
\=
\frac{2}{\whp(\lambda)^2 DP_{\lambda}(\whp(\lambda))} \left( \frac{\left(Df_\lambda^3(\wtp(\lambda)) \right)^2}{Df_{\lambda}^6(\whp(\lambda))} - 1 \right), $$
and the polynomial
$$ \df(w) \= \lambda + w^2 + w^3 \omega(\lambda) P_\lambda(w). $$
Note that each of the coefficients of~$\df$ depends holomorphically on~$\lambda$ in~$\cP_3(-2)$, and that~$\df$ is real when~$\lambda$ is real.
Moreover, we have~$\omega(-2) = 0$, so~$\whf_{-2}$ coincides with the quadratic polynomial~$f_{-2}$.

By definition, for each~$\lambda$ in~$\cP_3(-2)$ the polynomial~$\df$ coincides with~$f_\lambda$ on ${\{\beta(\lambda), -\beta(\lambda)\}}$, and on the orbits of~$p(\lambda)$, $\wtp(\lambda)$, and~$\whp(\lambda)$.
Moreover, the derivative of~$\df$ coincides with that of~$f_\lambda$ at every point in the orbit of~$p(\lambda)$ and~$\wtp(\lambda)$, so
\begin{equation}
  \label{e:Lyapunov equality I}
\chi_{\df}(p(\lambda)) = \chi_{f_{\lambda}}(p(\lambda))
\quad \text{and} \quad
\chi_{\df}(\wtp(\lambda)) = \chi_{f_{\lambda}}(\wtp(\lambda)).
\end{equation}
On the other hand,
$$ D\df(\whp(\lambda))
=
2 \whp(\lambda) \frac{\left(Df_\lambda^3(\wtp(\lambda)) \right)^2}{Df_{\lambda}^6(\whp(\lambda))}, $$
and for each~$j$ in~$\{1, \ldots, 5 \}$ the derivative of~$\df$ coincides with that of~$f_\lambda$ at~$f_{\lambda}^j(\whp(\lambda))$.
Thus~$D\df^6(\whp(\lambda)) = \left(Df_\lambda^3(\wtp(\lambda)) \right)^2$ and
\begin{equation}
  \label{e:Lyapunov equality II}
\chi_{\df}(\whp(\lambda))
=
\chi_{f_{\lambda}}(\wtp(\lambda))
=
\chi_{\df}(\wtp(\lambda)).
\end{equation}
\begin{lemm}
  \label{l:deformation of quadratic}
 Let~$K_0 > 1$ be given.
 For each~$\lambda$ in~$\cP_3(-2)$, put~$U_\lambda \= \df^{-1}(B(0, 80))$.
Then there are~$r_\# > 0$  and a map $\chi: B(-2, r_{\#})\to \cX$ such that for every~$\lambda$ in~$B(-2, r_\#)$ the map~$\df \colon U_\lambda \to B(0, 80)$ is a normalized quadratic-like map, and the family
$$ \sF_0
\=
\{ \df \colon U_\lambda \to B(0, 80) \mid \lambda \in B(-2, r_{\#}) \} $$
is uniform with constants~$K_0$ and~$R = 80$, and with~$c(\df)=\chi(\lambda)$.
Moreover, there is~$\delta > 0$ such that~$\chi$ maps~$ [-2, -2 + r_\#)$ homeomorphically onto ${[-2, -2 + \delta)}$, and there is~$n_\# \ge 1$ such that for every integer~$n \ge n_\#$ there is a continuous map ${\sigma_n \colon \cK_n \to [-2, -2 + r_{\#})}$ such that~$c \mapsto \chi (\sigma_n(c))$ is the identity on~$\cK_n$.
In particular,
\begin{equation*}
  \{\df \in \sF_0 \mid \lambda \in  \sigma_n(\cK_n)\}
  \subseteq
  \cK_n(\sF_0),
\end{equation*}
the map ${s_n \colon \cK_n\to \cK_n(\sF_0)}$ given by ${s_n(c) \= \widehat{f}_{\sigma_n(c)}}$ is continuous, and the family~$\sF_0$ is admissible.
\end{lemm}
\begin{proof}
Since~$\omega(-2) = 0$, and~$\omega$ and~$P_\lambda$ are holomorphic
in~$\lambda$, we can choose~$r_1 > 0$ such that~$B(-2, r_1)$ is contained in~$\cP_3(-2)$, and such that for every~$\lambda$ in~$B(-2, r_1)$ the closure of the open set~$U_\lambda$ is contained in~$B(0, 80)$ and~$\df \colon U_\lambda \to B(0, 80)$ is a quadratic-like map.
For each~$r$ in $(0, r_1]$, consider the family of quadratic-like maps
$$ \sF(r)
\=
( \df \colon U_\lambda \to B(0, 80) )_{\lambda \in B(0, r)}. $$
Noting that for~$\lambda$ close to~$-2$ the set~$\partial U_\lambda$ is an analytic Jordan curve that is close to~$\partial U_{-2}$ in the $C^1$ topology, it follows that there is~$r_2$ in~$(0, r_1)$ such that the map ${B(0,r_2) \to  \sU}$ given by ${\lambda \mapsto U_\lambda}$ is continuous with respect to the Carath{\'e}odory topology on~$\sU$, and that the family of quadratic-like maps~$\sF(r_2)$ is analytic in the sense of~\cite[\S II, 1]{DouHub85a}.
Moreover, the considerations in~\cite[\S II]{DouHub85a} imply that for every~$r_3$ sufficiently small in~$(0, r_2)$ the family~$\sF(r_3)$ satisfies the following.
There is a map ${\chi \colon B(0, r_3) \to \cX}$ such that the family~$\sF(r_3)$ is uniform with constants~$K_0$ and~$R = 80$, and with the function ${c \colon \sF_0 \to \cX}$ given by ${c(f) \= \chi(f(0))}$.
Note that for every~$\lambda$ in~$B(0, r_3)$, we have
\begin{equation*}
  \df(0)
  =
  \lambda
  \text{ and }
  c(\df)
  =
  \chi(\lambda).
\end{equation*}
Moreover, for every real parameter~$\lambda$ in~$B(0, r_3)$ the conjugacy  $h_{\df}$  commutes with the complex conjugation, and therefore~$c(\df)$ is real.
If~$\lambda$ in~$B(-2, r_3)$ is real and satisfies~$\lambda > -2$, then we have~$\df(0) = \lambda > - \beta(\lambda)$.
Together with
\begin{equation*}
  \df(-\beta(\lambda))
  =
  \df(\beta(\lambda))
  =
  \beta(\lambda),
\end{equation*}
this implies that~$c(\df) > -2$.
By~\cite[Lemma~A.1]{CorRiv13} there is~$\varrho > 0$, such that for every~$\lambda$ in~$(-2, -2 + \varrho)$ we have by~\eqref{e:Lyapunov equality I}
$$ \chi_{\df}(p(\lambda)) = \chi_{f_{\lambda}}(p(\lambda)) > \chi_{f_\lambda}(\wtp(\lambda)) = \chi_{\df}(\wtp(\lambda)). $$
Thus, reducing~$r_3$ if necessary, the inequality in~\eqref{eq:5} is satisfied.

Note that~\cite[Proposition~17 and Theorem~4]{DouHub85a} implies that there is~$r_\#$ in $(0, r_3)$ such that~$\chi$ is locally injective at each point of~$B(-2, r_\#) \setminus \{ -2 \}$.
Thus, there is~$\delta > 0$ such that~$\chi$ maps~$[-2, -2 + r_\#)$ homeomorphically onto~$[-2, -2 + \delta)$.
Since by Proposition~\ref{p:ps} there is~$n_\# \ge n_1$ such that for every integer~$n
\ge n_{\#}$ the set~$\cK_n$ is contained in~$(-2, -2 + \delta)$, we conclude that for every integer ${n \ge n_\#}$ there is a continuous map ${\sigma_n \colon \cK_n \to [-2, -2 + r_{\#})}$ such that~$c \mapsto \chi(\sigma_n(c))$ is the identity on~$\cK_n$.
 In particular,
 \begin{equation*}
   \{\df \in \sF_0 \mid \lambda \in  \sigma_n(\cK_n)\}
   \subseteq
   \cK_n(\sF_0).
 \end{equation*}
 To finish the proof, note that the map ${[-2, -2 + r_{\#}) \to \sF_0}$ given by ${\lambda \mapsto \df}$ is continuous and thus, for every integer ${n \ge n_\#}$ the map ${s_n \colon \cK_n \to  \cK_n(\sF_0)}$ given by ${s_n(c) \= \whf_{\sigma_n(c)}}$  is continuous, and~$c \circ s_n$ is the identity on~$\cK_n$.
 This completes the proof that~$\sF_0$ is an admissible family, and of the lemma.
\end{proof}

\subsection{Proof of the Sensitive Dependence of Geometric Gibbs States assuming the Main Theorem}
\label{ss:proof of sensitive dependence}
Let~$K_0$ be the constant given by the Main Theorem with~$R = 80$, and let~$r_\# > 0$ and the family of quadratic-like maps~$\sF_0$ be given by Lemma~\ref{l:deformation of quadratic} for this choice of~$K_0$.
Putting ${\Lambda_0 \= B(-2, r_\#)}$, we have ${\sF_0 = ( \df )_{\lambda \in \Lambda_0}}$.
On the other hand, $\sF_0$ is uniform with constants~$K_0$ and~$80$, and admissible by Lemma~\ref{l:deformation of quadratic}. 
Fix a sufficiently large integer~$n$ for which the conclusions of the Main Theorem are satisfied with ${\sF = \sF_0}$, and let~$(\fs)_{\uvarsigma \in \signs}$ and~$A$ be as in the statement of the Main Theorem.
Given~$\uvarsigma$ in~$\signs$, denote by~$\lambda(\uvarsigma)$ the unique parameter in~$\Lambda_0$ such that~$\whf_{\lambda(\uvarsigma)} = \fs$.
By Lemma~\ref{l:deformation of quadratic}, the parameter~$\lambda(\uvarsigma)$ is real.
Then we prove the Sensitive Dependence of Geometric Gibbs States with~$\Lambda = \{ \lambda(\uvarsigma) \mid \uvarsigma \in \signs \}$.

Put
$$ \Asup \= \sup_{\uvarsigma \in \signs} A(\uvarsigma)
\quad \text{and} \quad
\Ainf \= \inf_{\uvarsigma \in \signs} A(\uvarsigma). $$
Let~$(\beta_\ell)_{\ell \in \N}$ be a sequence of inverse temperatures such that~$\beta_\ell \to +\infty$ as~$\ell \to +\infty$.
Replacing~$( \beta_\ell )_{\ell \in \N}$ by a subsequence if necessary, assume that~$\beta_1 \ge \Asup$, and that for every~$\ell$ in~$\N$ we have
\begin{equation}
  \label{e:temperature subsequence growth}
  \beta_{\ell + 1}
  \ge
  \Asup \left( \frac{\beta_{\ell}}{\Ainf} + 2 \right).  
\end{equation}
For each~$\ell$ in~$\N$ put~$m(\ell) \= \lfloor \beta_{\ell} / \Asup \rfloor$, and note that $m(1)\ge 1$ and that
$$ m(\ell + 1)
\ge
\frac{ \beta_{\ell + 1}}{\Asup} - 1
\ge
\frac{\beta_{\ell}}{\Ainf} + 1
\ge
m(\ell) + 1. $$

Fix a sequence~$\uvarsigma_0 = (\varsigma_0(m))_{m \in \N}$ in~$\signs$, let~$\lambda_0$ in~$\Lambda_0$ be such that~$\whf_{\lambda_0} = f_{\uvarsigma_0}$, and let~$\varepsilon > 0$ be given.
Then there is~$\ell_0 \ge 1$ such that for every~$\uvarsigma \= (\varsigma(m))_{m \in \N}$ in~$\signs$ such that for every~$m$ in~$[0, m(\ell_0) - 1]$ we have~$\uvarsigma(m) = \uvarsigma_0(m)$, the parameter~$\lambda$ in~$\Lambda_0$ such that~$\whf_{\lambda} = \fs$, satisfies~$|\lambda - \lambda_0| < \varepsilon$.
Let~$\uvarsigma$ be the unique such sequence, such that in addition for every even (resp. odd) integer~$\ell \ge \ell_0 + 1$, and every~$m$ in~$[m(\ell), m(\ell) + 1]$, we have~$\varsigma(m) = +$ (resp. $\varsigma(m) = -$).

For every integer~$\ell \ge \ell_0 + 1$, we have
$$ \beta_\ell \ge \Asup m(\ell) \ge A(\uvarsigma) m(\ell), $$
and by~\eqref{e:temperature subsequence growth}
$$ A(\uvarsigma) (m(\ell + 1) - 1)
\ge
\Ainf \left( \frac{\beta_{\ell + 1}}{\Asup} - 2 \right)
\ge
\beta_{\ell}. $$
This proves that~$\beta_{\ell}$ is in~$\left[ A(\uvarsigma) m(\ell), A(\uvarsigma) (m(\ell + 1) - 1) \right]$.
Since by definition of~$\uvarsigma$ for every~$\ell \ge \ell_0$ we have
$$ \varsigma(m(\ell)) = \cdots = \varsigma(m(\ell + 1) - 1), $$
and since~$\varsigma(m(\ell))$ alternates between~$+$ and~$-$ according to whether~$\ell$ is even or odd, the Main Theorem implies the desired assertion for the map~$\whf_{\lambda} = \fs$.
\section{The Geometric Peierls condition, and uniform estimates}
\label{s:uniform estimates}
In this section we introduce the Geometric Peierls condition, and give a criterion for maps in a uniform family to satisfy this condition with uniform constants.
We also make other uniform estimates that are used in the rest of the paper, which are mostly deduced from analogous estimates for quadratic maps in~\cite{CorRiv13}.

To state the Geometric Peierls condition, we introduce some notation.
For every normalized quadratic-like map~$f$, put
$$
\chicritf \= \liminf_{n\to +\infty} \frac{1}{n} \log |Df^n(f(0))|.
$$

Let~$\sF$ be a uniform family of quadratic-like maps, $n$ an integer satisfying ${n \ge 5}$, and~$f$ a map in~$\cK_n(\sF)$.
Put
$$ 
\pV \= \pP = f^{-1}(P_{f, n}( \tbeta(f))),
$$ 
and
$$
\pD'
\=
\{w\in \C \setminus \pV \mid f^{m}(w)\in \pV \text{ for some } m \in \N\}.
$$
For~$w$ in $\pD'$ denote by~$m_f(w)$ the least~$m$ in~$\N$ such that $f^{m}(w)\in \pV$, and
call it the \emph{first landing time of~$w$ to~$\pV$}.
The \emph{first landing map to~$\pV$} is the map~$\pL \colon \pD' \to \pV$ defined
by~$\pL(w) \= f^{m_f(w)}(w)$.

\begin{defi}[Geometric Peierls Condition]
  \label{d:geometric Peierls}
  Let~$\sF$ be a uniform family of quadratic-like maps, let~$n$ be an integer satisfying ${n \ge 5}$, and~$f$ a map in~$\cK_n(\sF)$.
  Given~$\kappa > 0$ and $\upsilon > 0$, a quadratic-like map~$f$ in~$\sF$ satisfies the \emph{Geometric Peierls Condition with constants~$\kappa$ and~$\upsilon$}, if for every~$z$ in~$L_f^{-1}(\pV)$ we have
  \begin{equation}
    \label{eq:2}
    |DL_f(z)| \ge \kappa \exp((\chicritf/2 + \upsilon) m_f(z)).
  \end{equation}
\end{defi}

\begin{rema}
  \label{r:geometric Peierls}
  The analogy between~\eqref{eq:2} and the usual Peierls conditions for contour models is as follows.
  We use the terminology in~\cite[II]{Sin82}.
  As usual, the one-point interaction energy corresponds to the geometric potential~$- \log |Df|$.
  The (orbit of the) critical point~$z = 0$ of~$f$ plays the r{\^o}le of the unique ground state.
  In contrast to the usual Peierls condition for contour models where the ground state is assumed to be supported on a periodic configuration, it is crucial for the Main Theorem to allow the orbit of~$0$ to be nonperiodic.
  However, we do require later that the Lyapunov exponent~$\lim_{n\to +\infty} \frac{1}{n} \log |Df^n(f(0))|$ exists, so that the ``$\liminf$'' that defines~$\chicritf$ is actually a limit.
  Consider an initial condition~$w$ near the critical point~$0$ of~$f$.
  Following the definition of the boundary of a configuration~\cite[II, Definition~2.2]{Sin82}, we see that for the ``boundary'' of~$w$ with respect to~$0$ to be finite, it is enough to assume that for some integer~$\tau \ge 1$ we have~$f^{\tau}(w) = f^{\tau}(0)$.
  The orbit of~$w$ shadows that of~$0$ up to a certain time~$\ell$, so that the derivatives of~$f^{\ell - 1}$ at~$f(w)$ and at~$f(0)$ are comparable.
  After time~$\ell$, the orbit of~$w$ can be significantly different from that of~$0$.
  To simplify, assume that the point~$z$ defined by ${z \= f^{\ell}(w)}$ satisfies ${L_f(z) = 0}$, so we have
  \begin{equation*}
    f^{\ell + m_f(z)}(w)
    =
    f^{m_f(z)}(z)
    =
    L_f(z)
    =
    0,
  \end{equation*}
  and therefore the boundary of~$w$ with respect to~$0$ is bounded from above by~$m_f(z)$.
  Up to a uniform distortion constant, the Hamiltonian at~$z$ relative to~$w$ is equal to
  \begin{equation*}
    - \log |Df^{\ell + m_f(z)}(w)| - \left( - \frac{\ell + m_f(z)}{2} \chicritf \right)
    \sim
    - \log |DL_f(z)| + \frac{m_f(z)}{2} \chicritf.
    \footnote{Here we replaced $\log |D f^{\ell + m_f(z)}(0)|$ ($= -\infty$) by~$(\ell + m_f(z)) \chicritf/2$, which is what appears naturally in several estimates, see for example Lemmas~\ref{l:first return to central derivative} and~\ref{l:critical line}.}
  \end{equation*}
  Thus, condition~\eqref{eq:2} becomes Peierls condition as in~\cite[II, Definition~2.3]{Sin82}.\footnote{To follow the analogy, we should require the constant~$\kappa$ to be larger than the implicit distortion constant in the computation above.
    Later on we compensate a possible small value of~$\kappa$ by assuming that the map~$f$ is in~$\cK_n(\sF)$ for a sufficiently large integer~$n$.}
\end{rema}

The following is a criterion for the Geometric Peierls Condition.
For future reference, it is stated in a slightly stronger form than what is needed for this paper.
\begin{prop}
  \label{p:transporting Peierls}
  For every~$\upsilon > 0$ satisfying~$\upsilon < \frac{1}{2} \log 2$ and every~$R >0$, there are constants~$K_1 > 1$, $n_1 \ge 6$, and~$\kappa_1 > 0$, such that the following property holds.
  If the family of quadratic-like maps~$\sF$ is uniform with constants~$K_1$ and~$R$, then for every integer~$n \ge n_1$, every element~$f$ of~$\cK_n(\sF)$ satisfies the Geometric Peierls Condition with constants~$\kappa_1$ and~$\upsilon$.
  Furthermore, we have
  \begin{displaymath}
    \chi_f(\beta(f)) > \chicritf + 2\upsilon,
    \chicritf > 2 \upsilon,
    \text{ and }
    \chi_f(p(f)) < \chi_f(p^+(f)) + \frac{\upsilon}{4}.
  \end{displaymath}
\end{prop}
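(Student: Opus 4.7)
The plan is to reduce the four assertions to the corresponding statements for the quadratic family $(f_c)_{c \in \cK_n}$, which are either established in \cite{CorRiv13} or follow from Proposition~\ref{p:ps}, and then transport them to a general $f \in \cK_n(\sF)$ via the $K_1$-quasiconformal conjugacy $h_f$. The freedom to take $K_1 > 1$ arbitrarily close to $1$ will allow us to absorb the distortion introduced by the transport into an arbitrarily small fraction of~$\upsilon$.

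For the quadratic baseline, Proposition~\ref{p:ps} ensures that $\cK_n$ lies in an arbitrarily small neighborhood of $c = -2$ as $n \to \infty$. A direct computation gives $\chi_{f_{-2}}(2) = \log 4$, while the critical orbit of $f_c$ for $c \in \cK_n$ shadows the Cantor set $\Lambda_c$, whose uniform expansion yields a value of $\chi_{\crit}(f_c)$ that depends continuously on $c$ and is strictly less than $\log 4$ in the limit $c \to -2$. Continuity of the relevant multipliers (cf.~\cite[Lemma~A.1]{CorRiv13}) yields the three bounds $\chi_{f_c}(\beta(c)) > \chi_{\crit}(f_c) + 2\upsilon$, $\chi_{\crit}(f_c) > 2\upsilon$, and $\chi_{f_c}(p(c)) < \chi_{f_c}(p^+(c)) + \upsilon/4$ uniformly over $c \in \cK_n$ for $n$ large enough. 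The quadratic Geometric Peierls Condition is in turn a consequence of the bounded-distortion estimates for the first landing map developed in \cite{CorRiv13}: each branch of $L_{f_c}^{-1}$ decomposes into a close-pass phase near the critical point contributing a factor of order $|Df_c^k(f_c(0))|^{1/2}$ via a square-root branch, and a uniformly expanding phase on $\Lambda_c$, which together yield the required exponential lower bound with exponent $\chi_{\crit}(f_c)/2 + \upsilon$.

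To transfer these estimates to $f \in \cK_n(\sF)$, I would first show that as $K_1 \searrow 1$ the normalized homeomorphisms $h_f$ are uniformly close to the identity on $\cl{B(0,R)}$. This follows from compactness of the space of normalized $K_1$-quasiconformal maps of $\C$ together with the observation that the only conformal self-map of $\C$ tangent to the identity at $\infty$ is the identity itself. In particular, for the periodic orbits $\beta(f), p(f), p^+(f), p^-(f)$, the multipliers of $f$ converge to those of $f_c$ as $K_1 \to 1$, uniformly in $c \in \cK_n$. Choosing $K_1$ sufficiently close to $1$ absorbs at most a loss of $\upsilon/2$ into the constants, yielding the three multiplier inequalities for $f$.

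The main obstacle is transporting the Geometric Peierls Condition itself, since the inequality involves derivatives of arbitrarily long compositions, which a quasiconformal conjugacy does not preserve. To handle this, I would invoke the Koebe Distortion Theorem applied to branches of $L_f^{-1}$: combining the bounded geometry of the Yoccoz puzzle with the $K_1$-quasiconformal control on $h_f$, each component $W$ of $L_f^{-1}(\pV)$ sits inside a topological disk on which $L_f$ extends univalently onto a definite annular neighborhood of $\pV$. Koebe then yields that $|DL_f(z)|$ is comparable, up to a universal constant, to $\diam(\pV)/\diam(W)$. The analogous estimate for $L_{f_c}$, together with the H\"older continuity of $h_f^{\pm 1}$ with exponent tending to $1$ as $K_1 \to 1$, permits the transfer of these diameter ratios and yields the Peierls lower bound for $f$ with constant $\kappa_1$ depending only on $\upsilon$, $R$, and $n_1$.
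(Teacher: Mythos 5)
Your approach mirrors the paper's at the structural level: reduce to the quadratic Geometric Peierls condition and Lyapunov estimates established in \cite{CorRiv13} over $\cK_n$, then transport them to $f\in\cK_n(\sF)$ via the $K_1$-quasiconformal conjugacy, taking $K_1$ close to $1$ so that the distortion is absorbed into the slack allowed by the hypothesis $\upsilon < \tfrac12\log 2$. The paper also relies on Mori's theorem (Lemma~\ref{l:Mori}), Koebe distortion, and \cite[Proposition~B]{CorRiv13}, so the ingredients are the same. The two proofs diverge in one place that is worth flagging: the paper proves a single transfer estimate (Lemma~\ref{l:qc transport}), giving $C_3^{-1}|Df_{c(f)}^m(h_f^{-1}(w))|^{1/K}\le|Df^m(w)|\le C_3|Df_{c(f)}^m(h_f^{-1}(w))|^K$ for all blocks $f^m$ mapping a neighborhood onto $P_{f,1}(0)$, and then deduces both the multiplier bounds (via Lemma~\ref{l:bound chicritf}) and the Peierls inequality from that one lemma. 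You instead use a soft compactness argument ($K_1\searrow 1\Rightarrow h_f\to\id$) for the periodic-orbit multipliers and the Koebe/diameter-ratio argument only for the Peierls bound. Your compactness argument is a valid and arguably cleaner way to handle the periodic multipliers of $\beta(f)$, $p(f)$, $p^{\pm}(f)$.

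There is, however, one genuine gap in how you handle $\chicritf$. You group $\chicritf$ together with the periodic multipliers and appeal to the convergence $h_f\to\id$. But $\chicritf$ is a $\liminf$ of averages of derivatives along the \emph{infinite} critical orbit, not a finite-period multiplier. Local uniform convergence $f\to f_{c(f)}$ gives $Df^n\to Df_{c(f)}^n$ for each fixed $n$, which does not control a $\liminf$ over $n\to+\infty$; asymptotic Lyapunov exponents are in general not continuous under uniform perturbation of the map. Since two of the three inequalities (and the exponent in the Peierls bound itself) involve $\chicritf$, the soft compactness step does not by itself close the argument. Fortunately the fix uses machinery you already invoke: the same Koebe space + Mori's-theorem H\"older estimate you apply to branches of $L_f^{-1}$ applies equally well to the long iterates $g_f^k$ restricted to $h_f(\Lambda_{c(f)})$ (this is exactly what the paper's Lemma~\ref{l:qc transport} does, and Lemma~\ref{l:bound chicritf} then gives $K^{-1}(1-\varepsilon)\log 2\le\chicritf\le K(1+\varepsilon)\log 2$). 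So the conclusion is reachable, but you should route $\chicritf$ through the H\"older/Koebe transfer rather than through the periodic-multiplier compactness argument.
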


Roughly speaking, to prove the geometric Peierls condition we show that ${\chicritf \sim \log 2}$ (Lemma~\ref{l:bound chicritf}), and that for every~$z$ in~$L_f^{-1}(V_f)$ we have ${L_f(z) \sim 2^{m_f(z)}}$.
Assuming that the dilatation constant of the uniform family is sufficiently close to~$1$, we derive these estimates from the analogous estimates for the quadratic map~$f_{c(f)}$ established in~\cite{CorRiv13}.
This is done with the help of the uniform geometric estimates established in~\S\ref{ss:uniform geometric estimates}.

After some uniform geometric estimates in~\S\ref{ss:uniform geometric estimates}, the proof of Proposition~\ref{p:transporting Peierls} is given in~\S\ref{ss:proving Peierls}.
In~\S\ref{ss:uniform estimates} we make various uniform estimates.

\subsection{Uniform geometric estimates}
\label{ss:uniform geometric estimates}
In this subsection we use Mori's theorem on the modulus of continuity of normalized quasi\nobreakdash-conformal maps, to obtain some preliminary estimates for quadratic-like maps in a given uniform family.

\begin{lemm}
  \label{l:uniform family}
  Given~$K > 1$ and~$R > 0$ there is a constant~$C_1 > 1$ such that for every uniform family~$\sF$ of quadratic-like maps with constants~$K$ and~$R$, the following property holds for every~$f$ in~$\sF$ and~$w$ in~$\hX_f$:
  \begin{equation}
    \label{e:derivative estimate II}
    C_1^{-1} |w|
    \le
    |f(w) - f(0)|^{1/2}
    \le
    C_1|w|.
  \end{equation}
  Moreover, if in addition~$w$ is in~$X_f$, then
  \begin{equation}
    \label{e:derivative estimate I}
    C_1^{-1}|w| \le |Df(w)|\le C_1|w|,
  \end{equation}
  \begin{equation}
    \label{e:derivative Mori}
    C_1^{-1} |Df_{c(f)}(h_f^{-1}(w))|^{K}
    \le
    |Df(w)|
    \le
    C_1 |Df_{c(f)}(h_f^{-1}(w))|^{\frac{1}{K}}.
  \end{equation}
\end{lemm}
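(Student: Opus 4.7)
The plan is to combine a compactness argument for the conjugacies~$h_f$ (via Mori's theorem) with the normalized form of $f$ at its critical point. First I would use that each $h_f$ is $K$\nobreakdash-quasi\nobreakdash-conformal with $h_f(0)=0$, holomorphic off $\cl{B(0,R)}$, and tangent to the identity at infinity; the latter forces $h_f(z) = z + O(1/z)$ as $|z|\to\infty$, so that $h_f$ is normalized at both $0$ and $\infty$. Mori's theorem then yields a constant $M=M(K,R)>1$ such that, uniformly in $f\in\sF$ and $z,z'\in B(0,2R)$,
$$
 M^{-1}|z-z'|^K \le |h_f(z) - h_f(z')| \le M|z-z'|^{1/K},
$$
and analogously for $h_f^{-1}$. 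In particular $\hX_f$ and $f(\hX_f)$ are uniformly bounded; $X_f$ is compactly contained in the domain $U$ of~$f$, with uniform conformal modulus of the collar between them, inherited via~$h_f$ from the universal collar between $X_{c(f)}$ and $\hX_{c(f)}$, which is an annulus of fixed modulus for $c(f)\in\cX$; and the family $\{(c(f), h_f) \mid f\in\sF\}$ is relatively compact in the topology of locally uniform convergence.

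Next I would treat~(\ref{e:derivative estimate II}) by using the normalization of~$f$ to write $f(w) - f(0) = w^2 g_f(w)$ with $g_f(w) \= 1 + wR_f(w)$ holomorphic on $U$ and $g_f(0)=1$. Since $f\colon U \to V$ has degree two with unique critical point~$0$, one has $f^{-1}(f(0))=\{0\}$ in~$U$, so $g_f$ is nowhere vanishing on~$U$. The function $|g_f(w)|^{1/2} = |f(w) - f(0)|^{1/2}/|w|$ extends continuously and positively to~$\hX_f$, varies continuously over the compact parameter space above, and hence admits uniform upper and lower bounds, giving~(\ref{e:derivative estimate II}). For~(\ref{e:derivative estimate I}) I would differentiate to obtain
$$
 Df(w) = w \bigl(2 + 3wR_f(w) + w^2 R_f'(w)\bigr).
$$
The bound on $R_f$ just proved, together with Cauchy's estimate applied on the uniform collar around~$X_f$ inside~$U$, give a uniform bound on $R_f'$ on~$X_f$; the bracketed factor equals~$2$ at $w=0$ and is continuous over the compact parameter space, so it is uniformly bounded above and away from zero on~$X_f$. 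Finally, for~(\ref{e:derivative Mori}) I would set $z = h_f^{-1}(w) \in X_{c(f)}$, observe $|Df_{c(f)}(z)|=2|z|$ and $|Df(w)| \asymp |w| = |h_f(z) - h_f(0)|$ via~(\ref{e:derivative estimate I}), and substitute the Mori bounds of the first paragraph applied with $z'=0$.

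The hard part will be the first paragraph: extracting the Mori-type H\"older estimates with constants depending only on~$K$ and~$R$ from the normalizations on~$h_f$, and deriving the uniform collar of $X_f$ inside~$U$. Once these structural inputs are in place, the three inequalities reduce to routine Cauchy estimates, the local normal form of~$f$ at the critical point, and the chain of comparisons indicated above.
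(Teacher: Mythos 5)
Your plan is essentially the paper's approach: establish a uniform Mori--H\"older estimate for the conjugacies $h_f$ (the paper isolates this as a separate lemma, Lemma~\ref{l:Mori}), exploit the normal form $f(w) = f(0) + w^2 + w^3 R_f(w)$ at the critical point, and combine these with the uniform collar between $X_{c}$ and $\hX_{c}$ transported by $h_f$. Where you diverge is in the execution: you propose to obtain the uniform upper and lower bounds on $|g_f|^{1/2}$ and on the bracketed factor $Df(w)/w$ by a soft normality/compactness argument over the parameter space $\overline{\{(c(f), h_f)\}}$, whereas the paper proceeds quantitatively -- it bounds $\sup_{f \in \sF}\diam f(\hX_f)$ via Mori, gets a uniform inner radius $\whr$ with $B(w,\whr) \subset \hX_f$ for $w \in X_f$, controls $R_f$ on $B(0,\whr)$ by the maximum principle, controls $DR_f$ by Schwarz, and then closes the large-$|w|$ regime by Koebe and Mori. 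Both routes are viable; yours is shorter to state, the paper's gives explicit constants. One caveat worth recording: the compactness route needs two checks that your sketch glosses over. First, that a limit $(c_\infty, h_\infty)$ of $(c(f_n), h_{f_n})$ still produces a normalized quadratic-like map $h_\infty f_{c_\infty} h_\infty^{-1}$ (this follows from locally uniform convergence and the Cauchy integral formula, but is what actually licenses "continuity over the compact parameter space"). Second, and more delicate, continuity of the suprema and infima of $|g_f|$ over the $f$-dependent domains $\hX_f$ requires controlling the behavior near $w = 0$ and near $\partial\hX_f$; the first of these in turn needs a Cauchy estimate on a ball $B(0,\rho)\subset\hX_f$ of uniform radius, which is exactly the uniform collar the paper constructs. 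In other words, once you unwind the compactness, you are led back to the same boundary estimates the paper writes out directly, so the normality argument does not really save work here.
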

The proof of this lemma is after the following one.
\begin{lemm}
  \label{l:Mori}
  For each~$R > 0$ there is a constant~$C_2 > 1$ such that the following property holds.
  Let~$K \ge 1$ be given, and let~$h$ be a $K$\nobreakdash-quasi\nobreakdash-conformal homeomorphism of~$\C$ that is holomorphic outside~$\cl{B(0, R)}$ and that is tangent to the identity at infinity.
  Then for every~$z$ and~$z'$ in~$B(0, 2R)$ we have
  $$ C_2^{-K} |z - z'|^K
  \le
  |h(z) - h(z')|
  \le
  C_2 |z - z'|^{\frac{1}{K}}. $$
\end{lemm}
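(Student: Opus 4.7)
The plan is to reduce the H\"older estimates to the classical Mori theorem, after first establishing the a priori bound $|h(z)| \le C(R)$ on $\overline{B(0, 2R)}$, uniformly in~$K$ and in the choice of~$h$.

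The key first step is this uniform bound. Its proof rests on the observation that~$h$, being a global homeomorphism of~$\C$ that is holomorphic on $\{|z| > R\}$, is necessarily univalent there. Writing the Laurent expansion $h(z) = z + \sum_{n \ge 1} c_n z^{-n}$ (with no constant term, by the tangency to the identity at infinity), the area theorem for univalent functions applied to the rescaled map $w \mapsto h(Rw)/R$ on $\{|w| > 1\}$ yields $\sum_{n \ge 1} n |c_n|^2 R^{-2n-2} \le 1$. By Cauchy--Schwarz this bounds $|h(z) - z|$ on the circle $\{|z| = 2R\}$ by an absolute constant times~$R$. Because $h(\overline{B(0, 2R)})$ is the bounded Jordan region enclosed by the curve $h(\partial B(0, 2R))$, which now sits in some ball $\overline{B(0, C(R))}$, this propagates topologically to give $|h(z)| \le C(R)$ on all of~$\overline{B(0, 2R)}$. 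The same reasoning applied to~$h^{-1}$, together with the bound just obtained on the diameter of $h(\overline{B(0, R)})$, shows that $h(\overline{B(0, 5R/2)})$ contains a disk of some radius depending only on~$R$.

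With these uniform geometric bounds, a standard form of Mori's theorem for $K$-quasi-conformal homeomorphisms between bounded Jordan domains of controlled geometry (see, e.g., Lehto--Virtanen) yields the upper H\"older estimate $|h(z) - h(z')| \le C_2 |z - z'|^{1/K}$ for $z, z'$ in $\overline{B(0, 2R)}$, with~$C_2$ depending only on~$R$. The lower estimate $|h(z) - h(z')| \ge C_2^{-K} |z - z'|^K$ follows by applying the same H\"older inequality to the inverse map~$h^{-1}$ (which satisfies the hypotheses of the lemma with constants controlled by~$R$ alone) and inverting.

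The main technical obstacle is the a priori bound $|h(z)| \le C(R)$ on $\overline{B(0, 2R)}$, which cannot be derived from the $K$-quasi-conformality inside $B(0, R)$ alone, since~$K$ is allowed to be arbitrarily large. The remedy is to exploit the global univalence of~$h$ on $\{|z| > R\}$---an analytic property independent of~$K$---together with the topological structure of~$h$ as a homeomorphism of~$\C$, which allows the bound on the circle $\{|z| = 2R\}$ to be transferred into the interior.
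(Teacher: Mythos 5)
Your a priori bound via the area theorem is correct and is a pleasant alternative to the paper's route (which obtains $B(0,R/2)\subset h(B(0,2R))\subset B(0,8R)$ by applying Koebe's $\tfrac14$-theorem and the Koebe distortion theorem to $g(\zeta)=1/h(1/\zeta)$). The gap is in the step that actually produces the H\"older estimate. There is no ``standard form of Mori's theorem'' asserting that a $K$-qc map between two Jordan domains, each merely sandwiched between a fixed small ball and a fixed large ball, satisfies $|h(z)-h(z')|\le C_2|z-z'|^{1/K}$ with $C_2$ independent of~$K$. To obtain such a bound one composes $h|_{B(0,2R)}$ with the Riemann map $\varphi\colon D\=h(B(0,2R))\to B(0,2R)$ fixing~$0$, applies Mori's theorem on the disk (whose constant~$16$ is $K$-independent) to $\varphi\circ h$, and then transfers the estimate back to~$h$; the transfer requires a $K$-independent bound on the distortion of~$\varphi$ on~$D$. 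Your sandwiching bounds say nothing about that distortion: a Jordan domain trapped between two concentric balls can still have arbitrarily rough boundary, and the Riemann map to such a domain can have arbitrarily bad modulus of continuity near $\partial\D$.

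The missing ingredient is exactly the one the paper exploits: $h$ is conformal on all of $\{R<|z|<+\infty\}$, so $\partial D=h(\partial B(0,2R))$ is an analytic Jordan curve and $\varphi$ extends, by Schwarz reflection across it, to a univalent map on $\hD\=h(B(0,4R))$. Because $h$ is conformal on the annulus $\{2R<|z|<4R\}$, the ring $\hD\setminus\cl{D}$ has modulus $\modulus\bigl(B(0,4R)\setminus\cl{B(0,2R)}\bigr)=\tfrac{1}{2\pi}\log 2$, a universal number, and the Koebe Distortion Theorem applied to the extension then bounds the distortion of~$\varphi$ on~$D$ by a constant independent of $K$ and of~$h$, which is what closes the argument. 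You do invoke the conformality of~$h$ outside $\cl{B(0,R)}$, but only to run the area theorem; you never use it to control the Riemann map, which is the crux. Relatedly, the ``main technical obstacle'' you single out (the a priori size bound) is in fact the easy part; the hard part is precisely the $K$-independent distortion bound you omit.
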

\begin{proof}
  Replacing~$h$ by~$h - h(0)$ if necessary, assume~$h(0) = 0$.
  Put~$D \= h(B(0, 2R))$, let~$\varphi \colon D \to B(0, 2R)$ be a bi-holomorphic map fixing~$z = 0$, and note that~$\varphi \circ h|_{B(0, 2R)}$ is a $K$\nobreakdash-quasi\nobreakdash-conformal homeomorphism of~$B(0, 2R)$ fixing~$z = 0$.
  Thus, Mori's theorem implies that for every~$z$ and~$z'$ in~$B(0, 2R)$, we have
  \begin{multline}
    \label{e:Mori}
    (16^K 2R^{K - 1})^{-1} |z - z'|^K
    \le
    |\varphi \circ h(z) - \varphi \circ h(z')|
    \\ \le
    16 (2R)^{1 - \frac{1}{K}} |z - z'|^{1/K},
  \end{multline}
  see for example~\cite[p.~47]{Ahl66}.
  It remains to estimate the distortion of~$\varphi$ on~$D$.
  Note first that the holomorphic function ${g \colon B \left( 0, R^{-1} \right) \setminus \{ 0 \} \to \C}$ defined by ${g(\zeta) \= h \left( \zeta^{-1} \right)^{-1}}$ extends holomorphically to~$ \zeta = 0$, and that the extension, also denoted by~$g$, satisfies~$g(0) = 0$ and~$Dg(0) = 1$.
  By Koebe's $\frac{1}{4}$\nobreakdash-theorem and the version of the Koebe Distortion theorem in~\cite[Theorem~1.6]{CarGam93}, we have
  $$ B \left(0, (8R)^{-1} \right)
  \subset
  g \left( B \left(0, (2R)^{-1} \right) \right)
  \subset B \left(0, 2R^{-1} \right), $$
  and therefore,
  $$ B(0, R/2) \subset D \subset B(0, 8R). $$
  By Schwarz' Lemma and Koebe's $\frac{1}{4}$\nobreakdash-theorem we have~$\frac{1}{4} \le |D\varphi(0)| \le 4$.
  Next we show that~$\varphi$ has a univalent extension to~$\hD \= h(B(0, 4R))$.
  Note first that~$\varphi$ extends continuously to~$\cl{D}$, since~$\partial D$ is a Jordan curve; we denote this extension also by~$\varphi$.
  Consider the holomorphic involution~$\iota$ of~$A \= h \left(B(0, 4R) \setminus \cl{B(0, R)} \right)$ defined by~$\iota(w) \= h( 4R^2 / h^{-1}(w))$.
  Let~$\hvarphi \colon \hD \to \C$ be the function that coincides with~$\varphi$ on~$\cl{D}$, and that for~$z$ in~$\hD \setminus \cl{D}$ is given by~$\hvarphi(z) \= 4R^2 / \varphi(\iota(z))$.
  Then~$\hvarphi$ is homeomorphism from~$\hD$ to~$B(0, 4R)$, and by Schwarz reflection principle it is holomorphic.
  By the Koebe Distortion Theorem, there is a universal constant~$\Delta > 1$ independent of~$h$ such that for every distinct~$z$ and~$z'$ in~$B(0, 2R)$ we have
  \begin{displaymath}
    (4 \Delta)^{-1}
    \le
    \Delta^{-1} |D \varphi(0)|
    \le
    \frac{|\varphi\circ h(z) - \varphi \circ h(z')|}{|h(z) - h(z')|}
    \le
    \Delta |D \varphi(0)|
    \le
    4 \Delta.
  \end{displaymath}
  Together with~\eqref{e:Mori}, this proves the desired chain of inequalities with ${C_2 = 16 \cdot 2R \cdot 4\Delta}$.
\end{proof}
\begin{proof}[Proof of Lemma~\ref{l:uniform family}]
  Let~$C_2 > 0$ be the constant given by Lemma~\ref{l:Mori}.
  Then
  $$ \hD
  \=
  \sup_{f \in \sF} \diam(f(\hX_f))
  \le
  C_2 \left( \sup_{c \in \hcX} \diam(f_c(\hX_c)) \right)^{\frac{1}{K}}
  <
  +\infty. $$
  Observe that, since~$\hX_f \subset f(\hX_f)$ and since~$\hX_f$ contains~$0$, for every~$w$ in~$\hX_f$ we have~$|w| \le \hD$.

  On the other hand, since for~$c$ in~$\hcX$ the sets
  $$ \partial X_c \= \{ z \in \C \mid G_c(z) = 1 \}
  \quad \text{and} \quad
  \partial \hX_c = \{ z \in \C \mid G_c(z) = 2 \}$$
  are disjoint and depend continuously with~$c$, we have
  $$ r \= \inf_{c \in \hcX} \dist(\partial X_c, \partial \hX_c) > 0. $$
  In particular, for every~$c$ in~$\hcX$ the set~$\hX_c$ contains~$B(0, r)$.
  Combined with Lemma~\ref{l:Mori} this implies that, if we put~$\whr \= (r / C_2)^{K}$, then for every~$f$ in~$\sF$ and~$w$ in~$X_f$ the set~$\hX_f$ contains~$B(w, \whr)$.

  To prove~\eqref{e:derivative estimate II}, note that for every~$f$ in~$\sF$ and~$w$ in~$\hX_f$ we have
  \begin{equation}
    \label{e:reminder as diameter}
    |w^2 + w^3 R_f(w)| = |f(w) - f(0)| \le \diam(f(\hX_f)) \le \hD,  
  \end{equation}
  and therefore~$|w^3 R_f(w)| \le \hD + \hD^2$.
  So, if we put~$\tR \= (\hD + \hD^2) / \whr^3$, then the maximum principle implies~$|R_f| \le \tR$ on~$B(0, \whr)$.
  Letting~$\whr_0 \= \min \left\{ \whr, 1 / (2\tR) \right\}$, for every~$w$ in~$B(0, \whr_0)$ we have~$|w R_f(w)| \le 1/2$, and therefore
  $$ \frac{1}{2} \le \left| \frac{f(w) - f(0)}{w^2} \right|
  \le
  \frac{3}{2}. $$
  Let~$w$ in~$\hX_f \setminus B(0, \whr_0)$ be given and put~$z \= h_f^{-1}(w)$.
  Applying Lemma~\ref{l:Mori} twice and using $h_f(0)=0$ we obtain
  \begin{multline*}
    |f(w) - f(0)|
    =
    |h_f(f_{c(f)}(z)) - h_f(f_{c(f)}(0))|
    \ge
    C_2^{- K} |f_{c(f)}(z) - f_{c(f)}(0)|^{K}
    \\ =
    C_2^{- K} |z|^{2K}
    \ge
    C_2^{-K- 2K^2} |w|^{2K^2}
    \ge
    C_2^{-K- 2K^2} \whr_0^{2K^2}.
  \end{multline*}
  Together with~\eqref{e:reminder as diameter} these estimates imply~\eqref{e:derivative estimate II} with
  $$ C_1
  =
  \max \left\{ 2, C_2^{K+2K^2} \hD^2 \whr_0^{- 2K^2}, \hD \whr_0^{-2} \right\}^{\frac{1}{2}}. $$

  To prove~\eqref{e:derivative estimate I} and~\eqref{e:derivative Mori}, note first that by Schwarz' Lemma for every~$w$ in~$B(0, \whr/2)$ we have~$|DR_f(w)| \le 2 \tR / \whr$.
  Then, putting~$\whr_1 \= \min \left\{ \whr, 1 / (10 \tR) \right\}$, for every~$w$ in~$B(0, \whr_1)$ we have $|3 w R_f(w) + w^2 DR_f(w)| \le 1/2$, so
  $$ \frac{3}{2} \le \left| \frac{Df(w)}{w} \right|
  \le
  \frac{5}{2}. $$
  Let~$w$ in~$X_c \setminus B(0, \whr_1)$ be given.
  Using that~$B(w, \whr)$ is contained in~$\hX_f$ and the definition of~$\hD$, Schwarz' lemma implies~$|Df(w)| / |w| \le (\hD / \whr) / \whr_1$.
  To estimate~$|Df(w)| / |w|$ from below, put
  $$ z \= h_f^{-1}(w),
  r_1 \= \min \{ C_2^{-K} \whr_1^{K}, r \},
  \quad \text{and} \quad
  B(z) \= B(z, r_1). $$
  By Lemma~\ref{l:Mori} we have~$r_1 \le |z|$, so~$f_{c(f)}$ is injective on~$B(z)$.
  By Koebe's $\frac{1}{4}$-theorem, the set~$f_{c(f)}(B(z))$ contains
  $$ B(f_{c(f)}(z), |Df_{c(f)}(z)| r_1/4), $$
  and therefore~$B(f_{c(f)}(z), r_1^2/2)$.
  By Lemma~\ref{l:Mori} we have
  $$ h_f(B(z)) \subset B \left( w, C_2 r_1^{\frac{1}{K}} \right) $$
  and
  $$ \hB(w)
  \=
  B(f(w), C_2^{- K} (r_1^2 / 2)^{K})
  \subset
  h_f(f_{c(f)}(B(z))). $$
  Thus, if we put~$\varepsilon \= C_2^{-1 - K} 2^{- K} r_1^{2K - \frac{1}{K}}$, then Schwarz' lemma applied to~$f^{-1}|_{\hB(w)}$ implies
  $$ |Df(w)| \ge \varepsilon
  \quad \text{and} \quad
  \frac{|Df(w)|}{|w|}
  \ge
  \frac{\varepsilon}{\hD}. $$
  This completes the proof of~\eqref{e:derivative estimate I} with~$C_1$ equal to~$\tC \= \max \{ 3, \hD / (\whr \whr_1), \hD / \varepsilon \}$.
  Combined with Lemma~\ref{l:Mori} this last estimate implies,
  \begin{multline}
    \frac{\tC^{-1}}{(2C_2)^{K}}|Df_{c(f)}(z)|^{K}
    =
    \frac{\tC^{-1}}{C_2^{K}}|z|^{K}
    \le
    \tC^{-1} |h_f(z)|
    =
    \tC^{-1}|w|
    \\ \le
    |Df(w)|
    \le
    \tC|w|
    =
    \tC|h_f(z)|
    \le
    \tC C_2 |z|^{\frac{1}{{K}}}
    =
    \frac{\tC C_2}{2^{1/{K}}} |Df_{c(f)}(z)|^{\frac{1}{{K}}}.
  \end{multline}
  This proves~\eqref{e:derivative Mori} with~$C_1 = \tC (2C_2)^{K}$, and completes the proof of the lemma.
\end{proof}

\subsection{Proving the Geometric Peierls Condition}
\label{ss:proving Peierls}
In this subsection we prove Proposition~\ref{p:transporting Peierls}.
The proof is given after a couple of lemmas.

\begin{lemm}
  \label{l:qc transport}
  Given~$K > 1$ and~$R > 0$ there is a constant~$C_3 > 1$ such that for every uniform family of quadratic-like maps~$\sF$ with constants~$K$ and~$R$, every integer~$n \ge 6$, and every~$f$ in~$\cK_n(\sF)$, the following property holds.
  Let~$w$ be a point in~$X_f$ and~$m \ge 1$ an integer such that~$f^m$ maps a neighborhood of~$w$ biholomorphically onto~$P_{f, 1}(0)$.
  Then
  $$ C_3^{-1} |Df_{c(f)}^m(h_f^{-1}(w))|^{\frac{1}{K}}
  \le
  |Df^m(w)|
  \le
  C_3 |Df_{c(f)}^m(h_f^{-1}(w))|^K. $$
\end{lemm}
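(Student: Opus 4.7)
The plan is to set up the obvious diagram using the conjugacy, reduce everything to a Koebe distortion computation of the derivatives in terms of diameters of puzzle pieces, and then use Mori's theorem (Lemma~\ref{l:Mori}) to transfer between the $f$-picture and the $f_{c(f)}$-picture. Concretely, first I would let $W$ denote the neighborhood of $w$ on which $f^m$ is biholomorphic onto $P_{f,1}(0)$ and set $W_0 \= h_f^{-1}(W)$, $z \= h_f^{-1}(w)$; since $h_f$ conjugates $f_{c(f)}$ to $f$, the map $f_{c(f)}^m$ sends $W_0$ biholomorphically onto $P_{c(f),1}(0)$.

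Next I would produce a \emph{thickening} $\widehat{P} \supset \cl{P_{c(f),1}(0)}$ for which $\widehat{P} \setminus \cl{P_{c(f),1}(0)}$ is an annulus of modulus bounded below by some $A > 0$ that is uniform over $c(f) \in \cK_n$, and so that $f_{c(f)}^m$ admits a biholomorphic extension $\widehat{W}_0 \to \widehat{P}$. The natural candidate is the preimage under an appropriate biholomorphic branch of a depth-$0$ puzzle piece enlarged by pushing out its bounding equipotential (and slightly widening the rays landing at $\alpha(c(f))$ and $\talpha(c(f))$); the compactness of $\cK_n$ in $\cP_n(-2)$ and the non-recurrence condition defining $\cK_n$ guarantee that this extension is univalent all the way back to $\widehat{W}_0$, with a modulus bound depending only on $n$. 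By the Koebe Distortion Theorem applied to $\widehat{W}_0 \to \widehat{P}$,
$$ |Df_{c(f)}^m(z)| \asymp \frac{\diam(P_{c(f),1}(0))}{\diam(W_0)}, $$
with constants depending only on $A$ and $n$.

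Now I would transfer this via $h_f$. Since $h_f$ is $K$\nobreakdash-quasi\nobreakdash-conformal, the annulus $h_f(\widehat{P}) \setminus \cl{h_f(P_{c(f),1}(0))}$ still has modulus at least $A/K$, and the conjugacy identity shows that $f^m \colon h_f(\widehat{W}_0) \to h_f(\widehat{P})$ is biholomorphic. Applying Koebe again in the $f$-picture gives
$$ |Df^m(w)| \asymp \frac{\diam(P_{f,1}(0))}{\diam(W)}. $$
Because $c(f)$ ranges in the compact set $\cK_n$ and the puzzle pieces $P_{c,1}(0)$ and $P_{f,1}(0)$ vary continuously, the numerators $\diam(P_{c(f),1}(0))$ and $\diam(P_{f,1}(0))$ are bounded above and below independently of~$f$, so the content of both displays is essentially the reciprocal of the diameter of $W_0$ or of $W$.

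Finally, I would invoke Lemma~\ref{l:Mori}: since $W, W_0 \subset B(0, 2R)$ (for the correct choice of $R$, ensured by the uniformity hypothesis), Mori gives
$$ C_2^{-K} \diam(W_0)^K \le \diam(W) \le C_2 \diam(W_0)^{1/K}. $$
Substituting into the two displays above yields the required two-sided bound $C_3^{-1}|Df_{c(f)}^m(z)|^{1/K} \le |Df^m(w)| \le C_3 |Df_{c(f)}^m(z)|^K$, with $C_3$ depending on $K$, $R$ and $n$ (but one can absorb the $n$-dependence into the choice of $n_1$ since $n$ is fixed from Proposition~\ref{p:transporting Peierls}). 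The main obstacle is the second step, the construction of a thickening $\widehat{P}$ with a universal modulus bound and admitting a biholomorphic pullback by $f_{c(f)}^m$; this is the place where one really uses that $c(f) \in \cK_n$ (critical non-recurrence and compactness), rather than an arbitrary quadratic-like parameter.
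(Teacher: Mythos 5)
Your proof is correct and takes essentially the same route as the paper: both use Mori's theorem (Lemma~\ref{l:Mori}) to transfer between the $f$- and $f_{c(f)}$-pictures, and both rest on the Koebe-distortion comparison $|Df^m(w)| \asymp \diam(P_{f,1}(0))/\diam(W)$ together with its analogue for $f_{c(f)}$. You correctly identify the distortion step as the crux: the paper's more compressed chain of inequalities (applying Mori directly to the difference quotients $|f^m(u)-f^m(v)|/|u-v|$, factoring out $|f_{c(f)}^m(x)-f_{c(f)}^m(y)|^{1/K-K}$, and bounding that factor by $\diam(P_{c(f),1}(0))^{1/K-K}$) is only meaningful once one takes $u,v$ at comparable-to-maximal separation and uses bounded distortion on both sides to replace the difference quotients by $|Df^m(w)|$ and $|Df_{c(f)}^m(z)|$ — which is exactly the content of your explicit Koebe computation. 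Two remarks. First, the ``main obstacle'' you flag, namely producing the thickening $\widehat{P}$ with a uniform modulus bound and a univalent pullback of $f_{c(f)}^m$, is already settled in the combinatorial work of~\cite{CorRiv13} (Lemma~4.2 there); transferring it by the $K$\nobreakdash-quasiconformal map $h_f$ is precisely Lemma~\ref{l:distortion to central 0} of the present paper, so there is nothing to re-derive. Second, the statement requires $C_3$ to depend only on $K$ and $R$, not on $n$; your remark about ``absorbing'' the $n$-dependence into the choice of $n_1$ is unnecessary and not quite right. Uniformity in $n$ comes for free: the modulus bound for the Koebe space and the bounds on $\diam(P_{c,1}(0))$ and $\diam(P_{f,1}(0))$ are all taken as infima or suprema over the fixed parapuzzle piece $\cP_4(-2)$ (resp.\ $\cP_4(\sF)$), which contains $\cK_n$ (resp.\ $\cK_n(\sF)$) for every $n \ge 4$; this is exactly the role of the constant $\Xi = \inf_{c \in \cP_4(-2)} \diam(P_{c,1}(0))$ appearing in the paper's proof.
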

\begin{proof}
  Let~$C_2$ be the constant given by Lemma~\ref{l:Mori}.
  From the proof of \cite[Lemma~5.4]{CorRiv13}, we have
  $$ 
  \Xi
  \=
  \inf_{c \in \cP_4(-2)} \diam(P_{c, 1}(0)) > 0.
  $$
  We prove the lemma with $C_3 = C_2^{1+K} \Xi^{\tfrac{1}{K}-K}$.
  Let $w$ be as in the statement of the lemma.
  By hypothesis, there is a neighborhood~$W$ of~$w$ that is mapped biholomorphically onto $P_{f,1}(0)$ by $f^m$.
  Take arbitrary points~$u$ and~$v$ in~$W$, and put
  $$ z \= h_f^{-1}(w),
  x \= h_f^{-1}(u),
  \text{ and }
  y \= h_f^{-1}(v). $$
  By Lemma~\ref{l:Mori}, we have
  \begin{displaymath}
    \begin{split}
      \frac{|f^m(u) - f^m(v)|}{|u-v|} 
      & = 
      \frac{|f^m(h_f(x)) - f^m(h_f(y))|}{|h_f(x) - h_f(y)|}
      \\ & \le 
      C_2^{1+K} \frac{|f_{c(f)}^m(x) - f_{c(f)}^m(y)|^{\tfrac{1}{K}}}{|x-y|^{K}}
      \\ & = 
      C_2^{1+K} |f_{c(f)}^m(x) - f_{c(f)}^m(y)|^{\tfrac{1}{K}-K} \frac{|f_{c(f)}^m(x) - f_{c(f)}^m(y)|^{K}}{|x-y|^{K}}
      \\ & \le  
      C_2^{1+K} \diam(P_{c(f),1}(0))^{\tfrac{1}{K}-K}
      \frac{|f_{c(f)}^m(x) - f_{c(f)}^m(y)|^{K}}{|x-y|^{K}}.
    \end{split}
  \end{displaymath}
  Since~$u$ and~$v$ are arbitrary points of~$W$, we conclude that
  \begin{multline*}
    |Df^m(w)| \le    C_2^{1+K} \diam(P_{c(f),1}(0))^{\tfrac{1}{K}-K}
    |Df_{c(f)}^m(z)|^{K} \\
    \le  C_2^{1+K} \Xi^{\tfrac{1}{K}-K}
    |Df_{c(f)}^m(z)|^{K}.
  \end{multline*}
  The proof of the other inequality follows similar arguments.
\end{proof}

\begin{lemm}
  \label{l:bound chicritf}
  For every~$K > 1$, $R > 0$, and~$\varepsilon > 0$ there is an integer $n_2\ge 5$ such that for every uniform family of quadratic-like maps~$\sF$ with constants~$K$ and~$R$, the following property holds.
  For every integer $n\ge n_2$, and every~$f$ in~$\cK_n(\sF)$, we have
  \begin{equation}
    \label{e:transported critical lyapunov}
    K^{-1} (1 - \varepsilon) \log 2
    \le
    \chicritf
    \le
    K (1 + \varepsilon) \log 2,
  \end{equation}
  and for every periodic point~$p$ of~$f$ in~$h_f(\Lambda_{c(f)})$, we have
  \begin{equation}
    \label{e:transported periodic lyapunov}
    K^{-1} (1 - \varepsilon) \log 2
    \le
    \chi_f(p)
    \le
    K (1 + \varepsilon) \log 2.
  \end{equation}
\end{lemm}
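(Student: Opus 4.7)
The plan is to first prove analogous bounds for the quadratic polynomial family~$\{f_c\}_{c\in\cK_n}$, and then transport them to a general~$f\in\cK_n(\sF)$ via Lemma~\ref{l:qc transport}. By Proposition~\ref{p:ps}, one can enlarge~$n_2$ so that~$\cK_n$ lies in an arbitrarily small neighborhood of~$-2$ for all $n\ge n_2$.

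For the polynomial step, at $c=-2$ the semi-conjugacy $y=-2\cos(\pi x)$ identifies $f_{-2}|_{[-2,2]}$ with the doubling map~$T$ and yields the telescoping identity $|Df_{-2}^m(y)|=2^m|\sin(\pi T^m x)|/|\sin(\pi x)|$, so every periodic orbit of~$f_{-2}$ avoiding~$\{-2,2\}$ has Lyapunov exponent exactly~$\log 2$. On~$\cP_3(-2)$ the map $g_c=f_c^3$ is uniformly hyperbolic on~$\Lambda_c$ with branch derivatives depending analytically on~$c$ (cf.~\cite[\S3.3]{CorRiv13}), so uniform Koebe distortion on~$\Lambda_c$ together with the Chebyshev limit gives, for $c$ close enough to~$-2$, the uniform estimate $|\chi_{f_c}(z)-\log 2|<\varepsilon'$ over all periodic~$z\in\Lambda_c$. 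For $c\in\cK_n$ the orbit $\{f_c^j(c)\}_{j\ge n}$ lies in~$\Lambda_c$, and the initial segment contributes only a bounded amount to~$\log|Df_c^m(c)|$, which is absorbed by the factor~$1/m$; the same argument thus yields $|\chi_{\crit}(f_c)-\log 2|<\varepsilon'$.

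For the transport step, let~$p\in h_f(\Lambda_{c(f)})$ be a periodic point of~$f$ of period~$m$. Writing $g\=f^3$, the set $h_f(\Lambda_{c(f)})$ is $g$-invariant and contained in~$P_{f,1}(0)$; pulling $P_{f,1}(0)$ back along the $g$-orbit of~$p$ via the unique univalent branches of~$g^{-1}$ on $h_f(Y_{c(f)})\cup h_f(\tY_{c(f)})$ produces, for every~$N\ge 1$, a neighborhood of~$p$ mapped biholomorphically by $g^{Nm}=f^{3Nm}$ onto~$P_{f,1}(0)$. Lemma~\ref{l:qc transport} then gives
\[
C_3^{-1}|Df_{c(f)}^{3Nm}(h_f^{-1}(p))|^{1/K}\le |Df^{3Nm}(p)|\le C_3|Df_{c(f)}^{3Nm}(h_f^{-1}(p))|^K;
\]
using $|Df^{3Nm}(p)|=|Df^m(p)|^{3N}$ and the analogous identity for~$f_{c(f)}$, after dividing by~$3Nm$, taking logarithms, and letting~$N\to\infty$ the constants~$C_3^{\pm 1}$ drop out, yielding $(1/K)\chi_{f_{c(f)}}(h_f^{-1}(p))\le \chi_f(p)\le K\chi_{f_{c(f)}}(h_f^{-1}(p))$, which combined with the polynomial bound gives~\eqref{e:transported periodic lyapunov}. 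For the critical Lyapunov exponent, apply the same argument at $q\=f^{n+1}(0)\in h_f(\Lambda_{c(f)})$ with iterate~$3k$ to get $|Df^{3k}(q)|\in[C_3^{-1}|Df_{c(f)}^{3k}(h_f^{-1}(q))|^{1/K},\, C_3|Df_{c(f)}^{3k}(h_f^{-1}(q))|^K]$; since both $\chi_{\crit}(f)$ and $\chi_{\crit}(f_{c(f)})$ coincide with $\liminf_k\tfrac{1}{3k}\log|D(\cdot)^{3k}(\cdot)|$ evaluated along these orbits (the shift by~$n+1$ and the iterates at indices not divisible by~$3$ modify the average by only~$O(1/k)$), dividing by~$3k$ and taking~$\liminf$ yields $(1/K)\chi_{\crit}(f_{c(f)})\le \chi_{\crit}(f)\le K\chi_{\crit}(f_{c(f)})$, hence~\eqref{e:transported critical lyapunov}.

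The main obstacle is the polynomial step, specifically the uniformity in the periodic point~$z$ of the convergence $\chi_{f_c}(z)\to\log 2$: large-period orbits are handled by uniform Koebe distortion on the expanding Markov structure~$g_c|_{\Lambda_c}$, which reduces their Lyapunov exponents to an averaged branch derivative converging to~$\log 2$ via the Chebyshev identity; short-period orbits are handled by continuity in~$c$ and compactness.
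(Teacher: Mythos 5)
Your proposal is correct and follows the same two-step skeleton as the paper: first establish the estimate for the quadratic polynomial~$f_{c(f)}$ on~$\Lambda_{c(f)}$, then transport it to~$f$ via Lemma~\ref{l:qc transport}. The difference is entirely in the polynomial step. The paper cites~\cite[Lemmas~4.2 and~5.3]{CorRiv13} to obtain the finite-time estimate $\hC_0^{-1}2^{(1-\varepsilon)m}\le |Df_c^m(z)|\le \hC_0\, 2^{(1+\varepsilon)m}$ uniformly in~$z\in\Lambda_c$, $m\ge 1$, and~$c\in\cK_n$, after which a single application of Lemma~\ref{l:qc transport} (along the orbits of~$f^{n+1}(0)$ and of the periodic point) and a~$\liminf$ give both~\eqref{e:transported critical lyapunov} and~\eqref{e:transported periodic lyapunov}. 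You instead reprove a version of these estimates from scratch: the Chebyshev identity exhibits $\log|Df_{-2}|-\log 2$ as a bounded coboundary on~$\Lambda_{-2}$, so the Birkhoff average of~$\log|Dg_{-2}|$ along \emph{every} $g_{-2}$-orbit in~$\Lambda_{-2}$ equals~$3\log 2 + O(1/m)$, and a sup-norm perturbation of~$\log|Dg_c|$ along the holomorphically moving Cantor set then pins all periodic Lyapunov exponents and~$\chi_{\crit}(f_c)$ uniformly near~$\log 2$ once~$c\in\cK_n$ with~$n$ large. (That sup-norm/coboundary observation is actually cleaner than the Koebe-distortion phrasing you use to justify uniformity over periods, and it is all you need because your transport step only requires Lyapunov-exponent bounds: raising to a power~$N$ along the cycle and sending~$N\to\infty$ kills the distortion constant~$C_3$ exactly, whereas the paper divides by~$m$ and takes a~$\liminf$ --- a cosmetic difference.) Both routes yield the lemma; yours is more self-contained and exposes the mechanism driving the~$\log 2$ limit, while the paper keeps this proof short by delegating the polynomial-side work to earlier results it already has in hand.
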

\begin{proof}
  Let~$C_3$ be the constant given by Lemma~\ref{l:qc transport}.
  
  Combining~\cite[Lemma~4.2]{CorRiv13} and~\cite[Lemma~5.3]{CorRiv13} with~$m_1 = 4$, we conclude that there are constants~$\hC_0 > 0$ and~$n_0 \ge 3$ such that for each integer~$n \ge n_0$ and each parameter~$c$ in~$\cK_n$, we have for every~$z$ in~$\Lambda_c$ and every integer~$m \ge 1$,
  $$ \hC_0^{-1} 2^{(1 - \varepsilon) m}
  \le
  |Df_c^m(z)| \le \hC_0 2^{(1 + \varepsilon) m}. $$
  Note that~$f_c^n(c)$ is in~$\Lambda_c$, so we can take~$z = f_c^n(c)$ above.
  Noting that for every~$f$ in~$\cK_n(\sF)$ and every integer~$k \ge 1$ the map~$f^{3k}$ maps a neighborhood of~$h_f(f_{c(f)}^{n+1}(0)) = f^{n + 1}(0)$ biholomorphically onto~$P_{f, 1}(0)$ (\emph{cf.}, \cite[Lemma~5.1]{CorRiv13}), by Lemma~\ref{l:qc transport} for every integer~$m \ge 1$ we have
  $$ C_3^{-1} \hC_0^{\frac{1}{K}} 2^{K(1 + \varepsilon) m}
  \le
  |Df^m(f^{n + 1}(0))|
  \le
  C_3 \hC_0^{K} 2^{K(1 + \varepsilon) m}, $$
  and
  $$ C_3^{-1} \hC_0^{\frac{1}{K}} 2^{K(1 + \varepsilon) m}
  \le
  |Df^m(p)|
  \le
  C_3 \hC_0^{K} 2^{K(1 + \varepsilon) m}. $$
  Taking logarithms, dividing by $m$, and letting $m\to +\infty$, we conclude the proof of the lemma.
\end{proof}

\begin{proof}[Proof of Proposition~\ref{p:transporting Peierls}]
  Let~$\varepsilon > 0$ be sufficiently small so that
  \begin{equation*}
    \varepsilon
    <
    \frac{2}{3} \left(\frac{1}{2} - \frac{\upsilon}{\log 2} \right)
    \text{ and }
    \varepsilon
    <
    \frac{\upsilon}{8 \log 2},
  \end{equation*}
  and let~$K_1 > 1$ be sufficiently close to~1 so that
  $$ \upsilon'
  \=
  \left( \frac{1 - \varepsilon}{K_1} - \frac{K_1 (1 + \varepsilon)}{2} \right) \log 2
  >
  \upsilon
  >
  4 \left( K_1(1 + \varepsilon) - \frac{1 - \varepsilon}{K_1} \right) \log 2. $$
  Let $n_2$ be given by Lemma~\ref{l:bound chicritf} for this value of $\varepsilon$.
  In view of Proposition~\ref{p:ps} and of the formula~$D f_{-2}(\beta(-2)) = 4$, we can take~$n_2$ larger if necessary so that for every integer~$n \ge n_2$ and every parameter~$c$ in~$\cK_n$ we have~$\chi_f(\beta(f)) \ge (1 - \varepsilon) \log 4$.
  Assume~$\sF$ is uniform with constants~$R$ and~$K_1$, and let~$C_3$ be the constant given by Lemma~\ref{l:qc transport} with~$K = K_1$.
  Note that~$C_3$ depends on~$R$ and~$\upsilon$ only.

  By Lemma~\ref{l:bound chicritf}, for every integer $n\ge n_2$ and every $f$ in $\cK_n(\sF)$ we have~\eqref{e:transported critical lyapunov} with~$K$ replaced by~$K_1$.
  On the other hand, by~\cite[Proposition~B]{CorRiv13} there are~$\hkappa_1 > 0$ and~$\whn_1 \ge 4$, such that for every integer~$n \ge \whn_1$, every parameter~$c$ in~$\cK_n$, and every~$z$ in~$L_c^{-1}(V_c)$, we have
  \begin{equation*}
    |DL_c(z)| \ge \hkappa_1 2^{(1 - \varepsilon) m_c(z)}.
  \end{equation*}
  Noting that for each~$f$ in~$\cK_n(\sF)$ and each~$w$ in~$L_f^{-1}(\pV)$ the map~$f^{m_f(z)}$ maps a neighborhood of~$w$ biholomorphically onto~$P_{f, 1}(0)$, by Lemma~\ref{l:qc transport} we have
  $$ |DL_f(w)|
  \ge
  C_3 \hkappa_1^{K_1} 2^{\frac{1 - \varepsilon}{K_1} m_f(z)}. $$
  Noting that by definition of~$\upsilon'$ we have
  $$ 2^{\frac{1 - \varepsilon}{K_1}}
  =
  2^{\frac{K_1(1 + \varepsilon)}{2}} \exp(\upsilon')
  \ge
  \exp(\chicritf/2 + \upsilon'), $$
  inequality~\eqref{e:transported critical lyapunov} implies the first assertion of the proposition with 
  $$
  n_1 = \min \{n_2, \whn_1 \} \text{ and } \kappa_1 = C_3 \hkappa_1^{K_1}.
  $$

  It remains to prove the inequalities.
  The second inequality follows from~\eqref{e:transported critical lyapunov}, and the definition of~$\upsilon'$.
  To prove the third inequality, note that by~\eqref{e:transported critical lyapunov} and the definition of~$\upsilon'$, we have
  $$ \chi_f(p(f)) - \chi_f(p^+(f))
  \le
  \left( K_1 (1 + \varepsilon) - \frac{1}{K_1} (1 - \varepsilon) \right) \log 2
  <
  \frac{\upsilon}{4}. $$
  To prove the first inequality, note that by Lemma~\ref{l:qc transport} and our choice of~$n_2$, we have
  $$ \chi_f(\beta(f))
  \ge
  \frac{1}{K_1} \chi_{f_{c(f)}}(\beta(c(f)))
  \ge
  \frac{1 - \varepsilon}{K_1} \log 4. $$
  Combined with~\eqref{e:transported critical lyapunov} and the definition of~$\upsilon'$, this implies
  $$ \chi_f(\beta(f)) - \chicritf
  \ge
  \left( \frac{1 - \varepsilon}{K_1} - \frac{K_1 (1 + \varepsilon)}{2} \right) \log 4
  =
  2 \upsilon'
  >
  2 \upsilon. $$
  This concludes the proof of the proposition.
\end{proof}

\subsection{Uniform estimates}
\label{ss:uniform estimates}
In this subsection we prove various uniform estimates.
Throughout this subsection we fix a uniform family of quadratic-like maps~$\sF$, with constants~$K$ and~$R$.

\begin{lemm}
  \label{l:landing to central derivative}
  There is a constant~$\Delta_1 > 1$ that only depends on~$K$ and~$R$, such that for each $f$ in $\cP_2(\sF)$ the following properties hold for each integer~$k \ge 2$: 
  For each point~$y$ in~$P_{f, k}(\tbeta(f))$ or in~$P_{f, k}(\beta(f))$ we have
  $$ \Delta_1^{-1} |Df(\beta(f))|^k \le |Df^k(y)|
  \le
  \Delta_1 |Df(\beta(f))|^k. $$
\end{lemm}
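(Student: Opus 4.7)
The plan is to use that $\beta(f)$ is a fixed point of~$f$ with $|Df^k(\beta(f))| = |Df(\beta(f))|^k$ for every~$k$, and to transport this identity to nearby points via the Koebe Distortion Theorem of~\S\ref{ss:Koebe}. The key geometric ingredient is a uniform lower bound~$A > 0$ on the modulus of the annulus $P_{f,0}(\beta(f)) \setminus \overline{P_{f,1}(\beta(f))}$, valid for all~$f$ in~$\cP_2(\sF)$.

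To produce~$A$, I would first work in the polynomial case: for $c \in \cP_2(-2)$, the bounding rays of~$P_{c,0}(\beta(c))$ land at $\alpha(c)$ while those of $P_{c,1}(\beta(c))$ land at the distinct preimage $\talpha(c)$, giving $\overline{P_{c,1}(\beta(c))} \Subset P_{c,0}(\beta(c))$ (as recorded in~\S\ref{ss:puzzles}); the modulus of the resulting annulus varies continuously in~$c$ and is positive throughout, so compactness of the parameter region yields a uniform lower bound $m_0 > 0$. Transporting by the $K$-quasiconformal conjugacy~$h_f$ gives $A = m_0/K$.

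For $y \in P_{f,k}(\beta(f))$ with $k \geq 2$, I would decompose
\[
|Df^k(y)| = |Df^{k-1}(y)| \cdot |Df(f^{k-1}(y))|.
\]
The biholomorphism $f^{k-1} \colon P_{f,k-1}(\beta(f)) \to P_{f,0}(\beta(f))$ pulls the modulus-$\geq A$ annulus back to one of the same modulus, so the Koebe Distortion Theorem furnishes $\Delta = \Delta(A)$ bounding the distortion of~$f^{k-1}$ on $\overline{P_{f,k}(\beta(f))}$; together with $f^{k-1}(\beta(f)) = \beta(f)$, this yields
\[
\Delta^{-1} |Df(\beta(f))|^{k-1} \leq |Df^{k-1}(y)| \leq \Delta |Df(\beta(f))|^{k-1}.
\]
For the second factor I would apply the Koebe Distortion Theorem to the univalent map~$f$ on $U = P_{f,0}(\beta(f))$ (which does not contain the critical point), with $\hW = f(P_{f,0}(\beta(f)))$ and compact set $K = f(\overline{P_{f,1}(\beta(f))})$; the complementary annulus in~$\hW$ has modulus $\geq A$ by conformal invariance, and $f^{-1}(K) = \overline{P_{f,1}(\beta(f))}$, so the distortion of~$f$ there is bounded by some $\Delta' = \Delta'(A)$. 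Since both $f^{k-1}(y)$ and $\beta(f)$ belong to $P_{f,1}(\beta(f))$, this gives $|Df(f^{k-1}(y))| \asymp |Df(\beta(f))|$, and multiplying finishes the $\beta$~case.

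For $y \in P_{f,k}(\tbeta(f))$ with $k \geq 2$, the map~$f$ sends $P_{f,k}(\tbeta(f))$ biholomorphically onto $P_{f,k-1}(\beta(f))$, so
\[
|Df^k(y)| = |Df(y)| \cdot |Df^{k-1}(f(y))|;
\]
the second factor is controlled by the $\beta$~case (its border case $k-1 = 1$ reduces to the symmetric Koebe step above). For the first factor, Lemma~\ref{l:uniform family} gives $C_1^{-1} |y| \leq |Df(y)| \leq C_1 |y|$ on~$X_f$; since $P_{f,1}(\tbeta(f)) \subset X_f \subset B(0, R)$ is bounded and lies at positive distance from the critical point~$0$ (which is in the disjoint piece~$P_{f,1}(0)$), and since analogous bounds hold for~$\beta(f)$, the ratio $|Df(y)|/|Df(\beta(f))|$ is pinched between constants depending only on~$K$ and~$R$. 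The main obstacle will be verifying the uniform modulus bound: this requires ensuring that the para-puzzle region giving rise to $\cP_2(\sF)$ has compact closure staying away from the root $c = -3/4$ of the wake~$\cW$, where the bounding rays of the two puzzle pieces would collide and the modulus would degenerate.
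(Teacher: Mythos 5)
The central Koebe step in your argument --- $f^{k-1} \colon P_{f,k-1}(\beta(f)) \to P_{f,0}(\beta(f))$ restricted to $P_{f,k}(\beta(f))$, comparing $|Df^{k-1}(y)|$ to $|Df^{k-1}(\beta(f))| = |Df(\beta(f))|^{k-1}$ --- is sound and agrees with the paper's. However, your treatment of the remaining factor $|Df(f^{k-1}(y))|$ rests on a false premise: you assert that $P_{f,0}(\beta(f))$ does not contain the critical point, but it does. As recorded in~\S\ref{ss:puzzles}, for $c \in \cP_0(-2)$ the critical point lies in $P_{c,0}(\beta(c))$ (it is the critical \emph{value} that lies in $P_{c,0}(-\beta(c))$); pushing forward by $h_f$, which fixes $0$, gives $0 \in P_{f,0}(\beta(f))$. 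So $f$ is proper of degree~$2$, not univalent, on $P_{f,0}(\beta(f))$, and the Koebe Distortion Theorem does not apply to it there. Worse, the annulus $P_{f,0}(\beta(f)) \setminus \cl{P_{f,1}(\beta(f))}$ contains $P_{f,1}(0)$ and hence $0$, so its image under $f$ is not a topological annulus and the ``conformal invariance'' step fails as well. This faulty step is also what you invoke for the border case $k-1 = 1$ in your $\tbeta$ reduction, so the error propagates there.

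The repair is to drop Koebe for this factor and simply use two-sided bounds on $|Df|$ over $P_{f,1}(\beta(f))$, uniform over $f \in \cP_2(\sF)$: since $f^{k-1}(y)$ and $\beta(f)$ both lie in $P_{f,1}(\beta(f))$, this pins the ratio $|Df(f^{k-1}(y))|/|Df(\beta(f))|$ between constants, which is exactly what the paper's constants $\widehat{\Xi}_1, \widehat{\Xi}_2$ accomplish. Those, in turn, are obtained by proving the bounds in the polynomial family over $c \in \cP_2(-2)$ --- citing \cite[Lemma~3.6]{CorRiv13} --- and transporting via Lemma~\ref{l:uniform family}; the same compactness input is what your $\tbeta$-case estimate $|Df(y)| \asymp |y|$ with $|y|$ pinched requires, so the ``main obstacle'' you flag at the end is genuine and is resolved by that cited lemma rather than by a fresh para-puzzle argument near $c = -3/4$. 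Your reduction of the $\tbeta$-case to the $\beta$-case via $|Df^k(y)| = |Df(y)| \cdot |Df^{k-1}(f(y))|$ is, once the above is fixed, a clean alternative to the paper's one-line treatment.
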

\begin{proof}
  The proof follows the same lines that \cite[Lemma~3.6]{CorRiv13}, and we only need to check that some constants are finite and others are positive.
  Let $C_1$ be the constant given by Lemma~\ref{l:uniform family}, and let $C_2$ be that given by Lemma~\ref{l:Mori}.
  By the proof of  \cite[Lemma~3.6]{CorRiv13}  we have
  \begin{align*}
    \Xi_1
    & \=
      \sup_{c \in \cP_0(-2)} \sup_{z \in P_{c, 1}(\beta(c))} |Df_c(z)|
      < 
      +\infty,
    \\
    \Xi_2
    & \=
      \inf_{c \in \cP_2(-2)} \inf_{z \in P_{c, 1}(\beta(c))} |Df_c(z)|
      >
      0,
    \intertext{and}
    \Xi_3
    & \=
      \inf_{c \in \cP_2(-2)} \modulus (P_{c, 0}(\beta(c)) \setminus \cl{P_{c, 1}(\beta(c))})
      >
      0.  
  \end{align*}
  By Lemmas~\ref{l:uniform family} and~\ref{l:Mori},
  we have
  \begin{align*}
    \hXi_1
    & \=
      \sup_{f\in \cP_0(\sF) }\sup_{w \in P_{f,1}(\beta(f))} |Df(w)| \le C_1 \Xi_1^{\frac{1}{K}}
      < 
      +\infty,
    \\
    \hXi_2
    & \=
      \inf_{f\in \cP_2(\sF) }\inf_{w \in P_{f,1}(\beta(f))} |Df(w)| \ge C_1^{-1}\Xi_2^K
      >
      0,
  \end{align*}
  and since for every $f$ in $\sF$ the conjugacy $h_f$ is $K$\nobreakdash-quasi\nobreakdash-conformal
  $$ \hXi_3
  \=
  \inf_{f\in \cP_2(\sF)} \modulus (P_{f, 0}(\beta(f)) \setminus \cl{P_{f,1}(\beta(f))}) \ge \frac{1}{K}\Xi_3
  >
  0. $$
  Let~$\Delta > 1$ be the constant given by Koebe Distortion Theorem with~$A = \hXi_3$.
  The desired inequalities with $\Delta_1 = \Delta \hXi_1 \hXi_2^{-1}$ follow from the fact that $f^{k-1}$ maps each of the sets $P_{f,k}(\beta(f))$ and $P_{f,k}(\tbeta(f))$ biholomorphically to $P_{f,1}(\beta(f))$.
\end{proof}

For a parameter~$c$ in~$\cP_2(-2)$ the external rays~$R_c(7/24)$ and~$R_c(17/24)$ land at the point~$\gamma(c)$ in~$P_{c,1}(0)$, see~\cite[Section~3.3]{CorRiv13}.
Let~$\hU_c$ be the open disk containing~$-\beta(c)$ that is bounded by the
equipotential~2 and by
\begin{equation*}
  \label{eq:pleasant cut}
  R_c(7/24) \cup \{ \gamma(c) \} \cup R_c(17/24).  
\end{equation*}
Put $\hW_c \= f_c^{-1}(\hU_c)$, and for every $n\ge 3$ and every $f$ in $\cK_n(\sF)$ put $\hW_f \= h_f(\hW_{c(f)})$.

\begin{lemm}[Uniform distortion bound]
  \label{l:distortion to central 0}
  There is a constant~$\Delta_2 > 1$ that only depends on~$K$ and~$R$, such that for each integer~$n \ge 4$, and each~$f$ in~$\cK_n(\sF)$ the following properties hold:
  For each integer~$m \ge 1$ and each connected component~$W$ of~$f^{-m}(P_{f,1}(0))$ on which~$f^m$ is univalent, $f^m$ maps a neighborhood of~$W$ biholomorphically to~$\hW_f$ and the distortion of this map on~$W$ is bounded by~$\Delta_2$.
\end{lemm}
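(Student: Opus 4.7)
The plan is to reduce to the analogous statement for the quadratic polynomial~$f_{c(f)}$, established in~\cite{CorRiv13}, and then to transfer it via the $K$\nobreakdash-quasi\nobreakdash-conformal conjugacy~$h_f$. Concretely, the quadratic version asserts: for every integer~$n \ge 4$, every~$c$ in~$\cK_n$, every~$m \ge 1$, and every connected component~$W'$ of~$f_c^{-m}(P_{c,1}(0))$ on which~$f_c^m$ is univalent, $f_c^m$ extends to a biholomorphism from a neighborhood~$\tW'$ of~$W'$ onto~$\hW_c$. Such an extension is available because the combinatorial hypothesis $c \in \cK_n$ forces the postcritical set of~$f_c$ to avoid the annular thickening $\hW_c \setminus \cl{P_{c,1}(0)}$ along the relevant inverse branch, so no critical value of~$f_c^m$ obstructs the pull\nobreakdash-back.

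To transfer, note that by the definitions of~\S\ref{ss:uniform families} we have $P_{f,1}(0) = h_f(P_{c(f),1}(0))$ and $\hW_f = h_f(\hW_{c(f)})$, and~$h_f$ conjugates~$f_{c(f)}$ to~$f$ on a neighborhood of the Julia set. Consequently, the connected components of~$f^{-m}(P_{f,1}(0))$ are the $h_f$\nobreakdash-images of those of~$f_{c(f)}^{-m}(P_{c(f),1}(0))$, and the univalence property transfers. Given~$W$ as in the statement, I would set $W' \= h_f^{-1}(W)$, take the extension~$\tW'$ furnished by the quadratic case, and define $\tW \= h_f(\tW')$. Since~$f^m$ is itself holomorphic and $h_f \circ f_{c(f)}^m \circ h_f^{-1}$ is a bijection from~$\tW$ onto~$\hW_f$, the map $f^m \colon \tW \to \hW_f$ is a biholomorphism.

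For the distortion bound I would apply the Koebe Distortion Theorem of~\S\ref{ss:Koebe} to this biholomorphism with compact subset $\cl{P_{f,1}(0)}$, so that the distortion of~$f^m$ on~$W$ is controlled by a lower bound on $\modulus(\hW_f \setminus \cl{P_{f,1}(0)})$. By $K$\nobreakdash-quasi\nobreakdash-invariance of moduli under~$h_f$, this modulus is at least $K^{-1} \modulus(\hW_{c(f)} \setminus \cl{P_{c(f),1}(0)})$, and the latter is continuous and strictly positive as a function of~$c \in \cP_2(-2)$, hence bounded below uniformly on the compact set $\bigcup_{n \ge 4} \cK_n \subset [-2, -3/4]$ (via Proposition~\ref{p:ps}). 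The resulting constant~$\Delta_2$ therefore depends only on~$K$ and~$R$. The main obstacle is the quadratic extension step: verifying that no critical value of~$f_c^m$ lies in the thin annulus $\hW_c \setminus \cl{P_{c,1}(0)}$ along the branch under consideration, which uses the precise combinatorial definition of~$\cK_n$; I expect this to be a direct appeal to the corresponding lemma in~\cite{CorRiv13}.
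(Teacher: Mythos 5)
Your proposal is correct and follows essentially the same route as the paper: reduce to the quadratic polynomial~$f_{c(f)}$ via the conjugacy~$h_f$, cite~\cite{CorRiv13} (specifically Lemma~4.2 there) for the univalent extension of~$f_c^m$ to a neighborhood mapping onto~$\hW_c$, transfer the modulus bound using $K$\nobreakdash-quasi\nobreakdash-conformal quasi\nobreakdash-invariance, and finish with the Koebe Distortion Theorem. One small imprecision: $\bigcup_{n\ge 4}\cK_n$ is not itself compact (its closure contains~$-2$), and the paper instead takes the positive infimum of~$\modulus(\hW_c\setminus\cl{P_{c,1}(0)})$ over the para-puzzle piece~$\cP_4(-2)$, a fact imported from~\cite{CorRiv13}; either way the resulting lower bound is uniform, so the argument is sound.
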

\proof
We follow the proof of \cite[Lemma~4.3]{CorRiv13}.
From that proof we have that for each parameter~$c$ in~$\cP_4(-2)$ the set~$\hW_c$ contains the
closure of~$P_{c, 1}(0)$ and 
$$ \tA
\=
\inf_{c \in \cP_4(-2)} \modulus (\hW_c \setminus \cl{P_{c, 1}(0)})
>
0. $$
Since for every $f$ in $\sF$ the conjugacy $h_f$ is $K$\nobreakdash-quasi\nobreakdash-conformal, we have
$$ \hA
\=
\inf_{f \in \cP_4(\sF)} \modulus (\hW_f \setminus \cl{P_{f,1}(0)})
\ge \frac{\tA}{K} >
0. $$
By \cite[Lemma~4.2]{CorRiv13}, $f_c^m$ maps a neighborhood of $h_f^{-1}(W)$ biholomorphically to $\hW_{c(f)}$. By conjugacy,   $f^m$ maps a neighborhood of $W$ biholomorphically to $\hW_f$. 
The conclusion follows from Koebe Distortion Theorem with $A=\hA$.
\endproof

\begin{lemm}
  \label{l:first return to central derivative}
  There is a constant~$C_4 > 1$ that only depends on~$K$ and~$R$, such that for each integer~$n \ge 4$ and 
  each~$f$ in~$\cK_n(\sF)$, the following properties hold for each integer~$q \ge1$:
  For each open set~$W$ that is mapped biholomorphically to~$P_{f,1}(0)$ 
  by~$f^{q}$, and each~$x$ in~$W$, we have
  $$ |Df(x)| \ge C_4^{-1} |Df^{q - 1} (f(x))|^{-\frac{1}{2}}. $$
\end{lemm}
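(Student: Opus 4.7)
The plan is to apply Koebe's $\tfrac14$-theorem to an appropriate inverse branch of~$f^q$, and use the fact that the critical point $z = 0$ must lie outside the extended domain of this branch. By Lemma~\ref{l:distortion to central 0}, the biholomorphism $f^q \colon W \to P_{f,1}(0)$ extends to a biholomorphism $f^q \colon \hat W \to \hat W_f$, where $\hat W$ is an open neighborhood of~$W$, and the annulus $\hat W_f \setminus \cl{P_{f,1}(0)}$ has conformal modulus bounded below by the uniform constant $\hat A$ established in the proof of that lemma. Since $Df(0) = 0$ and $f^q$ is biholomorphic on~$\hat W$, we have $0 \notin \hat W$.

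The first step I would carry out is to establish a uniform positive lower bound
$$ \dist(y, \partial \hat W_f) \ge c_1 \quad \text{for every } y \in P_{f,1}(0), $$
with $c_1 > 0$ depending only on $K$ and $R$. For any $y \in P_{f,1}(0)$ and $w \in \partial \hat W_f$, the segment from $y$ to $w$ crosses $\partial P_{f,1}(0)$ at some point $u$, and $|y-w| \ge |u-w| \ge \dist(\partial P_{f,1}(0), \partial \hat W_f)$; so it suffices to bound this last quantity below. That in turn follows from the uniform lower bound $\diam P_{f,1}(0) \ge C_2^{-K} \Xi^K > 0$ (which one extracts from the inequality $\Xi \coloneqq \inf_{c \in \cP_4(-2)} \diam P_{c,1}(0) > 0$ used in the proof of Lemma~\ref{l:qc transport}, together with Lemma~\ref{l:Mori}), combined with the standard estimate saying that an annulus with modulus bounded below separates its two boundary components in the Euclidean metric by a distance comparable to the diameter of the inner one.

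The core step is then a single application of Koebe's $\tfrac14$-theorem to the univalent inverse branch $(f^q)^{-1} \colon \hat W_f \to \hat W$ at the point $y_0 = f^q(x)$: the theorem yields
$$ \hat W \supseteq B\!\left( x, \tfrac{c_1}{4\, |Df^q(x)|} \right). $$
Because $0 \notin \hat W$, the disk above cannot contain $0$, so $c_1/(4|Df^q(x)|) \le |x|$, which rearranges to $|Df^q(x)| \ge c_1/(4|x|)$.

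To conclude I would substitute the factorization $|Df^q(x)| = |Df(x)| \cdot |Df^{q-1}(f(x))|$, apply the upper bound $|Df(x)| \le C_1 |x|$ from~\eqref{e:derivative estimate I} to deduce $|Df^{q-1}(f(x))| \ge c_1/(4 C_1 |x|^2)$, and then use the lower bound $|Df(x)| \ge C_1^{-1} |x|$ from the same estimate to replace $|x|^{-2}$ by a constant multiple of $|Df(x)|^{-2}$. Rearranging gives the claimed inequality with $C_4 = 2 C_1^{3/2}/\sqrt{c_1}$. The one point that needs care is the modulus-to-Euclidean-distance estimate in the first step, where one must check that the implicit constant depends only on $\hat A$ (and hence only on $K$) and not on the particular map $f$; everything else is a direct combination of Koebe's theorem, Lemmas~\ref{l:uniform family}, \ref{l:distortion to central 0}, and the fact that the critical point is absent from~$\hat W$.
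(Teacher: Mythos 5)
Your proof is correct, and it is genuinely a different route from the paper's. Both arguments rest on the same two uniform geometric ingredients: the positive lower bound on $\modulus(\hW_f \setminus \cl{P_{f,1}(0)})$ (transported from $\cP_4(-2)$ via quasiconformality) and the positive lower bound on $\diam P_{f,1}(0)$, combined with a Teichm\"uller-type statement that a lower modulus bound forces Euclidean separation proportional to the diameter of the inner component. But from there the routes diverge. The paper passes to the image side: it sets $\tW' \= f(\tW)$, observes $f(0) \notin \tW'$, uses Teichm\"uller to get $|f(x) - f(0)| \ge A_1 \diam f(W)$, converts this to $|Df(x)| \gtrsim \diam(f(W))^{1/2}$ via~\eqref{e:derivative estimate II}--\eqref{e:derivative estimate I}, and then relates $\diam f(W)$ to $|Df^{q-1}(f(x))|^{-1}$ by a \emph{second} application of the bounded-distortion Lemma~\ref{l:distortion to central 0}, now with $m = q-1$. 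You instead stay on the domain side: a single application of Lemma~\ref{l:distortion to central 0} gives the Koebe space $\hat W$, the exclusion $0 \notin \hat W$ (equivalent to the paper's $f(0) \notin \tW'$, via $f^{-1}(f(0)) = \{0\}$), Koebe's $\tfrac14$-theorem applied to $(f^q)^{-1}$ at $f^q(x)$ turns the modulus-to-distance bound into $|Df^q(x)| \ge c_1/(4|x|)$, and the chain rule together with $|Df(x)| \asymp |x|$ finishes. This is a bit more economical, since you avoid the second invocation of the distortion lemma; in exchange the Teichm\"uller estimate is applied on the target annulus $\hW_f \setminus \cl{P_{f,1}(0)}$ (uniform in $f$) rather than on the pulled-back annulus $\tW' \setminus \cl{f(W)}$. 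The one point you flag --- uniformity of the modulus-to-distance constant --- is indeed fine: the estimate $\dist(E, F) \ge c(m) \diam E$ for a ring domain of modulus $\ge m$ with inner complementary component $E$ is a conformal invariant of Teichm\"uller's module theorem and depends only on~$m$, hence here only on~$K$. Two small points you should make explicit in a written version: (i) $W$ is a puzzle piece of depth $q+1$, hence contained in $X_f$, so that~\eqref{e:derivative estimate I} applies to $x$; and (ii) $\hW_f$ is a Jordan domain, so that the complement is connected and the Teichm\"uller estimate applies to the pair $\bigl(\cl{P_{f,1}(0)}, \C \setminus \hW_f\bigr)$.
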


\begin{proof}
  We follow the proof of \cite[Lemma~5.4]{CorRiv13}.  Let $C_1$ be the constant given by Lemma~\ref{l:uniform family}, and let $C_2$ be that given by Lemma~\ref{l:Mori}.
  Let~$\Delta_1 > 1$ and~$\Delta_2 > 1$ be the constants given by Lemmas~\ref{l:landing to central derivative} and~\ref{l:distortion to central 0}, respectively.
  From the proof of  \cite[Lemma~5.4]{CorRiv13}, we have
  $$ \Xi_1
  \=
  \inf_{c \in \cP_4(-2)} \diam(P_{c, 1}(0))
  >
  0
  \quad \text{and} \quad
  \Xi_2
  \=
  \sup_{c \in \cP_4(-2)} |Df_c(\beta(c))|
  < 
  +\infty,$$
  and  that for each~$c$ in~$\cP_3(-2)$ the closure of~$P_{c, 1}(0)$ is contained in~$\hW_c$ and
  $$ \Xi_3
  \=
  \inf_{c \in \cP_{4}(-2)} \modulus (\hW_c \setminus \cl{P_{c, 1}(0)})
  >
  0. $$
  Since for every $f$ in $\sF$ the conjugacy $h_f$ is $K$\nobreakdash-quasi\nobreakdash-conformal, we have
  $$ \hXi_3
  \=
  \inf_{f \in \cP_4(\sF)} \modulus (\hW_f \setminus \cl{P_{f,1}(0)})
  \ge \frac{\Xi_3}{K} >
  0, $$
  and by Lemma~\ref{l:Mori} and inequality~\eqref{e:derivative Mori},
  we have 
  $$ \hXi_1
  \=
  \inf_{f \in \cP_4(\sF)} \diam(P_{f, 1}(0))
  \ge C_2^{-K}\Xi_1^{K}
  >
  0$$
  and 
  $$
  \hXi_2
  \=
  \sup_{f \in \cP_4(\sF)} |Df(\beta(f))|
  \le C_1 \Xi_2^{\frac{1}{K}}< 
  +\infty. $$

  Let~$n \ge 4$ be a integer and~$f$ in~$\cK_n(\sF)$.
  Note that~$f^{q}$ maps a neighborhood~$\tW$ of~$W$ biholomorphically to~$\hW_f$ (Lemma~\ref{l:distortion to central 0}).
  So, if we put~$\tW' \= f(\tW)$, then~$f(0)$ is not in~$\tW'$ and~$f^{q - 1}$ 
  maps~$\tW'$ biholomorphically to~$\hW_f$; in particular we have 
  $$ \modulus (\tW' \setminus \cl{f(W)})
  =
  \modulus (\hW_f \setminus \cl{P_{f,1}(0)})
  \ge
  \hXi_3. $$
  Thus there is a constant~$A_1 > 0$ independent of~$n$, $f$ and~$q$ such that for every~$x$ in~$W$, we have 
  \begin{equation*}
    |f(x) - f(0)|
    \ge
    \dist(f(W), f(0))
    \ge
    \dist(f(W), \partial \tW')
    \ge
    A_1 \diam(f(W))
  \end{equation*}
  (\emph{cf}., \cite[Teichm{\"u}ller's module theorem, II, \S1.3]{LehVir73}).
  Thus, if we put~$A_2 \= C_1^{-2}(A_1 \Delta_2^{-1} \hXi_1)^{1/2}$, then by Lemmas~\ref{l:uniform family} and~\ref{l:distortion to central 0} with~$m = q - 1$ and with~$W$ replaced by~$f(W)$, we have
  $$ |Df(x)|
  \ge
  C_1^{-2} A_1^{1/2} \diam(f(W))^{1/2}
  \ge
  A_2 |Df^{q - 1} (f(x))|^{- 1/2}. $$
  This proves the lemma with constant~$C_4 = A_2^{-1}$.
\end{proof}

\begin{lemm}\label{l:contractions}
  There are constants $C_5 > 0$ and $\upsilon_1 > 0$ that only depend on~$K$ and~$R$, such that for every~$f$ in~$\cP_5(\sF)$, every~$\ell$ in~$\N$, and every connected
  component~$W$ of $g_f^{-\ell}(P_{f,1}(0))$, we have
  $$
  \max\{\diam(W), \diam(f(W)),\diam(f^2(W))\} \le C_5\exp(-\upsilon_1\ell).
  $$
\end{lemm}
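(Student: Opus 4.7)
The strategy is to reduce the three-term maximum to an estimate on $\diam(W)$, transfer the question to the quadratic polynomial $f_{c(f)}$ via Mori's bound, and then exploit the uniform hyperbolicity of $g_c$ on $Y_c \cup \tY_c$ via Schwarz--Pick.

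For the first reduction, inequality~\eqref{e:derivative estimate I} together with $\hX_f \subset B(0, R)$ gives $|Df| \le C_1 R$ on $X_f$. Since $W \subset P_{f, 1}(0) \subset X_f$ and $X_f$ is $f$\nobreakdash-invariant, $\diam(f^i(W)) \le (C_1 R)^i \diam(W)$ for $i \in \{1, 2\}$, so it suffices to estimate $\diam(W)$. Setting $W_0 \= h_f^{-1}(W)$, a connected component of $g_{c(f)}^{-\ell}(P_{c(f), 1}(0))$ contained in $X_{c(f)} \subset B(0, R)$, Lemma~\ref{l:Mori} yields $\diam(W) \le C_2 \diam(W_0)^{1/K}$. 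Hence it is enough to prove $\diam(W_0) \le \widetilde{C} \exp(-\widetilde{\upsilon} \ell)$ with constants uniform over $c = c(f) \in \cP_5(-2)$.

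For the quadratic estimate, by~\S\ref{ss:expanding Cantor set} the map $g_c \colon Y_c \sqcup \tY_c \to P_{c, 1}(0)$ is a disjoint union of two biholomorphisms, and the closures $\cl{Y_c}$ and $\cl{\tY_c}$ are disjoint and compactly contained in $P_{c, 1}(0)$. The two inverse branches $F_c^{Y} \colon P_{c, 1}(0) \to Y_c$ and $F_c^{\tY} \colon P_{c, 1}(0) \to \tY_c$ are therefore univalent self-maps of $P_{c, 1}(0)$ whose image lies in the fixed compact subset $\cl{Y_c \cup \tY_c}$, so by Schwarz--Pick each strictly contracts the hyperbolic metric of $P_{c, 1}(0)$ with a factor $\lambda_c < 1$ controlled by $\modulus(P_{c, 1}(0) \setminus \cl{Y_c \cup \tY_c})$. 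Any connected component $W_0$ of $g_c^{-\ell}(P_{c, 1}(0))$ has the form $F_c^{i_1} \circ \cdots \circ F_c^{i_\ell}(P_{c, 1}(0))$ with $i_j \in \{Y, \tY\}$, so its hyperbolic diameter in $P_{c, 1}(0)$ is at most $\lambda_c^{\ell - 1}$ times the hyperbolic diameter of $Y_c \cup \tY_c$. Uniformity of $\lambda_c$ and of the hyperbolic-to-Euclidean comparison on $\cl{Y_c \cup \tY_c}$ over $c \in \cP_5(-2)$ comes from the same type of uniform moduli arguments invoked in the proofs of Lemmas~\ref{l:landing to central derivative} and~\ref{l:distortion to central 0}, yielding $\diam(W_0) \le \widetilde{C} \lambda^\ell$ with uniform $\widetilde{C}$ and $\lambda \in (0, 1)$. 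Combined with the two reductions above, this proves the lemma with $\upsilon_1 = - K^{-1} \log \lambda$ and a suitable $C_5$.

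The main obstacle is precisely this last uniform lower bound on $\modulus(P_{c, 1}(0) \setminus \cl{Y_c \cup \tY_c})$ on the non\nobreakdash-precompact open set $\cP_5(-2)$. It is of the same nature as the uniform moduli already used in the proofs of Lemmas~\ref{l:landing to central derivative} and~\ref{l:distortion to central 0}, and can be obtained from the corresponding continuity and compactness arguments in~\cite{CorRiv13}.
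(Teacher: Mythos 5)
Your overall strategy (reduce the three diameters to a bound on $\diam(W)$, transfer to $f_{c(f)}$ via Lemma~\ref{l:Mori}, and estimate the quadratic-side diameter by hyperbolic contraction) is sound in outline, and the Schwarz--Pick argument you sketch for the quadratic-side diameter bound is a reasonable self-contained substitute for the paper's citation of~\cite[Lemma~2.4]{CorRiv15b}. The problem is in the very first reduction.

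You assert that $|Df|\le C_1 R$ on $X_f$ yields $\diam(f^i(W))\le (C_1R)^i\diam(W)$. A pointwise bound on $|Df|$ over $X_f$ produces a Euclidean Lipschitz estimate only when any two points of $W$ (and of $f(W)$) can be joined by a path in $X_f$ of length comparable to their Euclidean distance; this is automatic if $X_f$ (or at least a neighborhood of $W$ inside $X_f$) is convex, but $X_f=h_f(X_{c(f)})$ is the image of a Green's sublevel set under a $K$-quasiconformal homeomorphism and need not be convex, nor need a segment between two points of $W$ stay in $X_f$. One could rescue the step by first showing $\diam(W)$, $\diam(f(W))$, $\diam(f^2(W))$ are small compared to a uniform lower bound on the distance from $Y_f\cup\tY_f$ and its first two images to $\partial X_f$, but that requires exactly the diameter estimate you are in the process of proving, together with uniform separation constants you never introduce, and you do not carry this out. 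The paper avoids the issue by doing the reduction on the $f_{c(f)}$ side via the maximum principle: for fixed $w\in Y_c\cup\tY_c$, the holomorphic functions $z\mapsto (f_c^j(z)-f_c^j(w))/(z-w)$, $j=1,2$, on $X_c$ are bounded on $\partial P_{c,1}(0)$ by $\Xi_0^{-1}\Xi_1$ with $\Xi_0\=\inf_c\dist(Y_c\cup\tY_c,\partial P_{c,1}(0))>0$ and $\Xi_1\=\sup_c\diam\{G_c\le 2\}<+\infty$, so the maximum principle gives the Lipschitz bound on $Y_c\cup\tY_c$ directly, after which Lemma~\ref{l:Mori} transfers all three diameters at once. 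You should replace the derivative-bound reduction by such a maximum-principle argument (or explicitly justify the convexity/path-length step), and also note that the single-diameter estimate itself is available as~\cite[Lemma~2.4]{CorRiv15b}, which is how the paper handles the uniformity over $\cP_5(-2)$ you flag at the end.
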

\begin{proof}
  Let $K\ge 1$ and $R>0$ be the constants of the family $\sF$.  For this $R$, let $C_2$  be the constant of Lemma~\ref{l:Mori}.
  By \cite[Lemma~2.4]{CorRiv15b} there are constants $C'_0 > 0$ and $\upsilon'_0 > 0$ such that for every~$c$ in~$\cP_5(-2)$, every~$\ell$ in~$\N$, and every connected
  component~$W'$ of $g_{c}^{-\ell}(P_{c,1}(0))$, we have
  $$
  \diam(W') \le C'_0\exp(-\upsilon'_0\ell).
  $$
  Put
  $$
  \Xi_0 \= \inf_{c\in\cP_5(-2)}\dist(Y_c\cup \tY_c, \partial P_{c,1}(0)) > 0,
  $$
  $$
  \Xi_1 \= \sup_{c\in\cP_0(-2)}\diam(\{z\in \C \mid G_{c}(z)\le 2\}) < +\infty,
  $$
  and  $\hC_0\=\max\{C'_0,\Xi_0^{-1}\Xi_1C'_0\}$.
  Fix~$c$ in $\cP_5(-2)$, $\ell$ in~$\N$, and a connected
  component~$W'$ of $g_{c}^{-\ell}(P_{c,1}(0))$. 
  For every $w$ in $Y_c\cup \tY_c$ define the holomorphic maps
  $$
  z\mapsto  \frac{f_c(z)-f_c(w)}{z-w} \text{ and }  z\mapsto  \frac{f_c^2(z)-f_c^2(w)}{z-w}
  $$
  on $X_c$. Notice that for $z$ in  $\partial P_{c,1}(0)$ both maps are bounded from above by $\Xi_0^{-1}\Xi_1$.
  By the maximun principle for every~$z$ and~$w$ in $Y_c\cup \tY_c$ we have
  $$
  \max\{|f_c(z)-f_c(w)|, |f_c^2(z)-f_c^2(w)|\}
  \le
  \Xi_0^{-1}\Xi_1 |z-w|.
  $$
  In particular, 
  $$\max\{\diam(W'),  \diam(f_c(W')),\diam(f_c^2(W'))\}
  \le
  \hC_0\exp(-\upsilon'_0\ell).$$
  From Lemma~\ref{l:Mori} by putting $C_5\=C_2\hC_0^{\tfrac{1}{K}}$ and $\upsilon_1\=\upsilon'_0/K$, we conclude the proof of the lemma.
\end{proof}

\section{Estimating the geometric pressure function}
\label{s:estimating pressure}
The aim of this section is to prove Proposition~\ref{p:improved MS criterion}, stated at the beginning of~\S\ref{ss:Bowen type formula}.
For a uniform family, this proposition allows us to control the geometric pressure function with the itinerary of the critical point.
We achieve this aim using an inducing scheme and by adapting a similar result for quadratic maps~\cite[Proposition~D]{CorRiv13}.
The general scheme of this adaptation is the following.
The arguments in the original proof can be grouped into~3 types.
Purely combinatorial arguments depending only on the combinatorics of the Yoccoz puzzle.
Geometric estimates of the sizes of the puzzle pieces.
An estimate of the derivative of the first landing map to a neighborhood of the critical point and an estimate of the Lyapunov exponent of the critical value.
In the adaptation, the combinatorial arguments follow directly from the conjugacy, and the geometric estimates follow from the H\"older continuity of the conjugacy (\emph{cf}., Lemma~\ref{l:Mori}).
The third type of arguments use the Geometric Peierls condition in a crucial way.
This condition is introduced in Definition~\ref{d:geometric Peierls}, and it is used in Propositions~\ref{p:improved MS criterion}, \ref{p:Bowen type formula} and~\ref{p:estimating Z_1 by the postcritical series}, and in Lemmas~\ref{l:landing contribution} and~\ref{l:Poincare series}.

In~\S\ref{ss:Inducing scheme} we introduce an inducing scheme, and we prove a result on the existence of conformal measures and equilibrium states (Proposition~\ref{p:equilibria}) that is analogous to general results in~\cite{PrzRiv11}.
In~\S\ref{ss:Bowen type formula} we state Proposition~\ref{p:improved MS criterion}, and prove a Bowen type formula, and other general properties of the geometric pressure function.
Finally, the proof of Proposition~\ref{p:improved MS criterion} is given in~\S\ref{ss:proof of MS criterion}.

Throughout this section we fix a uniform family of quadratic-like maps~$\sF$, with constants~$K$ and~$R$.

\subsection{Inducing scheme}
\label{ss:Inducing scheme}
In this subsection we introduce the inducing scheme to estimate the geometric pressure function for maps in~$\cK_n(\sF)$.

Let~$n$ be an integer satisfying ${n \ge 5}$ and let~$f$ be in~$\cK_n(\sF)$.
Recall that ${\pV = \pP}$, and put 
$$
\pD\=\{z\in \pV  \mid  f^m(z) \in \pV  \text{ for some } m\ge 1\}.
$$
For~$w$ in~$\pD$ put~$m_f(w)\=\min\{m\in \N \mid f^m(w) \in \pV\}$, and call it the \emph{first return time of~$w$ to~$\pV$}.
The \emph{first return map to~$\pV$} is defined by
$$ \begin{array}{rcl}
     F_f \colon \pD & \to & \pV \\
     w & \mapsto & F_f(w) \= f^{m_f(w)}(w).
   \end{array} $$
   It is easy to see that~$\pD$ is a disjoint union of puzzle pieces; 
   so each connected component of~$\pD$ is a puzzle piece.
   Note furthermore that in each of these puzzle pieces~$W$, 
   the return time function~$m_f$ is constant; denote the common value
   of~$m_f$ on~$W$ by~$m_f(W)$.

   Throughout the rest of this subsection we put $\hV_f \= P_{f,4}(0)$. The proof of the following lemma is the same as for  \cite[Lemma~6.1]{CorRiv13}. 
   The reason  is that the combinatorics  and Koebe space are preserved by the conjugacy.

   \begin{lemm}[Uniform distortion bound]
     \label{l:bounded distortion to nice}
     There is a constant~$\Delta_3 > 1$ that only depends on~$K$ and~$R$, such that for each integer~$n \ge 5$, and each~$f$ in~$\cK_n(\sF)$ the following property holds: For every connected
     component~$W$ of~$\pD$ the map~$F_f|_W$ is univalent and its distortion is
     bounded by~$\Delta_3$.
     Furthermore, the inverse of~$F_f|_W$ admits a univalent extension to~$\hV_f$
     taking images in~$\pV$.
     In particular, $F_f$ is uniformly expanding with respect to the hyperbolic
     metric on~$\hV_f$.
   \end{lemm}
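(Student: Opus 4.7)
The plan is to transfer the corresponding result \cite[Lemma~6.1]{CorRiv13} for quadratic polynomials to the uniform family via the $K$-quasi\nobreakdash-conformal conjugacy~$h_f$. Two standard principles drive the transfer: first, the combinatorics of the Yoccoz puzzle and the induced first-return dynamics are preserved by~$h_f$; second, moduli of annuli are distorted by at most a factor of~$K$ under a $K$-quasi\nobreakdash-conformal map. Once these are in place, all three conclusions---univalence, distortion, and hyperbolic expansion---follow by the same arguments used for quadratic polynomials.

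The first step is to observe that by induction $f^m = h_f \circ f_{c(f)}^m \circ h_f^{-1}$ on the relevant domains, so $h_f$ carries each puzzle piece of~$f_{c(f)}$ onto the puzzle piece of~$f$ of the same depth, identifies $V_{c(f)}, \hV_{c(f)}, D_{c(f)}$ with $\pV, \hV_f, \pD$ respectively, and preserves first-return times. Writing $W' = h_f^{-1}(W)$, \cite[Lemma~6.1]{CorRiv13} produces a univalent branch of $f_{c(f)}^{-m_f(W)}$ extending $F_{c(f)}|_{W'}^{-1}$ from $V_{c(f)}$ to $\hV_{c(f)}$ with image in $V_{c(f)}$. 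The combinatorial fact that the relevant pullback of $\hV_{c(f)}$ avoids the critical point at every intermediate iterate is preserved by~$h_f$; since $f$ is holomorphic, the analogous pullback of $\hV_f$ under $f^{-m_f(W)}$ is therefore a univalent holomorphic branch $\varphi \colon \hV_f \to \pV$ extending $F_f|_W^{-1}$. This establishes univalence of $F_f|_W$ together with the claimed extension.

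Next I would secure a uniform lower bound $m_0 > 0$ on $\modulus(\hV_f \setminus \cl{\pV})$ depending only on~$K$ and~$R$. The corresponding bound for $f_{c(f)}$ appears in the proof of \cite[Lemma~6.1]{CorRiv13}; it depends continuously on~$c$ and is monotone increasing in~$n$ (since $V_{c}$ is a puzzle piece of depth~$n+1$, while $\hV_{c(f)}$ has the fixed depth~$4$), so compactness of $\cK_n$ from Proposition~\ref{p:ps} yields a bound uniform in $c \in \cK_n$ and in $n \ge 5$. Applying the $K$-quasi\nobreakdash-conformal map~$h_f$ loses at most a factor~$K$. The Koebe Distortion Theorem applied to the univalent extension~$\varphi$ then yields the distortion constant~$\Delta_3$ depending only on~$m_0$, hence only on~$K$ and~$R$. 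For the final claim, $\varphi$ is a univalent self-map of~$\hV_f$ whose image~$\pV$ is compactly contained in~$\hV_f$ with modulus at least~$m_0$, so the Schwarz--Pick lemma provides a uniform hyperbolic contraction factor, which is the desired uniform expansion of~$F_f$.

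I do not foresee a substantial obstacle: the argument is a mechanical transfer through the conjugacy, with all constants absorbed into~$K$ and~$R$. The one place requiring genuine care is the uniformity of the modulus bound $m_0$ simultaneously over $c \in \cK_n$ and across all $n \ge 5$; monotonicity of the moduli in~$n$ handles the latter, and compactness of $\cK_n$ together with continuous dependence of puzzle pieces on the parameter handles the former.
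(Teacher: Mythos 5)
Your proof is correct and takes essentially the same approach as the paper's, which simply cites \cite[Lemma~6.1]{CorRiv13} and notes that the combinatorics of the puzzle and the Koebe space are preserved (up to a factor of~$K$ in moduli) under the quasi\nobreakdash-conformal conjugacy~$h_f$. Your write-up fleshes out the same two transfer principles---conjugacy-invariance of the first-return combinatorics giving the univalent extension, and $K$-quasi\nobreakdash-invariance of annular moduli giving the uniform Koebe and Schwarz--Pick constants---that the paper leaves implicit.
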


   Denote by~$\pfD$ the collection of connected components of~$\pD$ and
   if $c(f)$ is real denote by~$\pfD^{\R}$ the sub-collection of~$\pfD$ of those sets intersecting~$I(f)$.
   For each~$W$ in~$\pfD$ denote by~$\phi_W \colon \hV_f \to \pV$ the extension of~$F_f|_{W}^{-1}$ given by Lemma~\ref{l:bounded distortion to nice}. Given an
   integer~$\ell \ge 1$ we denote by~$E_{f,\ell}$ (resp. $E_{f,\ell}^{\R}$) the
   set of all words of length~$\ell$ in the alphabet~$\pfD$ (resp. $\pfD^{\R}$).
   Again by Lemma~\ref{l:bounded distortion to nice}, for each integer~$\ell \ge 1$ and
   each word~$W_1 \cdots W_\ell$  in~$E_{f,\ell}$ the composition
   $$ \phi_{W_1 \cdots W_\ell} \= \phi_{W_1} \circ \cdots \circ \phi_{W_\ell} $$
   is defined on~$\hV_f$.
   We also put
   $$ m_f(W_1 \cdots W_\ell) \= m_f(W_1) + \cdots + m_f(W_\ell). $$

   For~$t, p$ in~$\R$ and an integer~$\ell \ge 1$ put
   \begin{align*}
     Z_{\ell}(t, p)
     & \=
       \sum_{\underline{W} \in E_{f,\ell}} \exp(-m_f(\underline{W}) p) \left( \sup \{
       |D\phi_{\underline{W}}(z) | \mid z \in \pV \} \right)^t
       \intertext{ and }
       Z_{\ell}^{\R}(t, p)
     & \=
       \sum_{\underline{W} \in E_{f,\ell}^{\R}} \exp(-m_f(\underline{W}) p) \left(
       \sup \{ |D\phi_{\underline{W}}(z) | \mid z \in \pV \} \right)^t.
   \end{align*}
   For a fixed~$t$ and~$p$ in~$\R$ 
   the sequence
   $$ \left(\frac{1}{\ell} \log Z_{\ell}(t, p) \right)_{\ell = 1}^{+\infty}
   \left( \text{resp. } \left(\frac{1}{\ell}  \log Z_{\ell}^{\R}(t, p) \right)_{\ell = 1}^{+\infty} \right) $$
   converges to the pressure function of~$F_f$ (resp. $F_f|_{\pD \cap I(f)}$) for the
   potential ${- t \log |D\pF| - p m_f}$; 
   we denote it by~$\psP(t, p)$ (resp. $\psP^{\R}(t, p)$).
   On the set where it is finite, the function~$\psP$ (resp.~$\psP^{\R}$) so
   defined is continuous and strictly decreasing in each of its variables.

   Given~$t > 0$ and~$p$ in~$\R$, a finite measure~$\tmu$ on~$\C$ that is supported on the maximal invariant set of~$F|_{D_f \cap \R}$ (resp.~$F$) is \emph{$(t, p)$\nobreakdash-conformal for~$F_f$}, if for every~$W$ in~$\fD^{\R}_f$ (resp. $\fD_f$), and every Borel subset~$U$ of~$W \cap \R$ (resp.~$W$), we have
   \begin{displaymath}
     \tmu(F_f(U))
     =
     \exp(p m_f(W)) \int_U |DF_f|^t \dd \tmu.
   \end{displaymath}
   Note that in this case we have
   \begin{multline}
     \label{eq:3}
     \exp(- p m_f(W)) \inf_{z \in W} |DF_f (z) |^{-t}
     \le
     \tmu(W)
     \\ \le
     \exp(- p m_f(W)) \sup_{z \in W} |DF_f (z) |^{-t}.
   \end{multline}

   \begin{prop}
     \label{p:equilibria}
     Let~$n \ge 5$ be an integer, $f$ in~$\cK_n(\sF)$, and~$t > 0$ such that
     \begin{equation}
       \label{e:Bowen formula}
       \sP^{\R}_f(t, P_f^{\R}(t)) = 0
       \left( \text{resp. }
         \sP_f(t, P_f(t)) = 0 \right).
     \end{equation}
     Then there is a~$(t, P^\R_f(t))$\nobreakdash-conformal (resp.~$(t, P_f(t))$\nobreakdash-conformal) probability measure~$\tmu$ for~$F_f$, and there is a probability measure~$\trho$ that is invariant by~$F_f$, absolutely continuous with respect to~$\tmu$, and whose density satisfies
     \begin{equation}
       \label{eq:4}
       \Delta_3^{-t} \le \frac{\dd \trho}{\dd \tmu} \le \Delta_3^t.
     \end{equation}
     If in addition
     \begin{multline}
       \label{e:time integrability}
       \sum_{W \in \fD_f^\R} m_{f}(W) \cdot \exp( - m_{f}(W) P^\R_f(t)) \sup_{w \in W \cap \R} |DF_f(w)|^{-t}
       \\ \left( \text{resp. }
         \sum_{W \in \fD_f} m_{f}(W) \cdot \exp( - m_{f}(W) P_f(t)) \sup_{w \in W} |DF_f(w)|^{-t}
       \right)
     \end{multline}
     is finite, then the measure
     $$ \hrho \= \sum_{W \in \fD_f^{\R}} \sum_{j = 0}^{m_{f}(W) - 1} (f^j)_* \left( \trho|_{W \cap \R} \right)
     \left( \text{resp. }
       \sum_{W \in \fD_f} \sum_{j = 0}^{m_{f}(W) - 1} (f^j)_* \left( \trho_t|_W \right)
     \right) $$
     is finite and the probability measure proportional to~$\hrho$ is the unique equilibrium state of~$f|_{I(f)}$ (resp.~$f|_{J(f)}$) for the potential~$-t \log |Df|$. 
   \end{prop}
   \begin{proof}
     The proof is standard, refer to~\cite[\S4]{PrzRiv11} for precisions.
     The existence of the conformal measure follows from the same arguments given in~\cite[Theorem~A in~\S4 and Proposition~4.3]{PrzRiv11}.
     To construct an absolutely continuous invariant measure, let~$\ell \ge 1$ be an integer, and let~$\uW$ be a word in~$E_{f, \ell}$.
     Then by Lemma~\ref{l:bounded distortion to nice} and~\eqref{eq:3}, for every integer~$\ell' \ge 1$ we have in the complex case
     \begin{equation*}
       \begin{split}
         \tmu (F_f^{-\ell'}(\phi_{\uW}(\pV)))
         & =
         \sum_{\uW' \in E_{f, \ell'}} \tmu \left( \phi_{\uW'} \circ \phi_{\uW} (\pV) \right)
         \\ & \ge
         \tmu(\phi_{\uW}(\pV)) \sum_{\uW' \in E_{f, \ell'}} \exp \left(- m_f(\uW') P_f(t) \right) \inf_{z \in \phi_{\uW}(\pV)} |D\phi_{\uW'}(z)|^t
         \\ & \ge
         \Delta_3^{-t} \tmu(\phi_{\uW}(\pV)) \sum_{\uW' \in E_{f, \ell'}} \tmu(\phi_{\uW'}(V_{f}))
         \\ & =
         \Delta_3^{-t} \tmu(\phi_{\uW}(\pV)).
       \end{split}
     \end{equation*}
     A similar argument shows that~$\tmu (F_f^{-\ell'}(\phi_{\uW}(\pV))) \le \Delta_3^{t} \tmu(\phi_{\uW}(\pV))$.
     Analogous inequalities also hold in the real case.
     Since these inequalities hold for every~$\ell \ge 1$, and every~$\uW$ in~$E_{f, \ell}$, it follows that any weak* accumulation measure of~$\left( \frac{1}{k} \sum_{\ell = 0}^{k - 1} (F_f)_*^\ell \tmu \right)_{k = 1}^{+\infty}$ is an invariant probability measure satisfying the desired properties.

     To prove the last statement, note that by~\eqref{eq:3} and~\eqref{eq:4}, our hypothesis~\eqref{e:time integrability} implies that
     $$ \sum_{W \in \fD_f^\R} m_{f}(W) \trho (W)
     \left( \text{resp. } \sum_{W \in \fD_f} m_{f}(W) \trho_t (W) \right) $$
     is finite, so the measure~$\hrho$ is finite.
     The last statement of the proposition follows as in the proof of~\cite[Proposition~A]{CorRiv13}.
   \end{proof}

   \subsection{Estimating the 2~variables pressure function}
   \label{ss:Bowen type formula}
   The following is our main tool to estimate the 2~variables pressure function, in order to verify the hypotheses of Proposition~\ref{p:equilibria}.

   \begin{proproman}
     \label{p:improved MS criterion}
     Let~$\kappa>0$ and~$\upsilon>0$ be given.
     Then there are~$n_3 \ge 5$ and~$C_6 > 1$ that only depend on~$K$, $R$, $\kappa$, and~$\upsilon$, such that for every integer~$n \ge n_3$, and every~$f$ in~$\cK_n(\sF)$ satisfying the Geometric Peierls Condition with constants~$\kappa$ and~$\upsilon$,
     the following properties hold for each~$t \ge  2\log2/\upsilon$.
     \begin{enumerate}
     \item[1.]
       For~$p$ in~$[- t \pchicrit/2, 0)$ satisfying
       $$ \sum_{k = 0}^{+\infty} \exp(- (n + 3k)p)|Df^{n + 3k} (f(0))|^{-t/2}
       \ge
       C_6^{t}, $$
       we have ${\psP^{\R}(t, p) > 0}$ and ${P_f^{\R}(t) > p}$.
       If in addition the sum above is finite, then~$\psP(t, p)$ is finite.
     \item[2.]
       For~$p \ge - t \pchicrit/2$ satisfying
       $$ \sum_{k = 0}^{+\infty} \exp(- (n + 3k)p)|Df^{n + 3k} (f(0))|^{-t/2}
       \le
       C_6^{-t}, $$
       we have $\psP(t, p) < 0$ and~$P_f(t) \le p$.
     \item[3.]
       For~$p \ge - t \pchicrit/2$ satisfying
       $$ \sum_{k = 0}^{+\infty} k \cdot \exp(- (n + 3k)p)|Df^{n + 3k} (f(0))|^{-t/2}
       <
       +\infty, $$
       we have
       $$ \sum_{W \in \pfD} m_f(W) \cdot \exp(- m_f(W)p) \sup_{z \in W}|D\pF(z)|^{-t}
       <
       +\infty. $$
     \end{enumerate}
   \end{proproman}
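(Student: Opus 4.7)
The plan is to carry over the proof of the analogous statement \cite[Proposition~D]{CorRiv13} for quadratic polynomials to the uniform family setting, using the tools prepared in Section~\ref{ss:uniform estimates}. The combinatorial structure of the Yoccoz puzzle, and hence of the domain $\pfD$ of the first return map $F_f$, is transported unchanged by the conjugacies $h_f$; what must be re-established with constants depending only on $K$ and $R$ are the metric and derivative estimates. The needed ingredients are: bounded distortion for $F_f$ (Lemma~\ref{l:bounded distortion to nice}), the critical-first-return bound $|Df(x)| \ge C_4^{-1} |Df^{q-1}(f(x))|^{-1/2}$ (Lemma~\ref{l:first return to central derivative}), the derivative and Koebe-space control along preimages of $P_{f,1}(0)$ (Lemmas~\ref{l:landing to central derivative} and~\ref{l:distortion to central 0}), exponential diameter contraction under $g_f^{-1}$ (Lemma~\ref{l:contractions}), and the Geometric Peierls lower bound on derivatives of $L_f$.

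The core step is to classify each $W \in \pfD$ by the combinatorial pattern of its $F_f$-return and estimate $\sup_W |DF_f|^{-t}$ accordingly. A purely central return, in which the orbit of $W$ shadows the critical orbit for some $n+3k+1$ iterates before landing in $\pV$, contributes $\sup_W |DF_f|^{-t} \le C^{t} |Df^{n+3k}(f(0))|^{-t/2}$ by Lemma~\ref{l:first return to central derivative} together with the distortion control in Lemmas~\ref{l:landing to central derivative} and~\ref{l:distortion to central 0}. A non-central landing excursion of length $m$ contributes an extra factor of at most $\kappa^{-t} \exp(-t(\chicritf/2 + \upsilon)m)$ by the Geometric Peierls Condition, while Lemma~\ref{l:contractions} bounds by $2^m$ the number of pieces with a given landing depth $m$. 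Summing over $W \in \pfD$, one obtains an asymptotic factorization $Z_1(t,p) \asymp S_{\mathrm{cr}}(t,p) \cdot \Sigma_{\mathrm{P}}(t,p)$, where
\[
 S_{\mathrm{cr}}(t,p) \= \sum_{k \ge 0} \exp(-(n+3k)p) \, |Df^{n+3k}(f(0))|^{-t/2}
\]
is the central sum and
\[
 \Sigma_{\mathrm{P}}(t,p) \le \kappa^{-t} \sum_{m \ge 0} \exp\!\bigl(m(\log 2 - p - t\chicritf/2 - t\upsilon)\bigr)
\]
is the Peierls tail. For $p \ge -t\chicritf/2$, the hypothesis $t \ge 2\log 2/\upsilon$ makes the exponent in the last sum at most $-t\upsilon/2$ per step, so $\Sigma_{\mathrm{P}}(t,p)$ is bounded by a universal constant independent of $n$, $p$, and $f$.

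Items~1 and~2 then follow at once: $\psP^{\R}(t,p)$ and $\psP(t,p)$ have the sign of $\log Z_1(t,p)$, and choosing $C_6$ larger than the multiplicative constants implicit in the factorization makes this sign coincide with that of $\log(S_{\mathrm{cr}}(t,p)/C_6^{t})$, yielding both directions. The additional clause in item~1 uses that finiteness of $S_{\mathrm{cr}}$ together with boundedness of $\Sigma_{\mathrm{P}}$ forces $\psP^{\R}(t,p) < +\infty$, which by monotonicity of $p \mapsto \psP^{\R}(t,p)$ gives the strict inequality $P_f^{\R}(t) > p$. Item~3 follows by the same analysis applied to $-\partial_p Z_1(t,p) = \sum_W m_f(W) \exp(-m_f(W)p) \sup_W |DF_f|^{-t}$, whose central contribution becomes $\sum_k (n+3k) \exp(-(n+3k)p)|Df^{n+3k}(f(0))|^{-t/2}$ --- finite by hypothesis --- while the Peierls contribution remains uniformly bounded by $t \ge 2\log 2/\upsilon$. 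The main obstacle is the combinatorial bookkeeping: enumerating each $W \in \pfD$ exactly once as a central block followed by a possibly empty chain of landings governed by $L_f$ so that the factorization $Z_1 \asymp S_{\mathrm{cr}} \cdot \Sigma_{\mathrm{P}}$ holds without over- or under-counting. This was carried out in detail for $f = f_c$ in \cite[\S6]{CorRiv13}, and the uniform estimates of Section~\ref{ss:uniform estimates} reduce the general case to that one, with all constants absorbed into a single $C_6^t$ factor once $n \ge n_3$ is large enough.
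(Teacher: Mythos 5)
Your high-level architecture — decompose each return domain $W \in \pfD$ into a central block shadowing the postcritical orbit plus a landing excursion, bound the landing tail via the Geometric Peierls Condition, and use the hypothesis $t \ge 2\log 2/\upsilon$ to make the tail geometrically summable — is the same as the paper's (via its Proposition~\ref{p:estimating Z_1 by the postcritical series} and Lemmas~\ref{l:landing contribution}, \ref{l:level k contribution}). However, there is a genuine gap in how you pass from the induced pressure $\psP^{\R}(t,p)$, $\psP(t,p)$ to the geometric pressures $P_f^{\R}(t)$, $P_f(t)$. Your factorization argument and sign analysis concern only the induced quantities, and you invoke "monotonicity of $p \mapsto \psP^{\R}(t,p)$" as if that alone produced statements about $P_f^{\R}(t)$ and $P_f(t)$. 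It does not: what is needed is a Bowen-type formula asserting $P_f^{\R}(t) = \inf\{p : \psP^{\R}(t,p) \le 0\}$ (and likewise in the complex case), which is Proposition~\ref{p:Bowen type formula} in the paper. The inequality $P_f(t) \le \inf\{p : \psP(t,p) \le 0\}$, i.e.\ the direction that makes item~2 work and gives the strict inequality in item~1, is not a soft fact: its proof (Lemma~\ref{l:Poincare series}) requires controlling the Poincar\'e series via the Geometric Peierls Condition and the estimate of Lemma~\ref{l:landing contribution}. Omitting this step means your proposal never actually bounds $P_f(t)$ from above in part~2, nor does it get $P_f^{\R}(t) > p$ in part~1.

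A few secondary inaccuracies worth flagging. First, you claim $\Sigma_{\mathrm{P}}(t,p)$ is "bounded by a universal constant independent of $n$, $p$, and $f$," but the prefactor $\kappa^{-t}$ makes it grow like $C^t$ in $t$ when $\kappa < 1$; this is why the paper's statement carries $C_8^t$ rather than a constant, and why the threshold in the proposition is $C_6^t$. You do account for this at the end ("choosing $C_6$ larger\ldots"), so the conclusion survives, but the intermediate claim as stated is wrong. Second, you attribute the bound "at most $2^m$ pieces with a given landing depth $m$" to Lemma~\ref{l:contractions}; that lemma controls diameters, not cardinalities, and the $2^m$ count is just the degree of $f^m$. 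Third, in part~3 the $m_f(W)$ weight does not simply split into the central weight $n+3k$ plus a bounded Peierls piece: for $W \in \fD_{f,k}$, $m_f(W) = (n+3k)+(\text{excursion length})$ and the excursion-weighted landing sum must be controlled separately (this is what the second inequality of Lemma~\ref{l:level k contribution} provides, and what the paper handles via the shifted parameter $p' = p - t\upsilon/3$). Your phrase "the Peierls contribution remains uniformly bounded" glosses over exactly the step that needs the weighted version of Lemma~\ref{l:landing contribution} with the factor $m_f(z)+1$.
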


   The proof of this proposition is given in~\S\ref{ss:proof of MS criterion}.
   In the rest of this subsection we prove~2 results that are used in the proof of the proposition above.
   The first is a Bowen type formula relating~$P_f^{\R}$ (resp.~$P_f$) to the~2 variables pressure function of~$F_f$ (Proposition~\ref{p:Bowen type formula}).
   The second is a lower bound for the pressure function (Proposition~\ref{p:critical line}).

   \begin{proproman}[Bowen type formula]
     \label{p:Bowen type formula}
     For every $\kappa>0$, every $\upsilon>0$, every 
     integer~$n \ge 5$, and every $f$ in~$\cK_n(\sF)$ satisfying the Geometric Peierls Condition with constants $\kappa$ and $\upsilon$,
     we have for each~$t \ge 2\log2/\upsilon$,
     $$ P_f^{\R}(t)
     =
     \inf \left\{ p \mid \psP^{\R}(t, p) \le 0 \right\}
     \left( \text{resp. } P_f(t)
       =
       \inf \left\{ p \mid \psP(t, p) \le 0 \right\} \right). $$
   \end{proproman}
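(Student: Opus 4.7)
I treat the real case; the complex case is analogous. Set $p^{*} \= \inf\{p \mid \psP^{\R}(t,p) \le 0\}$, well defined by the continuity and strict monotonicity of $\psP^{\R}(t,\cdot)$ established in~\S\ref{ss:Inducing scheme}. The plan is to verify the two inequalities $P_f^{\R}(t) \ge p^{*}$ and $P_f^{\R}(t) \le p^{*}$, linking the pressure of $f$ and of the induced map $F_f$ through Abramov's formula.

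The inequality $P_f^{\R}(t) \ge p^{*}$ is the routine direction. For any $p < p^{*}$ we have $\psP^{\R}(t,p) > 0$, and the variational principle for the countable Markov system associated with $F_f|_{\pD \cap I(f)}$ (which, by Lemma~\ref{l:bounded distortion to nice}, has uniformly bounded distortion and is conjugate to a full shift) provides an ergodic $F_f$\nobreakdash-invariant probability measure $\nu$ with $h_\nu(F_f) - t\int\log|DF_f|\,d\nu - p\int m_f\,d\nu > 0$, and with $\int m_f\,d\nu < \infty$. Kac--Abramov then projects $\nu$ to an ergodic $f$\nobreakdash-invariant probability measure $\mu$ supported in $I(f)$, satisfying $h_\mu(f) = h_\nu(F_f)/\int m_f\,d\nu$ and $\int\log|Df|\,d\mu = \int\log|DF_f|\,d\nu/\int m_f\,d\nu$. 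Dividing the displayed inequality by $\int m_f\,d\nu$ gives $h_\mu(f) - t\int\log|Df|\,d\mu > p$, hence $P_f^{\R}(t) > p$; letting $p \uparrow p^{*}$ completes this half.

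The reverse inequality $P_f^{\R}(t) \le p^{*}$ requires showing $h_\mu(f) - t\int\log|Df|\,d\mu \le p^{*}$ for every ergodic $\mu \in \sM_f^{\R}$. If $\mu(\pV) > 0$, I induce $\mu$ to an $F_f$\nobreakdash-invariant ergodic measure $\nu$ and run Abramov in reverse: the $F_f$ variational principle gives $h_\nu(F_f) - t\int\log|DF_f|\,d\nu - p^{*}\int m_f\,d\nu \le \psP^{\R}(t,p^{*}) \le 0$, so dividing by $\int m_f\,d\nu$ yields the desired bound. If $\mu(\pV) = 0$, then $f$\nobreakdash-invariance concentrates $\mu$ on the survivor set $\Sigma \= J(f) \setminus \bigcup_{k \ge 0} f^{-k}(\pV)$. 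Approximating $\mu$\nobreakdash-a.e.\ $x \in \Sigma$ by points $z_N$ whose orbit first lands in $\pV$ at time $N \to \infty$, applying the Geometric Peierls Condition~\eqref{eq:7} to $z_N$, and transferring the derivative lower bound to $x$ through the Koebe-type distortion estimates of Lemma~\ref{l:distortion to central 0}, yields the uniform Lyapunov bound $\chi_\mu(f) \ge \chicritf/2 + \upsilon$. Combined with $h_\mu(f) \le \htop(f|_{I(f)}) = \log 2$ and $t \ge 2\log 2/\upsilon$, this gives
\[
h_\mu(f) - t\int\log|Df|\,d\mu
\le \log 2 - t(\chicritf/2 + \upsilon)
\le -\log 2 - t\chicritf/2.
\]
A separate lower estimate of $\psP^{\R}(t,\cdot)$ at this value of $p$, produced by isolating those inducing branches of $F_f$ whose derivatives nearly saturate the Peierls bound and showing their contribution to $Z_1(t,p)$ is already at least~$1$, then yields $p^{*} \ge -\log 2 - t\chicritf/2$, which absorbs the non-inducible contribution.

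The main obstacle is precisely the case $\mu(\pV) = 0$: the Geometric Peierls Condition bounds derivatives of the landing map $L_f$, not of $f$ on $\Sigma$, so establishing the uniform Lyapunov lower bound requires the pullback-and-approximation argument above together with careful use of the distortion estimates of~\S\ref{ss:uniform geometric estimates}. The precise threshold $t \ge 2\log 2/\upsilon$ emerges from balancing the topological entropy bound $\log 2$ against the $\upsilon$\nobreakdash-gain in Lyapunov exponent from Peierls, so that non-inducible measures cannot outscore the supremum attained by inducible ones.
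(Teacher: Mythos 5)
Your route is genuinely different from the paper's. The paper reduces the statement to the tree-pressure characterization~\eqref{e:pressure of original}, $P_f(t) = \limsup_m \frac1m\log\sum_{y\in f^{-m}(0)}|Df^m(y)|^{-t}$, and then invokes Lemma~\ref{l:Poincare series}, which ties the sign of $\psP(t,p)$ to convergence or divergence of the Poincar\'e series; the Geometric Peierls Condition and the threshold $t\ge 2\log 2/\upsilon$ enter there through Lemma~\ref{l:landing contribution}. You instead go through the variational principle for the countable Markov shift $F_f$ and Kac--Abramov. The first half ($P_f^{\R}(t)\ge p^*$ by projecting $F_f$\nobreakdash-equilibria down) is fine in outline, as is the inducible case of the reverse inequality: for each $p>p^*$ the Markov variational principle gives $h_\nu - t\int\log|DF_f|\,d\nu - p\int m_f\,d\nu \le 0$, and Abramov yields $h_\mu - t\chi_\mu\le p$.

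The gap is in the two remaining ingredients for the non-inducible case, and one of them is a genuine conceptual error. First, the a.e.\ Lyapunov lower bound $\chi_\mu\ge\chicritf/2+\upsilon$ for $\mu$ with $\mu(\pV)=0$ is plausible but is asserted, not proved; the comparison ``$|Df^m(x)|\approx|DL_f(z_N)|$'' needs a concrete choice of a univalent pull-back with definite Koebe space around both $x$ and $z_N$ (that control is available, via Lemma~\ref{l:distortion to central 0}, but it is real work and not a throwaway line). Second, and more seriously, the source you propose for the lower bound $p^*\ge -\log 2 - t\chicritf/2$, namely ``branches of $F_f$ whose derivatives nearly saturate the Peierls bound,'' is the wrong mechanism. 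A lower bound on $Z_1(t,p)$ needs return branches with \emph{small} $|DF_f|$, and those are exactly the branches shadowing the critical orbit produced in Lemma~\ref{l:critical line}, with $|DF_f|\lesssim|Df^{n+3k}(f(0))|^{1/2}\approx\exp((n+3k)\chicritf/2)$. Those branches do \emph{not} saturate the Peierls bound: combining the first-return estimate (Lemma~\ref{l:first return to central derivative}) with the Geometric Peierls Condition gives $|DF_f|\gtrsim\exp\bigl((\chicritf/2+\upsilon)m_f/2\bigr)$, so Lemma~\ref{l:critical line}'s branches sit far above that floor since $\chicritf>2\upsilon$ (Proposition~\ref{p:transporting Peierls}). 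In fact Peierls saturation plays no role here; what forces $Z_1(t,p)$ to diverge at $p=-\log 2 - t\chicritf/2$ is the half-exponent coming from the critical fold, as in Lemma~\ref{l:critical line}. Equivalently, and within your own framework, the cleanest fix is to note that the periodic measures from the proof of Proposition~\ref{p:critical line} are inducible, charge $\pV$, and have pressures $\to -t\chicritf/2$, so the Abramov inequality you already established for inducible measures gives $p^*\ge -t\chicritf/2$ directly. As written, the step ``showing their contribution to $Z_1(t,p)$ is already at least $1$'' does not follow from Peierls and is not justified.
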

   The proof of this proposition is at the end of the subsection.
   It uses several results that are also used in the next subsection.
   The proof that the geometric pressure function is smaller or equal than the infimum is simple and depends basically on Lemma~\ref{l:bounded distortion to nice}.
   The other inequality is much more involved.
   It requires the Geometric Peierls condition, and a lower bound on the pressure function that we proceed to state and prove.
   \begin{proproman}[Critical line]
     \label{p:critical line}
     For every integer~$n \ge 5$, and every~$f$ in~$\cK_n(\sF)$, we have
     $$ \chiinfR
     \=
     \inf \left\{ \int \log |Df| \dd \mu \mid \mu \in \sM_f^{\R} \right\}
     \le \pchicrit/2. $$
     In particular, for each~$t > 0$ we have
     $$ P_f(t) \ge P_f^{\R}(t) \ge - t \pchicrit/2. $$
   \end{proproman}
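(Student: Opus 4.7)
The inequality $P_f(t) \ge P_f^\R(t)$ follows immediately from the inclusion $\sM_f^\R \subset \sM_f$, which holds because $I(f) \subset J(f)$ for $f \in \cK_n(\sF)$ (as noted in~\S\ref{ss:Parameters}). The inequality $P_f^\R(t) \ge -t \chiinfR$ is a direct consequence of the variational principle:
$$ P_f^\R(t) = \sup_{\mu \in \sM_f^\R}\left[ h_\mu(f) - t \int \log|Df| \dd \mu \right] \ge -t \chiinfR. $$
The heart of the proposition is therefore the inequality $\chiinfR \le \chicritf/2$. To prove it, I would exhibit, for each large integer $N$, an $f$-invariant probability measure $\mu_N \in \sM_f^\R$ satisfying $\int \log|Df| \dd \mu_N \le \chicritf/2 + o(1)$; since $\chiinfR \le \int \log|Df| \dd \mu_N$ for each $N$, the desired bound follows on letting $N \to \infty$. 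No weak-$*$ accumulation argument (and no attendant upper semicontinuity issue at $z = 0$) is required.

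The measures $\mu_N$ will be taken as periodic measures supported on real periodic orbits of $f$ that pass within distance $\varepsilon_N \asymp \exp(-N\chicritf/2)$ of the critical point $0$. The heuristic behind the factor $1/2$ is that $f$ is quadratic at $0$, $f(w) - f(0) = w^2(1 + w R_f(w))$, so that the two $f$-preimages of a point at distance $\delta$ from $c_1 \= f(0)$ lie at distance $\asymp \delta^{1/2}$ from $0$: inverse branches of $f$ near the critical value contract by a square root. Composing with the $(N-1)$-fold expansion along the critical orbit, $|Df^{N-1}(c_1)| \asymp \exp((N - 1)\chicritf)$, produces inverse branches of $f^N$ whose image is a neighborhood of $0$ of diameter $\asymp \varepsilon_N$. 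At a real periodic point $p_N$ produced from such an inverse branch, the chain rule gives
$$ |Df^N(p_N)| \asymp 2|p_N| \cdot \exp((N-1)\chicritf) \asymp 2 \varepsilon_N \exp((N-1)\chicritf), $$
from which $\chi_f(p_N) = \frac{1}{N}\log|Df^N(p_N)| \le \chicritf/2 + O(1/N)$.

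The principal technical obstacle is the rigorous extraction of real periodic orbits with the prescribed proximity to $0$ and the tight control on $\chi_f(p_N)$. The existence of the relevant inverse branches with a definite Koebe space rests on the non-recurrence of the critical orbit --- that is, the fact that $f^k(0) \in \Lambda_f$ stays at definite distance from $0$ for all $k \ge n+1$, which is built into the definition of $\cK_n$. The distortion along the $N$-step shadowing segment is then controlled by the Koebe Distortion Theorem of~\S\ref{ss:Koebe}, and the uniform geometric estimates of~\S\ref{ss:uniform geometric estimates} allow the bounds to be taken uniformly over $\sF$. Reality of $p_N$ follows from working with the inverse branch that is symmetric under complex conjugation, combined with an intermediate value argument on an appropriate real interval containing both $0$ and a suitable point on the critical orbit of $f$.
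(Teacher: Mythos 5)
Your proposal takes essentially the same approach as the paper: construct real periodic orbits passing close to the critical point, exploit the quadratic local form at $0$ to gain the square-root factor, control distortion via Koebe and the non-recurrence of the critical orbit, obtain reality from the conjugation-symmetry of $h_f$, and conclude by evaluating the Lyapunov exponents of the corresponding periodic measures. The key derivative estimate you sketch is precisely what the paper packages as Lemma~\ref{l:critical line}, which produces pieces $\hW\subset P_{f,n+3k+2}(0)$ with first-return time $n+3k+3$ and $\sup_{\hW}|D\pF|\le C_7|Df^{n+3k}(f(0))|^{1/2}$; the fixed points of $\pF|_{\hW}$ are exactly your $p_N$.
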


   The proof of this proposition is given after the following lemma.

   \begin{lemm}
     \label{l:critical line}
     There is a constant~$C_7 > 0$ that only depends on~$K$ and~$R$, such that for each integer~$n \ge 5$ and each 
     ~$f$ in~$\cK_n(\sF)$, the following property holds: For every integer~$k \ge 0$ there is a connected component~$\hW$ of~$\pD$ contained in~$P_{f, n + 3k + 2}(0)$, such that $h_f^{-1}(\hW)$ intersects~$\R$, and such that~$m_f(\hW) = n + 3k + 3$ and
     $$ \sup_{z \in \hW} |D\pF(z)|
     \le
     C_7 |Df^{n + 3k} (f(0))|^{1 / 2}. $$
   \end{lemm}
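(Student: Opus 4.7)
The plan is to transport the analogous statement for the quadratic polynomial $f_c$ (with $c \coloneqq c(f) \in \cK_n$) from \cite{CorRiv13} via the quasi-conformal conjugacy $h_f$, taking care to perform all derivative estimates directly on the $f$\nobreakdash-side via a single application of Koebe distortion so as to avoid the power-of-$K$ losses of Lemma~\ref{l:qc transport}.

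\textbf{Construction of $\hW$.} For the quadratic map $f_c$, the analogous construction (a preliminary step in the proof of \cite[Proposition~D]{CorRiv13}) produces a connected component $W_c$ of $D_{f_c}$ contained in $P_{c,n+3k+2}(0)$, meeting $\R$, with first return time $m_{f_c}(W_c) = n+3k+3$; moreover, writing $F_{f_c}|_{W_c} = f_c^{n+3k+2} \circ f_c|_{W_c}$, the outer factor $f_c^{n+3k+2}$ extends biholomorphically from a neighborhood $\widetilde W_c$ of $f_c(W_c)$ onto the depth-zero piece $\hW_c$ from Lemma~\ref{l:distortion to central 0}, with $f_c(0) \in \widetilde W_c$ and $\modulus(\widetilde W_c \setminus \cl{f_c(W_c)})$ bounded below uniformly. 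Setting $\hW \coloneqq h_f(W_c)$, the conjugacy yields $\hW \subset P_{f,n+3k+2}(0)$, $h_f^{-1}(\hW) = W_c$ meets $\R$, and $m_f(\hW) = n+3k+3$.

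\textbf{Derivative estimate.} Decompose $F_f|_{\hW} = f^{n+3k+2} \circ f|_{\hW}$. By $K$\nobreakdash-quasi-invariance of the conformal modulus, $h_f(\widetilde W_c)$ is a neighborhood of $f(\hW)$ whose modulus over $\cl{f(\hW)}$ is at least $1/K$ times the quadratic bound, and $f^{n+3k+2}$ extends biholomorphically to it. A single direct application of Koebe distortion, using that the image $F_f(\hW)$ is a full component of $\pV$ with diameter uniformly bounded above and below, yields for every $x \in \hW$:
\begin{displaymath}
|Df^{n+3k+2}(f(x))| \asymp |Df^{n+3k+2}(f(0))| \asymp \diam(f(\hW))^{-1},
\end{displaymath}
with constants depending only on $K$ and $R$ (note $f(0) \in h_f(\widetilde W_c)$). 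Combined with~\eqref{e:derivative estimate II} (giving $|x|^2 \asymp |f(x) - f(0)| \lesssim \diam(f(\hW))$) and~\eqref{e:derivative estimate I} ($|Df(x)| \asymp |x|$), this produces $|Df(x)| \lesssim \diam(f(\hW))^{1/2}$, hence
\begin{displaymath}
|DF_f(x)| = |Df(x)| \cdot |Df^{n+3k+2}(f(x))| \lesssim |Df^{n+3k+2}(f(0))|^{1/2}.
\end{displaymath}
The two extra iterates $f^{n+3k+1}(0), f^{n+3k+2}(0) \in \Lambda_f$ have uniformly bounded derivatives (Lemma~\ref{l:bound chicritf}), so $|Df^{n+3k+2}(f(0))| \lesssim |Df^{n+3k}(f(0))|$, completing the bound with $C_7$ depending only on $K$ and $R$.

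\textbf{Main obstacle.} The crux is bypassing Lemma~\ref{l:qc transport}, whose $K$\nobreakdash-power losses would ruin the exponent $1/2$. Instead, one transfers only the \emph{existence} of the biholomorphic extension of $f^{n+3k+2}$ and a uniform lower bound on its Koebe modulus through~$h_f$, then runs all derivative estimates purely on the $f$\nobreakdash-side via a single, sharp application of Koebe distortion.
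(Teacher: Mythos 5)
The central derivative estimate is broken, because the parenthetical assertion ``$f(0) \in h_f(\widetilde W_c)$'' is false.  By construction, $\widetilde W_c$ is the domain on which the outer iterate $f_c^{n+3k+2}$ extends biholomorphically from a neighborhood of $f_c(W_c)$.  Equivalently, in the $f$\nobreakdash-coordinates, $h_f(\widetilde W_c) = f(\tW)$ where $\tW$ is the Koebe enlargement of $\hW$ on which $f^{n+3k+3}$ is biholomorphic.  Since $0 \notin \tW$ (else $Df^{n+3k+3}$ would vanish there) and $f(0)$ has $0$ as its unique preimage, we must have $f(0) \notin f(\tW) = h_f(\widetilde W_c)$.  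So you cannot invoke Koebe distortion on $h_f(\widetilde W_c)$ to compare $|Df^{n+3k+2}(f(x))|$ with $|Df^{n+3k+2}(f(0))|$; the critical value is outside the domain of the univalent extension.  This is not a small technicality — it is exactly the difficulty the lemma is designed to overcome, and you would need to supply an alternative mechanism.  There are two subsidiary problems as well: $\diam(\pV) = \diam(P_{f,n+1}(0))$ is bounded above but \emph{not} uniformly bounded below (it shrinks as $n \to \infty$), so the asserted $\asymp \diam(f(\hW))^{-1}$ identification fails uniformly in $n$; and the claim $|f(x)-f(0)|\lesssim \diam(f(\hW))$ is not justified (since $f(0) \notin f(\hW)$, one only gets $\dist(f(0),f(\hW)) \gtrsim \diam(f(\hW))$ from the Teichm\"uller argument, not the reverse).

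The paper's proof handles the obstruction by working with the \emph{specific} point $z_{\hW} \in \hW$ satisfying $f^{n+3k+3}(z_{\hW}) = 0$.  The crucial observation is that $f^{n+3k+1}(z_{\hW})$ and $f^{n+3k+1}(0)$ both lie in $P_{f,1}(0)$, hence are at uniformly bounded distance $\le \hXi_0$.  One then applies Koebe distortion (Lemma~\ref{l:distortion to central 0}) not on $\tW'$ but on $P_{f,n+3k+1}(f(0))$ — a puzzle piece that \emph{does} contain $f(0)$ — to pull this terminal distance bound back $n+3k$ iterates and obtain $|f(z_{\hW}) - f(0)| \le \Delta_2\hXi_0\,|Df^{n+3k}(f(0))|^{-1}$.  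Lemma~\ref{l:uniform family} then gives $|Df(z_{\hW})| \lesssim |f(z_{\hW})-f(0)|^{1/2}$, whence the $|Df^{n+3k}(f(0))|^{1/2}$ bound, which one upgrades to a sup over $\hW$ via the distortion bound $\Delta_3$ of Lemma~\ref{l:bounded distortion to nice}.  Your high-level strategy (work directly on the $f$\nobreakdash-side, transport only moduli through $h_f$ to avoid $K$\nobreakdash-power losses) is in fact exactly what the paper does, so the ``main obstacle'' you identify is not really the obstacle; what is missing is the shadowing point $z_{\hW}$ and the second, $f(0)$\nobreakdash-containing domain for Koebe distortion.
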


   \begin{proof}
     We follow the proof of~\cite[Lemma~6.3]{CorRiv13}. Let $C_1$ be the constant given by Lemma~\ref{l:uniform family} and let $C_2$ be the constant given by Lemma~\ref{l:Mori}.
     Let~$\Delta_2 > 1$ and~$\Delta_3 > 1$ be the constants given by 
     Lemmas~\ref{l:distortion to central 0} 
     and~\ref{l:bounded distortion to nice}, respectively.
     From the proof of~\cite[Lemma~6.3]{CorRiv13}, we have
     \begin{align*}
       \Xi_0
       & \=
         \sup_{c \in \cP_4(-2)} \diam(P_{c, 1}(0))
         <
         +\infty
         \intertext{ and }
         \Xi_1
       & \=
         \sup_{c \in \cP_4(-2)} \sup_{z \in P_{c, 1}(0)} |Df_c^2 (z)|
         <
         +\infty.
     \end{align*}

     By Lemma~\ref{l:Mori} and inequality~\eqref{e:derivative Mori}, we have
     \begin{align*}
       \hXi_0
     & \=
     \sup_{f \in \cP_4(\sF) } \diam(P_{f, 1}(0)) \le C_2 \Xi_0^{\frac{1}{K}}
     <
       +\infty,
       \intertext{ and } 
       \hXi_1
       & \=
         \sup_{f \in \cP_4(\sF)} \sup_{z \in P_{f,1}(0)} |Df^2 (z)| \le C_1^2 \Xi_1^{\frac{2}{K}}
     <
     +\infty.
     \end{align*}

     Fix an integer~$n \ge 5$, $f$ in~$\cK_n(\sF)$, and an integer~$k \ge 0$.
     By the proof of~\cite[Lemma~6.3]{CorRiv13}  there is a connected component~$W$ of~$D_{c(f)}$ contained in~$P_{c(f),n + 3k + 2}(0)$, that intersects~$\R$, and such that~$m_{c(f)}(W) = n + 3k + 3$. 
     The set  $\hW=h_f(W)$ verifies the desired properties of the lemma. To finish the proof it remains to prove the inequality.

     Let~$z_{\hW}$ be the unique point in~$\hW$ such that~$f^{n + 3k + 3}(z_{\hW}) = 0$.
     Then~$f^{n + 3k + 1}(z_{\hW})$ belongs to~$P_{f,1}(0)$, so by definition
     of~$\hXi_0$ we have
     \begin{equation}
       \label{e:image distance estimate}
       |f^{n + 3k + 1}(z_{\hW}) - f^{n + 3k}(f(0))|
       \le
       \diam(P_{f,1}(0))
       \le
       \hXi_0.
     \end{equation}
     Since~$f_c^n$ maps~$P_{c, n + 1}(c)$ biholomorphically to~$P_{c, 1}(0)$ and~$f_c^n(c) \in \Lambda_c$, it follows that~$f_c^{n + 3k}$ maps~$P_{c, n + 3k+ 1}(c)$ biholomorphically to~$P_{c, 1}(0)$, and the same holds for $f^{n+3k}$ and the sets~$P_{f, n + 3k+ 1}(f(0))$ and~$P_{f,1}(0)$; so the distortion of~$f^{n + 3k}$
     on~$P_{f, n + 3k + 1}(f(0))$ is bounded by~$\Delta_2$ (Lemma~\ref{l:distortion to central 0}) and for each point~$y$ in~$P_{f, n + 3k + 1}(f(0))$ we have
     \begin{equation}
       \label{e:binding estimate}
       \Delta_2^{-1} |Df^{n + 3k} (f(0))|
       \le
       |Df^{n + 3k} (y)|
       \le
       \Delta_2 |Df^{n + 3k} (f(0))|.
     \end{equation}
     Together with~\eqref{e:image distance estimate} this implies that,
     $$ |f(z_{\hW}) - f(0)|
     \le
     \Delta_2 \hXi_0 |Df^{n + 3k} (f(0))|^{-1} $$
     and by Lemma~\ref{l:uniform family},
     $$ |Df(z_{\hW})|
     \le
     C_1^2 \Delta_2^{1/2} \hXi_0^{1/2} |Df^{n + 3k} (f(0))|^{-1/2}. $$
     Combined with~\eqref{e:binding estimate} with~$y = f(z_{\hW})$, this implies
     \begin{equation*}
       \label{e:Julia's estimate}
       |Df^{n + 3k + 1} (z_{\hW})|
       \le
       C_1^2 \Delta_2^{3/2} \hXi_0^{1/2} |Df^{n + 3k} (f(0))|^{1/2}.
     \end{equation*}
     Putting~$C_7 \= C_1^2\Delta_3 \hXi_1 \Delta_2^{3/2} \hXi_0^{1/2}$, we get by Lemma~\ref{l:bounded distortion to nice}
     \begin{align*}
       \sup_{z \in \hW} |D\pF(z)|
       & \le
         \Delta_3  |Df^{n + 3k + 3} (z_{\hW})|
       \\ & \le
            \Delta_3 \hXi_1 |Df^{n + 3k + 1} (z_{\hW})|
       \\ & \le
            C_7 |Df^{n + 3k} (f(0))|^{1/2}.
     \end{align*}
   \end{proof}

   \begin{proof}[Proof of Proposition~\ref{p:critical line}]
     The proof follows from Lemma~\ref{l:critical line} by constructing a sequence of  measures supported in periodic points whose Lyapunov exponents converge to~$\pchicrit$.
     For details see the proof of~\cite[Proposition~6.2]{CorRiv13}.
   \end{proof}

   \begin{lemm}
     \label{l:landing contribution}
     Let~$\kappa>0$ and~$\upsilon>0$ be given.
     Then there is~$C_8 > 1$ that only depends on~$K$, $R$, $\kappa$, and~$\upsilon$, such that  for every integer~$n \ge 5$, and every~$f$ in~$\cK_n(\sF)$ satisfying the Geometric Peierls Condition with constants $\kappa$ and $\upsilon$,
     the following property holds: For every
     $$ t \ge  2\log2/\upsilon,
     p \ge - t (\pchicrit + \tfrac{2\upsilon}{3})/2, $$
     and~$y$ in~$\pV$ we have
     $$ \tL_{t, p}(y)
     \=
     1 + \sum_{z \in \pL^{-1}(y)} (m_f(z) + 1) \exp(- m_f(z) p) |D\pL (z)|^{-t}
     \le
     C_8^t.$$
   \end{lemm}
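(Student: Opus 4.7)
The plan is to control each term in the sum over $L_f^{-1}(y)$ by combining the lower bound on $|DL_f(z)|$ given by the Geometric Peierls Condition with the lower bound on $p$, thereby extracting a decay rate of the form $\exp(-2t\upsilon m_f(z)/3)$, and then to sum the resulting series after a trivial counting of the branches of $L_f$ by return time. The main input is the Geometric Peierls Condition~\eqref{eq:7}, together with the hypothesis on $p$.

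First, for any $z \in L_f^{-1}(y)$ with $m := m_f(z)$, I combine the two hypotheses as follows. The Geometric Peierls Condition yields $|DL_f(z)|^{-t} \le \kappa^{-t}\exp\bigl(-t(\chicritf/2 + \upsilon)m\bigr)$, and the bound $p \ge -t(\chicritf + 2\upsilon/3)/2$ gives $\exp(-mp) \le \exp\bigl(mt(\chicritf/2 + \upsilon/3)\bigr)$. Multiplying these inequalities causes the $\chicritf$-terms to cancel, leaving
\begin{equation*}
\exp(-m_f(z) p)\,|DL_f(z)|^{-t} \le \kappa^{-t}\exp\bigl(-\tfrac{2t\upsilon}{3}m_f(z)\bigr).
\end{equation*}

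Next, I count the preimages under $L_f$ of $y$ with a given return time. Each connected component $W$ of $D'_f$ gives, by definition of the first landing map and univalence of $L_f|_W$ onto $V_f$ (analogous to Lemma~\ref{l:bounded distortion to nice}), exactly one preimage of $y$; the components with $m_f(W) = m$ correspond to inverse branches of $f^m$ landing in $V_f$ whose orbit avoids $V_f$ at times $0,\dots,m-1$. Since $\deg(f^m)=2^m$, the number $N_m(y)$ of such preimages satisfies $N_m(y) \le 2^m$. Putting the two estimates together,
\begin{equation*}
\tL_{t,p}(y) \le 1 + \kappa^{-t}\sum_{m=1}^{\infty}(m+1)\,2^m\exp\bigl(-\tfrac{2t\upsilon}{3}m\bigr).
\end{equation*}

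The hypothesis $t \ge 2\log 2/\upsilon$ is tailored exactly so that $2\exp(-2t\upsilon/3) \le 2^{-1/3} < 1$, and so the series is dominated by the absolutely convergent series $\sum_{m\ge 1}(m+1)2^{-m/3}$, whose sum is a universal constant~$S$. Consequently $\tL_{t,p}(y) \le 1 + S\kappa^{-t}$. Choosing
\begin{equation*}
C_8 := \max\bigl\{2,\; \kappa^{-1}(1+S)^{\upsilon/(2\log 2)}\bigr\}
\end{equation*}
makes $1 + S\kappa^{-t} \le C_8^t$ uniformly for $t \ge 2\log 2/\upsilon$, which completes the proof. The only delicate point is verifying that $L_f|_W$ is univalent onto $V_f$ for each component of $D'_f$ so that the counting argument $N_m(y)\le 2^m$ is valid and that the per-branch estimate applies; this follows from the standard puzzle structure already used in Lemma~\ref{l:bounded distortion to nice}, so no new geometry beyond the Geometric Peierls Condition is required.
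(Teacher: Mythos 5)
Your proof is correct and follows essentially the same route as the paper: combine the Geometric Peierls lower bound on $|DL_f(z)|$ with the hypothesis on $p$ to obtain the per-branch decay $\kappa^{-t}\exp\bigl(-\tfrac{2}{3}\upsilon t\, m_f(z)\bigr)$, count branches by $\#\{z \in L_f^{-1}(y) \mid m_f(z)=m\} \le 2^m$ since such $z$ lie in $f^{-m}(y)$, and sum the resulting series using the hypothesis $t \ge 2\log2/\upsilon$ so that $2\exp(-2\upsilon t/3) \le 2^{-1/3}$. One small slip in your explicit constant: with $C_8 = \max\{2,\kappa^{-1}(1+S)^{\upsilon/(2\log 2)}\}$ the required bound $1 + S\kappa^{-t}\le C_8^t$ can fail at $t = 2\log 2/\upsilon$ when $\kappa \ge 1$ and $\upsilon$ is large enough that $2^{2\log2/\upsilon} < 1+S$; the correct choice, matching the paper's, is $C_8 = \max\{1,\kappa^{-1}\}(1+S)^{\upsilon/(2\log 2)}$, which absorbs the additive $1$ uniformly in $t\ge 2\log2/\upsilon$ regardless of $\kappa$ and $\upsilon$.
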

   \begin{proof}
     Fix $\kappa>0$ and $\upsilon>0$, and put $t_0 \= 2\log2/\upsilon$.
     We prove the lemma with
     $$ C_8
     \=
     \max \{1, \kappa^{-1} \} \left( 1 - \exp(\log 2-(2/3)\upsilon t_0) \right)^{- 2/t_0}.$$ 
     Fix $n\ge 5$ and $f$ in~$\cK_n(\sF)$ satisfying the Geometric Peierls Condition with constants $\kappa$ and $\upsilon$.

     Fix~$t \ge t_0$, $p \ge - t(\pchicrit/2 +    \upsilon/2)$, and~$y$ in~$\pV$.
     Since $f$ satisfies the Geometric Peierls Condition, for every  $z \in \pL^{-1}(y)$ we have
     $$ \exp(- m_f(z) p) |D\pL (z)|^{-t}
     \le \kappa^{-t} \exp(-(2/3)\upsilon\cdot  m_f(z)t). $$
     On the other hand, for each integer~$m \ge 1$ the set~$\{ z \in \pL^{-1}(y) \mid
     m_f(z) = m \}$ is contained in~$f^{-m}(y)$ and therefore it contains at most~$2^m$ points.
     So, we have
     \begin{equation*}
       \tL_{t, p}(y)
       \le
       \max \{1, \kappa^{-1} \}^t \sum_{m = 0}^{+\infty} (m + 1) \exp(m(\log2- (2/3)\upsilon t)) \le C_8^t.
       \qedhere
     \end{equation*}
   \end{proof}

   \begin{lemm}
     \label{l:Poincare series}
     Given an integer~$n \ge 5$ and~$f$ in~$\cK_n(\sF)$, the following property
     holds for every~$t$ in~$(0, +\infty)$ and every real number~$p$: If~$\psP^{\R}(t, p) > 0$ (resp. $\psP(t, p) > 0$), then the series
     \begin{multline}
       \label{e:Poincare series}
       \sum_{j = 1}^{+\infty} \exp(- j p) \sum_{y \in f|_{I(f)}^{-j}(0)} |Df^j (y)|^{-t}  
       \\ 
       \left( \text{resp. $\sum_{j = 1}^{+\infty} \exp(- j p) \sum_{y \in f^{-j}(0)} |Df^j (y)|^{-t}$} \right)  
     \end{multline}
     diverges.
     On the other hand,  if for some~$\kappa>0$ and~$\upsilon>0$ the map~$f$ satisfies the Geometric Peierls Condition with constants $\kappa$ and $\upsilon$,  then for  every
     $$ t \ge  2\log 2/\upsilon
     \text{ and }
     p \ge P_f^{\R}(t) - t \tfrac{\upsilon}{3} 
     \left( \text{resp. $p \ge P_f(t) - t \tfrac{\upsilon}{3}$ } \right)$$
     satisfying $\psP^{\R}(t, p) < 0$ (resp. $\psP(t, p) < 0$), the series above converges.
   \end{lemm}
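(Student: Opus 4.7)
The plan is to relate the Poincaré series in~\eqref{e:Poincare series} to the partition functions $Z_\ell(t,p)$ (resp.~$Z_\ell^{\R}(t,p)$) of the induced first-return map $\pF$, via the standard ``landing then returns'' decomposition of preimages of~$0$. Concretely, $0 \in \pV$ by the definition of~$\pV$ for $f \in \cK_n(\sF)$, so a preimage $y \in f^{-j}(0)$ falls into exactly one of two cases: either $y \in \pV$ and $y = \phi_{\uW}(0)$ for a unique nonempty word $\uW \in E_{f,\ell}$ with $m_f(\uW) = j$; or $y \notin \pV$, so that $m_0 \= m_f(y)$ is the first landing time of $y$, $\pL(y) = \phi_{\uW}(0)$ for a unique (possibly empty, in which case $\phi_{\uW}(0) = 0$) word $\uW$ with $m_f(\uW) = j - m_0$, and the chain rule gives $|Df^j(y)| = |D\pL(y)| \cdot |D\phi_{\uW}(0)|^{-1}$. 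By Lemma~\ref{l:bounded distortion to nice}, each $\phi_{\uW}$ extends univalently to $\hV_f \supsetneq \pV$, so Koebe applied to the fixed-modulus annulus $\hV_f \setminus \pV$ yields a uniform distortion bound on $\pV$ for $\phi_{\uW}$ independent of $\uW$. This makes $|D\phi_{\uW}(0)|^t$ comparable to $\sup_{z \in \pV}|D\phi_{\uW}(z)|^t$ up to a factor of $\Delta_3^t$.

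For the divergence statement, I will use that $\psP(t,p) = \lim_\ell \frac{1}{\ell}\log Z_\ell(t,p) > 0$ directly implies $\sum_\ell Z_\ell(t,p) = +\infty$. Restricting the Poincaré series to the subfamily of preimages $\{\phi_{\uW}(0)\}_{\uW}$ and applying the distortion bound gives
\begin{displaymath}
\sum_{j\ge 1} e^{-jp}\sum_{y \in f^{-j}(0)}|Df^j(y)|^{-t}
\ge
\sum_{\ell \ge 1}\sum_{\uW \in E_{f,\ell}}e^{-m_f(\uW)p}|D\phi_{\uW}(0)|^t
\ge
\Delta_3^{-t}\sum_{\ell\ge 1}Z_\ell(t,p),
\end{displaymath}
which is infinite. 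The real case is handled identically by restricting to $E_{f,\ell}^{\R}$.

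For the convergence statement, the hypothesis $p \ge P_f(t) - t\upsilon/3$ combined with Proposition~\ref{p:critical line} yields $p \ge -t\pchicrit/2 - t\upsilon/3$, which is precisely the range in which Lemma~\ref{l:landing contribution} applies and gives, uniformly in $z \in \pV$,
\begin{displaymath}
\sum_{y\in \pL^{-1}(z)} e^{-m_f(y)p}|D\pL(y)|^{-t} \le \tL_{t,p}(z) \le C_8^t.
\end{displaymath}
Plugging this into the full decomposition and using the trivial inequality $|D\phi_{\uW}(0)|^t \le \sup_{z \in \pV}|D\phi_{\uW}(z)|^t$ bounds the Poincaré series above by $(1+C_8^t)\bigl(1 + \sum_{\ell\ge 1}Z_\ell(t,p)\bigr)$; since $\psP(t,p)<0$ forces the $Z_\ell$ to decay exponentially in $\ell$, this is finite.

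The one subtle point is the uniform-in-$\uW$ distortion bound for the nested compositions $\phi_{\uW}$, which is exactly the role of the Koebe space $\hV_f \supsetneq \pV$ built into Lemma~\ref{l:bounded distortion to nice}; once this is in place everything else is bookkeeping via the chain rule, Lemma~\ref{l:landing contribution} for the landing sum, and the basic dichotomy between $\sum_\ell Z_\ell(t,p) = +\infty$ and $\sum_\ell Z_\ell(t,p) < +\infty$ corresponding to the sign of $\psP(t,p)$.
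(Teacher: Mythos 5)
Your proof is correct and follows essentially the same route as the paper: both decompose preimages of $0$ via first landing to $\pV$ followed by returns under $F_f$, use Lemma~\ref{l:bounded distortion to nice} (the Koebe space $\hV_f = P_{f,4}(0) \supsetneq \pV$) for the uniform distortion of the inverse branches $\phi_{\uW}$, use Lemma~\ref{l:landing contribution} via Proposition~\ref{p:critical line} to control the landing contribution, and conclude by comparing with $\sum_\ell Z_\ell(t,p)$ according to the sign of $\psP(t,p)$. The only cosmetic difference is that you phrase the decomposition in terms of backward inverse branches $\phi_{\uW}(0)$ while the paper phrases it forward in terms of $\ell(z)$, the number of visits to $\pV$ before hitting $0$; these are the same bookkeeping.
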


   \begin{proof} 
     The proof of the first part of the lemma depends on Lemma~\ref{l:bounded distortion to nice} and it follows the same lines that the first part of \cite[Lemma~6.5]{CorRiv13}.

     We prove the last assertion concerning~$f|_{J_f}$; the arguments apply without change to~$f|_{I(f)}$. 
     Fix  some positive constants $\kappa$ and $\upsilon$ 
     and  let~$C_8 > 1$ be given by Lemma~\ref{l:landing contribution} for the constants $\kappa$ and $\upsilon$.
     Let~$f$ be a map in~$\sF$ satisfying the Geometric Peierls Condition with constants $\kappa$ and $\upsilon$, and let
     $$ t \ge 2\log 2 / \upsilon
     \text{ and }
     p \ge P_f(t) - t \tfrac{\upsilon}{3}  $$
     be such that~$\sP_f(t, p) < 0$.
     By Proposition~\ref{p:critical line} we have 
     $$p \ge - t(\chicritf + \tfrac{2}{3} \upsilon)/2,$$ so~$t$ and~$p$ satisfy the hypotheses of Lemma~\ref{l:landing contribution}.
     Given an integer~$m \ge 1$ and a point~$z$ in~$f^{-m}(0)$ denote by~$\ell(z)$ the number of those~$j$ in~$\{0, \ldots, m - 1 \}$ such that~$f^j(z)$ is in~$\pV$.
     In the case where~$z$ is not in~$\pV$, this point is in the domain of~$L_f$ and we have~$\ell(z) = 0$ if and only~$L_f(z) = 0$.
     Moreover, if~$z$ is not in~$\pV$ and~$\ell(z) \ge 1$, then~$L_f(z)$ is in the domain of~$F_f^{\ell(z)}$ and~$F_f^{\ell(z)}(L_f(z)) = 0$.
     So, if~$z$ is not in~$\pV$ we have in all the cases,
     $$ |Df_f^m(z)| = |DF_f^{\ell(z)} (L_f(z))| \cdot |DL_f (z)|. $$
     Then Lemma~\ref{l:landing contribution} and our hypothesis $\psP(t, p) < 0$ imply that the series~\eqref{e:Poincare series} is bounded from above by
     \begin{multline*}
       \tL_{t, p}(0) + \sum_{\ell = 1}^{+\infty} \sum_{y \in F_f^{- \ell}(0)} \tL_{t, p}(y) \exp(- (m_f(F_f^{\ell - 1}(y)) + \cdots + m_f(y))p) |DF_f^{\ell} (y)|^{-t}
       \\ \le
       C_8^t \left( 1 + \sum_{\ell = 0}^{+\infty} Z_{f, \ell}(t, p) \right)
       < +\infty.
     \end{multline*}
   \end{proof}
   \begin{proof}[Proof of Proposition~\ref{p:Bowen type formula}]
     We follow the proof of \cite[Proposition~C]{CorRiv13}.

     We prove the assertion for~$f|_{J(f)}$; the arguments apply without change to~$f|_{I(f)}$.
     Let~$\Delta_3 > 1$ be given by Lemma~\ref{l:bounded distortion to nice}.
     Let~$n \ge 5$ be an integer and let~$f$ be  in~$\cK_n(\sF)$.
     We use that fact that for each~$t > 0$ we have
     \begin{equation}
       \label{e:pressure of original}
       P_f(t)
       =
       \limsup_{m \to +\infty} \frac{1}{m} \log \sum_{y \in f^{-m}(0)}
       |Df^m(y)|^{-t},
     \end{equation}
     see for example~\cite{PrzRivSmi04}.

     Fix~$t \ge 2\log 2 /\upsilon$.
     We use the fact that the function~$p \mapsto \psP(t, p)$ is strictly decreasing where it is finite, see~\S\ref{ss:Inducing scheme}.
     In particular, for each~$p$ satisfying
     \begin{equation*}
       p
       <
       p_0
       \=
       \inf \{ p \mid \psP(t, p) \le 0 \},
     \end{equation*}
     we have~$\psP(t, p) > 0$.
     Lemma~\ref{l:Poincare series} implies that for such~$p$ the series~\eqref{e:Poincare series} diverges and by~\eqref{e:pressure of original} we have~$P_f(t) \ge p$. It follows that,~$P_f(t) \ge p_0$.
     To prove the reverse inequality, suppose by contradiction~$p_0 <  P_f(t)$ and let~$p$ be in the interval~$(p_0, P_f(t))$ satisfying~$p \ge P_f(t) - t \tfrac{\upsilon}{3} $.
     Then~$\psP(t, p) < 0$ and by Lemma~\ref{l:Poincare series} the series~\eqref{e:Poincare series} converges.
     Then~\eqref{e:pressure of original} implies~$P_f(t) \le p$ and we obtain a contradiction that completes the proof of the proposition.
   \end{proof}

   \subsection{Proof of Proposition~\ref{p:improved MS criterion}}
   \label{ss:proof of MS criterion}
   The final step in the proof of Proposition~\ref{p:improved MS criterion} is given after the following proposition, which estimates the partition function of the induced map in terms of the derivative of the iterates of the map at its critical value.

   Let~$n \ge 4$ be an integer and~$f$  in~$\cK_n(\sF)$.
   Since the critical point~$z = 0$ does not belong to~$\pD$ (\emph{cf}., \cite[Lemma~4.2]{CorRiv13}), for each integer~$\ell \ge 1$, each connected component of~$\pD$ intersecting~$P_{f, \ell}(0)$ is contained
   in~$P_{f, \ell}(0)$.
   Define the \emph{level} of a connected component~$W$ of~$\pD$ as the largest integer~$k \ge 0$ such that~$W$ is contained in~$P_{f, n + 3k + 2}(0)$.
   Given an integer~$k \ge 0$ denote by~$\fD_{f,k}$ the collection of all connected components of~$\pD$ of level~$k$; we have~$\pfD = \bigcup_{k = 0}^{+\infty} \fD_{f,k}$, and for every~$W$ in~$\fD_{f, k}$ we have~$m_f(W) \ge n + 3k + 1$.

   \begin{prop}
     \label{p:estimating Z_1 by the postcritical series}
     Let~$\kappa > 0$ and~$\upsilon > 0$ be given.
     Then there are~$n_4 \ge 5$ and~$C_9 > 1$ that only depend on~$K$, $R$, $\kappa$, and~$\upsilon$, such that for every integer~$n \ge n_4$, and every~$f$ in~$\cK_n(\sF)$ satisfying the Geometric Peierls Condition with constants~$\kappa$ and~$\upsilon$, the following properties hold for each~$t \ge 2\log 2 / \upsilon$
     and each integer $k \ge 0$:
     \begin{enumerate}
     \item[1.]
       For each~$p$ in~${(-\infty, 0)}$, we have
       \begin{multline*} 
         \sum_{W \in \fD_{f,k}\cap \pfD^\R} \exp(-m_f(W)p) \inf_{z\in W} |D\pF(z)|^{-t}
         \\ >
         C_9^{-t} \exp(- (n + 3k)p)|Df^{n + 3k} (f(0))|^{-t/2}.
       \end{multline*}
     \item[2.]
       For each~$p \ge - t \pchicrit/2 - t \upsilon/3$, we have
       \begin{multline*} 
         \sum_{W \in \fD_{f,k}} \exp(-m_f(W)p) \sup_{z\in W} |D\pF(z)|^{-t}
         \\
         \begin{aligned}
           & \le
           \sum_{W \in \fD_{f,k}} (m_f(W)-(n+3k)) \exp(-m_f(W)p) \sup_{z\in W} |D\pF(z)|^{-t}
           \\ & <
           C_9^{t} \exp(- (n + 3k)p)|Df^{n + 3k} (f(0))|^{-t/2}.
         \end{aligned}
       \end{multline*}
     \end{enumerate}
   \end{prop}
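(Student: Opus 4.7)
The plan is to adapt the argument for quadratic maps (the analogue of \cite[Proposition~6.7]{CorRiv13}) to the uniform-family setting: combinatorial facts about the Yoccoz puzzle transfer via the conjugacy~$h_f$, geometric size comparisons via the H\"older bound of Lemma~\ref{l:Mori}, derivative bounds via the lemmas of~\S\ref{ss:uniform geometric estimates}, and the role previously played by specific features of the quadratic orbit is taken over by the Geometric Peierls Condition together with Lemma~\ref{l:landing contribution}.

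For Item~1 I would apply Lemma~\ref{l:critical line} to obtain a component $\hW \in \pfD^{\R}$ contained in $P_{f,n+3k+2}(0)$ with $m_f(\hW) = n+3k+3$ and $\sup_{\hW}|DF_f| \le C_7|Df^{n+3k}(f(0))|^{1/2}$. First I would check that the level of $\hW$ equals exactly~$k$: otherwise $\hW \subset P_{f,n+3k+5}(0)$ would force $f^{n+3k+4}(\hW) \subset P_{f,1}(0)$, contradicting $f^{n+3k+4}(\hW) = f(V_f) \subset P_{f,1}(\tbeta(f))$ which is disjoint from $P_{f,1}(0)$. Invoking the uniform distortion bound $\Delta_3$ of Lemma~\ref{l:bounded distortion to nice}, and using $\exp(-m_f(\hW)p) \ge \exp(-(n+3k)p)$ (valid since $p<0$), the single contribution of $\hW$ already yields the lower bound with $C_9$ of the order of $\Delta_3 C_7$.

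For Item~2 I would decompose $F_f|_W = f^\ell \circ f^{n+3k+1}|_W$ for each $W \in \fD_{f,k}$, where $\ell := m_f(W) - (n+3k+1) \ge 0$ and $\tW := f^{n+3k+1}(W) \subset P_{f,1}(0)$. The argument has two ingredients. First, the pointwise bound
\[
  |Df^{n+3k+1}(z)| \ \ge\ (\text{const})^{-1}\, |Df^{n+3k}(f(0))|^{1/2} \quad \text{for every } z \in W
\]
is established as follows: since components of $\pD$ are puzzle pieces, ``level exactly $k$'' forces $W \cap P_{f,n+3k+5}(0) = \emptyset$, and by Koebe on $f^{n+3k+3}\colon P_{f,n+3k+4}(f(0)) \to P_{f,1}(0)$ one finds $\dist(0,\partial P_{f,n+3k+5}(0)) \asymp |Df^{n+3k}(f(0))|^{-1/2}$, hence $|z| \gtrsim |Df^{n+3k}(f(0))|^{-1/2}$ for every $z \in W$; combined with $|Df(z)| \asymp |z|$ (Lemma~\ref{l:uniform family}) and $|Df^{n+3k}(f(z))| \asymp |Df^{n+3k}(f(0))|$ (Lemma~\ref{l:distortion to central 0}) this gives the claim. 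Second, when $\ell \ge 1$ the image $\tW$ is a component of $L_f^{-1}(V_f)$ with $L_f$-landing time exactly~$\ell$, and $W \mapsto \tW$ injects $\{W \in \fD_{f,k}: \ell \ge 1\}$ into such components; Koebe distortion for $L_f$ then yields
\[
  \sup_W |DF_f|^{-t} \ \le\ (\text{const})^t\, |Df^{n+3k}(f(0))|^{-t/2}\, |DL_f(z_{\tW})|^{-t}
\]
for any $L_f$-preimage $z_{\tW} \in \tW$ of a fixed $y_0 \in V_f$. Summing with weight $(m_f(W)-(n+3k)) = \ell + 1$ and factoring $\exp(-m_f(W)p) = \exp(-(n+3k+1)p)\exp(-\ell p)$, the inner sum over $\tW$ is bounded by $\tL_{t,p}(y_0) \le C_8^t$ from Lemma~\ref{l:landing contribution}, whose hypothesis $p \ge -t(\chicritf + \tfrac{2\upsilon}{3})/2$ coincides with ours. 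The special term $\ell = 0$ and the leftover factor $\exp(-p) \le (\text{const})^t$ (since $|p| = O(t)$ on the admissible range) are absorbed into~$C_9^t$.

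The main obstacle is the pointwise lower bound on $|Df^{n+3k+1}|$: since $f^{n+3k+1}$ is $2$-to-$1$ on $P_{f,n+3k+2}(0)$ (ramified at~$0$), Lemma~\ref{l:first return to central derivative} does not apply to~$W$ directly. The replacement relies crucially on the puzzle structure --- each component of~$\pD$ is a puzzle piece, so either entirely inside or entirely outside~$P_{f,n+3k+5}(0)$ --- together with the Koebe-based comparison $\diam(P_{f,n+3k+5}(0)) \asymp |Df^{n+3k}(f(0))|^{-1/2}$, which uses that $|Df^3|$ along the orbit of $f(0)$ in~$\Lambda_f$ is uniformly bounded by Lemma~\ref{l:bound chicritf}. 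Once this is in place, the remainder is a straightforward reorganization of the sum via the landing-map estimates.
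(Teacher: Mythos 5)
Your proposal is correct and follows the paper's argument in both parts: Part~1 rests on the single component $\hW$ supplied by Lemma~\ref{l:critical line} together with the observation (which you verify correctly) that its level is exactly~$k$, and Part~2 is the decomposition $F_f|_W = f^{\ell}\circ f^{n+3k+1}|_W$ combined with the landing-sum bound of Lemma~\ref{l:landing contribution} --- what you write out explicitly is precisely the content of Lemma~\ref{l:level k contribution}, which the paper invokes as a black box. One small slip: $W \mapsto \tW = f^{n+3k+1}(W)$ is \emph{not} an injection on $\{W \in \fD_{f,k} : \ell \ge 1\}$, since $f^{n+3k+1}$ is $2$-to-$1$ on $P_{f,n+3k+2}(0)$ (as you yourself note in the last paragraph) and both preimage components of a given $\tW$ can be return components of the same level; the map is only at most $2$-to-$1$, which costs a harmless factor of~$2$ and is exactly the origin of the prefactor~$2$ in Lemma~\ref{l:level k contribution}.
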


   The proof of this proposition is given after the following lemma.

   \begin{lemm}
     \label{l:level k contribution}
     There is~$C_{10} > 1$ that only depends on~$K$ and~$R$, such that for each integer~$n \ge 5$, each~$f$ in~$\cK_n(\sF)$, each integer~$k \ge 0$, and each pair of real numbers~$t > 0$ and~$p$, we have
     \begin{multline*}
       \sum_{W \in \fD_{f,k}} \exp(- m_f(W) p) \sup_{z \in W}|D\pF(z)|^{-t}
       \\
       \begin{aligned}
         & \le
         2 C_{10}^t  \exp(- (n + 3k + 1) p)|Df^{n + 3k} (f(0))|^{-t/2}
         \\ & \quad \cdot
         \left( 1 + \sum_{w \in \pL^{-1}(0) \text{ in } P_{f,1}(0)} \exp(- m_f(w) p) |D\pL(w)|^{-t} \right).  
       \end{aligned}
     \end{multline*}
     Moreover,
     \begin{multline*}
       \sum_{W \in \fD_{f,k}} (m_f(W) -(n+3k)) \exp(- m_f(W) p) \sup_{z \in W}|D\pF(z)|^{-t}
       \\
       \begin{aligned}
         & \le
         2 C_{10}^t  \exp(- (n + 3k + 1) p)|Df^{n + 3k} (f(0))|^{-t/2}
         \\ & \quad \cdot
         \left( 1 + \sum_{w \in \pL^{-1}(0) \text{ in } P_{f,1}(0)} ( m_f(w) + 1) \exp(- m_f(w) p) |D\pL(w)|^{-t} \right).
       \end{aligned}
     \end{multline*}
   \end{lemm}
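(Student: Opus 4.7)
The plan is to adapt the argument of~\cite[Lemma~6.4]{CorRiv13} to the present uniform setting, using the ingredients of Sections~\ref{s:uniform estimates} and~\ref{ss:Inducing scheme}. For each $W\in\fD_{f,k}$, I will first observe that $W$ is a puzzle piece contained in $P_{f,n+3k+2}(0)$; since $W\not\subset P_{f,n+3k+5}(0)$ by the definition of level and $0\notin W$, the nesting of puzzle pieces forces $W\subset P_{f,n+3k+2}(0)\setminus\cl{P_{f,n+3k+5}(0)}$. Because $f^{n+3k+1}(0)\in P_{f,1}(0)$ (a consequence of $f^{n+1+3k}(0)\in Y_f\cup\tY_f$), the iterate $f^{n+3k+1}$ maps $P_{f,n+3k+2}(0)$ as a degree-two branched cover onto $P_{f,1}(0)$ with $z=0$ as its unique critical point, so $f^{n+3k+1}|_W$ is biholomorphic onto $W^* \= f^{n+3k+1}(W)\subset P_{f,1}(0)$. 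Two cases arise: either $W^*\subset\pV$, in which case $m_f(W)=n+3k+1$ and $F_f|_W=f^{n+3k+1}|_W$; or $W^*$ lies in a component of $\pD'\cap P_{f,1}(0)$, in which case $F_f|_W=\pL\circ f^{n+3k+1}|_W$ and $m_f(W)=n+3k+1+m_f(W^*)$.

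Next I would establish the uniform Julia-type lower bound $\inf_{z\in W}|Df^{n+3k+1}(z)|\ge C^{-1}|Df^{n+3k}(f(0))|^{1/2}$ for a constant $C>0$ depending only on $K$ and $R$. Factoring $Df^{n+3k+1}(z)=Df^{n+3k}(f(z))\cdot Df(z)$, Lemma~\ref{l:distortion to central 0} applied to $f^{n+3k}$ on $P_{f,n+3k+1}(f(0))$ yields $|Df^{n+3k}(f(z))|\asymp|Df^{n+3k}(f(0))|$, while Lemma~\ref{l:uniform family} gives $|Df(z)|\asymp|z|$. The constraint $z\notin P_{f,n+3k+5}(0)$, combined with the scaling
\begin{displaymath}
\diam P_{f,n+3k+5}(0)\asymp|Df^{n+3k}(f(0))|^{-1/2}
\end{displaymath}
—which follows by pulling back $P_{f,1}(0)$ via $f^{n+3k+4}=f\circ f^{n+3k+3}$, controlling the distortion of $f^{n+3k+3}$ on $P_{f,n+3k+4}(f(0))$ through Lemma~\ref{l:distortion to central 0}, using the uniform boundedness of $|Df^3|$ on the maximal invariant Cantor set of $g_f$ (as in Lemma~\ref{l:bound chicritf}), and translating diameters across the critical branching via Lemma~\ref{l:uniform family}—yields $|z|\gtrsim|Df^{n+3k}(f(0))|^{-1/2}$, from which the claimed bound follows.

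Finally I would assemble the sum. The degree-two property of $f^{n+3k+1}$ on $P_{f,n+3k+2}(0)$ implies that each image $W^*$ has at most two preimages in $\fD_{f,k}$, yielding the leading factor $2$. In case~(a) the number of admissible $W^*$ (puzzle pieces of $\pV\cap P_{f,1}(0)$) is uniformly bounded, which contributes the additive $1$. In case~(b), each component of $\pD'\cap P_{f,1}(0)$ is the image of a unique inverse branch of $\pL$, and since $0\in\pV$ this branch selects a unique $w\in\pL^{-1}(0)\cap P_{f,1}(0)$; Lemma~\ref{l:bounded distortion to nice} provides bounded distortion of $\pL$ on each such component, so $\sup_{W^*}|D\pL|^{-t}\asymp|D\pL(w)|^{-t}$. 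Multiplying the derivative bound of the previous paragraph by these contributions and summing over $\fD_{f,k}$ yields the first inequality; the weighted version follows identically since $m_f(W)-(n+3k)$ equals $1$ in case~(a) and $m_f(w)+1$ in case~(b). The principal difficulty lies in the derivation of the scaling $\diam P_{f,n+3k+5}(0)\asymp|Df^{n+3k}(f(0))|^{-1/2}$ uniformly in $k$ and $f\in\cK_n(\sF)$, since it underpins the Julia-type bound and requires coordinating the quasi-conformal control of Lemma~\ref{l:Mori}, the puzzle-piece distortion bounds of Lemma~\ref{l:distortion to central 0}, and the uniform hyperbolicity of $g_f$ on its maximal invariant set.
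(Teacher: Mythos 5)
Your proposal follows the same route as the paper's proof, which itself is just a pointer to the proof of \cite[Lemma~7.1]{CorRiv13}: decompose $m_f(W)=(n+3k+1)+m_f(W^*)$ via the degree-two map $f^{n+3k+1}\colon P_{f,n+3k+2}(0)\to P_{f,1}(0)$, bound $\inf_W|Df^{n+3k+1}|$ from below by $\asymp|Df^{n+3k}(f(0))|^{1/2}$, and handle the tail with the landing map together with bounded distortion. The decomposition into cases, the factor $2$ from the degree, the identification of each case-(b) image $W^*$ with a landing component and hence with a unique $w\in\pL^{-1}(0)\cap P_{f,1}(0)$, and the observation that the weighted version only changes $1$ to $m_f(w)+1$ are all correct and match the intended argument.

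The one point worth flagging is that the step you single out at the end as ``the principal difficulty''---establishing $\inf_{z\in W}|Df^{n+3k+1}(z)|\gtrsim|Df^{n+3k}(f(0))|^{1/2}$ via the scaling of $\diam P_{f,n+3k+5}(0)$---is exactly what Lemma~\ref{l:first return to central derivative} already provides, and the paper's proof cites it for this purpose. Concretely, the puzzle piece $\tW$ of depth $n+3k+5$ containing $W$ satisfies $\tW\ne P_{f,n+3k+5}(0)$, so $f^{n+3k+4}$ maps $\tW$ biholomorphically onto $P_{f,1}(0)$ (this is the content of \cite[Lemma~5.1]{CorRiv13}, also cited by the paper); applying Lemma~\ref{l:first return to central derivative} with $q=n+3k+4$ gives $|Df(z)|\ge C_4^{-1}|Df^{n+3k+3}(f(z))|^{-1/2}$, and combining with the bounded distortion of Lemma~\ref{l:distortion to central 0} on $P_{f,n+3k+1}(f(0))$ and the uniform boundedness of $|Df^3|$ on $h_f(\Lambda_{c(f)})$ yields the desired bound with no new work. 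In your direct derivation you would also need to record a \emph{roundness} statement (that $\dist(0,\partial P_{f,n+3k+5}(0))$ is comparable to $\diam P_{f,n+3k+5}(0)$), since a diameter estimate alone does not control $|z|$ from below for $z\notin P_{f,n+3k+5}(0)$; this is implicit in your sketch via the Koebe control of $f^{n+3k+3}$ and the square-root pullback through the critical point, but it should be made explicit. Finally, the bounded-distortion step $\sup_{W^*}|D\pL|\asymp|D\pL(w)|$ formally requires a variant of Lemma~\ref{l:bounded distortion to nice} for the first landing map $\pL$ on components of $\pD'$, rather than for $F_f$ on components of $\pD$; the proof is identical, but as stated that lemma does not literally cover it.
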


   \proof
   The proof follows the same lines that the proof of~\cite[Lemma~7.1]{CorRiv13}, and it depends on Lemmas~\ref{l:distortion to central 0}, \ref{l:first return to central derivative}, and~\ref{l:bounded distortion to nice}, as well as on~\cite[Lemma~5.1]{CorRiv13}. 
   The second inequality does not appear in \cite[Lemma~7.1]{CorRiv13}.
   It follows from the first displayed equation in the proof of~\cite[Lemma~7.1]{CorRiv13}, and from~\cite[(7.1)]{CorRiv13}.
   See the proof of~\cite[Lemma~7.1]{CorRiv13} for further details.
   \endproof

   \begin{proof}[Proof of Proposition~\ref{p:estimating Z_1 by the postcritical series}]
     The proof depends on Lemmas~\ref{l:critical line}, \ref{l:landing contribution}, and~\ref{l:level k contribution}, and on Proposition~\ref{p:Bowen type formula}, and it follows the same lines that the proof of  \cite[Lemma~7.2]{CorRiv13}.
     There are some differences in item~2 since the condition on $p$ is slightly different and we add a new inequality.
     We include the proof of item~2 here.

     Let~$C_8 > 0$ and~$C_{10}>0$  be given by Lemmas~\ref{l:landing contribution} and~\ref{l:level k contribution}, respectively.
     Let $n_2$ be the integer given by Lemma~\ref{l:bound chicritf} with $\varepsilon=\tfrac{1}{10}$.  Put $t_0 \= 2\log 2 / \upsilon$. We prove the lemma for $n_4=n_2$.
     Fix an integer~$n \ge n_2$, a map~$f$ in~$\cK_n(\sF)$, $t \ge t_0$, and an integer $k \ge 0$.

     To prove item~2, note that the first inequality follows from the fact that for every~$W$ in~$\fD_{f, k}$ we have~$m_f(W) \ge n + 3k + 1$.
     To prove the second inequality, let~$p \ge - t\pchicrit/2 - t \upsilon/3 $ be given.
     By Lemma~\ref{l:bound chicritf}, we have
     \begin{equation*}
       \pchicrit \le 1.1 K \log 2.
     \end{equation*}
     Thus~$-p \le t  (0.55 K + \tfrac{\upsilon}{3\log2}) \log 2 < t  (0.55 K + 1/t_0) \log 2$ and therefore
     \begin{equation*}
       2 \exp(- p)
       <
       2^{t(0.55 K + 2/t_0)}.
     \end{equation*}
     Combined with Lemmas~\ref{l:landing contribution} and~\ref{l:level k contribution}, we obtain  item~2 of the proposition with ${C_9 = 2^{0.55 K + 2/t_0} C_{10} C_8}$.
   \end{proof}

   \begin{proof}[Proof of Proposition~\ref{p:improved MS criterion}] We follow the proof of  \cite[Proposition~D]{CorRiv13}. 
     Let~$n_4$ and~$C_9$ be given by Proposition~\ref{p:estimating Z_1 by the postcritical series}, and put $t_0 \= 2\log 2 /\upsilon$.
     To prove the proposition, fix an integer~$n$ satisfying ${n \ge n_4}$, a map~$f$ in~$\cK_n(\sF)$, and a real number~$t$ satisfying ${t \ge t_0} $.

     To prove item~1, let~$p$ in~$[- t \pchicrit / 2, 0)$ be such that the sum
     \begin{equation}
       \label{e:postcritical series}
       \sum_{k = 0}^{+\infty} \exp( - (n + 3k) p) |Df^{n + 3k}(f(0))|^{-t/2}
     \end{equation}
     is greater than or equal to~$(2C_9)^t$.
     Then, we can choose~$p'$ in~${(p, 0)}$ such that the sum above with~$p$ replaced by~$p'$ is strictly larger than~$C_9^t$.
     By item~1 of Proposition~\ref{p:estimating Z_1 by the postcritical series}, we have ${\psP^{\R}(t, p) \ge \psP^{\R}(t, p') > 0}$ and by Proposition~\ref{p:Bowen type formula} we have ${P_f^{\R}(t) \ge p' > p}$.
     This proves the first assertion of item~1 with~$C_6 = 2C_9$.
     To prove the second assertion of item~1, suppose that is finite.
     Then, by item~2 of Proposition~\ref{p:estimating Z_1 by the postcritical series} the sum
     $$ \sum_{W \in \pfD} \exp(- m_f (W)p) \sup_{z \in W}|D\pF(z)|^{-t} $$
     is finite, so~$\psP^{\R}(t, p)$ is also finite.
     This completes the proof of item~1.

     To prove item~2, let~$p \ge - t\pchicrit/2$ be given.
     By item~2 of Proposition~\ref{p:estimating Z_1 by the postcritical series}, if~\eqref{e:postcritical series} is less than or equal to~$C_9^{-t}$, then~$\sP_f(t, p) < 0$ and by Proposition~\ref{p:Bowen type formula} we have~$P_f(t) \le p$.
     This proves item~2 of the proposition with~$C_6 = C_9$.

     To prove item~3, let~$p \ge - t \pchicrit/2$ be given and put~$p' \= p - t \tfrac{\upsilon}{3}$.
     By item~2 of Proposition~\ref{p:estimating Z_1 by the postcritical series} with~$k = 0$, the sum
     $$ \sum_{W \in \fD_{f, 0}} \exp(- m_f(W) p) \sup_{z \in W}|D\pF(z)|^{-t} $$
     is finite.
     Let~$A > 0$ be a constant such that for every pair of integers~$k \ge 1$ and~$m \ge 3k + 1$, we have
     $$ m
     \le
     A k \exp(t_0 \upsilon (m - 3k)/3). $$
     Applying item~2 of Proposition~\ref{p:estimating Z_1 by the postcritical series} with~$p$ replaced by~$p'$, we obtain that for each integer~$k \ge 1$ we have
     \begin{multline*}
       \sum_{W \in \fD_{f,k}} m_f(W) \cdot \exp(- m_f(W) p) \sup_{z \in W}|D\pF(z)|^{-t}
       \\
       \begin{aligned}
         & \le
         \sum_{W \in \fD_{f,k}} A k \exp(t \upsilon (m_f(W) - 3k)/3) \exp(- m_f(W) p) \sup_{z \in W}|D\pF(z)|^{-t}
         \\ & =
         A k \exp(t \upsilon k)  \sum_{W \in \fD_{f,k}} \exp \left( - m_f(W) p' \right) \sup_{z \in W}|D\pF(z)|^{-t}
         \\ & \le
         \left(A C_9^t \exp(-t \upsilon n/3)  \right) k \cdot \exp(- (n + 3k)p) |Df^{n + 3k}(f(0))|^{-t/2}.
       \end{aligned}
     \end{multline*}
     Summing over~$k \ge 0$ we obtain the desired assertion.
   \end{proof}

   \section{Sensitive dependence of geometric Gibbs states}
   \label{s:chaotic dependence at zero-temperature}
   In this section we prove the Main Theorem.
   In~\S\ref{ss:itineraries} we define the family of itineraries of the maps used in the Main Theorem, as well as other combinatorial objects used in the proof.
   In \S\ref{ss:estimating postcritical series} we estimate the postcritical series in terms of a certain 2~variables series that only depends on the combinatorics of the postcritical orbit (Lemma~\ref{l:2 variables functions}).
   The main estimates needed in the proof of the Main Theorem can be stated only in terms of these 2~variables series, and are relegated to Appendix~\ref{s:abstract estimates}.
   The proof of the Main Theorem is given in \S\ref{ss:proof of Main Theorem}, and it is divided in 3~parts.
   In the first part (\S\ref{sss:parameter-selection}), we introduce the family of maps~$(\fs)_{\uvarsigma \in \signs}$, which is mostly defined through the combinatorics of the postcritical orbit.
   In the second part (\S\ref{sss:pressure estimates}), we estimate the geometric pressure, and prove the existence and uniqueness of geometric Gibbs states, as well as the existence of conformal measures.
   The third, and most difficult part of the proof is given in \S\ref{sss:temperature dependence}, where we show that on certain intervals of inverse temperatures the geometric Gibbs states are concentrated near the orbit of~$p^+$, or the orbit of~$p^-$. 

   \subsection{The family of itineraries}
   \label{ss:itineraries}
   In this section we define several combinatorial objects used in the proof of the Main Theorem in~\S\S\ref{ss:estimating postcritical series}, \ref{ss:proof of Main Theorem}.
   One of the most important ones, is a family of itineraries~$(\iota(\uvarsigma))_{\uvarsigma \in \signs}$ in~$\{0, 1\}^{\N_0}$.
   Its definition depends on a choice of nonnegative integers~$q$ and~$\Xi$, satisfying
   \begin{equation}
     \label{eq:5}
     q
     \ge
     50(\Xi + 1)
     \text{ and }
     q + \Xi
     \equiv
     0 \mod 2.
   \end{equation}
   These integers are chosen in~\S\ref{sss:parameter-selection}.
   They depend on the uniform family~$\sF$ and of a choice of a sufficiently large integer~$n$, as in the Main Theorem.

   Endow each of the sets~$\{+, -\}$, $\{0, 1\}$, and~$\{0, 1^+, 1^- \}$ with the discrete topology, and each of the sets~$\signs$, ${\{0, 1 \}^{\N_0}}$, and ${\{0, 1^+, 1^- \}^{\N_0}}$ with the corresponding product topology.
   Roughly speaking, for a map~$f$ in~$\cK_n(\sF)$ a large block formed by repeated~$0$'s (resp.~$1$'s, $10$'s) in the itinerary~$\iota(f)$ of~$f$ indicates that a large portion of the critical orbit is close to the orbit of~$p(f)$ (resp.~$p^+(f)$, $p^-(f)$).
   To make this encoding more transparent, we use the auxiliary symbolic space ${\{0, 1^+, 1^- \}^{\N_0}}$ in which the symbol~$0$ (resp.~$1^+$, $1^-$) represents the orbit of~$p(f)$ (resp.~$p^+(f)$, $p^-(f)$).
   Thus, to define the family of itineraries~$(\iota(\uvarsigma))_{\uvarsigma \in \signs}$ in~$\{0, 1\}^{\N_0}$, we first define a family of sequences~$(\whx(\uvarsigma))_{\uvarsigma \in \signs}$ in~$\{0, 1^+, 1^-\}^{\N_0}$.
   Denote by~$\hSigma$ be the subspace of~${\{0, 1^+, 1^- \}^{\N_0}}$, given by
   \begin{multline}
     \label{eq:7}
     \hSigma
     \=
     \left\{ (\whx_j)_{j \in \N_0} \in \{0, 1^+, 1^-\}^{\N_0} \mid
     \right. \\ \left.
       \whx_j = 1^+ \Rightarrow \whx_{j + 1} \neq 1^-, \whx_j = 1^- \Rightarrow \whx_{j + 1} \neq 1^+ \right\}.
   \end{multline}

   For each~$s$ in~$\N_0$, define the integers
   \begin{equation*}
     a_s \= 2^{qs^3}
     \text{ and }
     b_s\=2^{qs^3} + q(2s + 1) + \Xi.
   \end{equation*}
   Note that ${b_s < a_{s+1}}$ and that in the case where ${s \ge 1}$ both~$a_s$ and~$b_s$ are even.
   Define the intervals~$I_s$ and~$J_s$ of~$\R$, by
   \[
     I_s
     \=
     \left[ a_s, b_s \right)
     \text{ and }
     J_s
     \=
     \left[ b_s, a_{s+1} \right).
   \]
   As~$s$ runs through~$\N_0$, the intervals~$I_s$ and~$J_s$ form a partition of~$[1, +\infty)$.
   Define functions
   \begin{equation*}
     N \colon \N_0 \to \N_0
     \text{ and }
     B \colon \N_0 \to \N_0
   \end{equation*}
   by $N(0) \= 0$, $B(0) \= 0$, for~$k$ in~$\N$ by
   \[
     N(k)
     \=
     \sharp \left\{ j \in \{0, \ldots, k-1 \} \mid j + 1 \in \bigcup_{s \in \N_0} I_s \right\},
   \]
   and for~$s$ in~$\N_0$ by
   \begin{equation}
     \label{e:B}
     B^{-1}(2s + 1) = I_s
     \quad \text{and} \quad
     B^{-1}(2(s + 1)) = J_s.
   \end{equation}
   Note that for every~$k$ in~$\N$ we have ${B(k) \le 2 N(k) + 1}$, and that
   \begin{equation}
     \label{eq:6}
     \lim_{k \to +\infty} \frac{N(k)}{k} = 0.
   \end{equation}

   For each~$\uvarsigma$ in~$\signs$, let~$\whx(\uvarsigma)$ be the sequence in ${\{0, 1^+, 1^- \}^{\N_0}}$ defined for~$j$ in~$\N_0$ by
   \begin{equation}
     \label{eq:8}
     \whx(\uvarsigma)_j
     \=
     \begin{cases}
       0
       &
       \text{if for some~$s$ in~$\N_0$ we have $j+1 \in I_s$};
       \\
       1^+
       &
       \text{if $j+1 \in J_0$};
       \\
       1^{\varsigma(m)}
       &
       \text{for } j +1 \in J_{4m - 3} \cup J_{4m - 2} \cup J_{4m - 1} \cup J_{4m}.
     \end{cases}
   \end{equation}
   Note that the first~$q$ entries of this sequence are equal to~$0$, that~$\whx(\uvarsigma)$ is in~$\hSigma$, and that the map ${\signs \to \hSigma}$ given by ${\uvarsigma \mapsto \whx(\uvarsigma)}$ is continuous.
   Moreover, the length of each maximal block of~$1^-$'s in~$\whx(\uvarsigma)$ is even, and for every~$k$ in~$\N$ we have
   \begin{equation}
     \label{eq:16}
     N(k)
   =
   \sharp \{ j \in \{0, \ldots, k-1 \} \mid \whx_j = 0 \},
   \end{equation}
   and, if in addition ${k \ge 2}$, then we have
   \begin{equation}
     \label{eq:17}
     B(k)
   =
   1 + \sharp \{ j \in \{0, \ldots, k - 2 \} \mid \whx_j \neq \whx_{j + 1} \}.
   \end{equation}
   Thus, for every~$k$ in~$\N$ the number~$B(k)$ is equal to the number of maximal blocks of~$0$'s, $1^+$'s, and~$1^-$'s in the sequence~$(\whx_j)_{j = 0}^{k-1}$.

   For~$\uvarsigma$ in~$\signs$, define the itinerary~$\iota(\uvarsigma)$ in~${\{0, 1\}}^{\N_0}$ for each~$j$ in~$\N_0$ by
   \begin{equation}
     \label{eq:9}
     \iota(\uvarsigma)_j
     =
     \begin{cases}
       0
       &
       \text{if $\whx(\uvarsigma)_j = 0$};
       \\
       1
       &
       \text{if~$\whx(\uvarsigma)_j = 1^+$};
       \\
       0
       &
       \text{if~$\whx(\uvarsigma)_j = 1^-$ and~$j$ is even};
       \\
       1
       &
       \text{if~$\whx(\uvarsigma)_j = 1^-$ and~$j$ is odd}.
     \end{cases}
   \end{equation}
   Note that the first~$q$ entries of~$\iota(\uvarsigma)$ are equal to~$0$, and that the map ${\signs \to \{0, 1\}^{\N_0}}$ given by ${\uvarsigma \mapsto \iota(\uvarsigma)}$ is continuous.

   \subsection{The 2~variables series}
   \label{ss:estimating postcritical series}
   Given a real number~$\xi$, define for each~$k$ in~$\N_0$ and each~$(\tau, \lambda)$ in~$[0, +\infty) \times [0, +\infty)$ the number
   \[
     \pi_k^{\pm}(\tau,\lambda)
     \=
     2^{- \lambda k - \tau N(k) \pm \xi \tau B(k)}.
   \]

   In this subsection we fix a uniform family of quadratic-like maps~$\sF$, let~$\Delta_2$ be the constant given by Lemma~\ref{l:distortion to central 0}, and recall that ${\Delta_2 > 1}$.
   For each integer~$n$ satisfying ${n \ge 5}$, and each~$f$ in~$\cK_n(\sF)$, put
   \begin{equation}
     \label{d:theta n}
     \theta(f)
     \=
     \left| \frac{Dg_f(p(f))}{Dg_f(\wtp(f))} \right|^{1/2}.
   \end{equation}
   Note that the condition~$\theta(f) > 1$ is equivalent to~$\chi_f(p(f)) > \chi_f(\wtp(f))$.
   When this holds, define
   $$ \xi(f)
   \=
   \frac{\log \Delta_2}{2 \log \theta(f)}. $$

   The purpose of this subsection is to prove the following lemma.

   \begin{lemm}
     \label{l:2 variables functions}
     Let~$\Delta_1$ be the constant given by Lemma~\ref{l:landing to central derivative}, let~$q$ and~$\Xi$ be nonnegative integers satisfying~\eqref{eq:5}, and let~$(\iota(\uvarsigma))_{\uvarsigma \in \signs}$ be the family of itineraries in~$\{0, 1\}^{\N_0}$ defined in~\S\ref{ss:itineraries} for these choices of~$q$ and~$\Xi$.
     Then, for every~$\uvarsigma$ in~$\signs$, every integer~$n$ satisfying ${n \ge 6}$, and every~$f$ in~$\cK_n(\sF)$ such that
     \begin{equation*}
       \iota(f) = \iota(\uvarsigma)
       \text{ and }
       \chi(\whp(f)) = \chi(\wtp(f)),
     \end{equation*}
     we have
     \begin{equation}\label{e:chicrit}
       \chicritf
       =
       \frac{1}{3} \log |Dg_f(\wtp(f))|.
     \end{equation}
     If in addition 
     $$ \chi_f(p(f)) > \chi_f(\wtp(f)), $$
     then the following property holds for every number~$\xi$ satisfying ${\xi \ge \xi(f)}$.
     For every~$k$ in~$\N_0$, and all real numbers~$t$ and~$\delta$ satisyfing ${t > 0}$ and ${\delta \ge 0}$, we have
     \begin{multline}
       \label{eq:2 variables series}
       \Delta_1^{- \frac{t}{2}} \exp(-n\delta)\left( \frac{\exp(\chicritf)}{|Df(\beta(f))|}
       \right)^{\frac{t}{2}n }
       \pi_k^- \left( \frac{\log \theta(f)}{\log 2} t, \frac{3 \delta}{\log 2}
       \right)
       \\ 
       \begin{aligned}
         & \le
         \exp \left(-(n+3k) \left(- t \frac{\chicritf}{2} + \delta \right) \right)
         |Df^{n+3k}(f(0))|^{-\frac{t}{2}}
         \\ & \le
         \Delta_1^{\frac{t}{2}} \exp(-n\delta) \left( \frac{\exp (\chicritf)}{|Df(\beta(f))|}
         \right)^{\frac{t}{2} n }
         \pi_k^+ \left( \frac{\log \theta(f)}{\log 2} t, \frac{3 \delta}{\log 2}
         \right).
       \end{aligned}
     \end{multline}
   \end{lemm}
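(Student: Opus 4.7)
Both conclusions stem from a single factorization of the derivative together with two closed-form estimates of its factors. By the chain rule, using $g_f = f^3$ on the nested preimages of $Y_f \cup \tY_f$ visited by the critical orbit,
\[
Df^{n+3k}(f(0)) = Df^n(f(0)) \cdot Dg_f^{k}(f^{n+1}(0)).
\]
For the first factor, the membership $f \in \cK_n(\sF)$ together with the compatibility of $\iota(f)$ with $\uwhx$ forces $f^i(c)$, where $c \= f(0)$, to share the same depth-$(n-1-i)$ puzzle piece as $f^i(\tbeta(f)) = \beta(f)$ for $i = 1,\dots,n-1$, so that $c \in P_{f,n-1}(\tbeta(f))$. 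Lemma~\ref{l:landing to central derivative} then gives $|Df^{n-1}(c)| \in \Delta_1^{\pm 1} |Df(\beta(f))|^{n-1}$, and Lemma~\ref{l:uniform family} bounds the remaining factor $|Df(f^{n-1}(c))|$ by a uniform multiple of $|Df(\beta(f))|$, since $f^{n-1}(c) \in P_{f,0}(\beta(f))$ is uniformly separated from $0$. Absorbing the resulting uniform constants, we obtain $|Df^n(f(0))| \in \Delta_1^{\pm 1} |Df(\beta(f))|^n$.

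For the second factor I would group $\{0,\dots,k-1\}$ into its $B(k)$ maximal $\uwhx$-blocks. The definition of $\hSigma$ forbids adjacency of $1^+$ and $1^-$, so each block is entirely $0$'s, entirely $1^+$'s, or entirely $1^-$'s (of even length $2\ell$ by hypothesis). On a block of length $b_i$, the $g_f$-orbit of $f^{n+1}(0)$ lies in a cylinder set of $g_f$ (or of $g_f^2$ in the $1^-$ case) nested around one of the periodic reference points $p(f)$, $\wtp(f)$, or $\whp(f)$; Lemma~\ref{l:distortion to central 0} applied with $m = 3 b_i$ bounds the distortion of $g_f^{b_i}$ on this cylinder by $\Delta_2$. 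Hence the block contribution lies within $\Delta_2^{\pm 1}$ of $|Dg_f(p(f))|^{b_i}$, $|Dg_f(\wtp(f))|^{b_i}$, or $|Dg_f^2(\whp(f))|^{\ell} = |Dg_f(\wtp(f))|^{b_i}$ respectively (the last equality using the hypothesis $\chi_f(\whp(f)) = \chi_f(\wtp(f))$). Multiplying,
\[
|Dg_f^k(f^{n+1}(0))| \in \Delta_2^{\pm B(k)} \cdot |Dg_f(p(f))|^{N(k)} \cdot |Dg_f(\wtp(f))|^{k-N(k)}.
\]

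Formula~(\ref{e:chicrit}) follows by taking logarithms, dividing by $n+3k$, and letting $k \to +\infty$: the first-factor contribution is $O(n)$, and the distortion term is $O(B(k))$, where $B(k) \le 2 N(k) + 1 = o(k)$ by the $\hSigma$-constraint and the hypothesis $N(k)/k \to 0$. For (\ref{eq:2 variables series}), I would substitute the two factor estimates into the left-hand side, use $|Dg_f(\wtp(f))| = \exp(3\chicritf)$ and $|Dg_f(p(f))| = \theta(f)^2 |Dg_f(\wtp(f))|$ to rewrite the $g_f^k$ piece as $\exp(-3k t \chicritf/2)\, \theta(f)^{-t N(k)}$---which cancels the $\exp(3k t \chicritf/2)$ in the prefactor---absorb the residual $\Delta_2^{\pm t B(k)/2}$ into $\theta(f)^{\pm \xi t B(k)}$ using $\theta(f)^\xi \ge \Delta_2^{1/2}$ (equivalent to $\xi \ge \xi(f)$), and split $\exp(-(n+3k)\delta) = \exp(-n\delta) \cdot \exp(-3k\delta)$. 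The surviving $N(k)$-, $B(k)$- and $k$-dependent factors regroup into $\pi_{f,k}^{\pm}\!\left(\tfrac{\log \theta(f)}{\log 2} t,\, \tfrac{3\delta}{\log 2}\right)$, yielding the stated inequalities.

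The most delicate point is the puzzle-piece assertion $c \in P_{f, n-1}(\tbeta(f))$ together with the uniform bound on $|Df(f^{n-1}(c))|/|Df(\beta(f))|$; via the conjugacy $h_f$ both reduce to combinatorial facts about the quadratic polynomial $f_{c(f)}$ that are built into the definition of $\cK_n$ and Proposition~\ref{p:ps}.
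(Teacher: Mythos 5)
Your proposal follows the paper's approach closely: the same chain-rule factorization $Df^{n+3k}(f(0)) = Df^n(f(0)) \cdot Dg_f^k(f^{n+1}(0))$, the same blockwise comparison of $Dg_f^k(f^{n+1}(0))$ to the reference periodic derivatives using Lemma~\ref{l:distortion to central 0} (with the $\chi(\whp(f))=\chi(\wtp(f))$ identity handling the even $1^-$-blocks), the same bound $B(k)\le 2N(k)+1$, and the same algebraic regrouping into $\pi_{f,k}^\pm$ via $|Dg_f(\wtp(f))|=\exp(3\chicritf)$, $|Dg_f(p(f))|=\theta(f)^2|Dg_f(\wtp(f))|$, and $\theta(f)^\xi\ge\Delta_2^{1/2}$.

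The one defect is in your treatment of the first factor. You place $c=f(0)$ only in $P_{f,n-1}(\tbeta(f))$, apply Lemma~\ref{l:landing to central derivative} at depth $n-1$, and then bound the remaining factor $|Df(f^{n-1}(c))|$ separately via Lemma~\ref{l:uniform family}. That last step costs an additional multiplicative constant $C>1$ depending on $K,R$, so "absorbing the resulting uniform constants" to claim $|Df^n(f(0))| \in \Delta_1^{\pm 1}|Df(\beta(f))|^n$ is not legitimate: you get $(C\Delta_1)^{\pm 1}$, and since $\Delta_1$ is fixed in the lemma statement this breaks the exact form of~\eqref{eq:2 variables series}. The fix is that in fact $f(0)\in P_{f,n}(\tbeta(f))$ (this is built into the definition $V_f = P_{f,n+1}(0) = f^{-1}(P_{f,n}(\tbeta(f)))$ for $f\in\cK_n(\sF)$), so Lemma~\ref{l:landing to central derivative} applies directly with $k=n$ and $y=f(0)$, giving $|Df^n(f(0))| \in \Delta_1^{\pm 1}|Df(\beta(f))|^n$ with no extra constant — this is exactly how the paper disposes of the $k=0$ case in a single line.
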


   \proof
   Put~$\whc \= f^{n + 1}(0)$.
   For every~$k$ in~$\N$ and every~$j$ in~$\{0,1,2\}$, we have by the chain rule
   \begin{align*}
     Df^{3k + j} (f(0))
     & =
       Df^j ((f^{3k})(\whc)) \cdot Df^{3k} (\whc) \cdot Df^n(f(0))
     \\ & =
          Df^j (g_f^{k}(\whc)) \cdot Dg_f^k (\whc) \cdot Df^n(f(0)).
   \end{align*}
   Since $|Df^j ((g_f^{k})(\whc))|$ is bounded independently of~$k$ and~$j$, we have
   \begin{equation}
     \label{eq:intermediate chicrit}
     \chicritf
     = 
     \liminf_{m\rightarrow +\infty} \frac{1}{m} \log |Df^m(f(0))|
     =
     \frac{1}{3} \liminf_{k\rightarrow +\infty}\frac{1}{k} \log |Dg_f^k (\whc)|.
   \end{equation}
   On the other hand, by Lemma~\ref{l:distortion to central 0}, the hypothesis~$\chi(\whp(f)) = \chi(\wtp(f))$, \eqref{eq:16}, \eqref{eq:17}, and the fact that the maximal blocks of $1^-$'s in~$\whx(\uvarsigma)$ have even length, we have that 
   for each integer~$k$ in $\N$,
   \begin{equation*}
     \Delta_2^{-B(k)}
     \le
     \frac{|Dg_f^k(\whc)|} {|Dg_f(\wtp (f))|^{k-N(k)} |Dg_f( p(f))|^{N(k)}}
     \le
     \Delta_2^{B(k)}.
   \end{equation*}
   Taking logarithms yields
   \begin{align*}
     - B(k)\log \Delta_2 + N(k) \log \frac{|Dg_f(p(f))|}{|Dg_f(\wtp(f))|}
     & \le 
       \log |Dg_f^k(\whc)| - k \log |Dg_f(\wtp(f))|
     \\ & \le
          B(k) \log \Delta_2 + N(k) \log \frac{|Dg_f(p(f))|}{|Dg_f(\wtp(f))|}.
   \end{align*}
   Since for each~$k$ in~$\N$ we have $B(k) \le 2 N(k) + 1$, by~\eqref{eq:6} we conclude that
   $$ \lim_{k\rightarrow +\infty}\frac{1}{k} \log |Dg_f^k (\whc)|
   =
   \log |Dg_f(\wtp(f))|. $$
   Combined with~\eqref{eq:intermediate chicrit}, this completes the proof of~\eqref{e:chicrit}.

   In the case where~$k = 0$, the chain of inequalities~\eqref{eq:2 variables series} is given by Lemma~\ref{l:landing to central derivative}.
   Fix~$k$ in~$\N$ and a number~$t$ satisfying ${t > 0}$.
   Using
   \[
     Df^{n+3k}(f(0)) = Dg_f^k (f^{n + 1}(0)) \cdot Df^n(f(0))
   \]
   and Lemmas~\ref{l:landing to central derivative} and~\ref{l:distortion to central 0}, the hypothesis~$\chi(\whp(f)) = \chi(\wtp(f))$, \eqref{eq:16}, \eqref{eq:17}, and the fact that the maximal blocks of~$1^-$'s in~$\whx(\uvarsigma)$ have even length, we have
   \begin{equation}
     \label{e:postcritical derivative}
     \begin{split}
       \Delta_1^{-t} \theta(f)^{- 2t N(k)}  \Delta_2^{- tB(k)}
       & \le
       \frac{|Df^{n+3k}(f(0))|^{-t}}{|Dg_f(\wtp(f))|^{-tk} |Df(\beta(f))|^{-tn}}
       \\ & \le
       \Delta_1^{t} \theta(f)^{- 2t N(k)} \Delta_2^{tB(k)}.
     \end{split}
   \end{equation}
   Since by~\eqref{e:chicrit} we have
   \begin{equation*}
     \exp((n+3k)t\chicritf)
     =
     \exp(nt\chicritf) |Dg_f(\wtp(f))|^{tk},
   \end{equation*}
   if we multiply each term in the chain of inequalities~\eqref{e:postcritical derivative} by
   $$ \left( \frac{\exp(\chicritf)}{|Df(\beta(f))|} \right)^{ t n}, $$
   then we get
   \begin{multline*}
     \Delta_1^{-t} \left( \frac{\exp(\chicritf)}{|Df(\beta(f))|} \right)^{ t n} \theta(f)^{-2t N(k)} \Delta_2^{- tB(k)}
     \\
     \begin{aligned}
       &\le 
       \exp((n+3k)t\chicritf) |Df^{n+3k}(f(0))|^{-t}
       \\ &\le 
       \Delta_1^{t} \left( \frac{\exp(\chicritf)}{|Df(\beta(f))|} \right)^{ t n} \theta(f)^{-2t N(k)} \Delta_2^{tB(k)}.
     \end{aligned}
   \end{multline*}
   Taking square roots, and then by multiplying by~$\exp(-(n + 3k) \delta)$ in each of the terms of the chain of inequalities above, we obtain
   \begin{multline*}
     \Delta_1^{-t/2} \exp(- n \delta) \left( \frac{\exp(\chicritf)}{|Df(\beta(f))|} \right)^{\frac{t}{2} n}
     \exp(- 3k \delta) \theta(f)^{-t N(k)} \Delta_2^{- tB(k)/2}
     \\
     \begin{aligned}
       & \le
       \exp \left(- (n+3k)\left(- t \frac{\chicritf}{2} + \delta\right)\right) |Df^{n+3k}(f(0))|^{-\frac{t}{2}}
       \\ & \le
       \Delta_1^{t/2}\exp(- n \delta) \left( \frac{\exp(\chicritf)}{|Df(\beta(f))|} \right)^{\frac{t}{2} n} \exp(- 3k \delta)  \theta(f)^{-t N(k)} \Delta_2^{tB(k)/2}.
     \end{aligned}
   \end{multline*}
   Together with our hypothesis~$\xi \ge \xi(f)$, and our definition of~$\pi_k^{\pm}$, this implies the desired chain of inequalities.
   \endproof

   \subsection{Proof of the Main Theorem}
   \label{ss:proof of Main Theorem}
   Put~$\upsilon \= \frac{1}{4} \log 2$, let~$R > 0$ be given, and let~$K_1$, $\kappa_1$, and~$n_1$ be given by Proposition~\ref{p:transporting Peierls}.
   We prove that the Main Theorem holds with~$K_0 = K_1$.
   Let~$\sF$ be a uniform family of quadratic-like maps with constants~$K_1$ and~$R$ that is admissible.
   By Proposition~\ref{p:transporting Peierls}, for every~$n \ge n_1$, every~$f$ in~$\cK_n(\sF)$ satisfies the Geometric Peierls Condition with constants~$\kappa_1$ and~$\upsilon$, and we have
   \begin{equation}
     \label{eq:10}
     \chi_f(\beta(f)) > \chicritf + 2 \upsilon.
   \end{equation}
   Taking~$n_1$ larger if necessary, assume that for every~$n \ge n_1$ there is a continuous function~$s_n \colon \cK_n \to \cK_n(\sF)$ such that~$c \circ s_n$ is the identity, and that~\eqref{eq:1} holds for every~$f$ in~$s_n(\cK_n)$.

   Let~$\Delta_1$, $\Delta_2$, $C_5$, $\upsilon_1$ and~$\Delta_3$ be the constants given by Lemmas~\ref{l:landing to central derivative}, \ref{l:distortion to central 0}, \ref{l:contractions}, and~\ref{l:bounded distortion to nice}, respectively.
   Moreover, let~$n_3$ and~$C_6$ the constants given by Proposition~\ref{p:improved MS criterion}, and $\kappa = \kappa_1$, let~$n_4$ and~$C_9$ be given by Proposition~\ref{p:estimating Z_1 by the postcritical series}, and let~$n_{\&} \ge \max \{6,n_3, n_4\}$ be sufficiently large so that
   \begin{equation}
     \label{e:choice of n I}
     \exp(n_{\&} \upsilon) \ge \Delta_1^{\frac{1}{2}} C_6 \left(2 + \Delta_2 \right).
   \end{equation}

   \subsubsection{The subfamily}
   \label{sss:parameter-selection}
   In this subsection we define the family $(f_{\uvarsigma})_{\uvarsigma\in \{+,-\}^{\N}}$, as in the statement of the Main Theorem.

   Fix an integer~$n$ satisfying ${n \ge n_{\&}}$, let~$c_{\&}$ in~$\cK_n$ be such that the itinerary~$\iota(c_{\&})$ is the constant sequence equal to~$0$ (Proposition~\ref{p:ps}), and put~$f_{\&} \= s_n(c_{\&})$.
   By~\eqref{eq:1} we have~$\theta(f_{\&}) > 1$, so there is~$r_{\&} > 0$ such that for every~$c$ in~$B(c_{\&}, r_{\&}) \cap \cK_n$ the number~$\theta(s_n(c))$ is defined, and depends continuously on~$c$.
   Reducing~$r_{\&}$ if necessary, assume that for all~$c$ and~$c'$ in~$B(c_{\&}, r_{\&}) \cap \cK_n$ we have~$\theta(s_n(c)) \le \theta(s_n(c'))^2$.
   By Proposition~\ref{p:ps} it follows that there is an integer~$q_{\&} \ge 0$ such that the set
   $$ \{ c \in \cK_n \mid \text{ for every~$j$ in~$\{0, \ldots, q_{\&} \}$, $\iota(c)_j = 0$} \} $$
   is a compact set contained in~$B(c_{\&}, r_{\&})$.
   On the other hand, by~\eqref{eq:1} for each~$c$ in~$\cK_n$ we can define the number~$\xi(s_n(c))$ as in~\S\ref{ss:estimating postcritical series}.
   Since~$s_n$ is continuous, it follows that this number depends continuously on~$c$ in~$\cK_n$, so
   $$ \sup_{c \in \cK_n \cap B(c_{\&}, r_{\& })} \xi(s_n(c)) < +\infty. $$
   Denote the supremum on the left side by~$\xi$.

   Put ${\Xi \= \lceil 2\xi \rceil + 1}$ as in~\S\ref{ss:estimating zero-temperature 2 variables series}, and let~$q$ be an integer satisfying ${q \ge q_{\&}}$ and \eqref{eq:5}.
   Moreover, let
   \begin{equation*}
     (\whx(\uvarsigma))_{\uvarsigma \in \signs}
     \text{ and }
     (\iota(\uvarsigma))_{\uvarsigma \in \signs}
   \end{equation*}
   be given by~\eqref{eq:8} and~\eqref{eq:9}, respectively, in~\S\ref{ss:itineraries}.
   Given~$\uvarsigma$ in~$\signs$, let~$c(\uvarsigma)$ in~$\cK_n$ be the unique parameter such that ${\iota(f_{c(\uvarsigma)}) = \iota(\uvarsigma)}$ (Proposition~\ref{p:ps}), and put ${\fs \= s_n(c(\uvarsigma))}$.
   Note that the function~$\uvarsigma \mapsto \fs$ so defined is continuous.
   On the other hand, since for each~$\uvarsigma$ in~$\signs$ the first~$q$ entries of~$\iota(\uvarsigma)$ are equal to~$0$ and we have ${q \ge q_{\&}}$ and ${\iota(\fs) = \iota(\uvarsigma)}$, we conclude that the parameter~$c(\uvarsigma)$ is in~$B(\lambda_{\&}, r_{\&})$.
   So, for all~$\uvarsigma$ and~$\uvarsigma'$ in~$\signs$ we have
   \begin{equation}
     \label{e:theta distortion}
     \theta(\fs) \le \theta(f_{\uvarsigma'})^2.
   \end{equation}

   \subsubsection{Pressure estimates, and the existence of equilibria}
   \label{sss:pressure estimates}
   The purpose of this subsection is to prove item~1 of the Main Theorem, and at the same time to estimate for each~$\uvarsigma$ in~$\signs$ the pressure functions of~$\fs$ at large values of~$t$.
   That for each~$\uvarsigma$ in~$\signs$ the interval map~$\fs|_{I(\fs)}$ is topologically exact follows from the fact that this map is not renormalizable, see~\cite[\S3]{CorRiv13} for details.
   Thus, to prove item~1 of the Main Theorem we only need to prove the assertions about equilibrium states.

   Let~$N \colon \N_0 \to \N_0$ and~$B \colon \N_0 \to \N_0$ be the functions defined in~\S\ref{ss:itineraries} for our choices of~$\Xi$ and~$q$ in~\S\ref{sss:parameter-selection}.
   Let~$\Pi^{\pm}$ be the~2 variables series, and for each~$s$ in~$\N_0$, let~$I_s^{\pm}$ and~$J_s^{\pm}$ be the series defined in~\S\ref{ss:estimating zero-temperature 2 variables series}.
   They satisfy
   $$ \Pi^{\pm} = \sum_{k = 0}^{+\infty} \pi_k^{\pm},
   I_s^{\pm} = \sum_{k \in I_s} \pi_k^{\pm},
   \text{ and }
   J_s^{\pm} = \sum_{k \in J_s} \pi_k^{\pm}. $$
   Furthermore, for each nonnegative real number~$s$ put~$\lambda(s) \= |J_s|^{-1}$ as in~\S\ref{ss:estimating zero-temperature 2 variables series}.

   Let~$A \colon \signs \to (0, +\infty)$ be the continuous function defined by
   $$ A(\uvarsigma) \= \frac{4 \log 2}{\log \theta(\fs)}, $$
   and define
   \begin{equation*}
     A_{\sup}
     \=
     \sup_{\uvarsigma \in \signs} A(\uvarsigma),     
   \end{equation*}
   and
   \begin{equation*}
   \eta_0
   \=
     \sup \left\{ \exp(\chi_{\fs}(\beta(\fs)) - \chi_\text{crit}(\fs)) \mid \uvarsigma \in \signs \right\}.
   \end{equation*}
   Moreover, let~$t_{\&}$ be a sufficiently large number so that
   \begin{multline}
     \label{e:choice of t I}
     t_{\&}
     \ge
     \frac{2 \log 2}{\upsilon},
     t_{\&}
     \ge
     \left( \inf_{\uvarsigma \in \signs} \chi_{\fs}(\wtp(\fs)) \right)^{-1},
     t_{\&}
     \ge
     \frac{25}{2} A_{\sup},
     \\ 2^{\left( \frac{4}{A_{\sup}} \right)^2 t_{\&}} 
     \ge
     2^n \Delta_1^{\frac{1}{2}} C_6 \eta_0^{\frac{n}{2}},
     \text{and }
     \frac{\log C_5}{t_{\&}^2}
     \le
     \upsilon_1 \frac{8}{A_{\sup}^2}.
   \end{multline}

   For the rest of this subsection we fix~$\uvarsigma$ in~$\signs$, and put
   $$ f \= \fs,
   \wtp \= \wtp(\fs),
   \whp \= \whp(\fs),
   $$
   $$ P^{\R} \= P_{f_{\uvarsigma}}^{\R},
   \sP^{\R} \= \sP_{f_{\uvarsigma}}^{\R},
   P\= P_{f_{\uvarsigma}},
   \text{ and }
   \sP \= \sP_{f_{\uvarsigma}}. $$
   Moreover, fix a real number~$t$ satisfying ${t \ge t_{\&}}$, and put
   $$ \tau
   \=
   \frac{4 }{A(\uvarsigma)} t, $$
   $$ P^+
   \=
   - t \frac{\pchicrit}{2} + \frac{\log 2}{3} \lambda(\tau - 1),
   \quad \text{and} \quad
   P^-
   \=
   - t \frac{\pchicrit}{2} + \frac{\log 2}{3} \lambda(\tau). $$
   Note that
   \begin{equation*}
     \tau
     =
   \frac{\log \theta(f)}{\log 2} t,
   \end{equation*}
that by~\eqref{e:chicrit} we have~$\pchicrit = \chi_{f}(\wtp)$, and that by~\eqref{e:choice of t I} we have
\begin{equation*}
  \tau \ge 50
  \text{ and }
  P^- < P^+ < 0.
\end{equation*}
   Moreover, by~\eqref{e:theta distortion} we have
   $$
   2^{\tau \xi} = \theta(f)^{t\xi}\le \theta(f)^{2t\xi(f)} = \Delta_2^t. 
   $$
   Combined with~\eqref{eq:1}, Lemma~\ref{l:2 variables functions} with ${\delta = \frac{\log 2}{3} \lambda(\tau - 1)}$, \eqref{eq:10}, \eqref{e:choice of n I}, and item~1 of Lemma~\ref{l:zero-temperature first floor}, this implies
   \begin{multline}
     \label{e:postcritical series upper bound}
     \sum_{k = 0}^{+\infty} \exp \left(-(n+3k) P^+ \right) |Df^{n+3k}(f(0))|^{-\frac{t}{2}}
     \\
     \begin{aligned}
       & \le \Delta_1^{\frac{t}{2}} \left( \frac{\exp (\pchicrit)}{|Df(\beta(f))|} \right)^{\frac{t}{2} n }
       \Pi^+ \left( \tau, \lambda(\tau - 1) \right).
       \\ & \le
       \left( \Delta_1^{\frac{1}{2}} \exp(- n \upsilon) \right)^t (2 + 2^{\tau \xi})
       \\ & \le
       \left( \Delta_1^{\frac{1}{2}} \exp(- n \upsilon)  \left( 2 + \Delta_2 \right) \right)^t
       \\ & \le
       C_6^{-t}.
     \end{aligned}
   \end{multline}
   Together with item~2 of Proposition~\ref{p:improved MS criterion}, this implies
   \begin{equation}
     \label{e:pressure upper bound}
     P^{\R}(t) \le P(t) \le P^+
     \quad \text{and} \quad
     \sP^{\R}(t, P^+) \le \sP(t, P^+) < 0.  
   \end{equation}

   On the other hand, by~\eqref{eq:1}, Lemma~\ref{l:2 variables functions} with ${\delta = \frac{\log 2}{3} \lambda(\tau) \le \log 2}$, the definition of $\eta_0$, \eqref{e:choice of t I}, and item~2 of Lemma~\ref{l:zero-temperature first floor}, we have
   \begin{multline}
     \label{e:postcritical series lower bound}
     \sum_{k = 0}^{+\infty} \exp \left(-(n+3k) P^- \right) |Df^{n+3k}(f(0))|^{-\frac{t}{2}}
     \\
     \begin{aligned}
       & \ge \Delta_1^{- \frac{t}{2}} \exp \left( - n \frac{\log 2}{3} \lambda(\tau) \right) \left( \frac{\exp (\pchicrit)}{|Df(\beta(f))|}
       \right)^{\frac{t}{2} n }
       \Pi^- \left( \tau, \lambda(\tau) \right).
       \\ & \ge
       2^{-n} \left( \Delta_1^{\frac{1}{2}} \eta_0^{\frac{n}{2}} \right)^{-t} 2^{\tau^2}
       \\ & 
       \ge \left( 2^n\Delta_1^{\frac{1}{2}}  \eta_0^{\frac{n}{2}} 2^{- \left(\frac{4}{A(\uvarsigma)}\right)^2 t_{\&}}
       \right)^{-t}
       \\ &
       \ge
       C_6^t.
     \end{aligned}
   \end{multline}
   Then item~1 of Proposition~\ref{p:improved MS criterion} implies
   \begin{equation}
     \label{e:pressure lower bound}
     P(t) \ge P^{\R}(t) > P^-
     \quad \text{and} \quad
     \sP(t, P^-) \ge \sP^{\R}(t, P^-) > 0.  
   \end{equation}

   We proceed to prove the existence and uniqueness of equilibrium states.
   Combining~\eqref{e:pressure upper bound} and~\eqref{e:pressure lower bound}, we have that the number~$\chiinfR(f)$ defined in the statement of Proposition~\ref{p:critical line} satisfies
   $$ \chiinfR(f)
   =
   - \lim_{t\to +\infty} \frac{P^{\R}(t)}{t}
   =
   \frac{\chicritf}{2}. $$
   Similarly,
   $$ \chiinfC(f)
   \=
   \inf \left\{ \int \log |Df| \dd \mu \mid \mu \in \sM_f \right\}
   =
   \frac{\chicritf}{2}. $$
   Using~\eqref{e:pressure lower bound} again, we conclude that for every~$t > 0$ we have
   \begin{equation*}
     P^{\R}(t) > - t \chiinfR (f)
     \quad \text{and} \quad 
     P(t) > - t \chiinfC (f).
   \end{equation*}
   The existence and uniqueness of equilibrium states follows from~\cite[Theorem~A]{PrzRiv19} in the real case.
   In the complex case it is proved in~\cite[Main Theorem]{PrzRiv11} for rational maps, and the proof applies without changes to quadratic-like maps.
   This completes the proof of item~1 of the Main Theorem.

   \subsubsection{Temperature dependence}
   \label{sss:temperature dependence}
   In this subsection we complete the proof of the Main Theorem by showing item~2.
   We give the proof in the complex setting; except for the obvious notational changes, it applies to the real case without modifications.
   We adopt the notation introduced in the previous subsections.

   Fix a number~$t$ satisfying ${t \ge t_{\&}}$, and let~$m_0$ be the integer in~$\N$ such that~$t$ is in~$(A(\uvarsigma) (m_0 - 1), A(\uvarsigma) m_0]$.
   Note that~$\tau \ge 50$, and that the integer~$\tau_0 \= \lceil \tau \rceil$ 
   satisfies $4m_0 - 3 \le  \tau_0 \le 4 m_0$.
   On the other hand, by~\eqref{e:pressure upper bound} and~\eqref{e:pressure lower bound} there is~$s^{\C}$ in~$[\tau - 1, \tau]$ such that
   \begin{equation}
     \label{e:pressure control}
     P(t) = - t \frac{\chicritf}{2} + \frac{\log 2}{3} \lambda(s^{\C}).
   \end{equation}
   Put~$s_0 \= \lceil s^{\C} \rceil$ and note that~$s_0$ is either equal to~$\tau_0 - 1$ or~$\tau_0$.

   We first prove that the hypotheses of Proposition~\ref{p:equilibria} are satisfied for this value of~$t$.
   By~\eqref{eq:1}, Lemma~\ref{l:2 variables functions} with ${\delta = \frac{\log 2}{3} \lambda(\tau)}$, and item~1 of Lemma~\ref{l:zero-temperature tower}, we have
   \begin{equation}
     \label{e:time postcritical series}
     \sum_{k = 0}^{+\infty} k \cdot \exp \left(-(n+3k)P^- \right) |Df^{n+3k}(f(0))|^{-\frac{t}{2}}
     <
     +\infty.  
   \end{equation}
   In particular, this implies that the sum in~\eqref{e:postcritical series lower bound} is finite, so by item~1 of Proposition~\ref{p:improved MS criterion} we have~$\sP(t, P^-) < +\infty$.
   This implies that~$\sP(t, \cdot)$ is continuous and strictly decreasing on~$[P^-, +\infty)$, so by~\eqref{e:pressure upper bound}, \eqref{e:pressure lower bound}, and Proposition~\ref{p:Bowen type formula} we have the second equality in~\eqref{e:Bowen formula}.
   Finally, combining~\eqref{e:time postcritical series}, and item~3 of Proposition~\ref{p:improved MS criterion}, we obtain that the second sum in~\eqref{e:time integrability}, with~$P_f(t)$ replaced by~$P^-$, is finite.
   In view of~\eqref{e:pressure lower bound}, this implies that the second sum in~\eqref{e:time integrability} is finite.
   This completes the proof that the hypotheses of Proposition~\ref{p:equilibria} are satisfied.

   Let~$\trho$, and~$\hrho$ be the measures given by Proposition~\ref{p:equilibria}.
   Moreover, put ${\fD \= \fD_f}$, ${F \= F_f}$, and for every~$k$ in~$\N_0$ put ${\fD_k \= \fD_{f, k}}$.
   Recall from~\S\ref{ss:itineraries}, that~$a_0 = 1$ and that for every~$s$ in~$\N_0$ we have
   $$ I_s = [a_s, b_s)
   \quad \text{and} \quad
   J_s = [b_s, a_{s + 1}). $$
   Note that by item~(b) of Lemma~\ref{l:the itinerary}, and the hypothesis~$q \ge 50 (\Xi + 1)$, we have~$a_{s + 1} - b_s = |J_s| \ge (s + 1)^2$.
   For each integer~$\varsigma$ in~$[\tau_0 - 3, s_0]$ put
   $$ \hrho_{\varsigma}'
   \=
   \sum_{k = b_{\varsigma} + {\varsigma}^2}^{a_{{\varsigma} + 1} - 1} \sum_{j = n + 3b_{\varsigma}-2}^{n + 3(k + 1 - \varsigma^2)} \sum_{W \in \fD_{k}} (f^j)_* \left(\trho|_W \right), $$
   and put
   $$ \hrho''
   \=
   \sum_{k \in J_{s_0}} \sum_{j = n + 3b_{s_0 - 1}-2}^{n + 3(a_{s_0} + 1 - s_0^2)} \sum_{W \in \fD_{ k}} (f^j)_* \left(\trho|_W \right). $$

   In part~1 we estimate the total mass of the measure
   $$ \hrho' \= \left( \sum_{\varsigma = \tau_0 - 3}^{s_0} \hrho_{\varsigma}' \right) +\hrho'' $$
   from below, and in part~2 we show that the total mass of~$\hrho - \hrho'$ is small in comparison to that of~$\hrho'$. 
   In part~3 we complete the proof of item~2 of the Main Theorem by showing that~$\hrho'$ is supported on a small neighborhood of the orbit of~$\wtp$ or~$\whp$.

   The following series defined in~\S\ref{sec:estim-weight-2}, are used in parts~1 and~2 below: $\tPi^{\pm}$, and for each~$s$ in~$\N_0$, the series~$\tPi^{\pm}$, $\tI_s^+$, $\tJ_s^+$, and~$\hJ_s^{\pm}$.
   They satisfy 
   $$ \tPi^+ = \sum_{k = 0}^{+\infty} k \cdot \pi_k^+,
   \tI_s^+ = \sum_{k \in I_s} k \cdot \pi_k^+,
   \tJ_s^+ = \sum_{k \in J_s} k \cdot \pi_k^+,
   \text{ and }
   \hJ_s^{\pm} = \sum_{k = b_s + s^2}^{a_{s + 1} - 1} (k + 1 - b_s - s^2) \pi_k^{\pm}. $$

   \partn{1}
   To estimate the total mass of~$\hrho'$ from below, put~$\Upsilon_1 \= \Delta_3 C_9 \Delta_1^{\frac{1}{2}} \eta_0^{\frac{n}{2}} 2^n$, and for each~$\varsigma$ in~$[\tau_0 - 3, s_0]$, put
   $$ H_{\varsigma}
   \=
   \{ k \in \N_0 \mid b_\varsigma + \varsigma^2 \le k \le a_{\varsigma + 1} - 1 \}. $$
   By item~1 of Proposition~\ref{p:estimating Z_1 by the postcritical series}, \eqref{eq:1}, Lemma~\ref{l:2 variables functions} with ${\delta = \frac{\log 2}{3}\lambda(s^{\C}) \le \log 2}$, the definition of~$\eta_0$, \eqref{eq:3}, \eqref{eq:4}, and~\eqref{e:pressure control}, we have
   \begin{multline}
     \label{e:central tower section I}
     |\hrho'|
     \\
     \begin{aligned}
       & \ge
       \sum_{\varsigma = \tau_0 - 3}^{s_0} |\hrho_{\varsigma}'|
       \\ & =
       \sum_{\varsigma = \tau_0 - 3}^{s_0} \sum_{k \in H_{\varsigma}} 3(k + 1 - b_{\varsigma} - {\varsigma}^2) \sum_{W \in \fD_{ k}} \trho (W)
       \\ & \ge
       \left( \Delta_3 C_9 \right)^{-t} \sum_{\varsigma = \tau_0 - 3}^{s_0} \sum_{k \in H_{\varsigma}} 3(k + 1 - b_{\varsigma} - {\varsigma}^2) \exp(-(n + 3k)P(t)) |Df^{n + 3k}(f(0))|^{- t/2}
       \\ & \ge
       \left( \Delta_3 C_9 \Delta_1^{\frac{1}{2}} \left( \frac{|Df(\beta(f))|}{\exp(\chicritf)} \right)^{\frac{n}{2}}\right)^{-t} 2^{-n} \sum_{\varsigma = \tau_0 - 3}^{s_0} \sum_{k \in H_{\varsigma}} 3(k + 1 - b_{\varsigma} - {\varsigma}^2) \pi_k^-(\tau, \lambda(s^{\C}))
       \\ & \ge
       3 \Upsilon_1^{-t} \sum_{\varsigma = \tau_0 - 3}^{s_0} \hJ_{\varsigma}^-(\tau, \lambda(s^{\C})).
     \end{aligned}
   \end{multline}

   \partn{2}
   By~\eqref{eq:3}, \eqref{eq:4}, item~2 of Proposition~\ref{p:estimating Z_1 by the postcritical series}, and~\eqref{e:pressure control}, we have
   \begin{equation*}
     \begin{split}
       \left| \hrho - \hrho' \right|
       & =
       \sum_{\substack{k \in \N_0 \\ k \not \in \bigcup_{\varsigma = \tau_0 - 3}^{s_0} H_{\varsigma}}} \sum_{W \in \fD_{ k}} m_{f}(W) \trho(W)
       \\ & \quad +
       \sum_{\varsigma = \tau_0 - 3}^{s_0 - 1} \sum_{k \in H_{\varsigma}} \sum_{W \in \fD_{ k}} \left(m_f(W) - 3 (k + 2 - b_{\varsigma} - \varsigma^2) \right) \trho(W)
       \\ & \quad +
       \sum_{k \in H_{s_0}} \sum_{W \in \fD_{ k}} \left(m_f(W) - 3 (k + 4 - b_{s_0} - 2s_0^2 + |J_{s_0 - 1}|))\right) \trho(W)
       \\ & \le
       (\Delta_3 C_9)^t \left[ \sum_{\substack{k \in \N_0 \\ k \not \in \bigcup_{\varsigma = \tau_0 - 3}^{s_0} H_{\varsigma}}} (n + 3k + 1) \exp(-(n + 3k) P(t)) |Df^{n + 3k}(f(0))|^{- \frac{t}{2}}
       \right. \\ & \quad +
       \sum_{\varsigma = \tau_0 - 3}^{s_0 - 1} \sum_{k \in H_{\varsigma}} (n + 3(b_{\varsigma} + \varsigma^2)) \exp(-(n + 3k) P(t)) |Df^{n + 3k}(f(0))|^{- \frac{t}{2}}
       \\ & \quad + \left.
         \sum_{k \in H_{s_0}} (n + 3(b_{s_0} + 2s_0^2 - |J_{s_0 - 1}|)) \exp(-(n + 3k) P(t)) |Df^{n + 3k}(f(0))|^{- \frac{t}{2}}
       \right].
     \end{split}
   \end{equation*}
   Thus, if we put~$\Upsilon_2 \= \Delta_3 C_9 \Delta_1^{\frac{1}{2}} \exp(-n \upsilon)$, then by~\eqref{eq:1}, Lemma~\ref{l:2 variables functions} with ${\delta = \frac{\log 2}{3} \lambda(s^{\C})}$, and item~2 of Lemma~\ref{l:zero-temperature tower},
   \begin{equation*}
     \begin{split}
       \left| \hrho - \hrho' \right|
       & \le
       (n + 4) \Upsilon_2^t \left[ \tPi^+ (\tau, \lambda(s^{\C}))
         - \sum_{\varsigma = \tau_0 - 3}^{s_0} \hJ_{\varsigma}^+(\tau, \lambda(s^{\C}))
         - (|J_{s_0 - 1}| - s_0^2) \hJ_{s_0}^+(\tau, \lambda(s^{\C})) \right]
       \\ & \le
       (n + 4) \Upsilon_2^t 2^{-q \tau^2} \sum_{\varsigma = \tau_0 - 3}^{s_0} \hJ_{\varsigma}^-(\tau, \lambda(s^{\C})).    
     \end{split}
   \end{equation*}
   Together with~\eqref{e:central tower section I} and the definitions of~$\tau$ and~$A_{\sup}$, the previous chain of inequalities implies
   \begin{displaymath}
     \left| \hrho - \hrho' \right|
     \le
     3(n + 4) \left(\Upsilon_1 \Upsilon_2 \right)^t 2^{- q \tau^2} |\hrho'|
     \le
     3(n + 4) \left(\Upsilon_1 \Upsilon_2 \right)^t 2^{- q\left(\frac{4}{A_{\sup}}\right)^2 t^2} |\hrho'|.  
   \end{displaymath}
   Thus, if we put
   \begin{displaymath}
     \upsilon_0'
     \=
     \frac{1}{2} q \left( \frac{4}{A_{\sup}} \right)^2 \log 2,
     \text{ and }
     C_0'
     \=
     3(n + 4) \exp \left( \frac{(\log (\Upsilon_1 \Upsilon_2))^2}{4 \upsilon_0'} \right),
   \end{displaymath}
   then
   \begin{equation}
     \label{eq:11}
     \frac{|\hrho - \hrho'|}{|\hrho|}
     \le
     \frac{|\hrho - \hrho'|}{|\hrho'|}
     \le
     C_0' \exp(- \upsilon_0' t^2).  
   \end{equation}

   \partn{3}
   Using the inequality~$\tau \ge 50$, and the definitions of~$\tau$ and~$A_{\sup}$ we have for every~$\varsigma$ in~$[\tau_0-3,s_0]$,
   $$ \varsigma^2
   \ge
   \frac{\tau^2}{2}
   \ge
   \frac{8}{A_{\sup}^2} t^2. $$
   So, if we put~$\upsilon_0''\= \upsilon_1 \frac{8}{A_{\sup}^2} - \frac{\log C_5}{t_{\&}^2} > 0$, then
   \begin{equation}
     \label{e:radius}
     C_5 \exp( - \upsilon_1 \varsigma^2))
     \le
     \exp( - \upsilon_0'' t^2)).
   \end{equation}

   For $\varsigma \in \{+,-\}$, denote by~$\cO^\varsigma$ the forward orbit of~$p^\varsigma$ under~$f$.
   Let~$\varsigma$ in~$[\tau_0 - 3, s_0]$ be given, and put~$m(\varsigma) \= \lceil \varsigma / 4 \rceil$, so that~$4m(\varsigma) - 3 \le \varsigma \le 4m(\varsigma)$.
   For every integer~$j$ such that $j+1$ is in~$J_{\varsigma}$ we have~$\whx(\uvarsigma)_j = 1^{\varsigma(m(\varsigma))}$, so
   $$ \iota(\uvarsigma)_j
   =
   \begin{cases}
     1 & \text{if $\varsigma(m(\varsigma)) = +$};
     \\
     0 & \text{if $\varsigma(m(\varsigma)) = -$ and~$j$ is even};
     \\
     1 & \text{if $\varsigma(m(\varsigma)) = -$ and~$j$ is odd}.
   \end{cases} $$
   Since $b_\varsigma$ is even, for every~$\ell$ in~$[0,a_{\varsigma + 1} - 1-b_\varsigma]$ the points
   $ f^{n + 1 + 3(b_{\varsigma}+\ell-1)}(0)$ and $f^{3\ell}(p^{\varsigma(m(\varsigma))})$ are both in~$Y_f$ or both in~$\tY_f$.
   It follows that 
   $$ P_{f, 3(a_{\varsigma + 1} - 1-b_\varsigma)+ 4}(f^{n + 1 + 3(b_\varsigma-1)}(0))
   =
   P_{f, 3(a_{\varsigma + 1} - 1-b_\varsigma)+ 4}(p^{\varsigma(m(\varsigma))}). $$
   Then, for each integer~$j$ in~$[b_{\varsigma}-1, a_{\varsigma+1} -2]$ we have
   \begin{equation}\label{e:equal puzzle pieces}
     P_{f, 3(a_{\varsigma + 1}-2 -j)+4}(f^{n + 1 + 3j}(0))
     =
     P_{f, 3(a_{\varsigma + 1}-2 -j)+4}(f^{3j}(p^{\varsigma(m(\varsigma))})),
   \end{equation}
   which implies that  for each integer~$k$  in~$J_\varsigma$ and for each integer~$j$ in~$[b_{\varsigma}-1, k -1]$,
   $$ P_{f, 3(k - j) +1}(f^{n + 1 + 3j}(0))
   =
   P_{f, 3(k - j) +1}(f^{3j}(p^{\varsigma(m(\varsigma))})).$$
   Note that by definition of~$\fD_{ k}$, every element~$W$ of~$\fD_{ k}$ is contained in~$P_{f, n + 3k + 2}(0)$, so, if in addition we have $k\ge b_\varsigma+\varsigma^2$ and~$j \le  k  - \varsigma^2$, then by~\eqref{e:radius} and Lemma~\ref{l:contractions} we obtain
   $$ f^{n + 1 + 3j}(W) \cup  f^{(n + 1 + 3j) + 1}(W) \cup 
   f^{(n + 1 + 3j) + 2}(W)
   \subset
   B(\cO^{\varsigma(m(\varsigma))}, \exp( - \upsilon_0'' t^2)). $$
   This proves that~$\hrho'_\varsigma$ is supported on~$B(\cO^{\varsigma(m(\varsigma))}, \exp( - \upsilon_0'' t^2))$.

   On the other hand, for each integer~$k$ in~$J_{s_0}$,  every element $W$ of $\fD_{k}$ is contained in $P_{f,n+3k+2}(0)$, and hence in $P_{f,n+3(a_{s_0}-1)+2}(0)$.
   Thus, by~\eqref{e:equal puzzle pieces} with $\varsigma=s_0-1$, \eqref{e:radius}, and Lemma~\ref{l:contractions}, we have that for every integer $j$ in $[b_{s_0-1}-1, a_{s_0}-s_0^2]$, 
   $$f^{n + 1 + 3j}(W) \cup  f^{(n + 1 + 3j) + 1}(W) \cup f^{(n + 1 + 3j) + 2}(W)
   \subset
   B(\cO^{\varsigma(m(s_0 - 1))} , \exp( - \upsilon_0'' t^2)), $$
   which proves that~$\hrho''$ is supported on~$B(\cO^{\varsigma(m(s_0 - 1))}, \exp( - \upsilon_0'' t^2))$.

   Assume that there are integers~$m$ and~$\whm$ as in the statement of the Main Theorem, so that
   \begin{equation}
     \label{eq:12}
     \whm \ge m \ge 1,
     \uvarsigma(m) = \cdots = \varsigma(\whm),
     \text{ and }
     t \in [A(\uvarsigma) m, A(\uvarsigma) \whm].
   \end{equation}
   Then~$4m \le \tau_0 \le 4 \whm$, so for every~$\varsigma$ in~$[\tau_0 - 3, s_0]$ we have~$\varsigma(m(\varsigma)) = \varsigma(m)$.
   It follows from the considerations above that the measure~$\hrho'$ is supported on~$B(\cO^{\varsigma(m_0)}, \exp( - \upsilon_0'' t^2))$.
   Since the equilibrium state~$\rho_t$ of~$f|_{J(f)}$ for the potential $- t \log |D f|$ is the probability measure proportional to~$\hrho$, by~\eqref{eq:11} we have
   $$ \rho_t(\C \setminus B(\cO^{\varsigma(m_0)}, \exp(- \upsilon_0'' t^2))
   \le
   \frac{|\hrho - \hrho'|}{|\hrho|}
   \le
   C_0' \exp( - \upsilon_0' t^2). $$
   Under our assumption~$t \ge t_{\&}$, this proves item~2 of the Main Theorem with~$\upsilon_0 = \min \{ \upsilon_0', \upsilon_0'' \}$ and~$C_0 = C_0'$.
   In the case where~$t$ is in~$(0, t_{\&})$, it suffices to take the same value of~$\upsilon_0$ and replace~$C_0$ by a constant bounded from below by~$\exp \left( \upsilon_0 t_{\&}^2 \right)$, if necessary.
   The proof of the Main Theorem is thus complete.

   \begin{rema}
     \label{r:periodic concentration}
     Without assuming the existence of~$m$ and~$\whm$ satisfying~\eqref{eq:12}, the measure~$\hrho'$ is supported on~$B(\cO^+ \cup \cO^-, \exp(- \upsilon_0'' t^2))$, and the estimate above gives that for every~$t > 0$ we have
     $$ \rho_t(\C \setminus B(\cO^+ \cup \cO^-, \exp(- \upsilon_0 t^2)))
     \le
     C_0 \exp( - \upsilon_0 t^2). $$
   \end{rema}

   \appendix
   \section{Estimating the~2 variables series}
   \label{s:abstract estimates}
   In this appendix we make some of the main estimates in the proof of the Main Theorem, in an abstract setting that is independent of the rest of the paper.

   After some preliminary estimates in~\S\ref{ss:zero-temperature partition}, the main estimates are given in~\S\ref{ss:estimating zero-temperature 2 variables series}, and~\S\ref{sec:estim-weight-2}.

   \subsection{Preliminary estimates}
   \label{ss:zero-temperature partition}
   Fix a nonnegative integer~$\Xi$, and an integer~$q$ satisfying~$q \ge 50 (\Xi + 1)$.
   For each~$s$ in~$[0,+\infty)$, define
   \begin{equation*}
     a_s
     \=
     2^{qs^3}
     \text{ and }
     b_s
     \=
     2^{qs^3} + q(2s + 1) + \Xi,
   \end{equation*}
   \[
     I_s
     \=
     \left[ a_s, b_s \right)
     \text{ and }
     J_s
     \=
     \left[ b_s, a_{s+1} \right).
   \]
   In the case where~$s$ is an integer, these definitions coincide with those in~\S\ref{ss:itineraries}.
   For~$s$ in~$[0,+\infty)$ that is not necessarily an integer, the interval~$J_s$ is used in the proof of Lemmas~\ref{l:zero-temperature first floor} and~\ref{l:zero-temperature tower} in~\S\ref{ss:estimating zero-temperature 2 variables series}.
   Note that~$|I_0| = q + \Xi$, and that for integer values of~$s$ the intervals~$I_s$ and~$J_s$ form a partition of $[1,+\infty)$.

   Let ${N \colon \N_0 \to \N_0}$ and ${B \colon \N_0 \to \N_0}$ be as in~\S\ref{ss:itineraries}.
   Observe that for every~$s$ in~$\N_0$, we have for every~$k$ in~$J_s$
   \begin{equation}
     \label{eq:N J}
     N(k)
     =
     \sum_{j = 0}^s |I_j|
     =
     \sum_{j = 0}^s (q(2j + 1) + \Xi)
     =
     q(s + 1)^2 + \Xi \cdot (s+1)
   \end{equation}
   and for every~$k$ in~$I_s$
   \begin{equation}
     \label{eq:N I}
     N(k) = k - (2^{qs^3} - 1) + qs^2 + \Xi s.
   \end{equation}

   \begin{lemm}
     \label{l:the itinerary}
     The following properties hold.
     \begin{enumerate}
     \item[(a)]
       For each real number~$s \ge 0$, we have $b_s\le a_{s+1}/2$.
     \item[(b)]
       For each real number~$s \ge 0$, we have $a_{s+1}/2 \le |J_s|$.
     \item[(c)]
       For each real number~$s \ge 1$, we have $b_{s}/a_s \le 5/4$.
     \end{enumerate}
   \end{lemm}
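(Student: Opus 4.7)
The proof is a direct calculation that exploits the doubly exponential growth of $a_s = 2^{qs^3}$ against the merely polynomial perturbation $q(2s+1) + \Xi$ that separates $b_s$ from $a_s$. All three assertions reduce to arithmetic once one controls the exponent gap between $a_{s+1}$ and $a_s$; the hypothesis $q \ge 50(\Xi+1)$ provides more than enough slack.

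I would begin with part~(c), which is the cleanest. Writing
\[
\frac{b_s}{a_s}
=
1 + \frac{q(2s+1) + \Xi}{2^{qs^3}},
\]
I would observe that for $s \ge 1$ the denominator satisfies $2^{qs^3} \ge 2^{q} \ge 2^{50}$, while the numerator is at most $3qs + \Xi s \le (3q+\Xi)s$ on the range~$s \ge 1$. A crude bound using $\Xi \le q/50$ gives a perturbation of size $O(q s / 2^{qs^3})$, which is far smaller than $1/4$ for every $s \ge 1$; the worst case is $s=1$, where $(3q+\Xi)/2^q \le 4q/2^{50}$.

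For part~(a), I would compare $b_s$ with $a_{s+1}/2 = 2^{q(s+1)^3 - 1}$ by looking at the exponent difference
\[
q(s+1)^3 - 1 - qs^3 = 3qs^2 + 3qs + q - 1 \ge q - 1 \ge 49.
\]
For $s \ge 1$ this gives $a_{s+1}/2 \ge 2^{49} a_s$, which combined with~(c) yields $a_{s+1}/2 \ge 2^{49}\cdot\tfrac{4}{5} b_s \ge b_s$. The case $s = 0$ has to be checked separately, but it is immediate: $b_0 = 1 + q + \Xi \le 2q \le 2^{q-1} = a_1/2$, again by $q\ge 50$. Part~(b) then follows at once from~(a), since $|J_s| = a_{s+1} - b_s \ge a_{s+1} - a_{s+1}/2 = a_{s+1}/2$.

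There is no real obstacle: the whole lemma is bookkeeping around the estimate $2^{qs^3} \gg$ polynomial in $s$. The only detail that needs attention is that $s$ ranges over all real numbers $\ge 0$ (not merely integers), but the formulas defining $a_s,b_s$ are smooth in $s$ and the inequalities just derived hold verbatim for real parameters; in particular for part~(a) I would still split off the endpoint $s=0$ because the bound $3qs^2+3qs+q-1\ge 49$ is the only place the argument might look tight, and it is tight only at $s=0$ where the direct check is trivial.
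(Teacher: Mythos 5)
Your overall plan — show that the $2^{qs^3}$ growth swamps the affine term $q(2s+1)+\Xi$ — is the right one and is the same mechanism the paper uses. Parts~(b) and~(c) are fine: part~(b) is an immediate consequence of~(a) exactly as in the paper, and for~(c) your reduction to bounding $\bigl(q(2s+1)+\Xi\bigr)/2^{qs^3}$ and noting monotonicity in~$s$ on $[1,+\infty)$ is correct (one small slip: the displayed majorant $4q/2^{50}$ is \emph{not} bounded as $q \to +\infty$; the quantity you actually want is $4q/2^{q}$, which is decreasing in $q$ for $q\ge 2$ and is at most $200/2^{50}$ over the admissible range $q\ge 50$).

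The genuine gap is in part~(a). Your argument for~(a) when $s\ge 1$ relies on~(c), and~(c) is only asserted (and only proved, both by you and by the paper) for $s\ge 1$. You then ``split off the endpoint $s=0$'' with a direct check, but the statement of~(a) is for \emph{every real} $s\ge 0$, so the entire interval $(0,1)$ is left uncovered: there your chain $b_s \le \tfrac{5}{4}a_s \le a_{s+1}/2$ cannot be invoked. The fix is easy — for $s\in[0,1)$ one has $b_s - a_s = q(2s+1)+\Xi \le 3q+\Xi$ while $a_{s+1}/2 - a_s \ge (2^{q-1}-1)\,a_s \ge 2^{q-1}-1$, and $q\ge 50(\Xi+1)$ gives $3q+\Xi \le 2^{q-1}-1$ — but as written your proof omits this case. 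The paper sidesteps the issue entirely by proving~(a) directly: it verifies the inequality at $s=0$ and then shows the function $s\mapsto 2^{q(s+1)^3-1} - \bigl(2^{qs^3}+q(2s+1)+\Xi\bigr)$ has strictly positive derivative on $[0,+\infty)$, which covers all real $s\ge 0$ at once and does not depend on~(c). A similar monotonicity argument (check at $s=1$, then positive derivative on $[1,+\infty)$) is what the paper uses for~(c); your crude uniform bound for~(c) is an acceptable alternative once the $4q/2^q$ point is fixed.
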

   \begin{proof}
     Item~$(a)$ with~$s = 0$ follows from our hypothesis~$q \ge 50 (\Xi + 1)$.
     For~$s > 0$, it follows from this and from the fact that the derivative of the function
     $$ s \mapsto 2^{q(s + 1)^3  - 1} - (2^{qs^3} + q(2s + 1) + \Xi) $$
     is strictly positive on~$[0, +\infty)$.
     Item~$(b)$ follows easily from item~$(a)$.
     For item~$(c)$ notice that by our hypothesis~$q\ge 50(\Xi+1)$ it is enough to prove that for every $s\ge 1$ we have $2q(s+1)\le (1/4)\cdot 2^{qs^3}$.
     The case~$s = 1$ is given by our hypothesis~$q\ge 50(\Xi+1)$.
     For~$s > 1$, it follows from this and from the fact that the derivative of the function
     $$ s \mapsto 2^{qs^3} - 8q(s+1) $$
     is strictly positive on~$[1, +\infty)$.
   \end{proof}

   \subsection{Estimating the~2 variables series}
   \label{ss:estimating zero-temperature 2 variables series}
   Let~$\xi > 0$ be given, put~$\Xi \= \lceil 2 \xi \rceil + 1$, and let~$q$, $N$, and~$B$ be as in the previous subsection.
   For~$s$ in~$\N_0$ define the following~2 variables series on~$[0,+\infty) \times [0,+\infty),$
   \begin{equation*}
     I_s^{\pm}(\tau, \lambda)
     \=
     \sum_{k\in  I_s} 2^{- \lambda k -\tau N(k) \pm \tau \xi B(k)}
     \quad \text{and} \quad
     J_s^{\pm}(\tau, \lambda)
     \=
     \sum_{k\in J_s} 2^{- \lambda k -\tau N(k) \pm \tau \xi B(k)},
   \end{equation*}
   and put
   $$ \Pi^{\pm}(\tau,\lambda)
   \=
   1 + \sum_{s=0}^{+\infty} I_s^{\pm}(\tau, \lambda) + \sum_{s=0}^{+\infty} J_s^{\pm}(\tau, \lambda). $$
   Note that by~\eqref{e:B} and~\eqref{eq:N J}, for every~$j$ in~$\N_0$ and every $\tau > 0$ we have
   \begin{equation}
     \label{e:J formula}
     J_j^{\pm}(\tau,\lambda)
     =
     2^{- q \tau \cdot (j+1)^2 - (\Xi \mp 2 \xi) \tau \cdot (j+1)} \sum_{k \in J_j} 2^{- \lambda k}.
   \end{equation}
   Moreover, for every real number~$s$ in~$[0,+\infty)$ define
   \[
     \lambda(s) \= \frac{1}{|J_s|}.
   \]
   \begin{lemm}
     \label{l:zero-temperature first floor}
     For every $\tau\ge 2$ the following inequalities hold:
     \begin{enumerate}
     \item[1.]
       $ \Pi^+(\tau,\lambda(\tau - 1))
       \le
       2 + 2^{\tau\xi}. $
     \item[2.]
       $ 2^{\tau^2}
       \le
       \Pi^-(\tau,\lambda(\tau))$.
     \end{enumerate}
   \end{lemm}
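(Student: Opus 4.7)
The plan is to prove the two parts by analyzing the block structure of $\Pi^{\pm}$ directly, using \eqref{e:J formula} for the $J$-blocks and the explicit formulas \eqref{eq:N J}, \eqref{eq:N I} for $N(k)$.

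For Part 1, I decompose $\Pi^+(\tau, \lambda(\tau-1)) = 1 + I_0^+ + \sum_{s \geq 1} I_s^+ + \sum_{s \geq 0} J_s^+$ and show that $I_0^+ \leq \tfrac{1}{3} 2^{\tau\xi}$ while all other contributions together are less than $1$. For $k \in I_0$ one has $N(k) = k$ and $B(k) = 1$, so $I_0^+ = 2^{\tau\xi} \sum_{k=1}^{q+\Xi} 2^{-(\lambda+\tau)k} \leq 2^{\tau\xi} \tfrac{2^{-\tau}}{1-2^{-\tau}} \leq \tfrac{1}{3} 2^{\tau\xi}$ for $\tau \geq 2$. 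For $s \geq 1$, plugging \eqref{eq:N I} into $I_s^+$ and using $\sum_{k \geq a_s} 2^{-(\lambda+\tau)k} \leq \tfrac{4}{3} 2^{-\tau \cdot 2^{qs^3}}$ cancels the huge power $2^{\tau \cdot 2^{qs^3}}$, leaving $I_s^+ \leq \tfrac{4}{3} 2^{-\tau(qs^2 + (\Xi - 2\xi)s + 1 - \xi)}$, which decays super-geometrically since $\Xi - 2\xi \geq 1$ and $q \geq 50$. For $J_s^+$ I use \eqref{e:J formula}: when $s \leq \tau - 1$ the trivial bound $\sum_{k \in J_s} 2^{-\lambda(\tau-1)k} \leq |J_s| \leq a_{s+1}$ produces the exponent $q(s+1)^2[(s+1)-\tau] - (\Xi-2\xi)\tau(s+1) \leq -\tau$ since $(s+1) - \tau \leq 0$ and $\Xi - 2\xi \geq 1$; when $s \geq \tau$, Lemma~\ref{l:the itinerary}(a) and the super-exponential growth of $a_s$ force $\lambda(\tau-1) b_s$ to be huge, so $\sum_{k \in J_s} 2^{-\lambda k}$ collapses faster than the prefactor can grow. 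Summing these gives $\Pi^+ \leq 1 + \tfrac{1}{3} 2^{\tau\xi} + O(2^{-q}) \leq 2 + 2^{\tau\xi}$.

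For Part 2 it suffices to exhibit a single integer $s^*$ with $J_{s^*}^-(\tau, \lambda(\tau)) \geq 2^{\tau^2}$. Write $n \= \lfloor \tau \rfloor$ (so $n \geq 1$) and $\alpha \= \tau - n \in [0, 1)$, and split into two overlapping cases: if $\alpha \leq 1 - \tfrac{1}{25}$ take $s^* = n$, otherwise take $s^* = n+1$. In the first case, Lemma~\ref{l:the itinerary}(a) gives $a_{s^*+1} \leq a_{\tau+1} \leq 2|J_\tau|$, so every $k \in J_{s^*}$ satisfies $\lambda(\tau)k \leq 2$ and $\sum_{k \in J_{s^*}} 2^{-\lambda(\tau)k} \geq |J_{s^*}|/4 \geq 2^{q(n+1)^3 - 3}$; substituting into \eqref{e:J formula} the exponent becomes $q(n+1)^2(1-\alpha) - (\Xi+2\xi)\tau(n+1) - 3$, and the hypotheses $q \geq 50(\Xi+1)$ and $1-\alpha \geq \tfrac{1}{25}$ yield $q(1-\alpha) \geq \Xi + 2\xi + 2$, so the exponent exceeds $(n+1)^2 \geq \tau^2$. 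In the second case, I restrict the $k$-sum to the subset $[b_{n+1}, b_{n+1} + \tfrac{1}{2}|J_\tau|]$ of $J_{s^*}$, which fits since $|J_\tau| \leq a_{\tau+1} \leq a_{n+2} \leq 2|J_{n+1}|$; because $(n+1+\alpha)^3 - (n+1)^3 \geq 3\alpha(n+1)^2 \geq \tfrac{72}{25}(n+1)^2$, the quantity $\lambda(\tau) b_{n+1}$ is super-exponentially small, so every term in the restricted sum is at least $1/2$, giving a lower bound of $|J_\tau|/4 \geq 2^{q(\tau+1)^3 - 3}$. The algebraic identity
\[
(n+1+\alpha)^3 - (n+\alpha)(n+2)^2 = (2\alpha - 1)n^2 + (3\alpha^2+2\alpha-1)n + (\alpha^3+3\alpha^2-\alpha+1)
\]
(with all coefficients positive for $\alpha > 1/2$) then makes the exponent of $J_{s^*}^-$ at least $q(2\alpha-1)n^2 - (\Xi+2\xi)(n+\alpha)(n+2) - O(1)$, which is $\geq (n+1)^2 \geq \tau^2$ using $2\alpha - 1 \geq \tfrac{23}{25}$ and $q \geq 50(\Xi+1)$.

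The main obstacle is the non-integer case in Part 2: the sum $\Pi^-$ is indexed by integer $s$, while $\lambda(\tau)$ depends on the real parameter $\tau$, so the ``natural'' choice $s^* \approx \tau$ must round to an integer. Rounding down makes the prefactor $2^{q(s^*+1)^2[(s^*+1)-\tau]}$ degenerate as $\alpha$ approaches $1$; rounding up overshoots $|J_\tau|$ and requires restricting the $k$-sum. The two-case split by the fractional part of $\tau$, whose ranges of validity overlap comfortably thanks to the slack $q \geq 50(\Xi+1)$, resolves the obstacle.
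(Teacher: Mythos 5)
Your Part 2 argument is correct and follows essentially the same strategy as the paper's: isolate a single dominant term $J_{s^*}^-(\tau,\lambda(\tau))$, with a case split governed by the fractional part of $\tau$. Your choice of threshold ($\alpha \le 24/25$ vs.\ the paper's $\tau \lessgtr \tau_0 - 1/3$) is different in detail but achieves the same purpose, and the algebraic identity you use in the second case checks out. One small slip: the inequality $a_{s+1}\le 2|J_s|$ is part (b), not part (a), of Lemma~\ref{l:the itinerary}.

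Part 1, however, has a genuine gap. Your case split for $\sum_s J_s^+$ is ``$s\le\tau-1$'' versus ``$s\ge\tau$''. When $\tau$ is not an integer, the integer $s=\lfloor\tau\rfloor$ satisfies $\tau-1<s<\tau$ and falls into neither case. This is not a harmless boundary issue: the trivial bound $\sum_{k\in J_s}2^{-\lambda k}\le|J_s|\le a_{s+1}$ that you use for $s\le\tau-1$ gives, for $s=\lfloor\tau\rfloor$, the exponent $q(s+1)^2\bigl[(s+1)-\tau\bigr]-(\Xi-2\xi)\tau(s+1)$ with $(s+1)-\tau=1-\alpha>0$; the first term can then be of order $q(s+1)^2$, which overwhelms the second, so your upper bound is useless there. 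Moreover, your argument for ``$s\ge\tau$'' asserts that $\lambda(\tau-1)\,b_s$ is ``huge'', but for $s=\lceil\tau\rceil$ with $\tau$ just below $\lceil\tau\rceil$ one has $\lambda(\tau-1)\,b_s\approx 2\cdot 2^{q(s^3-\tau^3)}$, which is close to $2$ rather than huge, so the claimed ``collapse'' of $\sum_{k\in J_s}2^{-\lambda k}$ does not occur. What actually makes the estimate work near $s=\tau$ is not a small geometric tail but the prefactor: one bounds $\sum_{m\ge1}2^{-\lambda(\tau-1)m}$ by $2a_\tau=2\cdot 2^{q\tau^3}$ as in \eqref{eq:geometrica2} (no decay claimed), and then observes that the factor $2^{-q\tau(s+1)^2}$ from \eqref{e:J formula} dominates $2^{q\tau^3}$ precisely when $(s+1)^2\ge\tau^2$, i.e.\ $s\ge\tau-1$. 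This is the route the paper takes, splitting at $j\ge\lfloor\tau\rfloor$ (which always satisfies $\lfloor\tau\rfloor\ge\tau-1$), and it covers the missing integer. Replacing your threshold $s\ge\tau$ by $s\ge\lfloor\tau\rfloor$ and your ``collapse'' argument by this prefactor comparison would close the gap.
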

   \proof
   \

   \partn{1}
   By~\eqref{e:B}, \eqref{eq:N I}, our hypothesis $\tau\ge 2$, and the inequality~$\Xi - 2 \xi \ge 1$, for every~$\lambda \ge 0$ we have
   \begin{equation}
     \label{eq:Is+1}
     \begin{split}
       \sum_{s=0}^{+\infty} I_s^+(\tau, \lambda)
       & \le
       \sum_{s=0}^{+\infty}  \sum_{m = 1}^{|I_s|} 2^{-\tau(qs^2 + \Xi s + m) +
         \tau \xi \cdot (2s + 1)}
       \\ & \le
       2^{\tau\xi} \sum_{s=0}^{+\infty} 2^{-(\Xi - 2 \xi) \tau s }
       \sum_{m = 1}^{|I_{s}|} 2^{-\tau m}
       \\ & \le
       2^{\tau\xi} \frac{2^{-\tau}}{1-2^{-\tau}} \sum_{s=0}^{+\infty} 2^{- (\Xi - 2\xi) \tau s}
       \\ & \le
       2^{\tau\xi} \frac{ 2^{-\tau}}{\left(1-2^{-\tau}\right)^2}.
       \\ & \le
       2^{\tau\xi}.
     \end{split}
   \end{equation}

   To complete the proof of item~1, note that
   \begin{equation}
     \label{eq:geometrica2}
     \sum_{m = 1}^{+\infty} 2^{- \lambda(\tau - 1) m}
     =
     \frac{1}{2^{\lambda(\tau - 1)} - 1}
     \le
     \frac{1}{\lambda(\tau - 1) \log 2}
     \le
     2 |J_{\tau - 1}|
     \le
     2 \cdot a_{\tau}.
   \end{equation}
   Combined with~\eqref{e:J formula} and the inequality~$\Xi - 2\xi \ge 1$, the previous chain of inequalities implies that for every~$j$ in~$\N_0$
   we have
   \begin{equation*}
     \begin{split}
       J_j^+(\tau,\lambda(\tau - 1))
       & \le 
       2 \cdot 2^{q\tau^3 - q \tau \cdot (j+1)^2 - (\Xi - 2 \xi) \tau \cdot (j + 1)} 
       \\ & \le
       2 \cdot 2^{q\tau^3 - q \tau \cdot (j+1)^2 - \tau \cdot (j + 1)} .
     \end{split}
   \end{equation*}
   We obtain for every integer~$j \ge \lfloor \tau \rfloor \ge \tau - 1$
   $$ J_j^+(\tau, \lambda(\tau - 1))
   \le
   2 \cdot 2^{q\tau^3 - q \tau \cdot (j + 1)^2 - \tau \cdot (j + 1)}
   \le
   2 \cdot 2^{-\tau \cdot (j + 1)}. $$
   To estimate~$J_j^+(\tau, \lambda(\tau - 1))$ for~$j$ in~$\{0, \ldots, \lfloor \tau \rfloor - 1 \}$, note that
   \begin{equation*}
     \sum_{m = 1}^{|J_j|} 2^{- \lambda(\tau - 1) m}
     \le
     |J_j|
     \le
     a_{j+1}.
   \end{equation*}
   Combined with~\eqref{e:J formula} and the inequality~$\Xi - 2\xi \ge 1$, this implies that for every integer~$j$ in~$\{0, \ldots, \lfloor \tau \rfloor - 1\}$ we have
   \begin{equation}
     \label{e:first J's}
     J_j^+(\tau, \lambda(\tau - 1))
     \le
     2^{q(j + 1)^3 - \tau q (j + 1)^2 - (\Xi - 2\xi) \tau \cdot (j + 1)}
     \le
     2^{-\tau \cdot (j + 1)}.
   \end{equation}
   Thus,
   \begin{equation*}
     \sum_{j=0}^{+\infty}J_j^+(\tau, \lambda(\tau - 1))
     \le
     2 \sum_{j=0}^{+\infty} 2^{-\tau \cdot (j + 1)}
     =
     2 \frac{2^{-\tau}}{1 - 2^{- \tau}}
     \le
     2.
   \end{equation*}
   Together with~\eqref{eq:Is+1} this implies the desired inequality.

   \partn{2}
   Put~$\tau_0 \= \lceil \tau \rceil$.
   By item~(b) of Lemma~\ref{l:the itinerary} and the definition of~$\lambda(\tau)$, we have
   \[
     \lambda(\tau)
     =
     |J_{\tau}|^{-1}
     \le
     \frac{2}{a_{\tau+1}}
     \le
     \frac{2}{a_{\tau_0}}.
   \]
   From this inequality, item~(c) of Lemma~\ref{l:the itinerary}  and our hypothesis~$\tau \ge 2$, we obtain
   \begin{equation}
     \label{e:first J term}
     \lambda(\tau) (b_{\tau_0}-1)
     \le
     2\frac{b_{\tau_0}}{a_{\tau_0}}
     \le
     3.
   \end{equation}
   On the other hand, note that for every~$m$ in~$\{1, \ldots, \lfloor |J_\tau| \rfloor \}$ we have~$\lambda(\tau)m \le 1$, so by item~(b) of Lemma~\ref{l:the itinerary} we have
   $$ \sum_{m = 1}^{\lfloor |J_\tau| \rfloor} 2^{- \lambda(\tau) m}
   \ge
   \frac{1}{2} (|J_{\tau}| - 1)
   \ge
   \frac{1}{2^2} |J_{\tau}|
   \ge
   \frac{1}{2^3} 2^{q(\tau + 1)^3}. $$

   Suppose~$\tau \ge \tau_0 - 1/3$.
   In view of~\eqref{e:J formula} and~\eqref{e:first J term}, the previous chain of inequalities implies
   \begin{multline}
     \label{e:main J up}
     \frac{1}{2^6} 2^{q(\tau + 1)^3 - q \tau \cdot (\tau_0 + 1)^2 - (\Xi + 2 \xi) \tau \cdot (\tau_0 + 1)}
     \\
     \begin{aligned}
       & \le
       \frac{1}{2^3} \left( \sum_{m = 1}^{|J_{\tau_0}|} 2^{- \lambda(\tau) m} \right) 2^{- q \tau \cdot (\tau_0 + 1)^2 - (\Xi + 2 \xi) \tau \cdot (\tau_0 + 1)}
       \\ & \le
       \left( \sum_{m = 1}^{|J_{\tau_0}|} 2^{- \lambda(\tau) m} \right) 2^{- \lambda(\tau) (b_{\tau_0} - 1) - q \tau \cdot (\tau_0 + 1)^2 - (\Xi + 2 \xi) \tau \cdot (\tau_0 + 1)}
       \\ & =
       J_{\tau_0}^-(\tau, \lambda(\tau)).
     \end{aligned}
   \end{multline}
   On the other hand, by our assumption~$\tau \ge \tau_0 - 1/3$ we have
   $$ (\tau + 1)^3 - \tau (\tau_0 + 1)^2
   \ge
   (\tau + 1)^3 - \tau \left( \tau + \frac{4}{3} \right)^2
   =
   \frac{\tau^2}{3} + \frac{11\tau}{9} + 1
   \ge
   \frac{\tau^2}{4} + \frac{\tau(\tau_0 + 1)}{12}. $$
   Combined with our hypotheses~$q \ge 50 (\Xi + 1) \ge 25 (\Xi + 2 \xi + 3)$ and~$\tau \ge 2$, and with~\eqref{e:main J up}, this implies item~2 of the lemma when~$\tau \ge \tau_0 - 1/3$.

   To complete the proof, suppose~$\tau \le \tau_0 - 1/3$.
   Similarly as above we have
   \begin{multline}
     \label{e:main J down}
     \frac{1}{2^6} 2^{q\tau_0^3 - q \tau \tau_0^2 - (\Xi + 2 \xi) \tau \tau_0}
     \\
     \begin{aligned}
       & \le
       \frac{1}{2^3} \left(\sum_{m = 1}^{|J_{\tau_0 - 1}|} 2^{- \lambda(\tau_0 - 1) m} \right) 2^{- q \tau \tau_0^2 - (\Xi + 2 \xi) \tau \tau_0}
       \\ & \le
       \frac{1}{2^3} \left(\sum_{m = 1}^{|J_{\tau_0 - 1}|} 2^{- \lambda(\tau) m} \right) 2^{- q \tau \tau_0^2 - (\Xi + 2 \xi) \tau \tau_0}
       \\ & \le
       \left(\sum_{m = 1}^{|J_{\tau_0 - 1}|} 2^{- \lambda(\tau) m} \right) 2^{- \lambda(\tau) (b_{\tau_0-1} - 1) - q \tau \tau_0^2 - (\Xi + 2 \xi) \tau \tau_0}
       \\ & =
       J_{\tau_0 - 1}^-(\tau, \lambda(\tau)).
     \end{aligned}
   \end{multline}
   On the other hand, our assumption~$\tau \le \tau_0 - 1/3$ implies
   $$ \tau_0^3 - \tau\tau_0^2
   \ge
   \frac{\tau_0^2}{3}
   \ge
   \frac{\tau^2}{4} + \frac{\tau\tau_0}{12}. $$
   Combined with our hypotheses~$q \ge 50 (\Xi + 1) \ge 25 (\Xi + 2 \xi + 3)$ and~$\tau \ge 2$, and with~\eqref{e:main J down}, we obtain item~2 of the lemma when~$\tau \le \tau_0 - 1/3$.
   The proof of the lemma is thus complete.
   \endproof

   \subsection{Estimating the weighted~2 variables series}
   \label{sec:estim-weight-2}
   For each~$s$ in~$\N_0$, $\tau > 0$, and~$\lambda \ge 0$ put
   \begin{align*}
     \tI_s^+(\tau, \lambda)
     & \=
       \sum_{k\in  I_s} k \cdot 2^{- \lambda k -\tau N(k) + \tau \xi B(k)},
     \\
     \tJ_s^+(\tau, \lambda)
     & \=
       \sum_{k\in J_s} k \cdot 2^{- \lambda k -\tau N(k) + \tau \xi B(k)},
     \intertext{and}
     \tPi^+(\tau,\lambda)
     & \=
       1 + \sum_{s=0}^{+\infty} \tI_s^+ (\tau, \lambda)
       + \sum_{s=0}^{+\infty} \tJ_s^+ (\tau, \lambda).
   \end{align*}
   Noting that by item~(b) of Lemma~\ref{l:the itinerary} we have~$a_{s + 1} - b_s = |J_s| \ge s^2 + 1$, define for each~$\tau > 0$ and~$\lambda\ge 0$,
   $$ \hJ_s^{\pm}(\tau, \lambda)
   \=
   \sum_{k = b_s + s^2}^{a_{s + 1} - 1} (k + 1 - b_s - s^2) \cdot 2^{- \lambda k -\tau N(k) \pm \tau \xi B(k)}. $$

   \begin{lemm}
     \label{l:zero-temperature tower}
     For each~$\tau \ge 50$, the following properties hold:
     \begin{enumerate}
     \item[1.]
       $ \tPi^+(\tau, \lambda(\tau)) < +\infty $.
     \item[2.]
       Let~$s$ in~$[\tau - 1, \tau]$ be given, put~$s_0 \= \lceil s \rceil$ and~$\tau_0 \= \lceil \tau \rceil$, and note that~$s_0$ is equal to either~$\tau_0 - 1$ or~$\tau_0$.
       Then
       \begin{multline*}
         \Pi^+ (\tau, \lambda(s))
         \le
         \tPi^+ (\tau, \lambda(s)) - \sum_{\varsigma = \tau_0 - 3}^{s_0} \hJ_{\varsigma}^+(\tau, \lambda(s)) - (|J_{s_0 - 1}| - s_0^2) J_{s_0}^+(\tau, \lambda(s))
         \\ \le
         2^{- q\tau^2} \sum_{\varsigma = \tau_0 - 3}^{s_0} \hJ_{\varsigma}^-(\tau, \lambda(s)).
       \end{multline*}
     \end{enumerate}
   \end{lemm}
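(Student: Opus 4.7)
\emph{Proof proposal.} I would prove the two parts separately, viewing Lemma~\ref{l:zero-temperature tower} as a ``weighted'' companion to Lemma~\ref{l:zero-temperature first floor}: part~1 is analogous to part~1 of that lemma with an extra factor of $k$, and part~2 encodes a quantitative ``concentration'' statement on blocks $\varsigma\in[\tau_0-3,s_0]$ with the enormous prefactor $2^{-q\tau^2}$.

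\emph{Part~1.} The plan is to mimic the proof of Lemma~\ref{l:zero-temperature first floor} part~1, keeping track of the polynomial factor $k$. For $\tI_s^+(\tau,\lambda(\tau))$, the inner sum is bounded by $\sum_{m\ge 1} m\cdot 2^{-\tau m}\le 2^{-\tau}(1-2^{-\tau})^{-2}$, so summing over $s$ produces a geometric series in $2^{-(\Xi-2\xi)\tau s}$ with a polynomial prefactor that is easily absorbed. For $\tJ_s^+$, formula~\eqref{e:J formula} gives $\tJ_s^+(\tau,\lambda(\tau))=2^{-q\tau(s+1)^2-(\Xi-2\xi)\tau(s+1)}\sum_{k\in J_s}k\cdot 2^{-\lambda(\tau)k}$, and the inner sum is at most $|J_s|\cdot a_{s+1}\le a_{s+1}^2=2^{2q(s+1)^3}$. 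Since $\tau\ge 50$ and $q\ge 50(\Xi+1)$, the exponential prefactor $2^{-q\tau(s+1)^2}$ dominates the polynomial blow-up and yields summability in $s$.

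\emph{Part~2, first inequality.} I would expand $\tPi^+-\sum_{\varsigma=\tau_0-3}^{s_0}\hJ^+_\varsigma-(|J_{s_0-1}|-s_0^2)J^+_{s_0}$ by redistributing the subtractions term-by-term among the $k$-ranges, and check that on every surviving index $k$ the resulting weight is at least $1$. Indeed, for $k\in I_s$ or for $k\in J_s$ with $s\notin[\tau_0-3,s_0]$ the weight remains $k\ge 1$; for $\varsigma\in[\tau_0-3,s_0-1]$ and $k\in[b_\varsigma,b_\varsigma+\varsigma^2-1]$ the weight is $k\ge 1$, while for $k\in[b_\varsigma+\varsigma^2,a_{\varsigma+1}-1]$ it is $b_\varsigma+\varsigma^2-1\ge 1$; and for $\varsigma=s_0$, using the explicit identity $b_{s_0}-|J_{s_0-1}|=2^{q(s_0-1)^3}+4qs_0+2\Xi$, both surviving weights $k-(|J_{s_0-1}|-s_0^2)$ and $b_{s_0}+2s_0^2-1-|J_{s_0-1}|$ are at least~$1$. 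Since $\Pi^+$ has weight~$1$ on every index, the first inequality follows by termwise comparison.

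\emph{Part~2, second inequality; the main obstacle.} The hard step is to bound the same leftover by $2^{-q\tau^2}\sum_{\varsigma=\tau_0-3}^{s_0}\hJ^-_\varsigma(\tau,\lambda(s))$. I would choose a reference index $\varsigma_*\in[\tau_0-3,s_0]$ (taking $\varsigma_*=s_0-1$ or $s_0$) and bound $\hJ^-_{\varsigma_*}(\tau,\lambda(s))$ from below via the scheme of part~2 of Lemma~\ref{l:zero-temperature first floor}: since $\lambda(s)\cdot|J_{\varsigma_*}|=O(1)$, the inner sum $\sum_{k=b_{\varsigma_*}+\varsigma_*^2}^{a_{\varsigma_*+1}-1}(k+1-b_{\varsigma_*}-\varsigma_*^2)\cdot 2^{-\lambda(s)k}$ is of order $|J_{\varsigma_*}|^2\sim 2^{2q\varsigma_*^3}$, producing a factor that comfortably dominates the prefactor $2^{-\tau q(\varsigma_*+1)^2-(\Xi+2\xi)\tau(\varsigma_*+1)}$. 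To control each leftover contribution, I would split according to its provenance. The full $\tI^+_s$ and the $\tJ^+_s$ for $s\notin[\tau_0-3,s_0]$ are bounded as in the proof of part~1 by geometric or super-geometric series in $s$, gaining an exponential gap $2^{-q\tau((s+1)^2-(\varsigma_*+1)^2)}$ against $\hJ^-_{\varsigma_*}$; this gap is at least $2^{q\tau}$ for every $s$ outside $[\tau_0-3,s_0]$, which when combined with the polynomial losses yields a decay much stronger than $2^{-q\tau^2}$. For $\varsigma\in[\tau_0-3,s_0]$, the residual pieces of $\tJ^+_\varsigma$ carry an additional polynomial weight at most $b_\varsigma+\varsigma^2$, but they are supported on ranges of size~$\varsigma^2$ (much smaller than $|J_\varsigma|$), and the sign flip $+\tau\xi B(k)\to -\tau\xi B(k)$ between numerator and denominator only costs a factor $2^{4\tau\xi(s_0+1)}$ which is again negligible against $2^{-q\tau^2}$ thanks to $q\ge 50(\Xi+1)\ge 25(\Xi+2\xi+3)$.

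The chief difficulty lies in the bookkeeping of this final step: one must track every residual index range after the subtraction, carefully separate the inexpensive sign-flip cost $2^{4\tau\xi(s_0+1)}$ from the decisive exponential saving $2^{-q\tau^2}$, and use the standing hypotheses $q\ge 50(\Xi+1)$ and $\tau\ge 50$ to ensure that all polynomial factors—including the $k$ weight, $|J_{s_0-1}|$, and the $O(s_0)$ bound on the number of blocks—are absorbed.
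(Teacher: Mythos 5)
Your overall architecture matches the paper's: you prove the first inequality of part~2 by a termwise comparison of weights (which the paper compresses to ``follows directly from the definitions''), and the second by anchoring the leftover against a lower bound for~$\hJ^-_{s_0}$, treating far-from-diagonal blocks by an exponential gap and the on-diagonal residuals by separating the sign-flip cost~$2^{4\tau\xi(\cdot)}$ from the saving~$2^{-q\tau^2}$. Those are exactly the roles that parts~2--4 of Sublemma~\ref{l:zero-temperature core} play in the paper, so the plan for part~2 is sound modulo the bookkeeping you acknowledge.

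However, there is a genuine gap in your proof of part~1 that also infects the tail estimate you invoke in part~2. You bound the inner sum $\sum_{k\in J_s}k\cdot 2^{-\lambda(\tau)k}$ by $|J_s|\cdot a_{s+1}\le a_{s+1}^2 = 2^{2q(s+1)^3}$, i.e.\ you discard the factor $2^{-\lambda(\tau)k}$ entirely. The resulting bound on $\tJ_s^+(\tau,\lambda(\tau))$ then has exponent $2q(s+1)^3-q\tau(s+1)^2-(\Xi-2\xi)\tau(s+1)=q(s+1)^2\bigl(2(s+1)-\tau\bigr)+O(\tau s)$, which becomes \emph{positive and unbounded} once $s+1>\tau/2$; the claim that ``$2^{-q\tau(s+1)^2}$ dominates the polynomial blow-up'' is therefore false, and your series does not visibly converge. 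The same problem occurs for $\tI_s^+$: the weight $k\ge a_s=2^{qs^3}$ contributes a factor you silently dropped when replacing the inner sum by $\sum_{m\ge 1}m\,2^{-\tau m}$, and $qs^3-q\tau s^2>0$ once $s>\tau$. The key missing step is precisely~\eqref{e:linear geometric}: one must retain the factor $2^{-\lambda k}$ and bound the inner sum by the \emph{full} series $\sum_{m\ge 1}m\,2^{-\lambda(\tau)m}\le 2^3|J_\tau|^2\le 2^3\,2^{2q(\tau+1)^3}$, which is a constant independent of the block index~$s$, and then the prefactor $2^{-q\tau(s+1)^2-\tau(s+1)}$ alone produces summability. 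The paper makes this work for both the $J$- and $I$-blocks at once by regrouping $\tJ_j^+ +\tI_{j+1}^+$ as in~\eqref{eq:t10}, so that the geometric decay in~$\lambda$ controls both. This same regrouping is what you need for the blocks $j\ge s_0+1$ in part~2; bounded your crude way, those contributions diverge rather than supply the gain~$2^{-q\tau^2}$ you claim.
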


   The proof of this lemma is given after the following one.
   \begin{sublemma}
     \label{l:zero-temperature core}
     Given~$\tau \ge 50$ and~$s$ in~$[\tau - 1, \tau]$, put~$\tau_0 \= \lceil \tau \rceil$ and~$s_0 \= \lceil s \rceil$.
     Then the following properties hold.
     \begin{enumerate}
     \item[1.]
       $ \hJ_{s_0}^-(\tau, \lambda(s))
       \ge
       2^{2q(s + 1)^3 - q \tau (s_0 + 1)(s_0 + 2)}. $
     \item[2.]
       $ \hJ_{s_0}^-(\tau, \lambda(s))
       \ge
       2^{q\tau^2 (\tau - 4)}. $
     \item[3.]
       For every integer~$\varsigma$ in~$[\tau_0 - 3, s_0 - 1]$ we have
       $$ (b_\varsigma + \varsigma^2) J_{\varsigma}^+(\tau, \lambda(s))
       \le
       \frac{1}{20} 2^{-q\tau^2} \cdot \hJ_{\varsigma}^-(\tau, \lambda(s)). $$
     \item[4.]
       $ (b_{s_0} - |J_{s_0 - 1}| + 2s_0^2) J_{s_0}^+(\tau, \lambda(s))
       \le
       \frac{1}{4} 2^{-q\tau^2} \cdot \hJ_{s_0}^-(\tau, \lambda(s)). $
     \end{enumerate}
   \end{sublemma}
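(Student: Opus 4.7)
The plan is to exploit the fact that $N(k)$ and $B(k)$ are constant on each block $J_\varsigma$, namely $N(k) = q(\varsigma+1)^2 + \Xi(\varsigma+1)$ and $B(k) = 2(\varsigma+1)$, so that every quantity in the statement factors as
\[
J_\varsigma^\pm(\tau,\lambda) = 2^{-q\tau(\varsigma+1)^2 - (\Xi\mp 2\xi)\tau(\varsigma+1)} \sum_{k\in J_\varsigma} 2^{-\lambda k},
\]
\[
\hJ_\varsigma^\pm(\tau,\lambda) = 2^{-q\tau(\varsigma+1)^2 - (\Xi\mp 2\xi)\tau(\varsigma+1)} \sum_{k=b_\varsigma+\varsigma^2}^{a_{\varsigma+1}-1}(k+1-b_\varsigma-\varsigma^2)\, 2^{-\lambda k}.
\]
All four parts then reduce to estimating these inner sums, the main complication being that $\lambda(s) = 1/|J_s|$ is tuned to $J_s$ rather than to the block under consideration.

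For part~1, I would substitute $j = k+1-b_{s_0}-s_0^2$ and lower-bound the resulting sum $\sum_{j=1}^{|J_{s_0}|-s_0^2} j \cdot 2^{-\lambda(s) j}$ by truncating to $j\in[1,\lfloor|J_s|\rfloor]$. Since $s+1\ge s_0$ and $s_0^2 \ll |J_s|$ for $\tau\ge 50$, this truncation is admissible and on it $2^{-\lambda(s) j}\ge 1/2$, so the restricted sum exceeds $|J_s|^2/8 \ge 2^{2q(s+1)^3-5}$ by Lemma~\ref{l:the itinerary}(b). The prefactor $2^{-\lambda(s)(b_{s_0}+s_0^2-1)}$ is bounded below by an absolute constant because $\lambda(s) b_{s_0} \le 2 b_{s_0}/a_{s+1} = O(1)$. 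Finally, expanding $(s_0+1)(s_0+2) = (s_0+1)^2 + (s_0+1)$ and using $q - \Xi - 2\xi \ge q/2$ together with $\tau\ge 50$ absorbs the remaining $O(1)$ slack. Part~2 is then a direct algebraic consequence: writing $\tau_0 = \tau+\delta$ with $\delta\in[0,1)$ and using $s+1\ge\tau$ and $s_0 \le \tau+1$, one checks
\[
2(s+1)^3 - \tau(s_0+1)(s_0+2) \ge \tau^3 - 3\tau^2 - 2\tau \ge \tau^2(\tau-4)
\]
for $\tau\ge 2$, which (after splitting the two sub-cases $s_0=\tau_0-1$ and $s_0=\tau_0$) settles the claim.

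For parts~3 and~4, I would form the ratio
\[
\frac{J_\varsigma^+(\tau,\lambda(s))}{\hJ_\varsigma^-(\tau,\lambda(s))}
= 2^{4\xi\tau(\varsigma+1)}\cdot \frac{\sum_{k\in J_\varsigma} 2^{-\lambda(s) k}}{\sum_{k=b_\varsigma+\varsigma^2}^{a_{\varsigma+1}-1}(k+1-b_\varsigma-\varsigma^2)\, 2^{-\lambda(s) k}}.
\]
In part~3 the hypothesis $\varsigma\le s_0-1$ gives $\varsigma+1\le s$ (by a case split on whether $s$ is an integer), so $\lambda(s) a_{\varsigma+1}\le 2\cdot 2^{-3qs^2}$ is negligible; therefore $2^{-\lambda(s) k}\in[1/2,1]$ on $J_\varsigma$, both inner sums collapse to their unweighted versions, and the ratio is $O(|J_\varsigma|^{-1}\cdot 2^{4\xi\tau(\varsigma+1)})$. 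Multiplying by $b_\varsigma+\varsigma^2\le 2a_\varsigma$ and using $a_\varsigma/|J_\varsigma|\le 4\cdot 2^{-3q\varsigma^2}$ with $\varsigma\ge\tau-3$ reduces the target to a polynomial inequality $(q-2\xi)\tau \gtrsim q$, which holds when $q\ge 50(\Xi+1)$ and $\tau\ge 50$. For part~4, where $\varsigma=s_0$ and $\lambda(s) k$ is no longer small on $J_{s_0}$, I would upper bound the numerator by $2|J_s|\cdot 2^{-\lambda(s) b_{s_0}}$ via a geometric series and lower bound the denominator by $2^{-\lambda(s)(b_{s_0}+s_0^2-1)}\cdot|J_s|^2/6$, exactly as in the proof of part~1. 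The exponentials then nearly cancel (since $\lambda(s) s_0^2$ is negligible), yielding a ratio $\lesssim 2^{4\xi\tau(s_0+1)}/|J_s|$; combining $b_{s_0}-|J_{s_0-1}|+2s_0^2 = b_{s_0-1}+q(2s_0+1)+\Xi+2s_0^2 \le 2a_{s_0-1}$ with $a_{s_0-1}/|J_s|\le 2\cdot 2^{q(s_0-1)^3-q(s+1)^3}$ again produces a polynomial inequality in $\tau$ verifiable at $\tau=50$ and monotone thereafter.

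The main technical obstacle is the regime $s_0>s$ in parts~1 and~4, where $\lambda(s)$ is tuned to $J_s$ rather than $J_{s_0}$ and neither inner sum has a clean closed form; the resolution in both parts is the same truncation to the first $\lfloor|J_s|\rfloor$ terms, whose admissibility rests on Lemma~\ref{l:the itinerary}(b) ($|J_s|\ge a_{s+1}/2$) together with $a_{s+1}>a_{s_0}$ coming from $s+1>s_0$. A secondary bookkeeping difficulty is keeping track of how the factor $2^{4\xi\tau(\varsigma+1)}$ competes with the negative exponent $-3q\varsigma^2$ coming from $a_\varsigma/|J_\varsigma|$; this is controlled throughout by the standing choice $q\ge 50(\Xi+1)\ge 100\xi + 100$, which leaves enough room to beat any $\tau\ge 50$.
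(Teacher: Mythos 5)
Your decomposition is the same as the paper's: you factor the $\varsigma$-dependence out of $J_\varsigma^{\pm}$ and $\hJ_\varsigma^{\pm}$, reduce everything to inner sums over shifted indices $m=k+1-b_\varsigma$ (or $m=k+1-b_\varsigma-\varsigma^2$), and use $\lambda(s)|J_\varsigma|\le 1$ to turn the geometric weights into $O(1)$ constants in parts~3--4. Two steps, however, would not survive scrutiny as written.

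In part~1, truncating to $j\in[1,\lfloor|J_s|\rfloor]$ is not admissible whenever $s$ is at or near $s_0$: if $s=s_0$ (i.e., $s$ is an integer), then $\lfloor|J_s|\rfloor = |J_{s_0}| > |J_{s_0}|-s_0^2$, so you are adding rather than discarding terms and the inequality runs the wrong way. One must cap the truncation at $\min(\lfloor|J_s|\rfloor,\,|J_{s_0}|-s_0^2)$, which is still $\ge |J_s|-s_0^2-1 \gtrsim |J_s|$, or bypass truncation and use the closed form for $\sum m\,2^{-\lambda m}$ together with the smallness of $\lambda(s_0)s_0^2$, as the paper does. In part~2, the chain $2(s+1)^3-\tau(s_0+1)(s_0+2)\ge\tau^3-3\tau^2-2\tau$ does not follow from $s+1\ge\tau$ and $s_0\le\tau+1$ in the sub-case $s_0=\tau_0>\tau$: those two facts give only $2\tau^3-\tau(\tau+2)(\tau+3)=\tau^3-5\tau^2-6\tau$, which is strictly less than $\tau^2(\tau-4)$. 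The missing ingredient is $s+1\ge s_0$ (since $s_0=\lceil s\rceil$), which in that sub-case yields $(s+1)^3\ge s_0^3\ge\tau s_0^2$, and then $\tau(s_0^2-3s_0-2)\ge\tau(\tau^2-3\tau-2)=\tau^3-3\tau^2-2\tau$ closes the argument; this is also the elementary fact underlying your bound $\lambda(s)b_{s_0}=O(1)$, so it is worth recording explicitly.
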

   \begin{proof}
     \

     \partn{1}
     By item~(b) of Lemma~\ref{l:the itinerary} and the definition of~$\lambda(s)$, we have
     \[
       \lambda(s)
       =
       |J_s|^{-1}
       \le
       \frac{2}{a_{s+1}}
       \le
       \frac{2}{a_{s_0}}.
     \]
     On the other hand,
     \begin{equation}
       \label{e:core residue}
       \lambda(s_0) s_0^2
       =
       \frac{s_0^2}{|J_{s_0}|}
       \le
       \frac{s_0^2}{2^{qs_0^3 - 1}}
       \le
       \frac{1}{q s_0}
       \le
       \frac{1}{100}.  
     \end{equation}
     From these~2 inequalities  and item~(c) of Lemma~\ref{l:the itinerary}, we obtain
     \begin{equation}
       \label{e:first block exponent}
       \lambda(s) (b_{s_0} + s_0^2)
       \le
       \frac{2b_{s_0} }{a_{s_0}} + \frac{1}{100}
       \le
       3.
     \end{equation}

     By~\eqref{e:B}, \eqref{eq:N J}, and~\eqref{e:first block exponent}, we have
     \begin{multline}
       \label{eq:tgeometrica2}
       \hJ_{s_0}^-(\tau,\lambda(s))
       \\
       \begin{aligned}
         & =
         2^{-\tau(q(s_0+1)^2 + \Xi \cdot (s_0+1)) - 2 \tau \xi \cdot (s_0+1)}
         \sum_{k = b_{s_0} + s_0^2}^{a_{s_0 + 1} - 1} (k + 1 - b_{s_0} - s_0^2) \cdot 2^{- \lambda(s)k}
         \\ & \ge
         \frac{1}{2^3} 2^{- q \tau \cdot (s_0+1)^2 - (\Xi + 2 \xi) \tau \cdot (s_0+1)}
         \sum_{m = 1}^{|J_{s_0}| - s_0^2} m \cdot 2^{- \lambda(s) m}.
       \end{aligned}
     \end{multline}
     Noticing that for every integer~$N \ge 1$ we have
     $$ \sum_{m = 1}^{N} m \cdot 2^{- \lambda(s) m}
     =
     \frac{2^{\lambda(s)}}{(2^{\lambda(s)} - 1)^2}
     \left( 1 - (N + 1) 2^{- \lambda(s) N} + N 2^{- \lambda(s) (N + 1)} \right), $$
     and that the function
     $$ \eta \mapsto 1 - (N + 1) \eta^N + N \eta^{N + 1} $$
     is decreasing on~$[0, 1]$, we have by~\eqref{e:core residue} and the inequality~$1 - 2^{- \lambda(s_0)} \le \lambda(s_0) \log 2$
     \begin{multline}
       \label{e:core main estimate}
       \sum_{m = 1}^{|J_{s_0}| - s_0^2} m \cdot 2^{- \lambda(s) m}
       \\
       \begin{aligned}
         & \ge
         \frac{2^{\lambda(s)}}{(2^{\lambda(s)} - 1)^2}
         \cdot \left( 1 - (|J_{s_0}| - s_0^2 + 1) 2^{- \lambda(s_0) (|J_{s_0}| - s_0^2)} + (|J_{s_0}| - s_0^2) 2^{- \lambda(s_0) (|J_{s_0}| - s_0^2 + 1)} \right)
         \\ & =
         \frac{2^{\lambda(s)}}{(2^{\lambda(s)} - 1)^2}
         \cdot \left( 1 - 2^{\lambda(s_0) s_0^2 - 1} - 2^{\lambda(s_0) s_0^2 - 1} (|J_{s_0}| - s_0^2) \left( 1 - 2^{- \lambda(s_0)} \right) \right)
         \\ & \ge
         \frac{2^{\lambda(s)}}{(2^{\lambda(s)} - 1)^2} \left( 1 - 2^{\lambda(s_0) s_0^2 - 1}(1 + \log 2) \right) 
         \\ & \ge
         \frac{1}{2^4} \frac{1}{(2^{\lambda(s)} - 1)^2}.
       \end{aligned}
     \end{multline}
     Note that by~$\lambda(s) \le 1$, we have~$2^{\lambda(s)} - 1 \le  \lambda(s)$.
     Thus, together with item~(b) of Lemma~\ref{l:the itinerary}, the previous chain of inequalities implies
     $$ \sum_{m = 1}^{|J_{s_0}| - s_0^2} m \cdot 2^{- \lambda(s) m}
     \ge
     \frac{1}{2^4} \cdot |J_s|^2
     \ge
     \frac{1}{2^6} \cdot 2^{2q (s + 1)^3}. $$
     Together with~\eqref{eq:tgeometrica2}, the inequality $\Xi \ge 2 \xi$, and our hypotheses~$q \ge 50 (\Xi + 1)$ and~$\tau \ge 50$, this implies
     \begin{equation*}
       \begin{split}
         \hJ_{s_0}^-(\tau,\lambda(s))
         & \ge
         \frac{1}{2^{9}} 2^{2q(s + 1)^3 - q \tau \cdot (s_0+1)^2 - (\Xi + 2 \xi) \tau \cdot (s_0+1)}
         \\ & \ge
         2^{2q(s + 1)^3 - q \tau \cdot (s_0+1)(s_0 + 2)}.
       \end{split}
     \end{equation*}
     This proves item~1.

     \partn{2}
     When~$s_0 \le \tau$ we have by our hypothesis~$\tau \ge 50$,
     $$ 2(s + 1)^3 - \tau \cdot (s_0 + 1)(s_0 + 2)
     \ge
     2\tau^3 - \tau(\tau + 1)(\tau + 2)
     \ge
     \tau^2 (\tau - 4). $$
     On the other hand, in the case where~$s_0 \ge \tau$ we have by our hypothesis~$\tau \ge 50$,
     \begin{multline*}
       2(s + 1)^3 - \tau \cdot (s_0 + 1)(s_0 + 2)
       \ge
       2\tau s_0^2 - \tau \cdot (s_0 + 1)(s_0 + 2)
       \\ =
       \tau s_0 (s_0 - 3) - 2\tau
       \ge
       \tau^2(\tau - 3) - 2 \tau
       \ge
       \tau^2 (\tau - 4).
     \end{multline*}
     In all the cases, item~2 follows from item~1.

     \partn{3}
     Let~$\varsigma$ be an integer in~$[\tau_0 - 3, s_0]$ and note that by~\eqref{e:B}, \eqref{e:J formula}, and the definition of~$\hJ_{\varsigma}^-$, we have
     \begin{equation}
       \label{e:core quotient formula}
       \frac{J_{\varsigma}^+(\tau, \lambda(s))}{\hJ_{\varsigma}^-(\tau, \lambda(s))}
       =
       2^{4 \tau \xi \cdot (\varsigma + 1) + \lambda(s) \varsigma^2} \frac{\sum_{m = 1}^{|J_{\varsigma}|} 2^{-\lambda(s) m}}{\sum_{m = 1}^{|J_{\varsigma}| - \varsigma^2} m 2^{-\lambda(s) m}}.
     \end{equation}
     Suppose~$\varsigma$ is in~$[\tau_0 - 3, s_0 - 1]$.
     Then~$\lambda(s) |J_{\varsigma}| \le 1$, so
     \begin{equation*}
       \begin{split}
         \frac{J_{\varsigma}^+(\tau, \lambda(s))}{\hJ_{\varsigma}^-(\tau, \lambda(s))}
         & \le
         2 \cdot 2^{4 \tau \xi \cdot (\varsigma + 1) + \lambda(s) \varsigma^2} \frac{\sum_{m = 1}^{|J_{\varsigma}|} 2^{- \lambda(s) m} }{\sum_{m = 1}^{|J_{\varsigma}| - \varsigma^2} m}
         \\ & \le
         2^2 \cdot 2^{4 \tau \xi \cdot (\varsigma + 1) + \lambda(s) \varsigma^2} \frac{|J_{\varsigma}|}{(|J_{\varsigma}| - \varsigma^2)^2}.
       \end{split}
     \end{equation*}
     Noting that by items~(a) and~(b) of Lemma~\ref{l:the itinerary} we have
     $$ \lambda(s) \varsigma^2 \le \varsigma^2 / |J_\varsigma| \le 1
     \quad \text{and} \quad
     |J_{\varsigma}| \le 2(|J_{\varsigma}| - \varsigma^2), $$
     by item (c) of Lemma~\ref{l:the itinerary}, the inequality~$\Xi \ge 2 \xi$, and our hypotheses~$q \ge 50 (\Xi + 1)$ and~$\tau \ge 50$
     we obtain
     \begin{equation*}
       \begin{split}
         (b_\varsigma + \varsigma^2) \frac{J_{\varsigma}^+(\tau, \lambda(s))}{\hJ_{\varsigma}^-(\tau, \lambda(s))}
         & \le
         2^6 a_s 2^{ 4 \tau \xi \cdot (\varsigma + 1)} |J_{\varsigma}|^{-1}
         \\ & \le
         2^7 \cdot 2^{q \varsigma^3 + 4 \tau \xi \cdot (\varsigma + 1) - q (\varsigma + 1)^3}
         \\ & \le
         \frac{1}{20} 2^{- q \tau^2}.
       \end{split}
     \end{equation*}
     This proves item~3.

     \partn{4}
     By our hypotheses~$q \ge 50(\Xi + 1)$ and $\tau \ge 50$ we have
     \begin{multline*}
       b_{s_0} - |J_{s_0 - 1}| + 2 s_0^2
       =
       2^{q(s_0 - 1)^3} + 2 s_0^2 + 4 q s_0 + 2 \Xi
       \\ \le
       2^{q(s_0 - 1)^3} + qs_0^2
       \le
       2 \cdot 2^{q(s_0 - 1)^3}.
     \end{multline*}
     Thus, by item~(b) of Lemma~\ref{l:the itinerary}, \eqref{e:core main estimate}, \eqref{e:core quotient formula}, and the inequality~$\lambda(s) \le 1$, we have
     \begin{equation*}
       \begin{split}
         (b_{s_0} - |J_{s_0 - 1}| + 2 s_0^2) \frac{J_{s_0}^+(\tau, \lambda(s))}{\hJ_{s_0}^-(\tau, \lambda(s))}
         & \le
         2^5 \cdot 2^{q(s_0 - 1)^3 + 4 \tau \xi \cdot (s_0 + 1) + \lambda(s) s_0^2} (2^{\lambda(s)} - 1)
         \\ & \le
         2^5 \lambda(s) \cdot 2^{q(s_0 - 1)^3 + 4 \tau \xi \cdot (s_0 + 1) + \lambda(s) s_0^2}
         \\ & \le
         2^6 \cdot 2^{- q (s + 1)^3 + q(s_0 - 1)^3 + 4 \tau \xi \cdot (s_0 + 1) + \lambda(s) s_0^2}
       \end{split}
     \end{equation*}
     Using~$\lambda(s) s_0^2 \le s_0^2 |J_s|^{-1} \le 1$, the inequality~$\Xi \ge 2\xi$, and our hypotheses~$q \ge 50 (\Xi + 1)$ and~$\tau \ge 50$, we have
     \begin{equation*}
       \begin{split}
         (b_{s_0} - |J_{s_0 - 1}| + 2 s_0^2) \frac{J_{s_0}^+(\tau, \lambda(s))}{\hJ_{s_0}^-(\tau, \lambda(s))}
         & \le
         2^{- q s_0^3 + q(s_0 - 1)^3 + q \tau \cdot (s_0 + 1)}
         \\ & \le
         2^{- 3 q s_0 (s_0 - 1) + q \tau \cdot (s_0 + 1)}
         \\ & \le
         \frac{1}{4} 2^{- q \tau^2}.
       \end{split}
     \end{equation*}
     This completes the proof of item~4 and of the lemma.
   \end{proof}

   \begin{proof}[Proof of Lemma~\ref{l:zero-temperature tower}]
     \partn{1}
     Note that for every~$s \ge 0$, we have~$\lambda(s) \le 1$ and
     \begin{multline}
       \label{e:linear geometric}
       \sum_{m = 1}^{+\infty} m \cdot  2^{- \lambda(s) m}
       =
       \frac{2^{\lambda(s)}}{\left(2^{\lambda(s)} - 1 \right)^2}
       \le
       \frac{2^{\lambda(s)}}{(\lambda(s) \log 2)^2}
       \\ \le
       2^3 |J_s|^2
       \le
       \left( 2^3 \right) 2^{2q(s+1)^3}.
     \end{multline}
     Together with~\eqref{e:B}, \eqref{eq:N J}, \eqref{eq:N I}, and the inequality~$\Xi - 2\xi \ge 1$, for every~$j$ in~$\N_0$ we have
     \begin{multline}
       \label{eq:t10}
       \tJ_j^+(\tau,\lambda(s))
       + \tI_{j + 1}^+(\tau,\lambda(s))
       \\
       \begin{aligned}
         & \le
         2^{-\tau(q(j+1)^2 + \Xi \cdot (j+1)) + \tau \xi \cdot (2j+3)}
         \sum_{k \in J_j \cup I_{j + 1}} k \cdot 2^{- \lambda(s)k}
         \\ & \le 
         (2^{\tau \xi + 3}) 2^{2q(s + 1)^3 - q \tau \cdot (j+1)^2 - \tau \cdot (j + 1)}.
       \end{aligned}
     \end{multline}
     Taking~$s = \tau$, for every~$j \ge 2 \tau + 1$ we have
     $$ \tJ_j^+(\tau,\lambda(\tau))
     + \tI_{j + 1}^+(\tau,\lambda(\tau))
     \le
     \left( 2^{\tau \xi + 3} \right) 2^{- \tau \cdot (j + 1)}. $$
     This implies that~$\tPi^+(\tau, \lambda(\tau))$ is finite, as wanted.

     \partn{2}
     The first inequality follows directly from the definitions.
     To prove the second inequality, note that by~\eqref{e:B} and~\eqref{eq:N I}, and our hypotheses~$\tau \ge 50$ and~$\xi > 0$, we have
     \begin{equation}
       \label{eq:first t block}
       1 + \tI_0^+(\tau, \lambda(s))
       \le
       1 + \sum_{k = 1}^{+\infty} k \cdot 2^{- \tau k + \tau \xi}
       =
       1 + 2^{\tau \xi} \frac{2^{- \tau}}{(1 - 2^{- \tau})^2}
       \le
       2^{\tau \xi}.
     \end{equation}
     On the other hand, by item (c) of Lemma~\ref{l:the itinerary}, \eqref{e:B}, \eqref{eq:N I}, and the inequality~$\Xi - 2\xi \ge 1$, for every integer $j\ge 1$ we have
     \begin{equation*}
       \begin{split}
         \tI_{j}^+(\tau, \lambda(s))
         & \le
         \left( 2^{\tau \xi} \right) 2^{- \tau (q j^2 + (\Xi - 2 \xi)j)}
         \sum_{m = 1}^{|I_{j}|} \left( 2^{qj^3} + m \right) 2^{- \tau m}
         \\ & \le
         \left( 2^{\tau \xi + 1} \right)
         2^{q j^3 - q \tau j^2 - (\Xi - 2 \xi) \tau j} \frac{1}{1 - 2^{-\tau}}
         \\ & \le
         \left( 2^{\tau \xi + 2} \right)
         2^{q (j - \tau) j^2 - \tau j}.
       \end{split}
     \end{equation*}
     Combined with~\eqref{eq:first t block}, the inequality~$\Xi \ge 2 \xi$, and our hypotheses~$q \ge 50 (\Xi + 1)$ and~$\tau \ge 50$, this implies
     \begin{equation*}
       \label{eq:total tI}
       1 + \sum_{j = 0}^{\tau_0 + 1}  \tI_{j}^+(\tau, \lambda(s))
       \le
       \left( 2^{\tau \xi + 2 }\right) \frac{2^{2q (\tau + 2)^2}}{1 - 2^{-\tau}}
       \le
       2^{2q \tau (\tau + 6)}.
     \end{equation*}
     Together with item~2 of Sublemma~\ref{l:zero-temperature core} and our hypothesis~$\tau \ge 50$, this chain of inequalities implies
     \begin{equation}
       \label{eq:first t I}
       1 + \sum_{j = 0}^{\tau_0 + 1}  \tI_{j}^+(\tau, \lambda(s))
       \le
       \frac{1}{2^3} 2^{- q \tau^2} \cdot \hJ_{s_0}^-(\tau, \lambda(s)).
     \end{equation}

     On the other hand, by~\eqref{e:B}, \eqref{eq:N J}, and our hypothesis~$\tau \ge 50$, for every~$j$ in~$\{0, \ldots, \tau_0 - 4 \}$ we have
     \begin{equation*}
       \begin{split}
         \tJ_j^+(\tau,\lambda(s))
         & =
         2^{-\tau(q(j+1)^2 + \Xi \cdot (j+1)) + 2 \tau \xi \cdot (j+1)}
         \sum_{k \in J_j} k \cdot 2^{- \lambda(s)k}
         \\ & \le 
         |J_j|2^{q(j + 1)^3 - q \tau \cdot (j+1)^2 - (\Xi - 2 \xi) \tau \cdot (j + 1)}
         \\ & \le 
         2^{2q(j + 1)^3 - q \tau \cdot (j+1)^2 - (\Xi - 2 \xi) \tau \cdot (j + 1)}
         \\ & \le
         2^{q(j + 1)^2 (2j + 2 - \tau)}
         \\ & \le
         2^{q(\tau - 2)^2 (\tau - 4)}
         \\ & \le
         2^{q\tau^2(\tau - 7)}.
       \end{split}
     \end{equation*}
     Together with item~2 of Sublemma~\ref{l:zero-temperature core} and with our hypothesis~$\tau \ge 50$, this implies
     \begin{equation}
       \label{eq:first t Js}
       \frac{\sum_{j = 0}^{\tau_0 - 4} \tJ_j^+(\tau, \lambda(s))}{\hJ_{s_0}^-(\tau, \lambda(s))}
       \le
       \tau 2^{- 3 q \tau^2}
       \le
       \frac{1}{2^2} 2^{- q \tau^2}.
     \end{equation}

     On the other hand, by~\eqref{eq:t10}, item~1 of Sublemma~\ref{l:zero-temperature core}, the inequality $\Xi\ge 2\xi$ and our hypothesis~$q \ge 50 (\Xi + 1)$, for every integer~$j \ge s_0 + 1$ we have
     \begin{equation*}
       \begin{split}
         \frac{\tJ_j^+(\tau, \lambda(s)) + \tI_{j + 1}^+(\tau, \lambda(s))}{\hJ_{s_0}^-(\tau, \lambda(s))}
         & \le
         \left( 2^{\tau \xi + 3} \right) 2^{-q \tau \cdot ((j + 1)^2 - (s_0 + 1)(s_0 + 2))}
         \\ & \le
         \left( 2^{\tau \xi + 3} \right) 2^{-q \tau \cdot (s_0 + 2)(j - s_0)}
         \\ & \le
         \left( 2^{\tau \xi + 3} \right) 2^{-q \tau^2(j - s_0) - q \tau}
         \\ & \le
         \frac{1}{2^3} 2^{-q \tau^2(j - s_0)}.
       \end{split}
     \end{equation*}
     Summing over~$j \ge s_0 + 1$ and using our hypotheses~$q \ge 50 (\Xi + 1)$ and~$\tau \ge 50$, we obtain
     \begin{equation*}
       \frac{\sum_{j = s_0 + 1}^{+\infty} \left( \tJ_j^+(\tau, \lambda(s)) + \tI_{j + 1}^+(\tau, \lambda(s)) \right)}{\hJ_{s_0}^-(\tau, \lambda(s))}
       \le
       \frac{1}{2^3} \frac{2^{-q \tau^2}}{1 - 2^{-q \tau^2}}
       \le
       \frac{1}{2^2} 2^{- q \tau^2}.
     \end{equation*}
     Combined with~\eqref{eq:first t I} and~\eqref{eq:first t Js}, this implies
     \begin{equation}
       \label{eq:13}
       \tPi^+(\tau, \lambda(s)) - \sum_{\varsigma = \tau_0 - 3}^{s_0} \tJ_{\varsigma}^+(\tau, \lambda(s))
       \le
       \frac{1}{2} 2^{- q\tau^2} \cdot \hJ_{s_0}^-(\tau, \lambda(s)).
     \end{equation}

     For each integer~$\varsigma$ in~$[\tau_0 - 3, s_0]$, we have
     \begin{multline}
       \label{eq:14}
       \tJ_{\varsigma}^+(\tau, \lambda(s)) - \hJ_{\varsigma}^+(\tau, \lambda(s))
       \\
       \begin{aligned}
         & \le
         \sum_{k = b_\varsigma}^{b_\varsigma + \varsigma^2 - 1} k \cdot 2^{- \lambda k -\tau N(k) + \tau \xi B(k)}
         +
         \sum_{k = b_\varsigma + \varsigma^2}^{a_{\varsigma + 1} - 1} (b_\varsigma + \varsigma^2) 2^{- \lambda k -\tau N(k) + \tau \xi B(k)}
         \\ & \le
         (b_\varsigma + \varsigma^2) J_{\varsigma}^+(\tau, \lambda(s)).
       \end{aligned}
     \end{multline}
     Together with item~3 of Sublemma~\ref{l:zero-temperature core}, this implies that for~$\varsigma$ in~$[\tau_0 - 3, s_0 - 1]$ we have
     \begin{equation}
       \label{eq:15}
       \tJ_{\varsigma}^+(\tau, \lambda(s)) - \hJ_{\varsigma}^+(\tau, \lambda(s))
       \le
       \frac{1}{20} 2^{- q \tau^2} \cdot \hJ_{\varsigma}^-(\tau, \lambda(s)).
     \end{equation}
     On the other hand, \eqref{eq:14} with~$\varsigma = s_0$ and item~4 of Sublemma~\ref{l:zero-temperature core} imply
     $$ \tJ_{\varsigma}^+(\tau, \lambda(s)) - \hJ_{\varsigma}^+(\tau, \lambda(s)) - (|J_{s_0 - 1}| - s_0^2) J_{s_0}^+(\tau, \lambda(s))
     \le
     \frac{1}{4} 2^{- q \tau^2} \cdot \hJ_{s_0}^-(\tau, \lambda(s)). $$
     Together with~\eqref{eq:13} and~\eqref{eq:15}, this implies the desired inequality and completes the proof of the lemma.
   \end{proof}

   % For Bibtex bibliography:
   \bibliographystyle{alpha}
   % \bibliography{$HOME/papers/_bib/papers}
   % \bibliography{papers}

\end{document}